\newcommand{\PP}{{\mathbb{P}}}
\newcommand{\RR}{{\mathbb{R}}}
\newcommand{\Rforce}{\mathbb{R}}    
\DeclareMathOperator{\ps}{\mathcal{P}}
\DeclareMathOperator{\len}{lh}
\DeclareMathOperator{\crit}{crit}
\DeclareMathOperator{\dom}{dom}
\DeclareMathOperator{\cf}{cf}
\DeclareMathOperator{\Col}{Col}
\DeclareMathOperator{\Add}{Add}
\DeclareMathOperator{\ZFC}{ZFC}
\DeclareMathOperator{\Ult}{Ult}
\DeclareMathOperator{\TP}{TP}
\DeclareMathOperator{\stem}{stem}
\DeclareMathOperator{\range}{range}
\def\MPB{{\mathbb{P}}}
\def\MQB{{\mathbb{Q}}}
\def\MRB{{\mathbb{R}}}
\def\MCB{{\mathbb{C}}}
\def\k{\kappa}
\def\l{\lambda}
\def\a{\alpha}
\def\b{\beta}
\newtheorem{theorem}{Theorem}[section]
\newtheorem{lemma}[theorem]{Lemma}
\newtheorem{notation}[theorem]{Notation}
\newtheorem{definition}[theorem]{Definition}
\newtheorem{remark}[theorem]{Remark}
\newtheorem{claim}[theorem]{Claim}
\numberwithin{equation}{section}
\def\l{\lambda}
\def\rmark{\mbox{$\rm\bf\rule{0.06em}{1.45ex}\kern-0.05em R$}}
\def\pmark{\mbox{$\rm\bf\rule{0.06em}{1.45ex}\kern-0.05em P$}}
\def\nmark{\mbox{$\rm\bf\rule{0.06em}{1.45ex}\kern-0.05em N$}}
\def\vdash{\mbox{$\rm\| \kern-0.13em -$}}
\def\l{\lambda}
\def\rmark{\mbox{$\rm\bf\rule{0.06em}{1.45ex}\kern-0.05em R$}}
\def\pmark{\mbox{$\rm\bf\rule{0.06em}{1.45ex}\kern-0.05em P$}}
\def\nmark{\mbox{$\rm\bf\rule{0.06em}{1.45ex}\kern-0.05em N$}}
\def\vdash{\mbox{$\rm\| \kern-0.13em -$}}
\title[The Tree property at all regular  even cardinals]{The Tree property at all regular even cardinals}
\author[M. Golshani ]{Mohammad Golshani }
\thanks{The author's research has been supported by a grant from IPM (No. 97030417).}
\begin{document}

\thanks{ } \maketitle




\begin{abstract}
Assuming the existence of a strong cardinal and a measurable cardinal above it, we construct a model of $\ZFC$ in which for every singular cardinal  $\delta$, $\delta$
is strong limit,  $2^\delta=\delta^{+3}$ and the tree property  at $\delta^{++}$ holds.
This answers a question of  Friedman, Honzik and  Stejskalova \cite{friedman-honzik2}.
We also produce, relative to the existence of a strong cardinal and two measurable cardinals above it, a model of $\ZFC$ in which   the tree property holds at all
 regular even cardinals.
The result answers questions of
Friedman-Halilovic \cite{friedman-halilovic} and Friedman-Honzik \cite{friedman-honzik0}.
\end{abstract}

\section{introduction}
Trees are combinatorial objects which are of great importance in contemporary set theory. Recall that for a regular cardinal $\kappa$,
a $\k$-tree  is a tree of height $\k$ all of whose levels have size less than $\k.$
A $\k$-tree is called $\k$-Aronszajn if it has no cofinal branches.

For a regular cardinal $\k$ let the tree property at $\kappa$, denoted $\TP(\k),$  be the assertion
``there are no $\kappa$-Aronszajn trees''.
The following $\ZFC$ results are know about Aronszajn trees  (see \cite{jech}).
\begin{itemize}
\item The tree property holds at $\aleph_0$ (K\"{o}nig),
\item The tree property fails at $\aleph_1$ (Aronszajn),
\item For an inaccessible cardinal $\k,$ the tree property holds at $\k$  if and only if $\k$ is  weakly compact.

\end{itemize}
 The problem of getting the tree property at successor regular cardinals bigger than $\aleph_1$ is more subtle and it is independent of $\ZFC$ (modulo some large cardinal assumptions). The major problem,
due to Magidor, is to prove the consistency of the tree property at all regular cardinals $\kappa > \aleph_1$.

In this paper, we are interested in the tree property at regular even cardinals, i.e., cardinals of the form $\k=\aleph_\a,$ where $\a$ is an even ordinal\footnote{Recall that each ordinal $\alpha$ can be written uniquely as $\alpha=2\cdot \beta+\xi$, where $\beta \leq \alpha$ is an ordinal and $\xi<2$. $\alpha$ is called even if $\xi=0$ and it is called odd if $\xi=1$.}.

First we consider the problem of getting the tree property at double successor of singular strong limit cardinals. The first result in this direction is due to Cummings and Foreman
\cite{cummings-foreman}, who produced, starting from a supercompact cardinals $\kappa$ and a weakly compact cardinal
above it,  a model of $\ZFC$ in which $\kappa$ is a singular strong limit cardinal of countable cofinality and the tree property holds at $\kappa^{++}$.
They also extended their result  for  $\k=\aleph_\omega$.  Friedman and Halilovic \cite{friedman-halilovic}  proved the same results from a cardinal $\k$ which is $H(\lambda)$-hypermeasurable for some weakly compact cardinal $\l> \k$. In \cite{gitik-tree}, Gitik produced a model of ``$\aleph_\omega$ is strong limit and the tree property holds at $\aleph_{\omega+2}$'' from optimal hypotheses.
The papers  \cite{five authors},  \cite{friedman-honzik1} and \cite{friedman-honzik2} have continued the work, where more results about the tree property at double successor
of singular strong limit cardinals of countable cofinality are obtained.

In \cite{golshani-mohammadpour}, singular cardinals of uncountable cofinality are considered, and
in it, a model is constructed in which the tree property holds at double successor of a singular strong limit cardinal of any prescribed cofinality.

In \cite{friedman-honzik2}, Friedman, Honzik and  Stejskalova  produced a model of $\ZFC$ in which $\aleph_\omega$ is strong limit, $2^{\aleph_\omega}=\aleph_{\omega+3}$ and the tree property holds at $\aleph_{\omega+2}.$ They asked if we can replace $\aleph_\omega$ by $\aleph_{\omega_1}$.
We answer their question; in fact we prove the following  global consistency result:

 \begin{theorem} \label{thm:main theorem}
 Assume $\kappa$ is an $H(\l^{++})$-hypermeasurable cardinal where  $\lambda > \kappa$ is measurable. Then there is a generic extension $W$ of $V$ in which the following hold:
 \begin{enumerate}
 \item [(a)] $\kappa$ is inaccessible.

 \item [(b)] For every singular cardinal $\delta< \kappa,$ $\delta$ is strong limit and $2^\delta=\delta^{+3}.$

 \item [(c)] For every singular cardinal $\delta< \kappa,$ the tree property at $\delta^{++}$ holds.

 \end{enumerate}
In particular the rank initial segment $W_\kappa$ of $W$ is a model of $ZFC$ in which for every singular cardinal
  $\delta$, the tree property  at $\delta^{++}$ holds  and $2^\delta=\delta^{+3}$.
 \end{theorem}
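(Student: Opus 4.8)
The plan is to obtain $W$ by first forcing a mild preparation and then forcing with $\Rforce$, an \emph{extender-based Radin forcing at $\kappa$ with interleaved L\'evy collapses}. The preparation arranges $\ZFC+\text{GCH}$ (above the relevant point) and extracts, from the $H(\l^{++})$-hypermeasurability of $\kappa$, a coherent sequence $\langle E_\eta : \eta<\kappa\rangle$ of extenders, $E_\eta$ of length $\eta^{+3}$, with enough ``repeat'' structure for the Radin machinery. The forcing $\Rforce$ then adds a club $C\subseteq\kappa$ and, at each $\eta\in C$, an extender-based Prikry generic for $E_\eta$ together with a block of L\'evy collapses closing the gap up to the next point of $C$; it is designed so that in $W$: the club $C$ is \emph{exactly} the class of singular cardinals below $\kappa$; each $\eta\in C$ is strong limit with $2^\eta=\eta^{+3}$; between consecutive points of $C$ only finitely many, necessarily successor, cardinals survive; and $\kappa$ itself is never singularized --- the Radin extender sequence being long enough --- so $\kappa$ remains inaccessible. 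This gives (a) and reduces (b), (c) to the behaviour at the points of $C$.

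At a point $\eta\in C$ the local picture is the configuration produced in \cite{friedman-honzik2} (refining \cite{gitik-tree} and \cite{five authors}): the extender-based component for $E_\eta$ simultaneously singularizes $\eta$ --- to the cofinality of its index in $C$ --- and pushes $2^\eta$ up to $\eta^{+3}$, while a Mitchell-style sub-poset threaded through the collapses delivers $\TP(\eta^{++})$ once $\eta$ has been made strong limit with $2^\eta=\eta^{+3}$. The role played in \cite{cummings-foreman} by a weakly compact cardinal above the supercompact is played here, at the top, by the measurable $\lambda>\kappa$: since $H(\l^{++})^V\subseteq M$ for the hypermeasurable embedding $j\colon V\to M$, inside $M$ the cardinal $\lambda$ is still measurable and $\kappa$ is still $\kappa^{+3}$-strong, so by elementarity unboundedly many $\eta<\kappa$ satisfy ``$\eta$ is $H(\eta^{+3})$-hypermeasurable with a measurable above'' --- the precise local hypothesis for running the $\eta$-block --- and this is preserved by the portion of $\Rforce$ below $\eta$.

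The verification splits in two. For (b) I would lift $j$, and where needed a normal-measure embedding of $\lambda$, through the whole forcing: build master conditions from the generic club and the generic collapses, using GCH together with the closure/chain-condition profile of $\Rforce$ (each $\eta$-block factoring as a highly closed poset times a small one); this shows $\kappa$ remains inaccessible, that cardinals and cofinalities are computed as intended, and that the part of $\Rforce$ strictly above the $\eta$-block adds no bounded subset of $\eta$, so $2^\eta=\eta^{+3}$ is undisturbed. For (c), suppose towards a contradiction that $T\in W_\kappa$ is an $\eta^{++}$-Aronszajn tree for some singular $\eta$; factor $\Rforce$ above the $\eta$-block as a small $\eta^{++}$-c.c.\ poset times a sufficiently closed poset, invoke the standard branch lemma to see that the closed factor adds no cofinal branch to $T$, and derive a cofinal branch through $T$ already in the small extension from the lifted embedding applied to the $\eta$-block --- i.e.\ from the Mitchell-style component together with the reflection $j$ supplies --- contradicting that $T$ is Aronszajn there.

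I expect the main obstacle to be twofold and tightly entangled. The first part is \emph{catching every singular cardinal}: arranging $\Rforce$ so that the singular cardinals of $W_\kappa$ are precisely the points of $C$ carrying the prescribed configuration --- in particular that every limit cardinal below $\kappa$ which ends up singular lies on $C$, and that no unhandled singular cardinal survives in a gap --- which pins the gap-collapses, the lengths involved, and the coherence of $\langle E_\eta\rangle$ down very tightly, and is exactly the place where the full strength ($\lambda^{++}$-strongness \emph{and} a measurable above) is needed rather than the weaker local hypotheses of \cite{friedman-honzik2}. The second part, the technical heart, is \emph{preserving the tree properties through all of $\Rforce$} rather than merely through each $\eta$-block: the quotient of $\Rforce$ past an $\eta$-block is again a Radin-type forcing with collapses, neither closed nor $\eta^{++}$-c.c.\ on its face, so one must present it as (an iteration of) a closed forcing with a small remainder and then control its interaction with the squares and approachability properties that the L\'evy collapses inevitably introduce at $\eta^+$ and $\eta^{++}$.
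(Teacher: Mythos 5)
Your plan diverges from the paper's at its core mechanism, and as written it has genuine gaps. The paper does \emph{not} use extender-based Radin forcing, and the gap $2^\delta=\delta^{+3}$ is not produced by the singularizing forcing at all: it is arranged \emph{in advance}, by a preparatory reverse Easton iteration which, at each measurable limit of measurables $\alpha<\kappa$, forces with the generalized Mitchell poset $\mathbb{M}(\alpha,\alpha_*,\alpha_*^+)$ (where $\alpha_*$ is the next measurable, reflected from the measurable $\lambda>\kappa$); this simultaneously makes $2^\alpha=\alpha_*^+$ (the future $\alpha^{+3}$) and forces $\TP$ at the cardinal that will become $\alpha^{++}$. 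Only afterwards does one force with a Cummings-style Radin forcing built from a measure sequence of length $\kappa^+$ with interleaved collapses $\Col(\alpha^{+5},<\beta)$, whose sole job is to singularize the right points, keep $\kappa$ inaccessible, and not disturb the prepared configuration. Your proposal instead asks the extender-based component at each $\eta\in C$, with extenders of length $\eta^{+3}$, to create the gap, while ``a Mitchell-style sub-poset threaded through the collapses'' delivers $\TP(\eta^{++})$. This is the step that would fail, or at least is entirely unsubstantiated: once $\eta^{+3}$-many extender-based sequences are being added at $\eta$, the forcing at the $\eta$-block is no longer (projected onto by) a Mitchell forcing, and the term-forcing/branch-preservation analysis that yields $\TP(\eta^{++})$ in all known gap-$3$ results (Gitik, Friedman--Honzik--Stejskalova, and this paper) breaks down; in every such argument the blow-up is done by Cohen/Mitchell forcing \emph{before} singularization precisely to keep that analysis available. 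You also omit the guiding generic for the interleaved collapse/Mitchell part (the $F^*_w$ coordinate), which must be arranged by a preliminary forcing (Lemma \ref{a very basic iteration lemma}) and is essential both for the Prikry property of $\Rforce_w$ and for the surgery lifting.

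The second gap is in your preservation step. The quotient of the Radin forcing past an $\eta$-block is a Radin-type forcing and is \emph{not} ``a small $\eta^{++}$-c.c.\ poset times a sufficiently closed poset,'' nor does the paper present it that way. What the paper actually does is: (i) use the factorization lemma and the Prikry property together with the $\kappa_{\xi+1}^+$-closure of the direct extension order to show the tail of $\Rforce_w$ adds no new subsets of $\alpha_*$ at all (Lemma \ref{thm: bounding sets}), so the verification localizes to $\MPB_{\alpha+1}\ast\dot{\Rforce}_{u_\xi}$; and (ii) at the block itself, instead of a master-condition lift of $j$ through the whole forcing, lift a measurability embedding of $\lambda$ (respectively its reflection $\alpha_*$), exhibit $\mathbb{M}\ast\dot\Rforce_w$ as a projection of $\MCB\times\mathbb{T}$ with $\mathbb{T}$ term-forcing $\kappa^+$-closed, and prove the key Lemma \ref{chain condition of c} that $\MCB_\pi\times\MCB_\pi$ is $\kappa^+$-c.c.\ --- a claim whose proof needs the Prikry property --- so that Unger-style preservation lemmas rule out new branches through the quotient. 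Your worry about squares/approachability is not the relevant obstruction, and your remark that $C$ is exactly the class of singular cardinals (with every $\eta\in C$ strong limit and $2^\eta=\eta^{+3}$) is also off: it is $\lim(C)$ that coincides with the singular cardinals, successor points of $C$ being successor cardinals in $W$.
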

 \begin{remark}
 Given any finite $n \geq 2,$ we can replace $2^\delta=\delta^{+3}$ with $2^\delta=\delta^{+n}.$
 \end{remark}

Then we consider the problem of getting the tree property at all regular even cardinals. In  \cite{mitchell72}, Mitchell showed that starting from two weakly compact cardinals one can get the tree property at both $\aleph_{2}$ and $\aleph_4$. Starting from infinitely many weakly compact cardinals, his result can be easily extended to get the tree property
at all $\aleph_{2n}$'s, $0<n<\omega$. The problem of getting the tree property at
 $\aleph_{2n}$'s, $0<n<\omega$ and  $\aleph_{\omega+2}$ while  $\aleph_\omega$ is strong limit
has remained open.
 In \cite{friedman-honzik0}, Friedman and Honzik produced a model in which  the tree property holds at all even cardinals below $\aleph_\omega$  where $\aleph_\omega$ is strong limit and $2^{\aleph_\omega}= \aleph_{\omega+2}$.
  Unger \cite{unger1} has extended this result to get the tree property
 at all $\aleph_n$'s, $1<n<\omega$. None of these papers obtain the tree property at $\aleph_{\omega+2}$.
We address this question and prove the following, which in particular answers a question of  \cite{friedman-honzik0}:
  \begin{theorem} \label{thm:main theorem1}
 Assume $\eta> \lambda $ are measurable cardinals above $\k$ and $\kappa$ is $H(\eta)$-hypermeasurable. Then there is a generic extension $W$ of $V$ in which:
  \begin{enumerate}
 \item [(a)] $\kappa=\aleph_\omega$ is a strong limit cardinal.

 \item [(b)] $\lambda=\aleph_{\omega+2}$.

 \item [(c)] The tree property holds at all $\aleph_{2n}$'s, $0<n<\omega$ and at $\aleph_{\omega+2}$.
 \end{enumerate}
 \end{theorem}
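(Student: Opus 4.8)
The plan is to build $W$ by a three-ingredient forcing over a suitable preparation of $V$: a Mitchell-type forcing $\mathbb{P}_{<\kappa}$ below $\kappa$ à la Friedman--Honzik \cite{friedman-honzik0} that seeds the tree property at the $\aleph_{2n}$'s, a Mitchell-style collapse $\mathbb{Q}_\kappa$ at $\kappa$ built from the measurable $\lambda$, and an extender-based Prikry forcing $\mathbb{P}_{\Es}$ at $\kappa$; the first two are the ``tree property'' engines and the last singularizes $\kappa$ to cofinality $\omega$ so that it becomes $\aleph_\omega$. This is the combination, at the single cardinal $\kappa$, of the machinery behind Mitchell's result for double successors of regulars with the machinery behind Theorem \ref{thm:main theorem} for double successors of singulars.

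In detail: first do a reverse Easton preparation making the $H(\eta)$-hypermeasurability of $\kappa$ indestructible under $\kappa$-directed-closed forcing of size $\le\eta$ (and under the below-$\kappa$ forcing to come), keeping $\lambda$ and $\eta$ measurable and arranging a convenient GCH pattern; afterwards fix $j\colon V\to M$ with $\crit(j)=\kappa$, $j(\kappa)>\eta$, $H(\eta)^V\subseteq M$ (so $M$ still sees that $\lambda$ and $\eta$ are measurable), and extract a $(\kappa,\lambda^{++})$-extender $\Es$ on $\kappa$. Now force with $\mathbb{P}_{<\kappa}\ast\mathbb{Q}_\kappa\ast\mathbb{P}_{\Es}$, where: $\mathbb{P}_{<\kappa}$ is the Friedman--Honzik Easton-support forcing at an appropriate sequence of weakly compact cardinals below $\kappa$ (with Unger's refinements \cite{unger1}), set up so that after the Prikry forcing the relevant cardinals land on the $\aleph_{2n}$'s with $2^{\aleph_{2n-1}}\ge\aleph_{2n}$ and $\TP(\aleph_{2n})$; $\mathbb{Q}_\kappa$ is a Mitchell-style collapse at $\kappa$ using $\lambda$, which turns $\lambda$ into $\kappa^{++}$, makes $2^\kappa=\kappa^{++}$, is $<\kappa$-closed, and factors as $\Add(\kappa,\lambda)$ times a $\kappa^+$-closed iteration; and $\mathbb{P}_{\Es}$ is the extender-based Prikry forcing for $\Es$ (with interleaved collapses), which forces $\cf(\kappa)=\omega$ and $\kappa=\aleph_\omega$ while preserving $\kappa^+$ and $\kappa^{++}$.

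Call the result $W$. Then (a) $\kappa=\aleph_\omega$ is strong limit: no stage adds new subsets of any $\aleph_n$ beyond those coming from the small Mitchell pieces, using the $<\kappa$-closure of $\mathbb{Q}_\kappa$ together with the Prikry property of $\mathbb{P}_{\Es}$ and the $\kappa$-closure of its pure-extension order; and (b) $\lambda=\kappa^{++}=\aleph_{\omega+2}$. For the easy half of (c): a $<\kappa$-closed forcing adds no cofinal branch to a tree of height $<\kappa$, and a Prikry-type forcing with $\kappa$-closed pure order adds no new bounded subset of $\kappa$, so neither $\mathbb{Q}_\kappa$ nor $\mathbb{P}_{\Es}$ destroys $\TP(\aleph_{2n})$ from $V^{\mathbb{P}_{<\kappa}}$; hence $\TP(\aleph_{2n})$ holds in $W$ for every $0<n<\omega$.

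The serious part is $\TP(\aleph_{\omega+2})$. Given a $\kappa^{++}$-tree $T\in W$, lift $j$ through the whole forcing: writing $j(\mathbb{P}_{<\kappa}\ast\mathbb{Q}_\kappa\ast\mathbb{P}_{\Es})$ as $(\mathbb{P}_{<\kappa}\ast\mathbb{Q}_\kappa\ast\mathbb{P}_{\Es})\ast\dot{\mathbb{P}}_{\mathrm{tail}}$, build a generic for $\dot{\mathbb{P}}_{\mathrm{tail}}$ over $W$ in a suitable outer model --- this is precisely where $\mathbb{Q}_\kappa$ and $\mathbb{P}_{\Es}$ are engineered so that the tail is highly closed and the requisite master conditions exist, and where the correctness of $M$ up to the measurable $\eta$ is spent --- obtaining $j\colon W\to W^\ast$. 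From $j$ and a node of $j(T)$ above $\sup j''\kappa^{++}$ one extracts, by the standard argument, a cofinal branch $b$ of $T$ with $b\in W^\ast$; since each $b\restriction\alpha$ is the set of $T$-predecessors of the node $b(\alpha)\in W$, we have $b\restriction\alpha\in W$ for all $\alpha<\kappa^{++}$, so it suffices to show the lifting quotient adds no cofinal branch to a $\kappa^{++}$-tree of $W$: then $b\in W$, contradicting that $T$ is Aronszajn. For $\mathbb{Q}_\kappa$ (and its relatives appearing in the tail) this is the Cummings--Foreman ``$\kappa^{++}$-c.c.\ versus $\kappa^{++}$-closed'' branch analysis, using the weak compactness of $\lambda$ \cite{cummings-foreman,friedman-honzik2}. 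The \emph{main obstacle} is the analogous branch lemma for the extender-based Prikry forcing $\mathbb{P}_{\Es}$: from its factorization into a $\kappa$-closed direct part and a $\kappa^+$-c.c.-type part together with the Prikry property, one must prove (by a fork-type argument) that no quotient of $\mathbb{P}_{\Es}$ occurring in the lifting adds a cofinal branch to a $\kappa^{++}$-tree of the intermediate model, and then check that this meshes with the $\mathbb{Q}_\kappa$-analysis and with the below-$\kappa$ part of the lifting. Once this branch lemma is in place the theorem follows, the remaining verifications being routine bookkeeping confirming that the three ingredients leave one another's conclusions intact.
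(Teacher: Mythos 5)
Your high-level architecture is close to what the paper does, and your outline of the branch analysis for $\TP(\aleph_{\omega+2})$ --- lift $j$ through a preparation and a Mitchell forcing at $\kappa$, use the chain-condition-versus-closure factorization, then argue the quotient adds no branch --- matches the paper's Lemma \ref{tree property at aleph-omega-2} and Theorem \ref{tree property at kappa} in spirit. But there is a genuine gap in your treatment of the $\aleph_{2n}$'s: you have misallocated which piece of the forcing is responsible for which tree property, and one of the preservation claims you lean on is false.

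Concretely, in the model one obtains the following cardinal structure below $\kappa$: writing $\langle \kappa_n : n<\omega\rangle$ for the Prikry club, the surviving cardinals are $\kappa_n, \kappa_n^+, \kappa_n^{++}, \kappa_n^{+3}, \kappa_n^{+4}, \kappa_n^{+5}$, and the even cardinals split into three families $\{\kappa_n\}$, $\{\kappa_n^{++}\}$, $\{\kappa_n^{+4}\}$ that are handled by \emph{different} forcings. The cardinal $\kappa_{n+1}$ is, before the singularizing step, still a full measurable cardinal in $V^{\mathbb{P}_{<\kappa}\ast\mathbb{Q}_\kappa}$; nothing in $\mathbb{P}_{<\kappa}$ gives it the tree property as a successor, because it is not yet a successor. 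It becomes $\kappa_n^{+6}$ only when the forcing between $\kappa_n^{+4}$ and $\kappa_{n+1}$ acts, and for $\TP(\kappa_{n+1})$ to hold that forcing must itself be a Mitchell forcing $\mathbb{M}(\kappa_n^{+4},\kappa_{n+1})$ rather than a L\'evy collapse --- a L\'evy collapse would leave CH at $\kappa_n^{+4}$ and hence a special $\kappa_{n+1}$-Aronszajn tree. So, contrary to your statement that ``the first two are the tree property engines and the last singularizes $\kappa$,'' the singularizing forcing is itself a tree property engine for one in every three even cardinals. This is precisely why the paper defines a new Radin/Prikry-type forcing whose interleaved pieces are $\mathbb{M}(\kappa_n^{+4},\kappa_{n+1})$ (Definition \ref{modified radin forcing definition}), and why Lemma \ref{tp at k-m plus 1} derives $\TP(\kappa_{m+1})$ directly from the interleaved Mitchell generic $F_m$.

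Relatedly, the claim ``a Prikry-type forcing with $\kappa$-closed pure order adds no new bounded subset of $\kappa$'' cannot be right here. The interleaved components add large numbers of new bounded subsets of $\kappa$ (that is how $\kappa$ becomes $\aleph_\omega$: the intervals between Prikry points are collapsed). The Prikry property gives something weaker and local --- after fixing a stem, new bounded subsets come only from the finitely many interleaved pieces already displayed, and the tail is $\kappa_n^+$-closed in the direct order --- which is exactly what the factorization Lemma \ref{thm:modified factorization lemma} and the bounding Lemma \ref{thm: modified bounding sets} encode. You therefore cannot argue that $\mathbb{P}_{\Es}$ ``preserves $\TP(\aleph_{2n})$ from $V^{\mathbb{P}_{<\kappa}}$'' wholesale. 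Instead one must, as in Lemmas \ref{tp at k-m plus 1}--\ref{tp at fourth successor of k-m}, isolate for each $\kappa_n$, $\kappa_n^{++}$, $\kappa_n^{+4}$ the appropriate intermediate model where that cardinal's Mitchell forcing is the last non-trivial one acting below it, prove the tree property there (using Lemmas \ref{basic facts on mitchell forcing}, \ref{baby iteration of mitchell}, \ref{iteration of mitchell}, \ref{laver vs mitchell} respectively), and then observe that the remaining forcing adds no new subsets of the relevant cardinal. Once you replace the interleaved collapses with interleaved Mitchell forcings and redo the $\aleph_{2n}$ analysis in this cardinal-by-cardinal way, your proposal lines up with the paper's construction.
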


Then, we prove the following global consistency  result, which is related to a question of Friedman and Halilovic \cite{friedman-halilovic}.
  \begin{theorem} \label{thm:main theorem2}
 Assume $\eta> \lambda $ are measurable cardinals above $\k$ and $\kappa$ is $H(\eta^+)$-hypermeasurable. Then there is a generic extension $W$ of $V$ in which:
  \begin{enumerate}
 \item [(a)] $\kappa$ is inaccessible.

 \item [(b)] The tree property holds at all regular even cardinals below $\kappa$.
 \end{enumerate}
 In particular the rank initial segment $W_\kappa$ of $W$ is a model of $ZFC$ in which the tree property holds at all regular even cardinals.
 \end{theorem}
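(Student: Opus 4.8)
The plan is to prove Theorem~\ref{thm:main theorem2} by merging the two constructions behind Theorems~\ref{thm:main theorem} and~\ref{thm:main theorem1}: the iteration of Theorem~\ref{thm:main theorem} already yields a model in which $\kappa$ is inaccessible and every singular cardinal $\delta<\kappa$ is strong limit with the tree property at $\delta^{++}$, so it suffices to graft onto it, at each stage, the component from the proof of Theorem~\ref{thm:main theorem1} that produces the tree property at the even successor cardinals $\delta^{+4},\delta^{+6},\dots$ lying strictly between consecutive singular cardinals. The passage from one measurable above $\kappa$ with $H(\lambda^{++})$-hypermeasurability (Theorem~\ref{thm:main theorem}) to two measurables $\eta>\lambda$ above $\kappa$ with $H(\eta^{+})$-hypermeasurability is exactly what is needed to carry this extra component along the iteration: fixing $j\func V\to M$ witnessing the hypothesis, $M$ recovers from $H(\eta^{+})\subseteq M$ enough of the structure at and above $\kappa$ to see (a sufficient form of) the hypothesis of Theorem~\ref{thm:main theorem1} at $\kappa$, so that, by the usual reflection, the set $S$ of $\bar\kappa<\kappa$ which are $H(\bar\eta^{+})$-hypermeasurable for a pair of measurables $\bar\kappa<\bar\lambda<\bar\eta<\kappa$ lies in the normal measure derived from $j$.

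Concretely, I would run the reverse Easton iteration of Theorem~\ref{thm:main theorem} up to $\kappa$, but arrange its ``active'' stages along a suitable club $C\subseteq S$ of former large cardinals (obtained exactly as in the proof of Theorem~\ref{thm:main theorem}), and at each $\bar\kappa\in C$ insert, together with the preparation that — after $\bar\kappa$ is later singularized — yields the tree property at $\bar\kappa^{++}$, also the Mitchell-type forcing that gives the tree property at the intermediate even successor cardinals below $\bar\kappa$, using a measurable in the interval below $\bar\kappa$ (and a second one above it but still below the next point of $C$) supplied by the reflection above. Routine preservation arguments for Easton iterations — the ones used inside the proofs of Theorems~\ref{thm:main theorem} and~\ref{thm:main theorem1} — show that each $\bar\kappa$ still carries the needed large-cardinal properties in the relevant intermediate model. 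Let $W$ be the resulting extension. For clause~(a): the iteration is $\kappa$-cc with bounded supports and, after the usual $\ZFC$-preparation arranging GCH below $\kappa$, preserves the inaccessibility of $\kappa$, with $\kappa=\aleph_\kappa^{W}$. For clause~(b): let $\mu<\kappa$ be regular and even in $W$. If $\mu=\aleph_0$ this is König's theorem; otherwise, as the collapses built into the construction leave no weakly inaccessible cardinals below $\kappa$ in $W$, $\mu$ is a successor cardinal, so $\mu=\aleph_{\sigma+2n}$ for a limit ordinal $\sigma$ and some $1\le n<\omega$, and setting $\delta=\aleph_\sigma^{W}$ — a singular strong limit cardinal of $W$ — we have $\mu=\delta^{+2n}$. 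If $n=1$, the tree property at $\delta^{++}$ is exactly what the construction of Theorem~\ref{thm:main theorem} provides; if $n\ge2$, it is provided by the grafted component from Theorem~\ref{thm:main theorem1}. In either case one must check that the tree property survives the rest of the iteration — the later (Mitchell, collapse, Prikry/Radin-type) forcing above $\mu$ is closed enough, and the singularizing forcing adds no new bounded subsets of the cardinal it singularizes — which is precisely the content of the preservation-of-the-tree-property lemmas used in Theorems~\ref{thm:main theorem} and~\ref{thm:main theorem1}, via the standard factorization of $W$ over the relevant stage followed by a branch-counting argument. Since $\kappa$ is inaccessible in $W$, $W_\kappa\models\ZFC$ and has exactly the cardinals of $W$ below $\kappa$, which gives the ``in particular'' clause.

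The main obstacle I anticipate is the combination itself: one must run, at each point of $C$, both the ``double successor of a singular'' machinery and the ``intermediate even cardinals'' machinery side by side inside a single global iteration, fit the reflected measurables so that each interval carries the pair it needs, verify that the two pieces of local forcing do not interfere, and — most delicately — check that the tree property is preserved \emph{uniformly}, i.e. that for \emph{every} regular even $\mu<\kappa$ the forcing sitting above the stage responsible for $\mu$ creates no $\mu$-Aronszajn tree. Pushing the tree property down past the top (Prikry/Radin-type and collapsing) forcing for all $\mu$ at once, rather than for a single $\mu$ as in Theorems~\ref{thm:main theorem} and~\ref{thm:main theorem1}, should follow from the closure and branch-counting arguments already developed there, but verifying that they still apply in the presence of the extra Mitchell-type component at every stage is where the real work lies.
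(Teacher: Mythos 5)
Your high-level plan — combine the iteration machinery of Theorem~\ref{thm:main theorem} with the Mitchell-type components of Theorem~\ref{thm:main theorem1} — is the approach of the paper, but there is a concrete gap in how you realize it, and it concerns the Radin/Prikry component rather than the Easton preparation.

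The decisive change in the paper's construction of $W$ is not just adding Mitchell forcings to the preparation iteration (which you do describe, placing $\dot{\mathbb{M}}(\alpha,\alpha_*)\ast\dot{\mathbb{M}}(\alpha_*,\alpha_{**})$ at active stages); it is that the \emph{interleaved component of the Radin forcing itself} is replaced by a Mitchell forcing. In Theorem~\ref{thm:main theorem} the condition part $h$ in $\MPB_w$ lives in $\Col(\lambda^{+5},<\kappa_w)$; in Section~\ref{section:local evens} and \ref{section:all even} this is replaced by $h\in \mathbb{M}(\lambda^{+4},\kappa_w)$. It is precisely the generic filter $F_\xi$ for $\mathbb{M}(\kappa_\xi^{+4},\kappa_{\xi+1})$ added between successive Radin points that produces the tree property at the third even cardinal $\kappa_{\xi+1}=\kappa_\xi^{+6}$ of each interval (Lemma~\ref{tree property at double successor k-i}); a Levy collapse would not directly carry Lemma~\ref{basic facts on mitchell forcing}(c), would change the cardinal arithmetic at $\kappa_\xi^{+4}$ (you need $2^{\kappa_\xi^{+4}}=\kappa_{\xi+1}$ for the branch-preservation lemmas such as Lemma~\ref{lem: preservation product cc-2} to apply above that point), and would break the projection structure $\MCB_\pi\times\mathbb{T}_\pi$ that the whole preservation argument for the top two cases rests on. Your parenthetical list ``(Mitchell, collapse, Prikry/Radin-type) forcing above $\mu$'' suggests you are still picturing interleaved collapses, which is where the argument would fail.

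This replacement is not cosmetic: one must redefine the building blocks $\MPB_w$, reprove the factorization lemma and the Prikry property for the new $\Rforce_w$ (Lemmas~\ref{chain condition for modified radin forcing}--\ref{thm:modified prikry property}), verify the bounding and geometric characterization lemmas transfer, and — crucially — construct the guiding generic filter $F$ for $\mathbb{M}(\kappa^{+4},i(\kappa))_{N^2}$ rather than for $\Col(\kappa^{+5},<i(\kappa))_{N^2}$ (Lemmas~\ref{preparation model for aleph-omega model} and~\ref{preparation model for all evens}). The stronger hypothesis $H(\eta^+)$-hypermeasurability (compared to the $H(\eta)$ needed for Theorem~\ref{thm:main theorem1}) is not, as you suggest, there to let $M$ ``see a sufficient form of the hypothesis of Theorem~\ref{thm:main theorem1} at $\kappa$''; it is needed specifically so that $H(\kappa^{++})\subseteq M^2$, which in turn is required so that the measure sequence has length $\kappa^+$ and the Radin forcing preserves the inaccessibility of $\kappa$ (Lemma~\ref{thm:modified prikry property}(b)). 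Finally, your description of the preparation iteration is internally confused: you speak of arranging its active stages along ``a suitable club $C$ obtained exactly as in Theorem~\ref{thm:main theorem}'', but there $C$ is the Radin club, which is added \emph{after} the Easton preparation; the preparation's active stages are the measurable limits of measurables in $V^1$, and the reflection argument (Lemma~\ref{suitable choice of elements}) merely shrinks the Radin club to land on them.
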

 The paper is organized as follows. Sections
 \ref{Some preservation lemmas} and \ref{sec:preparation model} are devoted to some preliminary results. In Section  \ref{Some preservation lemmas}
 we present some preservation lemmas and in Section \ref{sec:preparation model} we review a generalization of Mitchell's forcing and prove some facts related to it.
 In Section \ref{section:double successor}, we prove Theorem \ref{thm:main theorem}.  Section \ref{section:local evens} is devoted to the proof of Theorem \ref{thm:main theorem1}
 and finally in Section \ref{section:all even} we prove Theorem \ref{thm:main theorem2}.

We assume familiarity with forcing and large cardinals. For a forcing notion $\MPB$ we use $p \leq q$
to mean $p$ gives more information than $q$, i.e., $p \Vdash$``$q \in \dot{G}$'',
where $\dot{G}$ is the canonical $\MPB$-name for the generic filter.


\section{Some preservation lemmas}
\label{Some preservation lemmas}
The standard way to construct models with the tree property in a small cardinal is to start with some large cardinal that has certain reflection properties, and collapse it to become a specific accessible cardinal.
In order to show that the tree property holds in the generic extension, we pick a name for a $\kappa$-tree, $\dot T$, and assume towards a contradiction that it is forced to be an Aronszajn tree.
Then, we use the reflection properties of the initial large cardinal and show that the fact that $\dot T$ is Aronszajn in the generic extension, implies that the restriction of $\dot T$ to some ordinal $\alpha$ is Aronszajn tree at some intermediate stage of the forcing.
Then we show that the rest of the forcing cannot add a cofinal branch to this Aronszajn tree and get a contradiction. So the standard way to construct  models of the tree property in various small cardinals  is to go through preservation lemmas that show that various forcing notions cannot add branches to certain trees.

The following three lemmas state that forcing notions with good enough chain condition
do not add branches to Aronszajn trees
\begin{lemma}[Folklore]\label{lem: < kappa c.c.}
Let $\kappa$ be a regular cardinal and let $T$ be a $\kappa$-Aronszajn tree. Let $\mathbb{P}$ be a $\eta$-c.c.\ forcing notion, with $\eta < \kappa$. Then $\mathbb{P}$ does not add a branch to $T$.
\end{lemma}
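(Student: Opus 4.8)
This is a standard counting argument. Suppose for contradiction that some $\mathbb{P}$-generic filter $G$ adds a new cofinal branch $b$ through $T$; since $T$ has height $\kappa$, $b$ meets every level of $T$. I would pick a name $\dot b$ forced by some condition $p_0$ to be a cofinal branch of $T$ that is not in the ground model. Working below $p_0$, for each $\alpha < \kappa$ the condition $p_0$ forces $\dot b$ to contain a unique node on level $\alpha$, i.e. there is a maximal antichain $A_\alpha$ below $p_0$ such that each $q \in A_\alpha$ decides the $\alpha$-th node of $\dot b$ to be some particular $t^q_\alpha \in \Lev_\alpha(T)$. By the $\eta$-c.c., $|A_\alpha| < \eta$.

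**Main step.** Since $\dot b$ is forced not to be in $V$, for every single node $t \in T$ there must be a condition below $p_0$ forcing that $t \notin \dot b$ or $t \in \dot b$ in a way that cannot be pinned down in advance — more precisely, the key observation is that two \emph{incompatible} nodes of $T$ cannot both be forced into $\dot b$ by compatible conditions. So I would look at level $\alpha$ and count: by $\eta$-c.c., the set $B_\alpha = \{ t^q_\alpha : q \in A_\alpha \} \subseteq \Lev_\alpha(T)$ has size less than $\eta$. The union $S = \bigcup_{\alpha<\kappa} B_\alpha$ then has size at most $\kappa \cdot \eta = \kappa$ (using $\eta < \kappa$ and $\kappa$ regular), so $S$ is a subtree of $T$. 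But every node that $\dot b$ could possibly contain, over \emph{any} generic containing $p_0$, lies in $S$; in particular the realized branch $b^{G}$ is a subset of $S$ for any generic $G \ni p_0$. Now run the argument a little more carefully to extract a ground-model branch: since $p_0 \forces ``\dot b$ is cofinal'' and $|A_\alpha|<\eta<\kappa$, for a club (indeed, for all) $\alpha$ the set $B_\alpha$ is nonempty, and one shows that $S$ itself, as a ground-model tree of height $\kappa$ with levels of size $<\eta<\kappa$, must be a $\kappa$-tree; a further thinning (taking, for each $\alpha$, only those $t\in B_\alpha$ that are forced into $\dot b$ by some condition, and noting compatibility forces linearity along $\dot b$) yields in $V$ a cofinal branch of $T$ after all — contradicting that $T$ is $\kappa$-Aronszajn.

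**The cleanest version of the main step**, which I would actually write: in $V$, define $b^* = \{ t \in T : p_0 \not\forces t \notin \dot b \}$, equivalently the set of $t$ such that some $q \le p_0$ forces $t \in \dot b$. One checks $b^*$ is downward closed (if $t\in\dot b$ then all predecessors of $t$ are in $\dot b$) and linearly ordered: if $s,t$ are on the same level $\alpha$ with $s\neq t$ and $q_s\forces s\in\dot b$, $q_t\forces t\in\dot b$, then $q_s\perp q_t$, but among the witnesses for a fixed level one can refine to a single maximal antichain, and by the $\eta$-c.c. this antichain has size $<\eta$; cofinality of $\dot b$ forces $b^*$ to meet every level. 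Hence $b^*\in V$ is a cofinal branch of $T$, so $T$ is not $\kappa$-Aronszajn in $V$, a contradiction.

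**Expected obstacle.** The only delicate point is verifying linearity of $b^*$ (or equivalently, that the ``new branch'' cannot fork): one must argue that if two distinct nodes on the same level are each forced into $\dot b$ by some condition below $p_0$, then those conditions are incompatible, and then use $\eta$-c.c. to bound the branching — but more importantly to conclude that $b^*$ picks out at most one node per level. The subtlety is that a priori $b^*$ as defined could contain several nodes on a level (witnessed by incompatible conditions); the resolution is that $p_0$ forces $\dot b$ to be a \emph{branch}, so for any two incompatible conditions $q_s, q_t$ below $p_0$, there is no single generic realizing both, but we need a \emph{uniform} choice. This is handled by observing that if $B_\alpha$ had two elements $s\neq t$, pick a maximal antichain $A$ below $p_0$ deciding the $\alpha$-node of $\dot b$; the node is the same for all conditions in $A$ only if $|B_\alpha|=1$, and otherwise we still get, by density, that $p_0\forces$``the $\alpha$-node of $\dot b$ is in $B_\alpha$'', and then the tree structure together with downward closure forces a single branch through $\bigcup_\alpha B_\alpha$ whose $\alpha$-th element is determined — this element, being definable in $V$ from $p_0,\dot b,\mathbb{P},T$, gives the ground-model branch. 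Everything else (the cardinal arithmetic $|S|\le\kappa$, downward closure) is routine.
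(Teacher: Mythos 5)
Your subtree $b^* = \{t \in T : \exists q\le p_0,\ q\Vdash t\in\dot b\}$ (which you also call $S=\bigcup_\alpha B_\alpha$) is exactly the paper's $T'$, and you correctly observe that it is downward closed, has height $\kappa$, and by the $\eta$-c.c.\ has all levels of size $<\eta$. Up to there you match the paper.

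The gap is in the step from ``$b^*$ is a subtree of height $\kappa$ with narrow levels'' to ``$b^*$ \emph{is} (or yields) a ground-model branch.'' In your ``cleanest version'' you assert $b^*$ is linearly ordered, but it is not: if $s\ne t$ lie on level $\alpha$ and $q_s\Vdash s\in\dot b$, $q_t\Vdash t\in\dot b$, then indeed $q_s\perp q_t$, but that incompatibility is no obstruction to \emph{both} $s$ and $t$ belonging to $b^*$ (membership in $b^*$ only requires \emph{some} condition below $p_0$, and different nodes may be witnessed by mutually incompatible conditions). You notice this in the ``Expected obstacle'' paragraph, but the proposed fix --- that ``the tree structure together with downward closure forces a single branch through $\bigcup_\alpha B_\alpha$ whose $\alpha$-th element is determined'' --- is not an argument: a downward-closed tree of height $\kappa$ all of whose levels have size $<\eta$ need not pick out a canonical element per level, and there is nothing ``definable from $p_0,\dot b,\mathbb{P},T$'' that you have exhibited. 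What the paper does at this point, and what you actually need, is a nontrivial theorem of Kurepa (Kanamori, Prop.\ 7.9): a tree of height $\kappa$ ($\kappa$ regular) whose levels all have size less than some fixed $\eta<\kappa$ must have a cofinal branch. That theorem, applied to $T'=b^*$, produces a branch of $T$ in $V$ and finishes the proof. Without invoking it (or reproving it), your argument does not close.
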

\begin{proof}
Assume by contradiction that this is not the case and let $\dot b$ be a name for a new branch. Let us define:
\[T^\prime = \{t\in T\mid \exists p\in \mathbb{P},\,p\Vdash \check{t}\in \dot b\}\]
Note that if $x\leq_T y$ and $y\in T^\prime$ then $x\in T^\prime$ as well since $\dot b$ is forced to be downward closed. Therefore $T$ agrees with $T^\prime$ about the level of elements. Moreover, for every $\alpha < \kappa$, there is some $t\in T^\prime_\alpha$ since $\dot{b}$ is forced to be unbounded.

We conclude that $T^\prime$ is a subtree of $T$ of height $\kappa$. We claim that the width of the levels of $T^\prime$ is less than $\eta$. So let $\alpha < \kappa$ and let $\mathcal{A}$ be a maximal antichain in $\mathbb{P}$ of elements that decide the value of $\dot b$ at the level $\alpha$. By the chain condition of $\mathbb{P}$, $|\mathcal{A}| < \eta$ and therefore there are less than $\eta$ possible values for $\dot b(\alpha)$. But every element from $T^\prime_\alpha$ is a potential value for $\dot b(\alpha)$, so we conclude that $|T^\prime_\alpha| < \eta$.

By a theorem of Kurepa (see \cite{kanamori} Proposition 7.9),  $T^\prime$ has a branch.
\end{proof}
The following  branch lemma is due to Kunen and Tall, (see \cite{KunenTall}).
\begin{lemma}\label{lem: knaster forcings}
Let $\kappa$ be a regular cardinal and let $T$ be a $\kappa$-Aronszajn tree. Let $\mathbb{P}$ be a $\kappa$-Knaster forcing notion. Then $\mathbb{P}$ does not add a branch to $T$.
\end{lemma}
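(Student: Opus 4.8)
The plan is to argue by contradiction and extract from a name for a branch an honest cofinal branch of $T$ living in the ground model (so the hypothesis ``new'' is not actually needed: any cofinal branch at all contradicts $T$ being Aronszajn). So suppose $\dot b$ is a $\mathbb{P}$-name and some condition forces $\dot b$ to be a cofinal branch of $\check T$; passing below that condition we may assume $\mathbf{1}_{\mathbb{P}}$ forces this. For each $\alpha<\kappa$, using that $\dot b$ is forced to be a function with domain $\kappa$ choosing an element of level $\alpha$, fix a condition $p_\alpha\in\mathbb{P}$ and an element $t_\alpha\in T_\alpha$ with $p_\alpha\Vdash\dot b(\alpha)=\check t_\alpha$.

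Next I would apply the $\kappa$-Knaster property to the family $\{p_\alpha:\alpha<\kappa\}$ (it has size $\kappa$, since distinct $\alpha$ give conditions forcing incompatible information, so they are distinct): there is $I\subseteq\kappa$ with $|I|=\kappa$ such that the conditions $\{p_\alpha:\alpha\in I\}$ are pairwise compatible. The key step is then to check that $\{t_\alpha:\alpha\in I\}$ is a chain in $T$. Indeed, given $\alpha<\beta$ in $I$, choose $q\leq p_\alpha,p_\beta$; then $q\Vdash\dot b(\alpha)=\check t_\alpha\wedge\dot b(\beta)=\check t_\beta$, and since $q$ also forces $\dot b$ to be a branch through $T$, it forces $\dot b(\alpha)<_T\dot b(\beta)$, hence $t_\alpha<_T t_\beta$ holds in $V$.

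Finally, since $I$ is cofinal in $\kappa$ and $t_\alpha\in T_\alpha$ for each $\alpha\in I$, the set $\{s\in T:\exists\alpha\in I,\ s\leq_T t_\alpha\}$ is a linearly ordered, downward closed subset of $T$ meeting cofinally many levels, i.e.\ a cofinal branch of $T$ in $V$, contradicting the assumption that $T$ is $\kappa$-Aronszajn. I do not anticipate a genuine obstacle; the only point requiring a moment's care is the passage from ``the $p_\alpha$ ($\alpha\in I$) are pairwise compatible'' to ``the $t_\alpha$ ($\alpha\in I$) are pairwise $<_T$-comparable'', which uses both the compatibility of the conditions and the fact that $\dot b$ is forced to be linearly ordered by $<_T$.
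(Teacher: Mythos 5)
Your proof is correct and follows essentially the same argument as the paper's. One small caveat: the parenthetical claim that the $p_\alpha$ are pairwise distinct because ``distinct $\alpha$ give conditions forcing incompatible information'' is mistaken (the statements $\dot b(\alpha)=\check t_\alpha$ and $\dot b(\beta)=\check t_\beta$ are \emph{compatible} pieces of information whenever $t_\alpha$ and $t_\beta$ are $<_T$-comparable, and nothing prevents $p_\alpha=p_\beta$), but this is harmless: the Knaster property is ordinarily applied to the indexed family $\langle p_\alpha:\alpha<\kappa\rangle$, and in any case if a single condition $p$ equalled $p_\alpha$ for cofinally many $\alpha$, the corresponding $t_\alpha$'s would already form a chain, with no need for Knaster at all.
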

\begin{proof}
Let $\dot b$ be a name of a cofinal branch in $T$. For every $\alpha < \kappa,$ pick a condition $p_\alpha\in\mathbb{P}$ and an element  $t_\alpha\in T_\alpha,$ such that $p_\alpha\Vdash t_\alpha\in \dot b$. By the Knaster property, there is a cofinal subset $I$ of $\kappa$,  such that $\forall \alpha, \beta\in I$ $p_\alpha$ is compatible with $p_\beta$.

Let us choose $q\leq p_\alpha, p_\beta$. $q\Vdash t_\alpha, t_\beta \in \dot b$ and therefore it forces $t_\alpha \leq_T t_\beta$. But this is a $\Delta_0$-statement about elements of $V$ so it holds in $V$ as well. In particular, $\tilde b = \{t\in T\mid \exists \alpha\in I,\,t\leq_T t_\alpha\}$ is a cofinal branch.
\end{proof}
The following lemma is due to Silver.
\begin{lemma}
Let $\k$ and $\l$ be cardinals with $\l$ regular. Suppose that
$2^\k \geq \l$, $T$ be a $\l$-tree and $\MPB$ be $\k^+$-closed. Then forcing with $\MPB$ cannot add
a new branch through $T$.
\end{lemma}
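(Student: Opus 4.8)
The plan is to prove this by a counting/chain-condition argument, showing that if a new branch were added, one could extract too many distinct branches through $T$, contradicting that $T$ is a $\l$-tree (whose levels are small). The standard statement of Silver's lemma is: if $\MPB$ is $\k^+$-closed, $\l$ is regular with $\l \le 2^\k$ (actually one needs $2^{<\l}\le 2^\k$ or just $\l^{<\l}$-type bounds; the cleanest hypothesis is $2^{\k}\ge\l$ together with $\MPB$ being $\k^+$-closed and, implicitly, $\l$ being $\le \k^{++}$ or more precisely that $\l$-trees have levels of size $\le 2^{\k}$), then no new $\l$-branch is added. I will work with the hypotheses as stated.

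**First I would** suppose toward a contradiction that $p \forces \GN{b}$ is a new cofinal branch through $T$. The key combinatorial fact is a \emph{tree of conditions} (a ``splitting tree''): I construct, by recursion on $s \in {}^{<\k^+}2$ — or more economically on $s \in {}^{\le\k}2$ — a system of conditions $\langle p_s : s \in {}^{\le \k}2\rangle$ and ordinals $\langle \alpha_s : s \in {}^{<\k}2\rangle$ below $\l$ together with nodes $t_s \in T_{\alpha_s}$, such that: (i) $p_{s'} \le p_s$ whenever $s \subseteq s'$; (ii) $p_{s\append 0}$ and $p_{s\append 1}$ force incompatible information about $\GN b$, i.e. there is a level $\gamma_s < \l$ and distinct nodes $u_0 \ne u_1$ in $T_{\gamma_s}$ with $p_{s\append i} \forces \check u_i \in \GN b$. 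This splitting is possible precisely because $\GN b$ is forced to be \emph{new}: no condition can decide all of $\GN b$, so below any $q$ there are two extensions disagreeing about some node of $T$. At limit stages $\le \k$ of length $<\k^+$, the closure of $\MPB$ (which is $\k^+$-closed, hence certainly closed under the relevant decreasing sequences of length $\le\k$) lets me find lower bounds $p_s$ for $s$ of limit length $\le\k$.

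**Then the counting step:** for each of the $2^\k$ many branches $x \in {}^{\k}2$, the decreasing chain $\langle p_{x\restricted\xi} : \xi < \k\rangle$ has length $\k < \k^+$, so by $\k^+$-closure it has a lower bound $p_x$. Now $p_x$ forces $\GN b$ to extend all the nodes dictated along $x$, and for $x \ne y$ the first splitting point makes $p_x$ and $p_y$ force $\GN b$ to contain distinct nodes \emph{on the same level} $\gamma_{x\wedge y}$. Hence the nodes $\{\,$value $p_x$ forces for $\GN b$ at the splitting levels$\,\}$ separate all $2^\k$ branches $x$. More precisely, fixing attention appropriately, one produces $2^\k$ pairwise-distinct nodes on a single level of $T$ (by a pressing-down / pigeonhole argument on the splitting levels $\gamma_{x\wedge y}$, using that $\l$ is regular to bound them), or directly $2^\k$ distinct elements of some $T_\alpha$. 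Since $2^\k \ge \l$, this contradicts $|T_\alpha| < \l$.

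**The main obstacle** I expect is the bookkeeping to make the pigeonhole genuinely produce many nodes \emph{on one level}: the naive construction gives $2^\k$ branches whose splitting levels vary, so one must either (a) first thin out using regularity of $\l$ and the fact that the splitting tree has height $\k^+$ — wait, I should keep the splitting tree of height only $\k$ here, with $2^{\k}$ many full branches — to arrange a common splitting level for a large family, or (b) argue more slickly: define, in $V$, the set $T^* = \{t \in T : \exists q \le p,\ q \forces \check t \in \GN b\}$; this is a subtree of $T$ whose levels I must show have size $<\l$ while it nonetheless has a branch, which forces $\GN b$ to be old — the new-branch hypothesis is what kills level-smallness, and the closure is exactly what lets the $2^\k$-sized splitting survive, so the two hypotheses collide. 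I would present version (b) as the main line, falling back on the explicit splitting-tree construction of $\langle p_s\rangle$ to justify that $T^*$ has levels of size $\ge 2^\k \ge \l$ if $\GN b$ is new — the contradiction being with $T^*_\alpha \subseteq T_\alpha$ and $|T_\alpha|<\l$.
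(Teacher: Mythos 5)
Your overall strategy --- a binary splitting tree of conditions of height $\k$, lower bounds at limits from $\k^+$-closure, and a counting contradiction against the small levels of $T$ --- is the standard Silver argument, which is indeed what lies behind this lemma (the paper itself states it without proof). But the proposal never closes the step you yourself flag as the obstacle, and the route you designate as your main line, option (b), does not work. Since $T^*_\alpha \subseteq T_\alpha$, the levels of $T^* = \{t \in T : \exists q \leq p,\ q \Vdash \check t \in \dot b\}$ are automatically of size $< \l$, so there is nothing there to ``kill''; and even if $T^*$ has a cofinal branch lying in $V$, that by itself does not make $\dot b$ old, since no condition need force $\dot b$ to equal that branch. (That Kurepa-style argument is the one the paper uses for $\eta$-c.c.\ forcing and \emph{Aronszajn} trees, where the conclusion is merely that $T$ has a branch; it does not transfer here, where $T$ may well have branches in $V$.) Likewise, the ``pressing-down / pigeonhole on the splitting levels $\gamma_{x\wedge y}$'' cannot by itself produce $\l$ many nodes on one level: each splitting node $s$ contributes only two nodes at its level $\gamma_s$, so pigeonholing the levels yields at most $2\cdot|\{s : \gamma_s = \gamma\}|$ nodes on a level $\gamma$, which need not be large.

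What is missing is the actual final step of Silver's argument. Let $\delta < \l$ bound all splitting levels $\{\gamma_s : s \in {}^{<\k}2\}$, and extend each lower bound $p_x$ ($x \in {}^{\k}2$) to some $q_x$ forcing $\check t_x \in \dot b$ for a specific $t_x \in T_\delta$ (possible because $\dot b$ is forced to be cofinal); for $x \neq y$, the splitting at $s = x \wedge y$ forces $t_x$ and $t_y$ to lie above distinct nodes of $T_{\gamma_s}$, so $t_x \neq t_y$, giving $|T_\delta| \geq 2^\k \geq \l$, a contradiction. For such a $\delta$ to exist you need $2^{<\k} < \l$, which does not follow from $2^\k \geq \l$ alone; your opening hedge ($2^{<\l} \leq 2^\k$, ``$\l \leq \k^{++}$'') points in the wrong direction. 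The repair is cheap: if $\k \geq \l$ the lemma is trivial, since $\k^+$-closure prevents any new $\l$-sequence; otherwise replace $\k$ by the least $\mu$ with $2^\mu \geq \l$. Then $\MPB$ is still $\mu^+$-closed, and by regularity of $\l$ one gets $2^{<\mu} < \l$, so there are fewer than $\l$ splitting nodes and $\delta < \l$ exists. With the minimality reduction and the decide-at-a-common-level step supplied, your splitting-tree construction becomes a complete proof; as written, it has a genuine gap exactly at the counting step.
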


We also need the following results of Unger (see \cite{unger}).
\begin{lemma}\label{lem: preservation product cc}
 Assume $\kappa$ is a regular cardinal and $\PP$ is a forcing notion such that $\PP \times \PP$ is $\kappa$-c.c. Then forcing with $\PP$ adds no branches to $\kappa$-trees
\end{lemma}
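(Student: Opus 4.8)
The plan is to argue by contradiction, passing to the product forcing $\PP \times \PP$, which is where the chain condition hypothesis will be used. So suppose $T$ is a $\kappa$-tree in $V$ and $p \in \PP$ forces that $\dot b$ is a cofinal branch through $\check T$ with $\dot b \notin \check V$. Inside $\PP \times \PP$, write $\dot b_0$ and $\dot b_1$ for the two copies of $\dot b$ obtained by interpreting it with the first, respectively the second, coordinate of the generic; then $(p,p)$ forces each $\dot b_i$ to be a cofinal branch of $\check T$.

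The first step, and the one I expect to require the most care, is to show that $(p,p) \Vdash \dot b_0 \ne \dot b_1$. Suppose instead that $(r_0, r_1) \le (p,p)$ forces $\dot b_0 = \dot b_1$. I would then argue that $r_0$ must decide $\dot b(\gamma)$ (in $\PP$) for every $\gamma < \kappa$: if $s_0, s_0' \le r_0$ forced distinct values $x \ne x'$ for $\dot b(\gamma)$, then $(s_0, r_1)$ and $(s_0', r_1)$ would force $\dot b_1(\gamma)$ to equal $x$ and $x'$ respectively; but $\dot b_1(\gamma)$ depends only on the second coordinate, so this would force $r_1$ itself to decide $\dot b(\gamma)$ both as $x$ and as $x'$, which is impossible. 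Once $r_0$ decides every $\dot b(\gamma)$, the decided values form a cofinal branch of $T$ lying in $V$, and $r_0$ forces $\dot b$ to be that branch, contradicting $r_0 \le p$. Note that this step uses nothing about chain conditions, only the independence of the two coordinates.

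Given the first step, let $\dot\gamma^{*}$ be the $\PP\times\PP$-name for the least level at which $\dot b_0$ and $\dot b_1$ differ; by Step 1 it is forced by $(p,p)$ to be an ordinal below $\kappa$. Here the hypothesis enters: since $\PP \times \PP$ is $\kappa$-c.c.\ and $\kappa$ is regular, a name for an ordinal below $\kappa$ is bounded, so I can fix $\gamma^{**} < \kappa$ with $(p,p) \Vdash \dot\gamma^{*} < \gamma^{**}$. Because two distinct cofinal branches of a tree differ at every level beyond their first disagreement, it follows that $(p,p) \Vdash \dot b_0(\gamma^{**}) \ne \dot b_1(\gamma^{**})$.

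To finish, I would pick by genericity some $q \le p$ deciding $\dot b(\gamma^{**})$, say $q \Vdash_\PP \dot b(\gamma^{**}) = \check x$. Then $(q,q) \le (p,p)$, and evaluating each coordinate separately gives $(q,q) \Vdash \dot b_0(\gamma^{**}) = \check x = \dot b_1(\gamma^{**})$, contradicting the previous paragraph. The whole argument is short; the only place needing genuine vigilance is Step 1, in particular the bookkeeping that ``$\dot b_1(\gamma)$ lives in the second coordinate'', so that a statement of the form $(s_0, r_1) \Vdash \dot b_1(\gamma) = \check x$ can be pushed down to a statement about $r_1$ alone in $\PP$.
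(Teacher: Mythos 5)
Your proof is correct. The paper does not actually prove this lemma --- it cites Unger --- so there is no in-paper argument to compare against; here is an assessment on its own merits. Step~1 is the delicate point and you handled it properly: the inference from $(s_0,r_1)\Vdash\dot b_1(\gamma)=\check x$ to $r_1\Vdash_{\PP}\dot b(\gamma)=\check x$ is valid because $\dot b_1$ is a pure second-coordinate name and the statement $\dot b_1(\gamma)=\check x$ concerns only ground-model objects: given any $G_1\ni r_1$ one takes a $V[G_1]$-generic $G_0\ni s_0$, applies the hypothesis to $G_0\times G_1$, and reads off $\dot b[G_1](\gamma)=x$. Once $r_0$ decides every $\dot b(\gamma)$, the decided values assemble to a ground-model branch forced equal to $\dot b$, contradicting newness, so indeed $(p,p)\Vdash\dot b_0\neq\dot b_1$. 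Steps~2 and~3 then go through: $\kappa$-c.c.\ of $\PP\times\PP$ together with regularity of $\kappa$ gives a uniform bound $\gamma^{**}<\kappa$ on the name $\dot\gamma^{*}$ for the first level of disagreement (and two distinct branches of a tree do disagree at every level past their first split), while the diagonal condition $(q,q)$ with $q\leq p$ deciding $\dot b(\gamma^{**})$ forces $\dot b_0(\gamma^{**})=\dot b_1(\gamma^{**})$, a contradiction. Compared with the route one usually sees for this lemma --- explicitly exhibiting a $\kappa$-sized antichain in $\PP\times\PP$ out of pairs of conditions forcing divergent values of $\dot b$ at increasing levels --- your version replaces the antichain construction with a single appeal to the boundedness consequence of $\kappa$-c.c.\ for names of ordinals below a regular cardinal; it is the same underlying mechanism but somewhat cleaner to execute.
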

\begin{lemma}\label{lem: preservation product cc-2}
Let $\k$ be a regular cardinal. Let $\rho \leq \mu \leq \k$ be cardinals such that $2^\rho \geq \k$ and $2^{<\rho} < \k$. Let $\MPB$ be $\mu$-c.c. forcing notion and let
$\MQB$ be $\mu$-closed forcing notion in the ground model. Let $T$ be a $\k$-tree in $V^{\MPB}$. Then in $V^{\MPB}, \MQB$ does not add new branches to $T$.
\end{lemma}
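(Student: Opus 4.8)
The plan is to argue by contradiction. If $\MQB$ added a new cofinal branch to a $\k$-tree $T\in V^{\MPB}$, there would be $p_0\in\MPB$ forcing that, below some $q_0\in\MQB$, a certain $\MQB$-name $\dot b$ is a cofinal branch of $T$ not belonging to $V^{\MPB}$. Fixing a $\MPB$-generic $G\ni p_0$ and passing to $V[G]$, we get a $\k$-tree $T$ and $q_0\Vdash_{\MQB}$``$\dot b$ is a new cofinal branch of $T$''. The goal is to build inside $\MQB$ a tree of conditions of height $\rho$ with $2^\rho$ cofinal branches, each branch pinning down a distinct node on one fixed level $T_{\alpha^{*}}$; since $2^\rho\ge\k$ but every level of $T$ has size $<\k$, this is the desired contradiction.

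The basic mechanism is standard. Since $\dot b$ is forced to be a \emph{new} cofinal branch, no $q\le q_0$ can decide $\dot b\cap T_\delta$ for all $\delta$ past some $\delta_0<\k$ (otherwise $\dot b$ would lie in $V[G]$ below $q$); hence for every $q\le q_0$ and every $\beta<\k$ there are $q',q''\le q$, a level $\gamma\in[\beta,\k)$, and nodes $t'\ne t''\in T_\gamma$ with $q'\Vdash\check t'\in\dot b$ and $q''\Vdash\check t''\in\dot b$, and, using cofinality of $\dot b$, any such split below $q$ can be pushed up to any prescribed higher level. I would then construct $q_s\in\MQB$ for $s\in{}^{<\rho}2$ with $q_\emptyset=q_0$, an increasing sequence $\langle\alpha_\eta:\eta<\rho\rangle$ of ordinals below $\k$, and nodes $t_s\in T_{\alpha_\eta}$ for every $s$ with $|s|=\eta+1$, so that $q_{s\append i}\le q_s$; the conditions $q_{s\append 0},q_{s\append 1}$ split $\dot b$ on level $\alpha_{|s|}$ with $q_{s\append i}\Vdash\check t_{s\append i}\in\dot b$ and $t_{s\append 0}\ne t_{s\append 1}$; and at every limit level $\eta$ the condition $q_s$ is a lower bound of $\langle q_{s\restriction\xi}:\xi<\eta\rangle$. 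When passing from level $\eta$ to level $\eta+1$ there are at most $2^\eta\le 2^{<\rho}<\k$ nodes $s$ to handle, so by regularity of $\k$ one may first fix $\alpha_\eta<\k$ above $\sup_{\xi<\eta}\alpha_\xi$ and above a witnessing split level for each of these $q_s$, and then push all their splits up to the common level $\alpha_\eta$. The limit levels are where the $\mu$-closure of $\MQB$ (and $\eta<\rho\le\mu$) is used.

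Now put $\alpha^{*}=\sup_{\eta<\rho}\alpha_\eta<\k$. For $x\in{}^{\rho}2$ let $q_x$ be a lower bound of $\langle q_{x\restriction\eta}:\eta<\rho\rangle$, and let $q_x^{+}\le q_x$ decide $\dot b\cap T_{\alpha^{*}}$, say $q_x^{+}\Vdash\dot b\cap T_{\alpha^{*}}=\{w_x\}$. As $q_x^{+}$ forces each $t_{x\restriction(\eta+1)}$ into $\dot b$, the node $w_x$ is $T$-above all of them; consequently, if $x\ne y$ and $\eta_0$ is least with $x(\eta_0)\ne y(\eta_0)$, then $w_x,w_y$ lie above the distinct nodes $t_{x\restriction(\eta_0+1)}\ne t_{y\restriction(\eta_0+1)}$ of $T_{\alpha_{\eta_0}}$, so $w_x\ne w_y$. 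Hence $x\mapsto w_x$ is an injection of ${}^{\rho}2$ into $T_{\alpha^{*}}$, so $|T_{\alpha^{*}}|\ge 2^{\rho}\ge\k$ holds in $V[G]$ (the inequality $2^\rho\ge\k$ is preserved since $\MPB$ is $\mu$-c.c. and $\k\ge\mu$), contradicting that $T$ is a $\k$-tree.

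I expect the main obstacle to be the limit levels of the construction. The descending chains $\langle q_{s\restriction\xi}:\xi<\eta\rangle$ and $\langle q_{x\restriction\eta}:\eta<\rho\rangle$ whose lower bounds we need lie over $V^{\MPB}$, whereas $\MQB$ was only assumed $\mu$-closed in $V$, and that closure need not literally survive into $V^{\MPB}$; making this work is exactly where the $\mu$-chain condition of $\MPB$ is indispensable. The cleanest way to organize it is to run the whole construction over $V$: build $\MPB$-names for the conditions $q_s$, letting the tree branch at each successor step over a maximal antichain of $\MPB$ — of size $<\mu$ by the $\mu$-c.c.\ — whose members decide the relevant splitting data, so that at limit levels every chain in play is a genuine element of $V$ and the $\mu$-closure of $\MQB$ applies directly. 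The remaining point requiring care, already flagged above, is keeping the splitting levels $\alpha_\eta$ actual ordinals below $\k$ and uniform across each level of the construction, which is precisely where $2^{<\rho}<\k$ and the regularity of $\k$ come in.
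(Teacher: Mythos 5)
Your overall skeleton is the standard one and is fine (the paper itself gives no proof of this lemma, quoting it from Unger's paper): a binary tree of $\MQB$-conditions of height $\rho$ with splittings pinned to levels of $T$, then the count of $2^{\rho}\ge\kappa$ pairwise distinct nodes on a single level $\alpha^{*}$ against $|T_{\alpha^{*}}|<\kappa$. You also correctly identify the crux: the $\mu$-closure of $\MQB$ need not survive into $V^{\MPB}$, so the construction must be carried out in $V$ with $\MPB$-names. The gap is in the repair you propose for this, namely ``letting the tree branch at each successor step over a maximal antichain of $\MPB$''. First, this destroys the width bound you yourself rely on: the antichains only have size $<\mu$, and $\mu$ may be as large as $\kappa$, so after an infinite number of steps the enlarged tree can have $\ge\kappa$ nodes on a level (the hypotheses give $2^{<\rho}<\kappa$ but nothing like $\mu^{<\rho}<\kappa$), and then the choice of a splitting level $\alpha_\eta<\kappa$ by regularity of $\kappa$ — exactly the step you flag at the end — breaks down. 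Second, and more fundamentally, the branch of the enlarged tree that matters at the end is the one whose antichain coordinates are the members selected by the actual $\MPB$-generic $G$; that is a $\rho$-sequence of conditions living in $V[G]$, not in $V$, so the lower bound $q_x$ you need is again a lower bound of a chain that only exists in $V^{\MPB}$ — the very difficulty the detour was meant to remove reappears at the top of the tree.

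The correct device is close by but different: keep the tree of $\MQB$-conditions binary and entirely in $V$, and absorb the antichain into the construction of each single pair of immediate successors. Given $q_s$, build by recursion a descending pair of sequences in $\MQB$ together with conditions $p_i\le p^{*}$ forming an antichain: at each step, use the classical ``new branch'' splitting claim to find some $p_i$ below a condition not yet covered, extensions of the current pair, a level $\gamma_i<\kappa$, and two nodes forced distinct at level $\gamma_i$, such that $p_i$ forces the two $\MQB$-conditions to put $\dot b$ through these distinct nodes; by the $\mu$-c.c. this recursion terminates in $<\mu$ steps with $\{p_i\}$ a maximal antichain below $p^{*}$, and the $\mu$-closure of $\MQB$ (used along this walk and at limit levels $<\rho$) yields the pair $q_{s^\frown 0},q_{s^\frown 1}$. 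Since the antichain is maximal, in every generic extension by $\MPB$ the pair splits below $\alpha_\eta:=\sup_i\gamma_i<\kappa$ (a supremum of $<\mu\le\kappa$ ordinals), the number of conditions handled at stage $\eta$ stays $2^{|\eta|}\le 2^{<\rho}<\kappa$, every chain used anywhere — including the full branches $\langle q_{x\restriction\eta}:\eta<\rho\rangle$ — is an element of $V$, and your concluding injection $x\mapsto w_x$ into $T_{\alpha^{*}}$ goes through verbatim. (A minor point: bounding the full branches uses lower bounds of $\rho$-chains, so the closure hypothesis should be read as covering sequences of length $\rho$; in the paper's applications $\rho<\mu$, so this is harmless.)
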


\section{ Mitchell's forcing and its properties}
\label{sec:preparation model}
In this section we present a version of Mitchell's forcing and discuss some of its properties.
The forcing is essentially the same as  Mitchell's forcing, but it  allows us to blow up the power function as well.
\begin{definition}
Assume $\alpha < \beta$ are regular cardinals and $\gamma \geq \beta$ is an ordinal. Let $\mathbb{M}(\alpha, \beta, \gamma)$ be the following  forcing for making $2^\alpha=|\gamma|$
and forcing the tree property at $\beta=\alpha^{++}$:
\begin{itemize}
\item [(a)]
A condition in $\mathbb{M}(\alpha, \beta, \gamma)$ is a pair $(p, q),$ where
\begin{enumerate}
\item $p \in \Add(\alpha, \gamma)$,
\item $\dom(q)$ is a subset of $\beta$ of size $\leq \alpha$,
\item For each $\xi \in \dom(q), 1_{\Add(\alpha,  \xi)} \Vdash$``$q(\xi) \in \dot{\Add}(\alpha^+, 1)$''.
\end{enumerate}
\item [(b)]
For $(p, q), (p', q') \in \mathbb{M}(\alpha, \beta, \gamma)$, say $(p', q') \leq (p,q)$ iff
\begin{enumerate}
\item $p' \leq_{\Add(\alpha, \gamma)} p$,
\item $\dom(q') \supseteq \dom(q)$,
\item For all $\xi \in \dom(q), 1_{\Add(\alpha, \xi)} \Vdash$``$q'(\xi) \leq_{\dot{\Add}(\alpha^+, 1)} q(\xi)$''.
\end{enumerate}
\end{itemize}
\end{definition}
In the case $\gamma=\beta$ we obtain  Mitchell's forcing.
\begin{definition}
Assume $\alpha < \beta$ are regular cardinals. The Mitchell's forcing $\mathbb{M}(\alpha, \beta)$  is defined by
$$\mathbb{M}(\alpha, \beta)=\mathbb{M}(\alpha, \beta, \beta).$$
\end{definition}
We refer to \cite{friedman-honzik2} for more discussion about the  forcing notion $\mathbb{M}(\alpha, \beta, \beta)$ and  only present some of its basic properties which are needed in this paper.
Assume $GCH$ holds and let $\alpha< \beta \leq  \gamma$ be such that $\alpha$ is regular and $\beta$ is a measurable cardinal.
\begin{lemma}
\label{basic facts on mitchell forcing}
\begin{enumerate}
\item [(a)] $\mathbb{M}(\alpha, \beta, \gamma)$ is $\alpha$-directed closed.

\item [(b)] $\mathbb{M}(\alpha, \beta, \gamma)$  is $\beta$-Knaster.

\item [(c)] In the generic extension by $\mathbb{M}(\alpha, \beta, \gamma),$ $\alpha^+$ is preserved,  $2^\alpha \geq |\gamma|$, $\beta=\alpha^{++},$ and $\TP(\beta)$ holds.
\end{enumerate}
\end{lemma}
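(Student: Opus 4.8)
The plan is to establish (a) and (b) by direct arguments and to obtain (c) from the standard analysis of Mitchell forcing, citing \cite{friedman-honzik2} for the parts that are not new. For (a), given a directed family $\{(p_i,q_i):i\in I\}$ with $|I|<\alpha$, I would take $p=\bigcup_i p_i$, which is a condition in $\Add(\alpha,\gamma)$ since its conditions have size $<\alpha$ and $\alpha$ is regular; for the second coordinate I would set $\dom q=\bigcup_i\dom q_i$, of size $\le\alpha$ as a union of fewer than $\alpha$ sets of size $\le\alpha$, and for each $\xi$ in it take a name $q(\xi)$ that $1_{\Add(\alpha,\xi)}$ forces to be a lower bound of the fewer than $\alpha$ many $q_i(\xi)$'s; such a bound exists because those $q_i(\xi)$'s are forced to be pairwise compatible conditions of $\dot\Add(\alpha^+,1)$, so a name for their union works. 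Then $(p,q)\le(p_i,q_i)$ for all $i$. It is precisely the bound $|\dom q|\le\alpha$ (rather than $<\alpha$) in the definition that lets this argument work at limit lengths.

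For (b), $\beta$ is inaccessible (being measurable), so under $GCH$ the $\Delta$-system lemma applies both to the $<\alpha$-sized domains of the $p$-parts and to the $\le\alpha$-sized domains of the $q$-parts. Given $\beta$ many conditions, I would first replace each $q_\nu(\xi)$ by a nice name, then pass to a $\Delta$-system of size $\beta$ with roots $r$ and $R$ for the $p$- and $q$-domains respectively, and finally shrink once more to a set of size $\beta$ on which the pair $(p_\nu\restriction r,\,q_\nu\restriction R)$ is constant. The last step is legitimate because $2^{|r|}\le 2^{<\alpha}=\alpha$ and, for each $\xi\in R$, the number of nice $\Add(\alpha,\xi)$-names for conditions in $\dot\Add(\alpha^+,1)$ is below $\beta$ (as $\beta$ is a strong limit), so there are fewer than $\beta$ such pairs. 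Any two conditions of the resulting family are then compatible, since their $p$-parts agree on their only possible common domain $r$, their $q$-parts agree on their only possible common domain $R$, and the coordinatewise unions give a common lower bound.

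For (c) the structural fact I would use is that $\mathbb{M}(\alpha,\beta,\gamma)$ is a projection of $\Add(\alpha,\gamma)\times\mathbb{T}$, where $\mathbb{T}$ is the term forcing consisting of the $q$-parts with the inherited ordering. Since $\mathbb{T}$ is $\alpha^+$-closed in $V$ it adds no new subsets of $\alpha$, hence $\Add(\alpha,\gamma)$ is still $\alpha^+$-c.c. over $V^{\mathbb{T}}$, so $V^{\mathbb{T}}[\Add(\alpha,\gamma)]$ preserves $(\alpha^+)^V$, and as $V^{\mathbb{M}(\alpha,\beta,\gamma)}\subseteq V^{\mathbb{T}}[\Add(\alpha,\gamma)]$ so does the Mitchell extension. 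The inequality $2^\alpha\ge|\gamma|$ follows from the other projection $(p,q)\mapsto p$ onto $\Add(\alpha,\gamma)$, since the $|\gamma|$ mutually generic Cohen subsets of $\alpha$ survive into $V^{\mathbb{M}(\alpha,\beta,\gamma)}$. That $\beta=\alpha^{++}$ then reduces to collapsing every $V$-cardinal in $(\alpha^+,\beta)$ onto $\alpha^+$ --- the standard collapsing behaviour of Mitchell forcing, where the initial segment $\mathbb{M}\restriction\mu$ has a dense subset of size $|\mu|$ and collapses $|\mu|$ onto $\alpha^+$ --- which combined with the preservation of $\beta$ from (b) identifies $\beta$ as the new $\alpha^{++}$; I would cite \cite{friedman-honzik2} here.

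The main obstacle is $\TP(\beta)$, and here I would follow Mitchell's reflection argument. Assuming $\dot T$ is forced to be a $\beta$-Aronszajn tree, one uses the measurability of $\beta$ (weak compactness suffices) to reflect so that $\dot T\restriction\bar\beta$ is already forced Aronszajn by the initial segment $\mathbb{M}\restriction\bar\beta$ for some inaccessible $\bar\beta<\beta$, and then shows the quotient $\mathbb{M}/(\mathbb{M}\restriction\bar\beta)$ adds no cofinal branch: up to projection it splits as a $\bar\beta$-Knaster forcing absorbing $\Add(\alpha,[\bar\beta,\gamma))$ and an $\alpha^+$-closed forcing, and neither adds a branch to a $\bar\beta$-Aronszajn tree, by Lemma \ref{lem: knaster forcings}, Lemma \ref{lem: preservation product cc-2} and Silver's lemma (which applies since $2^\alpha\ge\bar\beta$ at that stage). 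The one point needing checking beyond the classical case $\gamma=\beta$ is that the extra Cohen coordinates do not spoil these branch lemmas, which they do not because $\Add(\alpha,\gamma)$ is absorbed into the $\alpha^+$-c.c. (hence $\bar\beta$-Knaster) factor throughout. Since this is exactly the argument of Mitchell \cite{mitchell72}, Cummings and Foreman \cite{cummings-foreman} and Friedman, Honzik and Stejskalova \cite{friedman-honzik2}, in the write-up I would invoke those references for the reflection step rather than reproduce it in full.
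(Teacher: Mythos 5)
Your proposal is essentially correct, but there is nothing in the paper to compare it against: the paper states this lemma without proof and simply refers to \cite{friedman-honzik2} for the analysis of $\mathbb{M}(\alpha,\beta,\gamma)$. Your (a) and (b) are the standard closure and $\Delta$-system arguments and are fine (for (b) you correctly use that $\beta$ is inaccessible and $GCH$ holds to bound the number of restricted pairs $(p_\nu\restriction r, q_\nu\restriction R)$ below $\beta$, after passing to nice names), and your treatment of (c) via the projection from $\Add(\alpha,\gamma)\times\mathbb{T}(\alpha,\beta,\gamma)$, the collapsing of $(\alpha^+,\beta)$, and the Mitchell-style reflection argument is exactly the argument of \cite{mitchell72}, \cite{cummings-foreman} and \cite{friedman-honzik2} that the paper implicitly relies on. One inaccuracy worth fixing: an $\alpha^+$-c.c.\ forcing is not thereby $\bar\beta$-Knaster, so the parenthetical ``$\alpha^+$-c.c.\ (hence $\bar\beta$-Knaster)'' and the appeal to Lemma \ref{lem: knaster forcings} for the Cohen factor are misstated. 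The correct justification that $\Add(\alpha,\cdot)$ adds no cofinal branch to a $\bar\beta$-Aronszajn tree is Lemma \ref{lem: < kappa c.c.} (it is $\alpha^+$-c.c.\ and $\alpha^+<\bar\beta$), or alternatively Lemma \ref{lem: preservation product cc}, since its square is $\alpha^+$-c.c. With that substitution the branch-preservation step goes through exactly as you describe, with Lemma \ref{lem: preservation product cc-2} handling the $\alpha^+$-closed term part.
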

Let $\mathbb{T}(\alpha, \beta, \gamma)$ be the term forcing notion defined by
$$\mathbb{T}(\alpha, \beta, \gamma) = \{(\emptyset, q): (\emptyset, q) \in \mathbb{M}(\alpha, \beta, \gamma)            \}.$$
\begin{lemma}
\label{more basic facts on mitchell forcing}
\begin{enumerate}
\item [(a)] $\mathbb{T}(\alpha, \beta, \gamma)$ is $\alpha^+$-closed

\item [(b)] There exists a projection
from $\Add(\alpha, \gamma) \times \mathbb{T}(\alpha, \beta, \gamma)$ onto $\mathbb{M}(\alpha, \beta, \gamma)$.

\item [(c)] $\mathbb{M}(\alpha, \beta, \gamma) \cong \Add(\alpha, \gamma) \ast \dot{\MQB}(\alpha, \beta, \gamma)$
for an $\Add(\alpha, \gamma)$-name $\dot{\MQB}(\alpha, \beta, \gamma)$ which is forced by $\Add(\alpha, \gamma)$ to be $\alpha^+$-distributive.
\end{enumerate}
\end{lemma}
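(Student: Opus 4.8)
The plan is to verify the three clauses in turn; (a) and (b) are routine, and (c) combines them through Easton's lemma. For (a), given a $\le$-decreasing sequence $\langle(\emptyset,q_i):i<\delta\rangle$ in $\mathbb{T}(\alpha,\beta,\gamma)$ with $\delta\le\alpha$, I would set $D=\bigcup_{i<\delta}\dom(q_i)$, so that $|D|\le\alpha$; for $\xi\in D$, letting $i_\xi$ be least with $\xi\in\dom(q_i)$, the sequence $\langle q_i(\xi):i_\xi\le i<\delta\rangle$ is an $\Add(\alpha,\xi)$-name for a $\le$-decreasing sequence of length $<\alpha^+$ in $\dot{\Add}(\alpha^+,1)$; since $1_{\Add(\alpha,\xi)}$ forces $\dot{\Add}(\alpha^+,1)$ to be $\alpha^+$-closed, there is by the maximum principle a name $q(\xi)$ for a lower bound of it, and then $(\emptyset,q)$ with $\dom(q)=D$ is a condition of $\mathbb{T}(\alpha,\beta,\gamma)$ below every $(\emptyset,q_i)$. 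For (b), I would check that $\pi(p,(\emptyset,q))=(p,q)$ is the desired projection: it is onto and sends the top to the top, it is order preserving since unwinding the two orderings shows $(p',(\emptyset,q'))\le(p,(\emptyset,q))$ implies $(p',q')\le(p,q)$, and given $(p',q')\le(p,q)=\pi(p,(\emptyset,q))$ in $\mathbb{M}(\alpha,\beta,\gamma)$ the pair $(p',(\emptyset,q'))$ already lies below $(p,(\emptyset,q))$ in $\Add(\alpha,\gamma)\times\mathbb{T}(\alpha,\beta,\gamma)$ and maps under $\pi$ to $(p',q')$.

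For (c), I would first note that $p\mapsto(p,\emptyset)$ is a complete embedding of $\Add(\alpha,\gamma)$ into $\mathbb{M}(\alpha,\beta,\gamma)$: it preserves order and incompatibility by inspection, and for every $(p,q)\in\mathbb{M}(\alpha,\beta,\gamma)$ and every $p'\le p$ one has $(p',q)\le(p',\emptyset),(p,q)$, so the image of a maximal antichain of $\Add(\alpha,\gamma)$ remains maximal in $\mathbb{M}(\alpha,\beta,\gamma)$. Consequently $\mathbb{M}(\alpha,\beta,\gamma)\cong\Add(\alpha,\gamma)\ast\dot{\MQB}(\alpha,\beta,\gamma)$, where $\dot{\MQB}(\alpha,\beta,\gamma)$ is the $\Add(\alpha,\gamma)$-name for the quotient $\mathbb{M}(\alpha,\beta,\gamma)/\dot G_0$ with $\dot G_0$ the induced generic. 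To see that $\Add(\alpha,\gamma)$ forces $\dot{\MQB}(\alpha,\beta,\gamma)$ to be $\alpha^+$-distributive, I would invoke Easton's lemma together with (b): under $GCH$ the generalized $\Delta$-system lemma makes $\Add(\alpha,\gamma)$ an $\alpha^+$-c.c.\ forcing, while $\mathbb{T}(\alpha,\beta,\gamma)$ is $\alpha^+$-closed by (a), so Easton's lemma gives that $\Add(\alpha,\gamma)$ forces $\mathbb{T}(\alpha,\beta,\gamma)$ to add no new $\le\alpha$-sequence of ordinals. Then, given an $\mathbb{M}(\alpha,\beta,\gamma)$-generic $G=G_0\ast H$ and an $f\colon\alpha\to\mathrm{Ord}$ in $V[G]$, I would force over $V[G]$ with the quotient of $\pi$ from (b) to obtain a generic $G_0^*\times K$ for $\Add(\alpha,\gamma)\times\mathbb{T}(\alpha,\beta,\gamma)$ over $V$ whose $\pi$-image generates $G$; unwinding $\pi$ forces $G_0^*=G_0$, so $K$ is $\mathbb{T}(\alpha,\beta,\gamma)$-generic over $V[G_0]$ and $V[G]\subseteq V[G_0][K]$, whence $f\in V[G_0]$ by the previous point. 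Thus $H$ adds no new $\le\alpha$-sequence of ordinals over $V[G_0]$, which is what $\alpha^+$-distributivity asserts.

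The step I expect to be the main obstacle is the identification of the name $\dot{\MQB}(\alpha,\beta,\gamma)$ in (c): one should take the honest quotient $\mathbb{M}(\alpha,\beta,\gamma)/\dot G_0$ rather than the naive ``interpret each $q(\xi)$ by $\dot G_0\restrict\xi$'', since $\Add(\alpha^+,1)$ is recomputed differently in $V[\dot G_0\restrict\xi]$ and in $V[\dot G_0]$; and correspondingly one must carefully verify $G_0^*=G_0$ so that the Easton-lemma absorption really takes place over the intermediate model $V[G_0]$ and not over some larger model.
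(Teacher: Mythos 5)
Your proof is correct, and it is exactly the standard argument that the paper relies on: Lemma \ref{more basic facts on mitchell forcing} is stated there without proof (the reader is referred to the literature on Mitchell forcing), and your treatment — closure of the term forcing $\mathbb{T}(\alpha,\beta,\gamma)$ via the ``$1_{\Add(\alpha,\xi)}$ forces'' ordering, the map $(p,(\emptyset,q))\mapsto(p,q)$ as a projection, the complete embedding $p\mapsto(p,\emptyset)$ giving the two-step decomposition, and Easton's lemma (using $\alpha^{<\alpha}=\alpha$ from GCH for the $\alpha^+$-c.c.\ of $\Add(\alpha,\gamma)$) to absorb any $f\colon\alpha\to\mathrm{Ord}$ of the full extension into $V[G_0]$ — is precisely how this is done in the sources the paper cites. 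Your closing caution about taking the genuine quotient name rather than a naive reinterpretation, and about verifying $G_0^*=G_0$ so the absorption happens over $V[G_0]$, is well placed and handled correctly.
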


The next lemma strengthens Lemma \ref{basic facts on mitchell forcing}(c).
 \begin{lemma}
 \label{baby iteration of mitchell}
 Assume $\alpha < \beta < \gamma$,
where $\alpha$ is regular and $\beta, \gamma$ are measurable cardinals. Let $G \ast H$ be an
$\mathbb{M}(\alpha, \beta) \ast \dot{\mathbb{M}}(\beta, \gamma)$-generic filter over $V$.
 Then:
\begin{enumerate}
\item [(a)] $Card^{V[G \ast H]} \cap [\alpha, \gamma]=\{\alpha, \alpha^+, \beta, \beta^+, \gamma            \}$.

\item [(b)] $V[G \ast H] \models$``$2^\a=\beta=\a^{++}$ and $2^\beta=\gamma=\beta^{++}$''.

\item [(c)]  $V[G \ast H] \models$``The tree property holds at both $\beta$ and $\gamma$''.
\end{enumerate}
 \end{lemma}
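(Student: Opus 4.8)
The plan is to apply Lemma~\ref{basic facts on mitchell forcing} twice, exploiting that $\mathbb{M}(\alpha,\beta)$ is small compared with $\gamma$. Since $GCH$ holds, $|\mathbb{M}(\alpha,\beta)|\le\beta^{+}<\gamma$, so a standard small-forcing argument shows that $\gamma$ is still measurable in $V[G]$ and that $GCH$ continues to hold at and above $\beta$ there; in particular $2^{\alpha}=2^{\alpha^{+}}=\beta$, hence $2^{<\beta}=\beta$, in $V[G]$, while by Lemma~\ref{basic facts on mitchell forcing} $\alpha^{+}$ is preserved, $\beta=\alpha^{++}$ and $\TP(\beta)$ holds in $V[G]$. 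Thus $V[G]$ satisfies all the hypotheses needed to run Lemma~\ref{basic facts on mitchell forcing} and Lemma~\ref{more basic facts on mitchell forcing} for the forcing $\mathbb{M}(\beta,\gamma)=\mathbb{M}(\beta,\gamma,\gamma)$, with the regular cardinal $\beta$ in the role of ``$\alpha$'' and the measurable cardinal $\gamma$ in the role of ``$\beta$''. This gives, in $V[G\ast H]$, that $\beta^{+}$ is preserved, $2^{\beta}=\gamma=\beta^{++}$, and $\TP(\gamma)$ holds. Assembling the two steps yields (a): $\mathbb{M}(\alpha,\beta)$, being $\alpha$-directed closed and $\beta$-Knaster, preserves $\alpha,\alpha^{+}$ and every cardinal $\ge\beta$ and collapses the $V$-cardinals of $(\alpha^{+},\beta)$ onto $\alpha^{+}$, while $\mathbb{M}(\beta,\gamma)$ over $V[G]$, being $\beta$-directed closed, preserving $\beta^{+}$ (Lemma~\ref{basic facts on mitchell forcing}(c)) and $\gamma$-Knaster, preserves every cardinal $\le\beta^{+}$ and every cardinal $\ge\gamma$ and collapses the $V[G]$-cardinals of $(\beta^{+},\gamma)$ onto $\beta^{+}$; hence the cardinals of $V[G\ast H]$ in $[\alpha,\gamma]$ are exactly $\alpha,\alpha^{+},\beta,\beta^{+},\gamma$. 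For (b): $2^{\alpha}=\beta=\alpha^{++}$ already holds in $V[G]$ and is untouched by the $\beta$-directed closed forcing $\mathbb{M}(\beta,\gamma)$ (which keeps $\beta=\alpha^{++}$), and $2^{\beta}=\gamma=\beta^{++}$ is part of the second application (the lower bound from its $\Add(\beta,\gamma)$-part, the upper bound from $\gamma$-Knasterness together with $GCH$ above $\beta$ in $V[G]$). The assertion $\TP(\gamma)$ in (c) is likewise part of the second application.

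It remains to establish $\TP(\beta)$ in $V[G\ast H]$, which is where the real work lies. Since $\TP(\beta)$ holds in $V[G]$, it is enough to show that $\mathbb{M}(\beta,\gamma)$ preserves it over $V[G]$. By Lemma~\ref{more basic facts on mitchell forcing}(c) (with $\beta,\gamma$ in the roles of $\alpha,\beta$) one has $\mathbb{M}(\beta,\gamma)\cong\Add(\beta,\gamma)\ast\dot{\mathbb{Q}}$, where $\Add(\beta,\gamma)$ forces $\dot{\mathbb{Q}}$ to be $\beta^{+}$-distributive. A $\beta^{+}$-distributive forcing adds no new subset of $\beta$, hence neither a new $\beta$-tree nor a new cofinal branch through an old one, so $\dot{\mathbb{Q}}$ preserves $\TP(\beta)$ automatically, and the problem reduces to showing that $\Add(\beta,\gamma)$ preserves $\TP(\beta)$ over $V[G]$. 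For trees already in $V[G]$ this is immediate: $\Add(\beta,\gamma)$ is $<\beta$-closed, hence $\alpha^{+}$-closed because $\beta=\alpha^{++}$, so since $2^{\alpha}=\beta$ in $V[G]$, Silver's lemma (stated above) shows that $\Add(\beta,\gamma)$ adds no cofinal branch to any $\beta$-tree of $V[G]$; with $\TP(\beta)$ in $V[G]$ this means no ground-model tree becomes Aronszajn. The substantive case is a \emph{new} $\beta$-Aronszajn tree $T$ in $V[G][K]$, $K$ the $\Add(\beta,\gamma)$-generic: normalising $T$ so that its underlying set is $\beta$ and its levels are consecutive intervals, the initial segments $T\restriction\delta$ ($\delta<\beta$) are bounded subsets of $\beta$ and hence, again by $<\beta$-closure, lie in $V[G]$; one must then produce a cofinal branch of $T$ inside $V[G][K]$, using the preservation lemmas of Section~\ref{Some preservation lemmas} (in particular Lemma~\ref{lem: preservation product cc-2}), the projection of $\Add(\beta,\gamma)\times\mathbb{T}(\beta,\gamma)$ onto $\mathbb{M}(\beta,\gamma)$ together with the $\beta^{+}$-closure of the term forcing $\mathbb{T}(\beta,\gamma)$ from Lemma~\ref{more basic facts on mitchell forcing}, and the arithmetic $2^{\alpha}=\beta$, $2^{<\alpha}=\alpha<\beta$ that $\mathbb{M}(\alpha,\beta)$ has installed in $V[G]$; equivalently, one re-runs Mitchell's reflection argument for the combined forcing using the weak compactness (indeed measurability) of $\beta$ in $V$.

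I expect this last step --- that $\Add(\beta,\gamma)$ adds no $\beta$-Aronszajn tree over $V[G]$ --- to be the main obstacle. Neither a chain-condition argument nor a closure argument suffices by itself: $\Add(\beta,\gamma)$ is only $\beta^{+}$-c.c.\ (not $\beta$-c.c., so Lemmas~\ref{lem: < kappa c.c.}, \ref{lem: knaster forcings} and~\ref{lem: preservation product cc} do not apply at $\beta$) and only $<\beta$-closed (not $\beta^{+}$-closed, so Silver's lemma does not, by itself, rule out branches through a possibly new tree). What makes it go through is precisely the arithmetical configuration arranged by $\mathbb{M}(\alpha,\beta)$, namely that $\beta=\alpha^{++}$ is a double successor with $2^{\alpha}=\beta$ and $\TP(\beta)$ in $V[G]$; combining this with the term-forcing and preservation machinery above is the delicate heart of the argument.
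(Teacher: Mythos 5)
Your handling of (a), (b) and of $\TP(\gamma)$ is fine and agrees with the paper, and your reductions for $\TP(\beta)$ (peeling off the $\beta^+$-distributive part of $\mathbb{M}(\beta,\gamma)$, and disposing of $\beta$-trees that already lie in $V[G]$) are sound. But the entire content of part (c) at $\beta$ is the step you explicitly leave unproved: that forcing with the Cohen part creates no \emph{new} $\beta$-Aronszajn tree. Listing ingredients (Lemma \ref{lem: preservation product cc-2}, a term-forcing projection, ``re-run Mitchell's reflection argument'') is not a proof, and the assembly is genuinely nontrivial. Two obstructions matter. First, the reflection cannot be run over $V[G]$, where $\beta=\alpha^{++}$ is no longer large; one must use a $V$-embedding $j$ with critical point $\beta$ and reflect the two-step forcing $\mathbb{M}(\alpha,\beta)\ast\dot{\Add}(\beta,\cdot)$ as a whole, not $\Add(\beta,\gamma)$ over $V[G]$. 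Second, a direct master-condition lifting through $\mathbb{M}(\alpha,\beta)\ast\dot{\Add}(\beta,\gamma)$ is unavailable: for the ultrapower embedding by a measure on $\beta$ one has $j(\beta)<\beta^{++}\leq\gamma$ (GCH), so the pointwise image of the $\Add(\beta,\gamma)$-generic has size $\gamma\geq j(\beta)$ and cannot be amalgamated into a condition of $\Add(j(\beta),j(\gamma))$. The missing idea that unlocks this is the paper's first move: by the chain condition and homogeneity of $\Add(\beta,\gamma)$, the tree name may be taken to be an $\mathbb{M}(\alpha,\beta)\ast\dot{\Add}(\beta,1)$-name, and one then works with $\mathbb{Q}=\mathbb{M}(\alpha,\beta)\ast\dot{\Add}(\beta,1)$.

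With that reduction made, the argument you would still have to supply runs as follows. Since $2^\alpha=2^{<\beta}=\beta$ after $\mathbb{M}(\alpha,\beta)$, one has $\Add(\alpha^+,1)\cong\Add(\beta,1)\ast\dot{\Col}(\alpha^+,\beta)$, whence $j(\mathbb{Q})\cong\mathbb{Q}\ast\dot{\Col}(\alpha^+,\beta)\ast\dot{\mathbb{R}}\ast\dot{\Add}(j(\beta),1)$ with $\dot{\mathbb{R}}$ forced $\alpha^+$-distributive; the closure of $\Add(j(\beta),1)$ yields a master condition, a lifting $\tilde{j}\colon V[J]\to M[J\ast K]$, and hence a cofinal branch of $T$ in $V[J\ast K]$. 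The remaining, and decisive, step is that the quotient $\Col(\alpha^+,\beta)\ast\dot{\mathbb{R}}\ast\dot{\Add}(j(\beta),1)$ adds no branch to a $\beta$-tree of $V[J]$; the paper gets this by applying $j$ to the projection of Lemma \ref{more basic facts on mitchell forcing}(b), absorbing the $\alpha^+$-closed term forcing $\mathbb{T}(\alpha,j(\beta),j(\beta))\ast\dot{\Add}(j(\beta),1)$ into $\Col(\alpha^+,j(\beta))$, and then invoking Lemma \ref{lem: preservation product cc-2} for $\Add(\alpha,j(\beta))\times\Col(\alpha^+,j(\beta))$. None of this appears in your proposal beyond naming some of the tools, so as written it identifies where the difficulty lies but does not prove $\TP(\beta)$ in $V[G\ast H]$.
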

 \begin{proof} \footnote{ The proof presented here is suggested by Yair Hayut, which is based on ideas of Unger \cite{unger}}
Parts $($a$)$ and $($b$)$ can be proved easily using Lemmas \ref{basic facts on mitchell forcing} and \ref{more basic facts on mitchell forcing}. Let us prove $(c)$. By Lemma \ref{basic facts on mitchell forcing}(c), $\TP(\gamma)$ holds in $V[G \ast H],$ thus let us show that  $\TP(\beta)$ holds in $V[G \ast H]$.

Let $\dot{T}$ be an $\mathbb{M}(\alpha, \beta) \ast \dot{\mathbb{M}}(\beta, \gamma)$-name for a $\beta$-tree. We have
\[
\mathbb{M}(\alpha, \beta) \ast \dot{\mathbb{M}}(\beta, \gamma)\cong \mathbb{M}(\alpha, \beta) \ast \dot{\Add}(\beta, \gamma) \ast \dot{\mathbb{Q}}(\beta, \gamma, \gamma)
\]
where
\[
\Vdash_{\mathbb{M}(\alpha, \beta) \ast \dot{\Add}(\beta, \gamma)}\text{``}\dot{\mathbb{Q}}(\beta, \gamma, \gamma) \text{~is~}\beta^+\text{distributive''.}
\]
Thus $\dot{T}$ is added by $\mathbb{M}(\alpha, \beta) \ast \dot{\Add}(\beta, \gamma)$. By the chain condition and the homogeneity of $\Add(\beta, \gamma)$, $\dot T$ is equivalent to an $\mathbb{M}(\alpha, \beta) \ast \dot \Add(\beta, 1)$-name.

Let $j\colon V\to M$ be an elementary embedding with critical point $\beta$, and set  $\mathbb{Q} = \mathbb{M}(\alpha, \beta) \ast \dot{\Add}(\beta, 1)$.  Then
 $$j(\mathbb{Q}) = j(\mathbb{M}(\alpha, \beta)) \ast  \dot \Add(j(\beta), 1) \cong \mathbb{M}(\alpha, \beta) \ast \dot \Add(\alpha^{+}, 1) \ast \dot{\mathbb{R}} \ast \dot \Add(j(\beta), 1)$$
 for some name $\dot{\mathbb{R}}$, which is forced to be $\alpha^+$-distributive.
 Since $\Add(\alpha^{+},1)$ is forcing equivalent to $\Col(\alpha^{+}, 2^\alpha)$ and after forcing with $\mathbb{M}(\alpha, \beta)$, $2^\alpha =2^{<\beta}= \beta$, we have
 \[
 \Add(\alpha^{+},1) \cong \Col(\alpha^{+}, \beta) \cong \Add(\beta, 1) \times \Col(\alpha^{+}, \beta) \cong \Add(\beta, 1) \ast \dot \Col(\alpha^{+}, \beta)
 \]
and hence
\[
\mathbb{M}(\alpha, \beta) \ast \dot \Add(\alpha^{+}, 1) \cong \mathbb{M}(\alpha, \beta) \ast \dot \Col(\alpha^{+}, \beta) \cong \mathbb{M}(\alpha, \beta) \ast \dot \Add(\beta, 1) \ast \dot \Col(\alpha^{+}, \beta).
\]
Thus, we can represent $j(\mathbb{Q})$ in the following way:
$$j(\mathbb{Q}) \cong \mathbb{Q} \ast \dot \Col(\alpha^{+}, \beta) \ast \dot{\mathbb{R}} \ast \dot \Add(j(\beta), 1).$$

Let $J$ be the $\mathbb{Q}$-generic filter over $V$ derived from $G \ast H$.
Using the closure of $\Add(j(\beta), 1)$, one can obtain a master condition and force a generic filter $K$ over $M[J]$ such that $J \ast K$ is a generic filter for $j(\mathbb{Q})$ and $j''[J]\subseteq J \ast K$. Therefore, in $V[J \ast K]$, we can extend $j$ to an elementary embedding $\tilde{j} \colon V[J] \to M[J \ast K]$. In particular, $\tilde{j}(\dot{T}^J)$ is a $j(\beta)$-tree and thus by taking any element from the $\beta$-th level of $\tilde{j}(\dot{T}^J)$ we can obtain a branch $b$ of $\dot{T}^J$.

Let us show that the forcing $$\Col(\alpha^{+}, \beta) \ast \dot{\mathbb{R}} \ast \dot \Add(j(\beta), 1)$$
 cannot add a branch to $\dot{T}^J$, so that $b \in V[J] \subseteq V[G \ast H].$

 By Lemma \ref{more basic facts on mitchell forcing}(b), there exists a projection from
$ \Add(\alpha, \beta) \times \mathbb{T}(\alpha, \beta, \beta)$ onto $\mathbb{M}(\alpha, \beta),$
and hence using $j$, we get a projection from
$\Add(\alpha, j(\beta)) \times \mathbb{T}(\alpha, j(\beta), j(\beta))$ onto $j(\mathbb{M}(\alpha, \beta)).$
Since $j(\mathbb{M}(\alpha, \beta)) \cong \mathbb{M}(\alpha, \beta) \ast \dot \Col(\alpha^{+}, \beta) \ast \dot{\mathbb{R}}$, thus we get a projection
form $\Add(\alpha, j(\beta)) \times \mathbb{T}(\alpha, j(\beta), j(\beta))$ onto $\Col(\alpha^{+}, \beta) \ast \dot{\mathbb{R}}$. The forcing $\mathbb{T}(\alpha, j(\beta), j(\beta)) \ast \dot \Add(j(\beta), 1)$ is $\alpha^+$-closed of size $j(\beta),$ and hence
\[
\Col(\alpha^{+}, j(\beta)) \cong \Col(\alpha^{+}, j(\beta)) \times (\mathbb{T}(\alpha, j(\beta), j(\beta)) \ast \dot \Add(j(\beta), 1)).
\]
Putting all things together, we get a projection
\[
\pi: \Add(\alpha, j(\beta)) \times \Col(\alpha^{+}, j(\beta)) \to \Col(\alpha^{+}, \beta) \ast \dot{\mathbb{R}} \ast \dot \Add(j(\beta), 1).
\]
But Lemma \ref{lem: preservation product cc-2}, the forcing $ \Add(\alpha, j(\beta)) \times \Col(\alpha^{+}, j(\beta))$ cannot add a branch to $\dot{T}^J$. It follows that $\Col(\alpha^{+}, \beta) \ast \dot{\mathbb{R}} \ast \dot \Add(j(\beta), 1)$ does not add a branch to $\dot{T}^J$, as required.
\end{proof}
\begin{remark}
The lemma is true if we assume $\beta$ and $\gamma$ are  weakly compact cardinals. The argument is essentially the same, where instead of an embedding from the whole universe, we use a weakly compact embedding $j: \mathcal{M} \to \mathcal{N}$, where $\mathcal{M}$ contains all relevant
information.
\end{remark}
In a similar way, we can prove the following.
 \begin{lemma}
 \label{iteration of mitchell}
 Assume $\alpha_0 < \alpha_1 < \dots < \alpha_n$,
where $\alpha_0$ is regular and $\alpha_1, \dots, \alpha_n$ are measurable cardinals and let $G= G_0 \ast G_1 \ast \dots \ast G_{n-1}$ be a generic filter over $V$ for the forcing notion
\[
\mathbb{M}(\a_0, \a_1) \ast \dot{\mathbb{M}}(\a_1, \a_2) \ast \dots \ast \dot{\mathbb{M}}(\a_{n-1}, \a_n).
\]
 Then
\begin{enumerate}
\item [(a)] $Card^{V[G]} \cap [\alpha_0, \alpha_n]=\{\alpha_0, \alpha_0^+, \alpha_1, \alpha_1^+, \dots, \alpha_{n-1}, \alpha_{n-1}^+, \alpha_n             \}$.

\item [(b)] $V[G] \models$``For each $i<n, 2^{\a_i}=\alpha_{i+1}=\alpha_i^{++}$''.

\item [(c)]  $V[G] \models$``The tree property holds at each $\alpha_i, 1 \leq i \leq n$''.
\end{enumerate}
 \end{lemma}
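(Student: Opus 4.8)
The plan is to obtain (a) and (b) by a straightforward induction on the length of the iteration, and to deduce (c) one index at a time: for $1 \le i < n$ the tree property at $\a_i$ will be extracted from Lemma~\ref{baby iteration of mitchell} applied over the appropriate intermediate model, and for $i = n$ it will come directly from Lemma~\ref{basic facts on mitchell forcing}(c).

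For (a) and (b), I would argue inductively: having forced with $\mathbb{M}(\a_0,\a_1) \ast \dots \ast \dot{\mathbb{M}}(\a_{j},\a_{j+1})$, Lemmas~\ref{basic facts on mitchell forcing} and~\ref{more basic facts on mitchell forcing} give that the cardinals in $[\a_0,\a_{j+1}]$ are exactly $\{\a_0,\a_0^+,\dots,\a_j,\a_j^+,\a_{j+1}\}$ and that $2^{\a_m} = \a_{m+1} = \a_m^{++}$ for $m \le j$. The next factor $\dot{\mathbb{M}}(\a_{j+1},\a_{j+2})$ is $\a_{j+1}$-directed closed (Lemma~\ref{basic facts on mitchell forcing}(a)) and $\a_{j+2}$-Knaster (Lemma~\ref{basic facts on mitchell forcing}(b)), so it adds no new subsets of $\a_{j+1}$ — hence leaves the cardinal structure and power function up to and including $\a_{j+1}$ untouched and keeps $\a_{j+2}$ a cardinal — while by Lemma~\ref{basic facts on mitchell forcing}(c) it collapses $(\a_{j+1}^+,\a_{j+2})$ and forces $2^{\a_{j+1}} = \a_{j+2} = \a_{j+1}^{++}$. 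Throughout one uses that the part of the iteration below $\a_{m+1}$ has size $\le \a_m < \a_{m+1}$, so each $\a_m$ ($1 \le m \le n$) is still measurable, and the $GCH$ still holds at and above $\a_{m-1}$, at the point where it is needed (L\'evy--Solovay, and a count of nice names for the $GCH$).

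For (c), fix $i$ with $1 \le i \le n$ and put $V^- := V[G_0 \ast \dots \ast G_{i-2}]$ ($V^- := V$ when $i = 1$) and $\mathbb{P}_{<i} := \mathbb{M}(\a_0,\a_1)\ast\dots\ast\dot{\mathbb{M}}(\a_{i-2},\a_{i-1})$ (the trivial forcing when $i=1$), so $|\mathbb{P}_{<i}| \le \a_{i-1} < \a_i$. As in the previous paragraph, in $V^-$ the cardinal $\a_{i-1}$ is regular, $\a_i$ and (if $i<n$) $\a_{i+1}$ are measurable, and the $GCH$ holds at and above $\a_{i-1}$; this local form of the $GCH$ is all that the proofs of Lemmas~\ref{basic facts on mitchell forcing},~\ref{more basic facts on mitchell forcing} and~\ref{baby iteration of mitchell} actually use (the only appeal to cardinal arithmetic there is ``$2^{\a_{i-1}} = 2^{<\a_i} = \a_i$ after $\mathbb{M}(\a_{i-1},\a_i)$''). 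If $i < n$, then $G_{i-1}\ast G_i$ is generic over $V^-$ for $\mathbb{M}(\a_{i-1},\a_i)\ast\dot{\mathbb{M}}(\a_i,\a_{i+1})$, so Lemma~\ref{baby iteration of mitchell}(c), applied in $V^-$ with $\a_{i-1},\a_i,\a_{i+1}$ in the roles of $\a,\b,\g$, yields $V[G_0\ast\dots\ast G_i] \models \TP(\a_i)$. The remaining forcing $\dot{\mathbb{M}}(\a_{i+1},\a_{i+2})\ast\dots\ast\dot{\mathbb{M}}(\a_{n-1},\a_n)$ — empty when $i = n-1$ — is a finite iteration of $\a_{i+1}$-directed closed forcings (each $\dot{\mathbb{M}}(\a_{j},\a_{j+1})$, $j > i$, being $\a_j$-directed closed over a model in which $\a_{j+1}$ is still measurable), hence $\a_{i+1}$-closed; since $\a_i < \a_{i+1}$ it adds no new subsets of $\a_i$, hence no new $\a_i$-trees and no new cofinal branches through them, so $\TP(\a_i)$ persists to $V[G]$. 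Finally, if $i = n$, then $G_{n-1}$ is $\mathbb{M}(\a_{n-1},\a_n)$-generic over $V^- = V[G_0\ast\dots\ast G_{n-2}]$ and $V[G] = V^-[G_{n-1}]$, so $\TP(\a_n)$ holds in $V[G]$ directly by Lemma~\ref{basic facts on mitchell forcing}(c).

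The step I expect to require the most care is not a new idea but the bookkeeping: one must check that each intermediate model carries exactly the local cardinal arithmetic and large-cardinal hypotheses demanded by Lemmas~\ref{basic facts on mitchell forcing},~\ref{more basic facts on mitchell forcing} and~\ref{baby iteration of mitchell} — in particular that $\a_{i+1}$ is still measurable when the block $\mathbb{M}(\a_{i-1},\a_i)\ast\dot{\mathbb{M}}(\a_i,\a_{i+1})$ is formed, yet the tail beyond $\a_{i+1}$ is already sufficiently closed to preserve $\TP(\a_i)$ — and to observe that the failure of the full $GCH$ in the intermediate models is immaterial, since only its restriction above the relevant cardinal is ever invoked.
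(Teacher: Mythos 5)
Your proposal is correct and is essentially the paper's own argument: the paper proves this lemma only by remarking that it goes ``in a similar way'' to Lemma \ref{baby iteration of mitchell}, and your reduction --- L\'evy--Solovay over the small initial segment of the iteration, Lemma \ref{baby iteration of mitchell} applied in the intermediate model (where, as you note, only the local cardinal arithmetic actually used in its proof is needed), and the $\a_{i+1}$-closure of the tail to transfer $\TP(\a_i)$ to $V[G]$ --- is exactly that argument made explicit. One wording slip in the (a)/(b) induction: being $\a_{j+1}$-directed closed, $\dot{\mathbb{M}}(\a_{j+1},\a_{j+2})$ adds no new subsets of cardinals \emph{below} $\a_{j+1}$ (no new bounded subsets of $\a_{j+1}$), not ``no new subsets of $\a_{j+1}$'' --- it adds $\a_{j+2}$ many, as you yourself use when computing $2^{\a_{j+1}}=\a_{j+2}$ --- but since the induction only needs the power function at $\a_m$ for $m\le j$ and the cardinals $\le\a_{j+1}$ to be preserved, nothing is affected.
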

Assume that
 $\eta > \l$ are measurable cardinals above $\k$
 and there exists an elementary embedding $j: V \to M$ with critical point $\k$
such that $H(\eta) \subseteq M$
and
 $j$ is generated by a $(\k, \eta)$-extender.
 Suppose there exists  $\bar g \in V$
which is $i(\Add(\k, \l)_{V})$-generic over $N,$
where $U$ is the normal measure derived from $j$ and  $i: V \to N \simeq \Ult(V, U)$
is the ultrapower embedding.

Let
\[
\MPB=\langle  \langle \MPB_\a: \a \leq \k+1      \rangle, \langle \dot{\MQB}_\a: \a \leq \k          \rangle\rangle
\]
be the reverse Easton iteration, where
\begin{enumerate}
\item If $\a \leq \k$ is a measurable limit of measurable cardinals, then
\begin{center}
$\Vdash_\a$``$\dot{\MQB}_\a =\dot{\mathbb{M}}(\a,  \a_*) \ast \dot{\mathbb{M}}(\a_*,  \a_{**})$'',
\end{center}
where for each $\a \leq \k$, $\a_* < \a_{**}$ are the first and the second measurable cardinals above $\a$ respectively.
\item Otherwise, $\Vdash_\a$``$\dot{\MQB}_\a$ is the trivial forcing''.
\end{enumerate}
Let
$$G=\langle \langle G_\a: \a \leq \k+1      \rangle, \langle  G(\a): \a \leq \k      \rangle\rangle $$
be
 $\MPB $-generic over $V$.
Also let $\mathbb{M}=\mathbb{M}(\aleph_0, \k).$
\begin{lemma}
\label{laver vs mitchell}
Forcing with $\mathbb{P} \times \mathbb{M}$ forces ``$\kappa=\aleph_2$ and $\TP(\k)$ holds''.

Similarly,  if we replace $\mathbb{P}$ with $\mathbb{P}_{(\lambda, \k+1]}$, the tail iteration after $\l$, and $\mathbb{M}$ with
$\mathbb{M}(\l, \k),$ then the resulting product forces ``$\kappa=\l^{++}$ and $\TP(\k)$ holds''
\end{lemma}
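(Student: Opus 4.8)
The plan is to prove both assertions by one argument; I describe it for $\mathbb{P}\times\mathbb{M}$ with $\mathbb{M}=\mathbb{M}(\aleph_0,\kappa)$ and indicate the (notational) changes for the second case at the end. That $\kappa=\aleph_2$ is the routine half. Writing $\dot{\mathbb{Q}}_\kappa=\dot{\mathbb{M}}(\kappa,\lambda)\ast\dot{\mathbb{M}}(\lambda,\eta)$ for the stage-$\kappa$ factor of $\mathbb{P}$, rearrange $\mathbb{P}\times\mathbb{M}\cong(\mathbb{P}_\kappa\times\mathbb{M}(\aleph_0,\kappa))\ast\dot{\mathbb{Q}}_\kappa$. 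By Lemma~\ref{basic facts on mitchell forcing}, $\mathbb{M}(\aleph_0,\kappa)$ makes $\kappa=\aleph_2$ with $2^{\aleph_0}=\kappa$; and $\mathbb{P}_\kappa$ is a reverse Easton iteration of size $\le\kappa$, $\kappa$-c.c., each of whose nontrivial factors is at least $\aleph_2$-directed closed, so it adds no bounded subsets of $\aleph_1$ and preserves cardinals $\ge\kappa$; hence $\mathbb{P}_\kappa\times\mathbb{M}(\aleph_0,\kappa)$ still makes $\kappa=\aleph_2$, and $\dot{\mathbb{Q}}_\kappa$, being $\kappa$-directed closed there, preserves $\aleph_1$ and $\aleph_2=\kappa$.

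For $\TP(\kappa)$ I would argue by contradiction: let $\dot T$ be a $\mathbb{P}\times\mathbb{M}$-name forced to be $\kappa$-Aronszajn. \emph{Localization.} Since $\mathbb{M}(\kappa,\lambda)\cong\Add(\kappa,\lambda)\ast\dot{\mathbb{Q}}(\kappa,\lambda,\lambda)$ with $\dot{\mathbb{Q}}(\kappa,\lambda,\lambda)$ forced $\kappa^+$-distributive (Lemma~\ref{more basic facts on mitchell forcing}(c)) and $\mathbb{M}(\lambda,\eta)$ is $\lambda$-directed closed, neither adds a new $\kappa$-sequence of ordinals nor collapses a cardinal $\le\kappa$; hence $T$ and every cofinal branch of $T$ that could exist in $V[\mathbb{P}\times\mathbb{M}]$ already lie in $W_0:=V^{(\mathbb{P}_\kappa\times\mathbb{M}(\aleph_0,\kappa))\ast\dot\Add(\kappa,\lambda)}$, where $T$ is a $\kappa$-tree that is $\kappa$-Aronszajn iff it is so in the full extension. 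So it suffices to contradict ``$T$ is $\kappa$-Aronszajn in $W_0$''. Now, exactly as in the proof of Lemma~\ref{baby iteration of mitchell}, a $\kappa$-tree is coded by a subset of $\kappa$, so by the $\kappa^+$-chain condition (using that $\mathbb{M}(\aleph_0,\kappa)$ is $\kappa$-Knaster, $\mathbb{P}_\kappa$ is $\kappa$-c.c., and $\Add(\kappa,\lambda)$ is $\kappa^+$-c.c.) together with the product structure and homogeneity of $\Add(\kappa,\lambda)$, the name $\dot T$ may be taken to be a name for $\mathbb{Q}:=(\mathbb{P}_\kappa\times\mathbb{M}(\aleph_0,\kappa))\ast\dot\Add(\kappa,1)$; write $G_{\mathbb{Q}}$ for the generic derived from $G$ and $T=\dot T^{G_{\mathbb{Q}}}$.

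Next I would run the embedding argument of Lemma~\ref{baby iteration of mitchell}, now with $j\colon V\to M$, $\crit(j)=\kappa$, $H(\eta)\subseteq M$. Since $V_\kappa^M=V_\kappa$, $U\in M$, and $M$, $V$ agree on measurables below $\eta$, we get $j(\mathbb{M}(\aleph_0,\kappa))=\mathbb{M}(\aleph_0,j(\kappa))$ and $j(\mathbb{P}_\kappa)=\mathbb{P}_\kappa\ast\dot{\mathbb{Q}}_\kappa\ast\dot{\mathbb{P}}_{\mathrm{tail}}=\mathbb{P}\ast\dot{\mathbb{P}}_{\mathrm{tail}}$, where $\dot{\mathbb{P}}_{\mathrm{tail}}$ is forced $\eta^+$-closed (the next active stage of the iteration above $\kappa$, computed in $M$, lies above $\eta$). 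Thus $j(\mathbb{Q})$ factors as $\mathbb{Q}\ast\dot{\mathbb{S}}$, with $\dot{\mathbb{S}}$ built from $\dot{\mathbb{M}}(\kappa,\lambda)\ast\dot{\mathbb{M}}(\lambda,\eta)$, the $\eta^+$-closed tail $\dot{\mathbb{P}}_{\mathrm{tail}}$, the part of $\mathbb{M}(\aleph_0,j(\kappa))$ above $\kappa$ (which, since $2^{\aleph_0}=\kappa$ after $\mathbb{M}(\aleph_0,\kappa)$, absorbs into $\dot\Col(\aleph_1,\kappa)\ast\dot{\mathbb{R}}$ with $\dot{\mathbb{R}}$ forced $\aleph_1$-distributive, exactly as in Lemma~\ref{baby iteration of mitchell}), and $\dot\Add(j(\kappa),1)$. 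I would then lift $j$ to $\tilde j\colon V[G_{\mathbb{Q}}]\to M[\widehat G]$ for a $j(\mathbb{Q})$-generic $\widehat G\supseteq j''G_{\mathbb{Q}}$ in a further extension of $V$: the $\mathbb{P}_\kappa$- and $\dot{\mathbb{Q}}_\kappa$-coordinates of $\widehat G$ are read off $G$ (they remain generic over the relevant inner model of $M$); the $\Add(\kappa,\lambda)$-coordinate is supplied by the hypothesized generic $\bar g\in V$ for $i(\Add(\kappa,\lambda))$ over $N=\Ult(V,U)$, transferred to $M$ along the factor map $k\colon N\to M$ with $j=k\circ i$; $\dot{\mathbb{P}}_{\mathrm{tail}}$ is handled by its $\eta^+$-closure and a diagonalization; and $\dot\Add(j(\kappa),1)$ by its $j(\kappa)$-closure via a master condition below $\bigcup j''(\text{the }\Add(\kappa,1)\text{-generic})$. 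Then $\tilde j(T)$ is a $j(\kappa)$-tree in $M[\widehat G]$, and any node on its $\kappa$-th level gives a cofinal branch $b$ of $T$ with $b\in M[\widehat G]\subseteq V[G_{\mathbb{Q}}][\mathbb{S}]$.

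It remains, as in Lemma~\ref{baby iteration of mitchell}, to show $\dot{\mathbb{S}}$ adds no cofinal branch to $T$ over $V[G_{\mathbb{Q}}]$, whence $b\in V[G_{\mathbb{Q}}]\subseteq W_0$ — contradicting that $T$ is $\kappa$-Aronszajn there. Using Lemma~\ref{more basic facts on mitchell forcing}(b) and its $j$-image (projections from $\Add(\aleph_0,j(\kappa))\times\mathbb{T}(\aleph_0,j(\kappa),j(\kappa))$ and from $\Add(\kappa,j(\kappa))\times\mathbb{T}(\kappa,j(\kappa),j(\kappa))$ onto the relevant Mitchell forcings) and absorbing the $\aleph_1$-closed, resp.\ $\kappa^+$-closed, term forcings together with $\dot{\mathbb{P}}_{\mathrm{tail}}$ and $\dot\Add(j(\kappa),1)$ into Levy collapses, one obtains a projection
\[
\pi\colon\ \Add(\aleph_0,j(\kappa))\times\Bigl(\Add(\kappa,j(\kappa))\times\Col(\aleph_1,j(\kappa))\times\Col(\kappa^+,j(\kappa))\Bigr)\ \longrightarrow\ \mathbb{S}
\]
in $V[G_{\mathbb{Q}}]$, whose left factor is $\aleph_1$-c.c.\ and right factor $\aleph_1$-closed, while in $V[G_{\mathbb{Q}}]$ one has $2^{\aleph_0}\ge\kappa$ and $2^{<\aleph_0}<\kappa$. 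By Lemma~\ref{lem: < kappa c.c.} the left factor adds no branch to the $\kappa$-tree $T$, and over its extension Lemma~\ref{lem: preservation product cc-2} (with $\rho=\aleph_0$, $\mu=\aleph_1$) shows the right factor adds none; so the product, and hence $\mathbb{S}$, adds no branch to $T$, as needed. The second assertion is proved verbatim, replacing $\aleph_0$ throughout by the relevant regular cardinal $<\kappa$ (so the collapsing Mitchell forcing makes $\kappa$ \emph{its} double successor) and $\mathbb{P}$ by the corresponding tail of the iteration; the embedding $j$, the factorization of $j(\mathbb{Q})$, and the projection onto the quotient are unchanged. I expect the main obstacle to be the last two steps taken together: arranging $j(\mathbb{Q})$ so that the generic $\widehat G\supseteq j''G_{\mathbb{Q}}$ genuinely exists — this is precisely where the ground-model generic $\bar g$ over $N$, the $\eta^+$-closure of $\dot{\mathbb{P}}_{\mathrm{tail}}$, and the $j(\kappa)$-closure of $\dot\Add(j(\kappa),1)$ are essential — while simultaneously keeping the quotient $\mathbb{S}$ in the projection form to which Lemmas~\ref{lem: < kappa c.c.} and~\ref{lem: preservation product cc-2} apply; everything else is bookkeeping for reverse Easton iterations and Mitchell-forcing factorizations already developed in Lemmas~\ref{basic facts on mitchell forcing}--\ref{iteration of mitchell}.
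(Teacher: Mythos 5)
Your overall template (shrink the name by chain condition and distributivity, lift $j$ in an outer extension with a master condition for the top Cohen factor, then pull the branch back by showing the quotient adds no branches) is the method of Lemma \ref{baby iteration of mitchell}, and it genuinely diverges from the paper's proof of Lemma \ref{laver vs mitchell}: the paper lifts $j$ through the whole iteration $\mathbb{P}$ \emph{inside} $V[G]$, using the factor map $k$, the term-forcing isomorphism, the guiding generic $\bar g$ and Woodin's surgery to arrange $j''[g]\subseteq h_a$ for the full $\Add(\kappa,\lambda)$-generic $g$; this keeps the only external quotient equal to $j(\mathbb{M})/H$, which is exactly the quotient already analysed in Lemma \ref{baby iteration of mitchell}. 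By cutting the name down to $(\mathbb{P}_\kappa\times\mathbb{M}(\aleph_0,\kappa))\ast\Add(\kappa,1)$ you remove the coordinate that forces surgery; note, incidentally, that your stated use of $\bar g$ is incoherent: transferred along $k$ it gives a generic for $\Add(j(\kappa),j(\lambda))$ over $M$, a forcing that does not occur in your $j(\mathbb{Q})$ at all, while the stage-$\kappa$ coordinate of $j(\mathbb{P}_\kappa)$ is simply read off $G$, so in your argument $\bar g$ does no work. The price of this shortcut is that your quotient $\mathbb{S}$ now contains the entire stage-$\kappa$ forcing $\mathbb{M}(\kappa,\lambda)\ast\mathbb{M}(\lambda,\eta)$ (minus one column), the tail of $j(\mathbb{P}_\kappa)$, $\Add(j(\kappa),1)$ and the Mitchell quotient at $\aleph_0$, and the whole burden of the proof moves to showing that this enlarged quotient adds no branch to $T$ over $V[G_{\mathbb{Q}}]$.

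That is where the argument as written breaks. Your single application of Lemma \ref{lem: preservation product cc-2} with $\rho=\aleph_0$, $\mu=\aleph_1$ takes $V[G_{\mathbb{Q}}]$ as ground model and declares $\Add(\kappa,j(\kappa))\times\Col(\aleph_1,j(\kappa))\times\Col(\kappa^+,j(\kappa))$ to be $\aleph_1$-closed there; but these posets live in models below the $\mathbb{M}(\aleph_0,\kappa)$-generic, and after adding that generic they are no longer even countably closed (a new $\omega$-sequence of old conditions coding a new real has no lower bound in the old poset), so the closure hypothesis of Lemma \ref{lem: preservation product cc-2} fails for your instantiation; nor does any single choice of $\mu$ repair it, since the forcing producing $T$ contains $\mathbb{M}(\aleph_0,\kappa)$, which is not $\aleph_1$-c.c., while the $\aleph_1$-closed term parts are not $\kappa^+$-closed. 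What is actually required is the finer, multi-model quotient analysis: split off the Cohen-at-$\aleph_0$ coordinates, work over an intermediate model containing the closed/term generics but no new reals, and prove chain-condition and closure facts for the quotients \emph{relative to the actual generic}, in the style of Lemma \ref{baby iteration of mitchell} and of Lemmas \ref{closure of t-pi} and \ref{chain condition of c}; this has to be carried out for your enlarged $\mathbb{S}$ and is not supplied by the one displayed projection. So either do that analysis, or follow the paper and lift $j$ through all of $\mathbb{P}$ internally --- this is precisely what $\bar g$, the transfer along $k$, and Woodin's surgery are for --- so that only $j(\mathbb{M})/H$ remains external and the proof of Lemma \ref{baby iteration of mitchell} applies as cited.
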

\begin{proof}
It is easily seen that the forcing notion   $\mathbb{P} \times \mathbb{M}$   preserves $\aleph_1$ and $\kappa$ and forces $\kappa=\aleph_2$.
Let us show that it forces the tree property at $\k.$

Let $G \times H$ be $\mathbb{P} \times \mathbb{M}$-generic over $V$ and assume  $T$ is a $\k$-tree in $V[G \times H]$.
We have
\[
j(\MPB)  \cong \MPB_\k \ast \dot{\mathbb{M}}(\k,  \l) \ast \dot{\mathbb{M}}(\l,  \eta) \ast j(\MPB)_{(\k+1, j(\k))} \ast \dot{\mathbb{M}}(j(\k),  j(\l)) \ast \dot{\mathbb{M}}(j(\l),  j(\eta))
\]
where $\Vdash_{\MPB_\k \ast \dot{\mathbb{M}}(\k,  \l) \ast \dot{\mathbb{M}}(\l,  \eta)}$`` $ j(\MPB)_{(\k+1, j(\k))}$ is $\kappa^+$-closed and $j(\k)$-c.c.''. On the other hand,
\[
\mathbb{M}(\k,  \l) \ast \dot{\mathbb{M}}(\l,  \eta) \cong \Add(\k, \l)\ast \dot{\MQB},
\]
where $\dot{\MQB}$ is forced to be $\k^+$-distributive. Thus
\[
j(\MPB)  \cong \MPB_\k \ast  \dot \Add(\k, \l)\ast \dot{\MQB} \ast  j(\MPB)_{(\k+1, j(\k))} \ast  \dot \Add(j(\k), j(\l))\ast j(\dot{\MQB}).
\]
Let us write $G$ as $$G=G_\k \ast g \ast h$$
which corresponds to $\MPB=\MPB_\k \ast  \dot \Add(\k, \l)\ast \dot{\MQB}.$

Let $k: N \to M$ be the induced elementary embedding so that $k \circ i=j.$
By standard arguments, we can lift  $k$ to $N[G_\k]$ to get $k: N[G_\k] \to M[G_\k].$

Let   $\bar \l $ be such that
\begin{center}
$N \models$``$\bar \lambda$ is the least measurable cardinal above $\k$''.
\end{center}
Note that $\k^+ < \bar \l < \k^{++}.$
Factor $g$ as $g_1 \times g_2$, which corresponds to
\[
\Add(\k, \l)_{V[G_\k]} = \Add(\k, \bar \l)_{V[G_\k]} \times \Add(\k, \l \setminus \bar \l)_{V[G_\k]}.
\]
We can further
extend $k$ to get $k: N[G_\k][g_1] \to M[G_\k][g].$

By an argument as above, we can write
$i(\MPB)$ as
\[
i(\MPB)  \cong \MPB_\k \ast  \dot \Add(\k, \bar \l)\ast \dot{\MQB}_1 \ast  i(\MPB)_{(\k+1, i(\k))} \ast  \dot \Add(i(\k), i(\bar \l))\ast i(\dot{\MQB}_1).
\]
where  $\Vdash_{\MPB_\k \ast  \dot \Add(\k, \bar \l)}$`` $ \dot{\MQB}_1$ is $\kappa^+$-distributive''.
Let $h_1$ be the filter generated by $i''(h)$. Then $h_1$
is $\dot\MQB_1[G_\k \ast g_1]$-generic over $N[G_\k \ast g_1]$.

By standard arguments, we can find $K \in V[G_\k \ast g_1 \ast h_1]$,
which is $i(\MPB)_{(\k+1, i(\k))}$-generic over $N[G_\k \ast g_1 \ast h_1].$
We transfer $g_1 \ast h_1 \ast K$ along $k$ to get

$\hspace{5.cm}$$i: V[G_\k] \to N[i(G_\k)],$

$\hspace{5.cm}$$k: N[i(G_\k)] \to M[j(G_\k)],$

where the maps are defined in $V[G_\k \ast g\ast h]$.
Since $\MPB_\k$ has size $\k$ and is $\k$-c.c., so the term forcing
\[
\Add(\k, \l)_{V[G_\k]} / \MPB_\k
\]
is forcing isomorphic to $\Add(\k, \l)_{V}$ (see \cite{cummings} Fact 2, $\S$1.2.6). By our assumption, we have
$\bar g \in V$
which is $i(\Add(\k, \lambda)_{V})$-generic over $N$,
and using it we can define $g_a$ which is
\[
\Add(i(\k), i(\l))_{N[i(G_\k)]}
\]
generic over $N[i(G_\k)]$.

Transfer $g_a$  along $k$ to get the new generic $\bar g_a$.
Now using Woodin's surgery argument, we can alter the filter $\bar g_a$ to find a generic filter $h_a$
with the additional property
$j''[g] \subseteq h_a$.

So we can build maps

$\hspace{4.cm}$$j: V[G_\k \ast g] \to M[j(G_\k \ast g)],$

$\hspace{4.cm}$$k: N[i(G_\k \ast g)] \to M[j(G_\k \ast g)]$,

$\hspace{4.cm}$$i: V[G_\k \ast g] \to N[ i(G_\k \ast g)].$

The forcing $\dot{\MQB}[G_\k \ast g]$  is $\k^+$-distributive in $V[G_\k \ast g]$,
so we can further extend the above embeddings  and get

$\hspace{3.cm}$$j: V[G_\k \ast g \ast h] \to M[j(G_\k \ast g \ast h)],$

$\hspace{3.cm}$$k: N[i(G_\k \ast g \ast h)] \to M[j(G_\k \ast g \ast h)]$,

$\hspace{3.cm}$$i: V[G_\k \ast g \ast h] \to N[ l(G_\k \ast g \ast h)].$

In particular, we have $j: V[G] \to M[j(G)].$ Now suppose $j(H)$ is $j(\mathbb{M})$-generic over $M[j(G)]$ such that $j$ lifts to

\[
\tilde{j}: V[G \times H] \to M[j(G) \times j(H)].
\]
Then $T \in M[G \times H]$ and $T$ has a cofinal branch  in $M[j(G) \times j(H)]$.
But note that
$$j(\mathbb{P})/G \simeq  j(\MPB)_{(\k+1, j(\k))} \ast  \dot \Add(j(\k), j(\l))\ast j(\dot{\MQB})$$
 is $\k^+$-closed. Also by
the proof of Lemma \ref{baby iteration of mitchell}, $j(\mathbb{M})/H$ can not add  a cofinal branch in $T$. Thus it follows that forcing with $j(\mathbb{P})/G \times \mathbb{M}/ H$
 can not add a branch through $T$,
 which leads to a contradiction. The lemma follows.
\end{proof}
In fact one can say more. Let $U$ be a normal measure on $\kappa.$ Then for any $\l < \k,$ one can show that
\[
\Lambda_\l= \{ \gamma \in (\l, \k):  \mathbb{P}_{(\lambda, \gamma]} \times \mathbb{M}(\l, \gamma)\Vdash~``~ \gamma=\l^{++} + TP(\gamma) \text{~''}     \} \in U.
\]
and hence $$\Lambda =\bigtriangleup_{\l < \k} \Lambda_\l =\{ \gamma< \k: \forall \l<\gamma, \gamma \in \Lambda_\l               \} \in U.$$

In a similar way, we can prove the following lemma that will be used later.
\begin{lemma}
\label{laver vs mitchell2}
Suppose $\a<\beta < \k$
 are such that $\a$ is regular and $\beta$ is measurable. Then
\[
\mathbb{M}(\a, \beta) \ast (\dot{\mathbb{P}}_{(\beta, \k+1])} \times \dot{\mathbb{M}}(\beta, \k))
\]
forces the tree property at both $\beta$ and $\k,$ where working in $V^{\mathbb{M}(\a, \beta)}$, the iteration $\MPB$ is defined as before
and $\MPB_{(\beta, \k+1])}$
denotes the tail iteration after $\beta$.
\end{lemma}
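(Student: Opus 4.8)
The plan is to repeat the proof of Lemma \ref{laver vs mitchell}, so I give only the skeleton and flag the one genuinely new point. As there, $\TP(\beta)$ and $\TP(\k)$ are handled separately; write $G=G_{0}\ast(G_{1}\times H)$ for the generic, with $G_{0}$ for $\mathbb{M}(\a,\beta)$, $G_{1}$ for $\MPB_{(\beta,\k+1]}$ and $H$ for $\mathbb{M}(\beta,\k)$, and let $T$ be the given $\k$-tree.

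\emph{The tree property at $\beta$.} Over $V^{\mathbb{M}(\a,\beta)}$ the factor $\MPB_{(\beta,\k+1]}$ is $\nu$-closed, where $\nu>\beta^{+}$ is the least measurable limit of measurables above $\beta$; in particular it adds no new subset of $\beta$, hence no new $\beta$-tree and no new cofinal branch through an old one. Forcing it first, it suffices to see that $\mathbb{M}(\beta,\k)$ adds no cofinal branch to a $\beta$-tree over the resulting model, and since $\MPB_{(\beta,\k+1]}$ added no subset of $\beta$ or of $\beta^{+}$, there $\mathbb{M}(\beta,\k)$ is the same poset as over $V^{\mathbb{M}(\a,\beta)}$. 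One then runs the argument of Lemma \ref{baby iteration of mitchell} verbatim, in the weakly-compact variant of the remark following it, which lifts trivially through the closed forcing $\MPB_{(\beta,\k+1]}$: using $\dot{\mathbb{M}}(\beta,\k)\cong\dot{\Add}(\beta,\k)\ast\dot{\MQB}(\beta,\k,\k)$ with $\dot{\MQB}(\beta,\k,\k)$ forced $\beta^{+}$-distributive, $T$ reduces to an $\mathbb{M}(\a,\beta)\ast\dot{\Add}(\beta,1)$-name, and then a weakly compact embedding together with the term-forcing projection of Lemma \ref{more basic facts on mitchell forcing}(b) and Lemma \ref{lem: preservation product cc-2} shows that no further forcing adds a branch to $T$. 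Hence $\TP(\beta)$ holds in the final model.

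\emph{The tree property at $\k$.} Here I would mimic the proof of Lemma \ref{laver vs mitchell}, using the $H(\eta)$-hypermeasurable embedding $j\colon V\to M$, $\crit(j)=\k$, and the factorization $k\circ i=j$ through $N\simeq\Ult(V,U)$ from the standing hypotheses. Since $\crit(j)=\k>\beta$, $j$ fixes $\mathbb{M}(\a,\beta)$ pointwise, sends $\mathbb{M}(\beta,\k)$ to $\mathbb{M}(\beta,j(\k))$, and sends $\MPB_{(\beta,\k+1]}$ to an iteration extending it whose part strictly above $\MPB_{(\beta,\k+1]}$, after rewriting the stage-$j(\k)$ iterand $\dot{\mathbb{M}}(j(\k),j(\k_{*}))\ast\dot{\mathbb{M}}(j(\k_{*}),j(\k_{**}))$ as $\dot{\Add}(j(\k),j(\k_{*}))\ast\dot{\MRB}$ with $\dot{\MRB}$ forced $\k^{+}$-distributive, is a composite of a $\k^{+}$-closed $j(\k)$-c.c.\ iteration, the copy of $\Add(j(\k),j(\k_{*}))$, and distributive pieces; in particular it is $\k^{+}$-distributive. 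I would then lift $j$ through $\mathbb{M}(\a,\beta)\ast(\dot{\MPB}_{(\beta,\k+1]}\times\dot{\mathbb{M}}(\beta,\k))$ exactly as in Lemma \ref{laver vs mitchell}: master conditions from the closure for the $\k^{+}$-closed pieces; Woodin's surgery argument, fed by the ground-model generic $\bar g$ and adjusted so that the relevant copy of $j''$ lands inside the surgically altered generic, for $\Add(j(\k),j(\k_{*}))$ (fed by the $\Add(\k,\k_{*})$-part of $G_{1}$); distributivity for the $\dot{\MRB}$- and $\dot{\MQB}$-pieces; and the $\Add(\beta,\cdot)$-component of $\mathbb{M}(\beta,j(\k))$ handled as in the proof of Lemma \ref{baby iteration of mitchell}, since $j$ fixes the bottom $\Add(\beta,\k)$-part of $H$ pointwise. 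This yields $\tilde{j}\colon V[G]\to M[G^{*}]$ with $G^{*}\supseteq j''[G]$, and $\tilde{j}(T)$ is a $j(\k)$-tree of $M[G^{*}]$; any node of it on level $\k$ gives a cofinal branch $b$ of $T$ in $M[G^{*}]$.

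It remains to check that $b\in V[G]$. As in Lemma \ref{laver vs mitchell}, $M[G^{*}]$ is a generic extension of $M[G]\ni T$ by the quotient of $j$ applied to the forcing, modulo $G$, which is the product of the $\k^{+}$-distributive forcing $j(\MPB_{(\beta,\k+1]})/G_{1}$ with $j(\mathbb{M}(\beta,\k))/H=\mathbb{M}(\beta,j(\k))/H$. By the argument in the proof of Lemma \ref{baby iteration of mitchell}, reapplied over the $\k^{+}$-distributive extension of $M[G]$ by $j(\MPB_{(\beta,\k+1]})/G_{1}$, the latter adds no cofinal branch to $T$; and a $\k^{+}$-distributive forcing adds no new $\k$-tree or branch; hence their product adds none, so $b\in M[G]\subseteq V[G]$, $T$ is not Aronszajn, and $\TP(\k)$ holds. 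The step I expect to require genuine care, exactly as in Lemma \ref{laver vs mitchell}, is the lift of $j$: carrying out Woodin's surgery on the Cohen part $\Add(\k,\k_{*})$ simultaneously with the master-condition construction for the $\k^{+}$-closed pieces, keeping the two compatible; the extra bottom forcings $\mathbb{M}(\a,\beta)$ and $\mathbb{M}(\beta,j(\k))$ are almost fixed by $j$ and introduce no essential new difficulty.
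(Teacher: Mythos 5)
Your proposal coincides with the paper's intent: the paper gives no separate proof of this lemma, stating only that it can be proved ``in a similar way'' to Lemma \ref{laver vs mitchell}, and your skeleton is exactly that argument --- the reduction of a $\beta$-tree to an $\mathbb{M}(\a,\beta)\ast\dot\Add(\beta,1)$-name and the branch-preservation machinery of Lemma \ref{baby iteration of mitchell} for $\TP(\beta)$ (using that $\MPB_{(\beta,\k+1]}$ is highly closed and the term-forcing projection plus Lemma \ref{lem: preservation product cc-2}), and the lifting via master conditions, the guiding generic $\bar g$ and Woodin's surgery, followed by absorption of the branch through the $\k^{+}$-distributive quotient $j(\MPB_{(\beta,\k+1]})/G_{1}$ and the Mitchell quotient $\mathbb{M}(\beta,j(\k))/H$, for $\TP(\k)$, exactly as in Lemma \ref{laver vs mitchell}. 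The only blemish is the sentence ``it suffices to see that $\mathbb{M}(\beta,\k)$ adds no cofinal branch to a $\beta$-tree'': the goal there is to \emph{produce} a branch (via an embedding with critical point $\beta$) and then use the preservation lemmas to show the branch already lies in the intermediate model, which is what the remainder of your paragraph in fact does, so this is a slip of wording rather than a gap.
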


\section{The tree property at double successor of  singular cardinals}
\label{section:double successor}
In this section we prove Theorem \ref{thm:main theorem}.  In Subsection \ref{measure sequences} we define the notion of a measure sequence
and then in Subsection \ref{Radin forcing with interleaved collapses}, we assign to each measure sequence $w$ a forcing notion $\Rforce_w$, which is a  version of Radin forcing which is needed for the proof of
Theorem \ref{thm:main theorem}.
In Subsection \ref{sec: basic properties}, we review some of the basic properties of the forcing notion $\Rforce_w$.
Then in Subsection \ref{sec: final model} we define the required model  and in Subsections
\ref{sec:tree property at double successors} and \ref{sec:completing the proof} we complete the proof of Theorem \ref{thm:main theorem}.
\subsection{Measure sequences}
\label{measure sequences}
In this subsection we define a class $\mathcal{U}_\infty$ of measure sequences which are needed for the proof of Theorem \ref{thm:main theorem}.
Our presentation follows \cite{golshani-diamond}, but we present the details for completeness.
During the Subsections \ref{measure sequences}, \ref{Radin forcing with interleaved collapses} and  \ref{sec: basic properties}, we assume that the following
conditions are satisfied:
\begin{itemize}
\item $\kappa$ is an $H(\k^{++})$-hypermeasurable cardinal.

\item $2^\k=2^{\k^+}=2^{\k^{++}}=\k^{+3}$.

\item  There is $j: V \to M$ with critical point $\k$
such that $H(\k^{++}) \subseteq M$.

\item  $j$ is generated by a $(\k, \k^{+4})$-extender.

\item $\k^{+4} < j(\k) < \k^{+5}$.

\item If $U$ is the normal measure derived from $j$ and if $i: V \to N \simeq \Ult(V, U)$
is the ultrapower embedding, then there exists  $F \in V$
which is $\Col(\k^{+5}, < i(\k))_{N}$-generic over $N.$
\end{itemize}
Let   $k: N \to M$ be the induced elementary embedding with $j=k \circ i$.
Then $crit(k)=\k^{+4}_N < \k^{+4}_M = \k^{+4}$.
Set

$\hspace{1.5cm}$ $P^*=\{  f: \kappa \to V_\kappa \mid \dom(f) \in U$ and $\forall \alpha, f(\alpha) \in    \Col(\alpha^{+5}, < \kappa)                \}$.

$\hspace{1.5cm}$ $F^* = \{ f \in P^* \mid i(f)(\kappa) \in F    \}$.

Then $U$ can be read off $F^*$ as
$$U= \{ X \subseteq \kappa \mid \exists f \in F^*, X=\dom(f)    \}.$$
The following definitions are based on  \cite{cummings} and  \cite{golshani-diamond} with the modifications required for our purposes.
\begin{definition}
A constructing pair is a pair $(j, F)$, where
\begin{itemize}
\item $j: V \to M$ is a non-trivial elementary embedding into  a transitive inner model, and if $\kappa=\crit(j),$ then
$M^\kappa\subseteq M.$

\item $F$ is $\Col(\kappa^{+5}, < i(\kappa))_N$-generic over $N$, where $i: V \to N \simeq \Ult(V, U)$ is the ultrapower embedding approximating $j$. Also factor $j$ through $i$, say $j=k \circ i.$

\item $F \in M$.

\item $F$ can be transferred along $k$ to give a $\Col(\kappa^{+5}, < j(\kappa))_M$-generic over $M$.
\end{itemize}
\end{definition}
In particular note that the pair $(j, F)$ constructed above is a constructing pair.
\begin{definition}
\label{constructing pair}
If $(j, F)$ is a constructing pair as above, then $F^* = \{ f \in P^* \mid i(f)(\kappa) \in F    \}.$
\end{definition}
\begin{definition}
Suppose $(j, F)$ is a constructing pair as above. A sequence $w$ is constructed by $(j, F)$ iff
\begin{itemize}
\item $w \in M.$
\item $w(0) = \kappa = \crit(j)$.
\item $w(1)=F^*.$
\item For $1 < \beta< \len(w), w(\beta)=\{ X \subseteq V_\kappa \mid w \upharpoonright \beta \in j(X)              \}$.
\item $M \models |\len(w)| \leq w(0)^{+}$.
\end{itemize}
\end{definition}
\begin{remark}
In \cite{cummings}, it is assumed that $M \models |\len(w)| \leq w(0)^{++}$, while here we just require that $M \models |\len(w)| \leq w(0)^{+}$. This is because we only need to preserve the inaccessibility of $\k=w(0)$ and by results of Mitchell (see Lemma \ref{thm:preserving inaccessibility}) it suffices that the length of the measure sequence to be $\k^+$.
\end{remark}
If $w$ is constructed by $(j, F)$, then we set $\kappa_w=w(0),$
and if $\len(w) \geq 2$, then we define

$\hspace{1.5cm}$ $F^*_w=w(1)$.

$\hspace{1.5cm}$ $\mu_w = \{ X \subseteq \kappa_w \mid \exists f \in F^*_w, X = \dom(f)     \}$.

$\hspace{1.5cm}$ $\bar{\mu}_w = \{ X \subseteq V_{\kappa_w} \mid \{ \alpha \mid \langle \alpha \rangle \in X     \} \in \mu_w       \}$.

$\hspace{1.5cm}$ $F_w= \{ [f]_{\mu_w} \mid f \in F^*_w   \}$.

$\hspace{1.5cm}$ $\mathcal{F}_w = \bar{\mu}_w \cap \bigcap \{w(\alpha) \mid 1 < \alpha < \len(w)          \}$.

\begin{definition}
Define inductively

$\hspace{1.5cm}$ $\mathcal{U}_0 = \{ w \mid \exists (j, F)$ such that $(j, F)$  constructs $w         \}$.

$\hspace{1.5cm}$ $\mathcal{U}_{n+1}= \{ w \in \mathcal{U}_n \mid \mathcal{U}_n \cap V_{\kappa_w} \in  \mathcal{F}_w         \}.$

$\hspace{1.5cm}$ $\mathcal{U}_{\infty}= \bigcap_{n \in \omega} \mathcal{U}_{n}$.

The elements of $\mathcal{U}_{\infty}$ are called measure sequences.
\end{definition}
Let $u$ be the measure sequences constructed using the pair $(j, F)$ above. It is easily seen that for each $\alpha < \kappa^{++}, u \upharpoonright \alpha$
exists and is in $\mathcal{U}_{\infty}$.

\subsection{Radin forcing with interleaved collapses}
\label{Radin forcing with interleaved collapses}
In this subsection, we assign to each measure sequence  $w \in \mathcal{U}_{\infty}$ a forcing notion $\Rforce_w$. The forcing $\Rforce_w$ adds a club $C$ of ground model regular cardinals into $\k_w$ in such a way that if $\a < \b$ are successive points in $C$, then it collapses all cardinals in the interval $(\a^{+6}, \b)$ into $\a^{+5}$
and makes $\b=\a^{+6}$. When $\len(w)=\k_w^+,$ then the forcing preserves the inaccessibility of $\k_w$
and all singular cardinals less than $\k_w$  are limit points of $C$. As we will see in the next subsections, if we start with a suitably prepared model
and force over it by $\Rforce_w$, where $\len(w)=\k_w^+,$ then in the resulting extension the tree property holds at  double successor of every limit point of $C$ and
in particular, the rank initial segment of the final model at $\k_w$ is a model in which the tree property holds at double successor of every singular cardinal.

First we define the building blocks of the forcing.
\begin{definition}
Assume $w \in \mathcal{U}_{\infty}.$ Then $\MPB_w$ is the set of all
  tuples $p=(w, \lambda, A, H, h),$ where
\begin{enumerate}
\item $w$ is a measure sequence.

\item $\lambda < \kappa_w$.

\item $A \in \mathcal{F}_w$.

\item $H \in F^*_w$ with  $\dom(H)=\{ \kappa_v > \lambda \mid v \in A  \}$.

\item $h \in \Col(\lambda^{+5}, < \kappa_w).$
\end{enumerate}

Note that if $\len(w)=1,$ then the above tuple is of the form  $(w, \lambda, \emptyset, \emptyset, h)$, where  $\lambda < \kappa_w$ and $h \in \Col(\lambda^{+5}, < \kappa_w).$
\end{definition}
Given $p \in \MPB_w$ as above, we denote it by
\[
p=(w^p, \lambda^p, A^p, H^p, h^p).
\]
The order on $\MPB_w$ is defined as follows.
\begin{definition}
Assume $p, q \in \MPB_w.$ Then $p \leq^* q$ iff:
\begin{enumerate}
\item $w^p=w^q$.

\item $\lambda^p=\lambda^q$.

\item $A^p \subseteq A^q$.

\item For all $v \in A^p, H^p(\k_v) \leq H^q(\k_v)$.

\item $h^p \leq h^q.$
\end{enumerate}

\end{definition}

Next we define the forcing notion $\Rforce_w$.
\begin{definition}
\label{radin forcing definition}
If $w$ is a measure sequence, then $\Rforce_w$ is the set of all finite sequences
$$p= \langle  p_k \mid k \leq n     \rangle,$$
where
\begin{enumerate}
\item $p_k = (w_k, \lambda_k, A_k, H_k, h_k) \in \MPB_{w_k}$, for each $k \leq n.$

\item $w_n=w.$


\item If $k<n,$ then $\lambda_{k+1}=\kappa_{w_k}$.
\end{enumerate}
\end{definition}
Given $p \in \Rforce_w$ as above, we denote it by
\[
p= \langle  p_k \mid k \leq n^p     \rangle
\]
and call $n^p$ the length of $p$.
We also use $w^p_k$ for $w^{p_k}$ and so on (for $k \leq n^p$).
The direct extension relation $\leq^*$ is defined  on $\Rforce_w$  in the natural way:
\begin{definition}
Assume $p, q \in \Rforce_w.$ Then $p \leq^* q$ iff
\begin{enumerate}
\item $n^p=n^q.$
\item For all $k \leq n^p, p_k \leq^* q_k$ in $\MPB_{w^{p}_k}$.
\end{enumerate}
\end{definition}
The following definition is the key step towards defining the order relation $\leq$ on $\Rforce_w$
\begin{definition}
\begin{itemize}
\item [(a)] Assume $p= (w, \lambda, A, H, h) \in \MPB_w$  and $w' \in A.$ Then $\Add(p, w')$ is the condition $\langle p_0, p_1 \rangle \in \Rforce_w$ defined by
\begin{enumerate}
\item $p_0 = (w', \lambda, A \cap V_{\kappa_{w'}}, H \upharpoonright \kappa_{w'}, h)$.

\item $p_1= (w, \kappa_{w'}, A \setminus V_\eta, H \upharpoonright \dom(H) \setminus V_\eta, H(\kappa_{w'})  ),$
where $\eta = \sup \range (H(\kappa_{w'}))$.
\end{enumerate}
 In the case that this does not yield a member of $\Rforce_w$, then $\Add(p,w')$ is undefined.

\item [(b)] Suppose $p=\langle p_0,\dots,p_n \rangle\in \Rforce_w$, $k\leq n$ and $u\in A^p_k$.  Then $\Add(p,u)$ is the member of $\Rforce_w$ obtained by replacing $p_k$ with the two members of $\Add(p_k,u)$,
That is,
\begin{itemize}
\item  $\Add(p,u) \upharpoonright  i=p \upharpoonright i.$
\item  $\Add(p,u)_{i}=\Add(p_i,u)_{0}.$
\item  $\Add(p,u)_{i+1}=\Add(p_i,u)_{1}.$
\item  $\Add(p,u) \upharpoonright [i+2,n+1]= p \upharpoonright [i+1,n].$
\end{itemize}
\end{itemize}
\end{definition}
The next lemma shows that any condition $p$  has a direct extension $q$ such that
$\Add(q, u)$ is well-defined for all $k \leq n^q$ and all $u \in A^q_k$.
\begin{lemma}
\label{almost all can be added}
\begin{itemize}
\item [(a)] Suppose $p= (w, \lambda, A, H, h) \in \MPB_w$. Then
\[
A'=\{  w' \in A:  \Add(p, w') \text{~is well-defined~}           \} \in \mathcal{F}_w.
\]
\item [(b)] Suppose $p\in \Rforce_w.$ Then there exists
 $q \leq^* p$ such that for all $k \leq n^q$ and all $u \in A^q_k, \Add(q, u) \in \Rforce_w$ is well-defined.
\end{itemize}
\end{lemma}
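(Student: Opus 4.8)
The plan is to obtain part (a) by exhibiting $A'$ as containing the intersection of finitely many $\mathcal{F}_w$-large sets, and then to deduce part (b) from part (a) by applying it coordinatewise and passing to a \emph{self-coherent} shrinking of each measure set. The only structural facts I will use are that $\mathcal{F}_w$ is a $\kappa_w$-complete filter on $\mathcal{P}(V_{\kappa_w})$ (being an intersection of normal ultrafilters), that consequently it concentrates on $\mathcal{U}_\infty\cap V_{\kappa_w}$ and on every final segment $\{v:\kappa_v>\delta\}$ with $\delta<\kappa_w$, and that $\kappa_w$ is inaccessible.

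For part (a), fix $p=(w,\lambda,A,H,h)\in\MPB_w$ and examine what it takes for $\Add(p,w')=\langle p_0,p_1\rangle$ to lie in $\Rforce_w$. The seam requirements $w^{p_1}=w$ and $\lambda^{p_1}=\kappa_{w^{p_0}}=\kappa_{w'}$ are part of the definition of $\Add(p,w')$, so what remains is to secure $p_0\in\MPB_{w'}$ and $p_1\in\MPB_w$. Reading off the clauses of the definition of $\MPB$, the requirements this places on $w'$ are: (i) $w'\in\mathcal{U}_\infty$, $\lambda<\kappa_{w'}$, and $h\in\Col(\lambda^{+5},<\kappa_{w'})$; (ii) $A\cap V_{\kappa_{w'}}\in\mathcal{F}_{w'}$ (so that $A^{p_0}$ is admissible); (iii) $H\restrict\kappa_{w'}\in F^*_{w'}$ (so that $H^{p_0}$ is admissible; its domain then matches automatically); and (iv) $\eta:=\sup\range(H(\kappa_{w'}))<\kappa_w$, so that $A\setminus V_\eta\in\mathcal{F}_w$ and, once the collapse functions are restricted to the index-sets prescribed by $\MPB$, the data of $p_1$ is admissible. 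Now (i) holds for $\mathcal{F}_w$-almost every $w'$ because $\lambda<\kappa_w=\crit$, $h$ has support bounded below $\kappa_w$, and $\mathcal{F}_w$ concentrates on $\mathcal{U}_\infty\cap V_{\kappa_w}$ and on the relevant final segments; (iv) holds for every $w'$, since $\kappa_w$ is regular and $|\range(H(\kappa_{w'}))|<\kappa_w$; and (ii), (iii) hold for $\mathcal{F}_w$-almost every $w'$ by the coherence of measure sequences, namely $A\in\mathcal{F}_w\Rightarrow\{v:A\cap V_{\kappa_v}\in\mathcal{F}_v\}\in\mathcal{F}_w$ and $H\in F^*_w\Rightarrow\{v:H\restrict\kappa_v\in F^*_v\}\in\mathcal{F}_w$; these are the standard normality properties of the objects built in Subsection~\ref{measure sequences} (see \cite{cummings}, \cite{golshani-diamond}). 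Intersecting these finitely many $\mathcal{F}_w$-large sets with $A$ gives a subset of $A'$ lying in $\mathcal{F}_w$, whence $A'\in\mathcal{F}_w$.

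For part (b), let $p=\langle p_0,\dots,p_n\rangle\in\Rforce_w$. For each $k\le n$, part (a) applied to $p_k\in\MPB_{w^p_k}$ yields $A'_k:=\{u\in A^p_k:\Add(p_k,u)\text{ is well-defined}\}\in\mathcal{F}_{w^p_k}$. Put $X_k:=A^p_k\cap A'_k$ and $\tilde A_k:=\{u\in X_k:X_k\cap V_{\kappa_u}\in\mathcal{F}_u\}$; by the coherence property again $\tilde A_k\in\mathcal{F}_{w^p_k}$, and $\tilde A_k$ is self-coherent, i.e.\ $\tilde A_k\cap V_{\kappa_u}\in\mathcal{F}_u$ for every $u\in\tilde A_k$. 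Let $q_k$ be $p_k$ with $A^p_k$ replaced by $\tilde A_k$ and $H^p_k$ restricted to the correspondingly shrunk domain, everything else unchanged; then $q_k\le^*p_k$, and since the $\lambda$'s and the measure sequences are untouched, $q:=\langle q_0,\dots,q_n\rangle$ is a direct extension of $p$ in $\Rforce_w$. It remains to check that $\Add(q,u)$ is well-defined for every $k\le n^q=n$ and every $u\in A^q_k=\tilde A_k$; fix such a $u$. As $u\in A'_k$, all the requirements established in part (a) for $\Add(p_k,u)$ hold for $q_k$ except the two that mention the $A$- and $H$-components: the clause $\tilde A_k\cap V_{\kappa_u}\in\mathcal{F}_u$ holds by self-coherence of $\tilde A_k$, and $H^{q_k}\restrict\kappa_u\in F^*_u$ holds because it is the restriction of $H^{p_k}\restrict\kappa_u\in F^*_u$ to a $\mu_u$-large sub-index-set and $F^*_u$ is closed under such restrictions. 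The seam requirements for $\Add(q,u)\in\Rforce_w$ are automatic since the relevant $\lambda$'s and $\kappa$'s coincide with those of $p$. This completes part (b).

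I expect the crux to be the two coherence facts invoked in part (a): that membership in $\mathcal{F}_w$ and in $F^*_w$ reflects, for $\mathcal{F}_w$-almost every $v$, to membership in $\mathcal{F}_v$ and in $F^*_v$. This is where the construction of $\mathcal{U}_\infty$ and of the collapse component of $F^*_w$ (via the generic $F$) is actually used; the remainder is bookkeeping about the shapes of conditions. One technical point I would be careful about is matching the domains of the Levy-collapse functions in $p_0$ and $p_1$ with those prescribed by the definition of $\MPB$; in particular one should read ``$H\restrict(\dom(H)\setminus V_\eta)$'' in the definition of $\Add(p,w')$ as the restriction of $H$ to exactly the index-set that $\MPB_w$ demands of $p_1$.
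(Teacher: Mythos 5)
Your proposal is correct, and it reaches the lemma by a route that differs from the paper's mainly in how the measure-one computation is packaged. The paper proves (a) by reflecting the statement itself through the constructing embedding: it observes that $A'\in\bar\mu_w$ (resp.\ $A'\in w(\alpha)$) is equivalent to $\Add(j(p),\langle\kappa_w\rangle)$ (resp.\ $\Add(j(p),w\upharpoonright\alpha)$) being a well-defined condition in $\Rforce^M_{j(w)}$, and then verifies that condition in $M$ directly, using $j(A)\cap V_{\kappa_w}=A\in\mathcal{F}_{w\upharpoonright\alpha}$, $j(H)\upharpoonright\kappa_w=H\in F^*_{w\upharpoonright\alpha}=w(1)$, and $j(H)(\kappa_w)\in\Col(\kappa_w^{+5},<j(\kappa_w))_M$. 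You instead itemize the finitely many membership requirements and quote the coherence facts ($A\in\mathcal{F}_w\Rightarrow\{v:A\cap V_{\kappa_v}\in\mathcal{F}_v\}\in\mathcal{F}_w$ and $H\in F^*_w\Rightarrow\{v:H\upharpoonright\kappa_v\in F^*_v\}\in\mathcal{F}_w$) as standard; those facts are true, but note that their proof \emph{is} exactly the computation the paper carries out (the equivalences $X\in w(\alpha)\iff w\upharpoonright\alpha\in j(X)$ and $X\in\bar\mu_w\iff\langle\kappa_w\rangle\in j(X)$), so a self-contained write-up should include that one-line reflection rather than an external citation; you correctly identify this as the crux. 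Where your write-up genuinely adds something is part (b): the paper dismisses it as an immediate consequence of (a), but shrinking $A^p_k$ to the set given by (a) does not by itself guarantee $A^q_k\cap V_{\kappa_u}\in\mathcal{F}_u$ for $u\in A^q_k$, and your two-step, self-coherent shrink $\tilde A_k=\{u\in X_k: X_k\cap V_{\kappa_u}\in\mathcal{F}_u\}$, together with closure of $F^*_u$ (and of $F^*_{w^p_k}$, which you should also invoke when forming $q_k$ itself) under restriction to $\mu$-large subdomains, is precisely the missing bookkeeping. The one soft spot you share with the paper is the case of length-one sequences $w'$, for which $\mathcal{F}_{w'}$ and $F^*_{w'}$ are not defined and conditions officially carry empty $A,H$; both your clause (ii)/(iii) and the paper's own display for $\Add(j(p),\langle\kappa_w\rangle)$ gloss this, so it is not a defect of your argument relative to the paper's.
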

\begin{proof}
Clause (b)   follows from (a), so let us prove (a). Let $p= (w, \lambda, A, H, h) \in \MPB_w$. We have to show that
$A' \in \bar{\mu}_w \cap \bigcap \{w(\alpha) \mid 1 < \alpha < \len(w)          \}.$

Suppose $(j, F)$ constructs $w$, where $j: V \to M,$ and let $i: V \to N \simeq \Ult(V, U)$
be the corresponding ultrapower embedding.

Let us first show that $A' \in \bar{\mu}_w.$ We have

$\hspace{2.cm}$ $A' \in \bar{\mu}_w \iff \{ \a: \langle \a \rangle  \in A'           \} \in U$

$\hspace{3.3cm}$ $\iff \k_w \in j(\{ \a: \langle \a \rangle  \in A'           \})$

$\hspace{3.3cm}$ $\iff \langle \k_w \rangle  \in j(A')$

$\hspace{3.3cm}$ $\iff \Add(j(p), \langle \k_w \rangle)$ is a well-defined condition in $\Rforce^M_{j(w)}.$

But we have
$$ \Add(j(p), \langle \k_w \rangle)= \langle (\langle \k_w \rangle, \l, A, H, h), (j(w), \k_w, A^*, H^*,  j(H)(\k_w))                        \rangle,$$
where $A^*=j(A) \setminus V_{\k_w}$ and $H^*= j(H) \upharpoonright \dom(j(H)) \setminus V_{\k_w}$. Thus $\Add(j(p), \langle \k_w \rangle)$   is  well-defined,
which implies $A' \in \bar{\mu}_w.$

Now let  $1< \a < \len(w).$  Then

$\hspace{2.cm}$ $A' \in w(\a) \iff w \upharpoonright \a \in j(A')$

$\hspace{3.65cm}$ $\iff \Add(j(p), w \upharpoonright \a)$ is a well-defined condition in $\Rforce^M_{j(w)}.$

By an argument as above, it is easily seen that $\Add(j(p), w \upharpoonright \a)$ is a well-defined condition in $\Rforce^M_{j(w)},$ and hence $A' \in w(\a)$.
Thus $$A' \in \bar{\mu}_w \cap \bigcap \{w(\alpha) \mid 1 < \alpha < \len(w)          \}$$ as required.
\end{proof}
By the above lemma we can always assume that $\Add(q, u)$ is well-defined for all $q \in \Rforce_w$, all $k \leq n^q$ and all $u \in A^q_k$.
\begin{definition}
\begin{itemize}
\item [(a)]  Suppose $p, q \in \Rforce_w$. Then $p$ is a one-point extension of $q$, denoted $p \leq^1 q$, if there are $k \leq n^q$ and $u \in A^q_k$
such that $p \leq^* \Add(q, u).$

\item [(b)] Suppose $p, q \in \Rforce_w$. Then $p$ is an extension of $q$, denoted $p \leq q$, if there are $n< \omega$
and conditions $p_0, p_1, \dots, p_n$ such that
$$p=p_0 \leq^1 p_1 \leq^1 \dots \leq^1 p_n=q.$$
\end{itemize}
\end{definition}

\subsection{Basic properties of the forcing notion $\Rforce_{w}$}
\label{sec: basic properties}
We now state and prove some of the main properties of the forcing notion $\Rforce_w$.
\begin{lemma} \label{chain condition for radin forcing}
$(\Rforce_w, \leq)$ satisfies the $\kappa_w^+$-c.c.
\end{lemma}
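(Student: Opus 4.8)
The plan is to run the standard $\Delta$-system argument adapted to Radin-type forcings, isolating the combinatorial core of a condition and showing that any $\kappa_w^+$ conditions sharing that core are pairwise compatible. First I would observe that, by GCH at $\kappa_w$ (which we are assuming throughout this subsection) together with the bound $|\len(w)|\le \kappa_w^+$ in $M$, the set of possible ``stems'' of conditions has size $\le \kappa_w$; here by the stem of a condition $p=\langle p_k\mid k\le n^p\rangle$ I mean the finite sequence of data that is \emph{not} subject to shrinking under $\leq^*$, namely the sequence of measure sequences $\langle w^p_k\mid k\le n^p\rangle$, the lower bounds $\lambda^p_0$, and the Cohen-collapse parts $\langle h^p_k\mid k\le n^p\rangle$. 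The point is that each $h^p_k$ lives in a collapse poset $\Col(\lambda^{+5},<\kappa_{w_k})$ of size $<\kappa_w$, and there are at most $\kappa_w$ possible finite sequences of measure sequences below $w$ because each measure sequence in $\mathcal U_\infty\cap V_{\kappa_w}$ is an element of $V_{\kappa_w}$ and $|V_{\kappa_w}|=\kappa_w$ under GCH. Hence the number of distinct stems is $\le\kappa_w^{<\omega}=\kappa_w$.

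Next, given $\kappa_w^+$ many conditions $\{p^\xi\mid\xi<\kappa_w^+\}$, I would apply the pigeonhole principle: since $\kappa_w^+$ is regular and there are only $\kappa_w$ stems, there is a set $I$ of size $\kappa_w^+$ on which all the $p^\xi$ have one and the same stem. I then claim any two conditions $p^\xi,p^\zeta$ with $\xi,\zeta\in I$ are compatible. Their stems agree, so in particular $n^{p^\xi}=n^{p^\zeta}=:n$, and $w^{p^\xi}_k=w^{p^\zeta}_k$, $h^{p^\xi}_k=h^{p^\zeta}_k$ for all $k\le n$; the only parts that can differ are the measure-one sets $A^{p^\xi}_k\in\mathcal F_{w_k}$ and the functions $H^{p^\xi}_k\in F^*_{w_k}$. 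For the $A$'s, $\mathcal F_{w_k}$ is closed under finite (indeed $\kappa_w$-)intersections since it is an intersection of ultrafilters, so $A^{p^\xi}_k\cap A^{p^\zeta}_k\in\mathcal F_{w_k}$; for the $H$'s, since $F^*_{w_k}$ is a generic filter (for the term-forcing $P^*$ restricted appropriately, via $\mu_{w_k}$-a.e.\ pointwise comparison in the relevant $\Col(\alpha^{+5},<\kappa_{w_k})$), any two of its members are compatible in it, so there is $H^*_k\le H^{p^\xi}_k,H^{p^\zeta}_k$ with $\dom(H^*_k)$ the appropriate measure-one set. Assembling $\langle(w_k,\lambda_k,A^{p^\xi}_k\cap A^{p^\zeta}_k,H^*_k,h_k)\mid k\le n\rangle$ yields a common $\leq^*$-extension, hence a common $\leq$-extension, of $p^\xi$ and $p^\zeta$.

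The main obstacle I anticipate is bookkeeping around the claim that $F^*_w$ behaves like a filter under the pointwise ordering, i.e.\ that two members $H,H'$ of $F^*_w$ genuinely have a common refinement $H^*$ \emph{with} $\dom(H^*)\in\mu_w$ and $H^*\in F^*_w$; this is where one uses that $F$ (equivalently $F^*$) is $\Col(\kappa^{+5},<i(\kappa))_N$-generic over $N$ and that $P^*$ is $<\kappa$-directed-closed in the appropriate sense, so that compatibility in the quotient is witnessed on a $\mu_w$-large set. I would also need to double-check the degenerate case $\len(w)=1$, where conditions are of the form $(w,\lambda,\emptyset,\emptyset,h)$ and the argument reduces to the $<\kappa_w$-closure of $\Col(\lambda^{+5},<\kappa_w)$ together with $|\{(w,\lambda,h)\}|\le\kappa_w$, which is immediate. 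Everything else—matching lengths, matching the $\lambda_{k+1}=\kappa_{w_k}$ constraints, etc.—is automatic once the stems coincide.
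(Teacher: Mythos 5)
Your proposal is correct and takes essentially the paper's approach: both arguments pigeonhole on the data lying in $V_{\kappa_w}$ (which has size $\kappa_w$ by inaccessibility; GCH is not needed) and then observe that incompatibility can only be caused by the collapse coordinates, since the $A_k$'s intersect inside the filter $\mathcal{F}_{w_k}$ and the $H_k$'s admit a common refinement in $F^*_{w_k}$ on a suitable measure-one set. The only cosmetic difference is that the paper fixes the entire lower part $d_p$ and the value $\lambda^p$ and then derives a contradiction with the $\kappa_w$-c.c.\ of $\Col(\lambda^{+5},<\kappa_w)$, whereas you also absorb the $h$-coordinates into the stem and count stems directly (note here that the top collapse $\Col(\lambda^{+5},<\kappa_w)$ has size $\kappa_w$, not $<\kappa_w$, which still yields at most $\kappa_w$ stems, and that the pointwise domination set must be checked to lie in every $w(\alpha)$, which follows by applying $j$ exactly as in your $\bar{\mu}_w$ computation).
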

\begin{proof}
Assume on the contrary that $A \subseteq \Rforce_w$ is an antichain of size $\kappa_w^+.$ We can assume that all $p \in A$ have the same length $n$.
Write each $p \in A$ as $p = d_p ^{\frown} p_n,$ where $d_p \in V_{\kappa_w}$ and $p_n = (w, \lambda^p, A^p, H^p, h^p)$. By shrinking $A$, if necessary,
we can assume that there are fixed $d \in V_{\kappa_w}$ and $\lambda < \kappa_w$ such that for all $p \in A,$ $d_p=d$ and $\lambda^p = \lambda.$

Note that for $p \neq q$ in $A$, as $p$ and $q$ are incompatible, we must have $h^p$ is incompatible with $h^q$. But $\Col(\lambda^{+5}, < \kappa_w)$
satisfies the $\kappa_w$-c.c., and we get a contradiction.
\end{proof}
The following factorization lemma can be proved easily.
\begin{lemma} (The factorization lemma)  \label{thm:factorization lemma}
Assume that $p=\langle p_0,\dots,p_n \rangle\in \Rforce_w$, where $p_i=(w_i, \lambda_i, A_i, H_i, h_i)$ and $m<n.$ Set $p^{\leq m}= \langle p_0, \dots, p_m \rangle$
and $p^{>m} = \langle  p_{m+1}, \dots, p_n     \rangle$.
Then

$($a$)$  $p^{\leq m} \in \Rforce_{w_m},$
$p^{>m} \in \Rforce_{w}$ and there exists
$$i: \Rforce_w / p \rightarrow \Rforce_{w_m}/p^{\leq m} \times \Rforce_w / p^{>m}$$
which is an isomorphism with respect to both $\leq^*$ and $\leq.$

$($b$)$ If $m+1 <n,$ then there exists
$$i: \Rforce_w / p \rightarrow \Rforce_{w_m}/p^{\leq m} \times \Col(\kappa_{w_m}^{+5}, <\kappa_{w_{m+1}}) \times \Rforce_w / p^{>m+1}$$
which is an isomorphism with respect to both $\leq^*$ and $\leq.$ \hfill$\Box$
\end{lemma}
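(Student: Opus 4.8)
The final displayed result is Lemma~\ref{thm:factorization lemma}, the factorization lemma for the Radin-with-collapses forcing $\Rforce_w$: given $p=\langle p_0,\dots,p_n\rangle\in\Rforce_w$ and $m<n$, one has $p^{\le m}\in\Rforce_{w_m}$, $p^{>m}\in\Rforce_w$, and there is an isomorphism (for both $\le$ and $\le^*$) $\Rforce_w/p\cong \Rforce_{w_m}/p^{\le m}\times\Rforce_w/p^{>m}$, and when $m+1<n$ a refined version splitting off the collapse factor $\Col(\kappa_{w_m}^{+5},<\kappa_{w_{m+1}})$.

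Let me sketch how I would prove this.
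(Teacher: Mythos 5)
Your submission contains no proof: after restating the lemma you write ``Let me sketch how I would prove this'' and stop, so there is no decomposition, no map, and no verification for me to check. The paper itself dismisses this lemma as easy and gives no written proof, but that does not excuse an empty attempt; what is missing is precisely the routine-but-necessary content. For (a) you must first check that $p^{\leq m}$ and $p^{>m}$ really are conditions: $w_m$ is itself a measure sequence, the coordinates of $p^{\leq m}$ satisfy clauses (1)--(3) of the definition of $\Rforce_{w_m}$ (in particular the top coordinate of $p^{\leq m}$ is $p_m$, whose measure sequence is $w_m$), and $p^{>m}$ still has top coordinate $w_n=w$. Then you must exhibit the isomorphism: a condition $q\leq p$ (or $q\leq^* p$) in $\Rforce_w/p$ splits uniquely into the block of its coordinates whose measure sequences lie in $V_{\kappa_{w_m}+1}$ (ending with the coordinate carrying $w_m$, which survives in every extension since extensions only shrink $A$-parts and add new coordinates) and the block of coordinates above; the key point to verify is that both $\leq^*$ and one-point extensions act coordinatewise and never mix the two blocks, because any $u\in A^q_k$ added at a lower-block coordinate lies in $V_{\kappa_{w_m}}$ and any $u$ added at an upper-block coordinate has $\kappa_u>\kappa_{w_m}$. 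This gives a bijection $q\mapsto(q^{\leq},q^{>})$ onto $\Rforce_{w_m}/p^{\leq m}\times\Rforce_w/p^{>m}$ preserving and reflecting both orders.

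For (b) there is an additional point you would have had to address, and your empty sketch gives no sign you noticed it: the middle factor $\Col(\kappa_{w_m}^{+5},<\kappa_{w_{m+1}})$ comes from the coordinate $p_{m+1}$, whose constraint $\lambda_{m+1}=\kappa_{w_m}$ forces its collapse part $h_{m+1}$ to lie in exactly this collapse. Applying (a) at $m+1$ reduces (b) to splitting the coordinate $p_{m+1}$ off from $\Rforce_{w_{m+1}}/p^{\leq m+1}$, and one must observe that the only way this coordinate gets extended, beyond shrinking $A_{m+1}$ and strengthening $H_{m+1}$ (which is absorbed into the adjacent factors or is trivial when $\len(w_{m+1})=1$), is by strengthening $h_{m+1}$ inside $\Col(\kappa_{w_m}^{+5},<\kappa_{w_{m+1}})$. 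Spelling this out is exactly the content of the lemma; as it stands, your proposal proves nothing.
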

\begin{lemma}  \label{thm:prikry property}
  $(\RR_w, \leq, \leq^*)$ satisfies the Prikry property, i.e., for any $p \in \Rforce_w$ and any statement
  $\sigma$ in the forcing language of $(\Rforce_w, \leq),$ there exists $q \leq^* p$ which decides $\sigma.$
\end{lemma}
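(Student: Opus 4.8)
The plan is to establish the Prikry property for Radin forcing with interleaved collapses by adapting the standard argument for Radin forcing (as in Cummings' handbook chapter \cite{cummings}) to the present setup where each block carries an additional Levy collapse coordinate. The key structural input is the factorization lemma (Lemma \ref{thm:factorization lemma}): a condition $p=\langle p_0,\dots,p_n\rangle$ with $n\geq 1$ factors as a product $\Rforce_{w_{n-1}}/p^{\leq n-1}\times\Col(\kappa_{w_{n-1}}^{+5},<\kappa_{w_n})\times\{\text{top block}\}$, which allows us to reduce the decision of $\sigma$ to the case of a one-block condition by induction on length, since the lower part $\Rforce_{w_{n-1}}/p^{\leq n-1}$ has size $<\kappa_w$ (hence is small relative to the diagonal intersections we will take) and the collapse coordinate is handled separately via its own closure/distributivity.

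\begin{proof}[Proof sketch]
We argue by induction on $n^{p}$. First suppose $n^p=0$, so $p=\langle(w,\lambda,A,H,h)\rangle$. Recall that a one-point extension at $u\in A$ splits the top block according to the critical point $\kappa_u$, and that $\leq$ on $\Rforce_w$ restricted to conditions of length $0$ refines to picking $u\in A$, shrinking $A$ past $V_{\kappa_u}$, decreasing $H$, and decreasing $h$. The heart of the matter is the usual Radin-style fusion: given $p$ and $\sigma$, we want to find a single direct extension $q\leq^{*}p$ — that is, we shrink $A$ to some $A'\in\mathcal F_w$, decrease $H$ pointwise, and decrease $h$ (the last in the $\kappa_w$-c.c., $\lambda^{+5}$-closed forcing $\Col(\lambda^{+5},<\kappa_w)$, where we may use closure) — with the property that no further one-point extension of $q$ can decide $\sigma$ in more than one way; then by a density/genericity argument over the measure one set of possible ``next points'' one shows $q$ itself must decide $\sigma$. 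Concretely, for each candidate $u$ one asks whether some direct extension of $\Add(p,u)$ decides $\sigma$; if so record the decision and a witnessing direct extension; then diagonalize these witnesses using the normality of $\mu_w$ and the fact that $\mathcal F_w=\bar\mu_w\cap\bigcap\{w(\alpha):1<\alpha<\len(w)\}$ is closed under the relevant diagonal intersections (here one uses that $M\models|\len(w)|\leq\kappa_w^{+}$ so the intersection is over $\leq\kappa_w^+$ many measures, and that $\mathcal F_w$-sets can be intersected diagonally). Simultaneously one must handle the collapse coordinate $h$: since any extension below $\Add(p,u)$ may also move the collapse part living in $\Col(\kappa_u^{+5},<\kappa_w)$ and $\Col(\lambda^{+5},<\kappa_u)$, one absorbs these using the closure of the term-forcing/collapse factors and a further fusion in the collapse, exactly as in the interleaved-collapse Radin arguments of \cite{golshani-diamond}.

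For the inductive step $n^p=n+1$, apply Lemma \ref{thm:factorization lemma} to write $\Rforce_w/p\cong \Rforce_{w_0}/p^{\leq 0}\times\Col(\kappa_{w_0}^{+5},<\kappa_{w_1})\times\Rforce_w/p^{>0}$ (or, if $n=0$, the two-factor version). The forcing $\Rforce_{w_0}/p^{\leq 0}$ has size $<\kappa_w$ and the collapse factor is $\leq\kappa_w$-sized; one first uses the Prikry property for the top part $\Rforce_w/p^{>0}$ (which has strictly smaller length, so the induction hypothesis applies) to reduce $\sigma$, over the small lower factors, to a statement about the top block, and then one appeals to the length-$0$ case. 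Making this precise requires the standard observation that a product of a small forcing with a Prikry-type forcing still has the Prikry property when one reads the direct-extension order as the product of the two direct-extension orders; since $|\Rforce_{w_0}/p^{\leq 0}\times\Col(\kappa_{w_0}^{+5},<\kappa_{w_1})|<\kappa_w$, we can run over all of its (at most $\kappa_w$-many, in fact $<\kappa_w$-many after the collapse is computed) conditions, for each one extract by the induction hypothesis a top-part direct extension deciding the corresponding statement, and diagonally intersect these $<\kappa_w$-many $\mathcal F_w$-sets as before.

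The main obstacle I expect is the interaction of the Radin fusion with the interleaved Levy collapses: unlike in pure Radin forcing, a one-point extension at $u$ does not merely shrink the measure-one set and the guiding function $H$, it also introduces a brand-new collapse coordinate in $\Col(\kappa_u^{+5},<\kappa_w)$ and splits the existing collapse coordinate $h$ across $\kappa_u$; so the fusion has to simultaneously build a decreasing sequence of conditions in all of these collapse forcings and take a lower bound, which is where the closure hypotheses on $\Col(\lambda^{+5},<\kappa_w)$ and on the term forcings (and the assumption on the generic $F$ for $\Col(\kappa^{+5},<i(\kappa))_N$) are essential. Once that bookkeeping is set up correctly — carried out block by block and coordinate by coordinate — the normality/diagonal-intersection step and the final density argument are routine, following \cite{cummings} and \cite{golshani-diamond}.
\end{proof}
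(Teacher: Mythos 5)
Your overall architecture is the same as the paper's: use the factorization lemma to peel off the small lower blocks (and the interleaved collapse), use the $\kappa_w$-closure of the direct-extension order on the top block to decide, lower-condition by lower-condition (equivalently, stem by stem), how the top part behaves, and then invoke the Prikry property of the smaller/shorter part to finish. The paper organizes this as an induction on $\kappa_w$ with an inner induction on the length of the condition, while you induct on the number of blocks and quote the one-block case for arbitrary measure sequences; since the one-block argument does not use the induction hypothesis, this reshuffling is harmless, and your awareness that the guiding generic and the closure of the collapse coordinates must enter is correct.

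The gap is in the one-block case, which is the actual content of the lemma. First, your parenthetical that $\mathcal{F}_w$-sets ``can be intersected diagonally'' over the $\leq\kappa_w^+$-many coordinates $\alpha<\len(w)$ is false: each $w(\alpha)$, and hence $\mathcal{F}_w$, is only $\kappa_w$-complete, and diagonal intersections are indexed by stems in $V_{\kappa_w}$, not by the measures; the hypothesis $M\models|\len(w)|\leq\kappa_w^{+}$ is there to preserve inaccessibility of $\kappa_w$, not to licence such intersections. The paper instead fixes, for each stem $s$ and each $\alpha<\len(w)$, a trichotomy set $A(s,\alpha)\in w(\alpha)$, diagonalizes over stems only, and then takes the \emph{union} over $\alpha$ of the resulting sets, which lies in every $w(\alpha)$ and hence in $\mathcal{F}_w$. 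Second, your concluding step --- ``by a density/genericity argument $q$ itself must decide $\sigma$'' --- is exactly what has to be proved and does not follow from the stabilization you describe: the paper takes a condition $r\leq p^*$ of minimal length deciding $\sigma$, applies $j$ to the map $v\mapsto (A_v,H_v)$ of witnesses to freeze their upper parts as $(A^{<\alpha},H^{<\alpha})=j(\phi)(w\upharpoonright\alpha)$, partitions the measure-one set as $A^{<\alpha}\cup A^{\alpha}\cup A^{>\alpha}$, and shows by a three-case compatibility analysis that every extension of the resulting condition $q^*$ is compatible with some deciding $q'_v$, contradicting minimality. Relatedly, ``decreasing $H$ pointwise'' is not free: $H$ must remain a member of $F^*_w$, so one can only improve it by meeting dense subsets of the collapses through the genericity of the guiding filter (the paper's $D^{\mathrm{top}}(i,s,v)$ and $D^{\mathrm{low}}(i,s,v)$ construction, using distributivity of $\Col(\kappa_v^{+5},<\kappa_w)$ and $\Col(\lambda^{+5},<\kappa_v)$); you assert this ingredient is ``essential'' but never say how it is used. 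Until the union-over-$\alpha$ point is corrected and the minimal-counterexample/compatibility argument is supplied rather than cited, the proof is incomplete.
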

\begin{proof}
We follow the argument given in \cite{hayut-eskew}.
We prove the lemma by induction on $\k_w.$ Thus, assuming  it is true for $\Rforce_u$ with $\k_u < \k_w;$ we prove it for $\Rforce_w.$

Suppose that $p \in \Rforce_w$ and $\sigma$ is a statement in the forcing language of $(\Rforce_w, \leq).$
First, we assume that $\len(p)=1$. So let us write it as $p=(w, \lambda, A, H, h) \in \MPB_w$.

Given $q \in \Rforce_w,$ we can write it as $q=d_q^{\frown} \langle q_{\len(q)}  \rangle$, where $d_q \in V_{\k_w}$ and $q_{\len(q)} \in \MPB_w$.
We set $\stem(q)=d_q$ and call it the stem of $q$.
Let
$\mathbf{L}$
be the set of stems of conditions in $\Rforce_w$ which extend $p$:
\[
\mathbf{L}=\{\stem(q): q \in \Rforce_w \text{~and~} q \leq p                  \}.
\]

Suppose $v \in A$ and $s=\langle q_k: k \leq n  \rangle \in \mathbf{L}$, where $q_k=(w_k, \lambda_k, A_k, H_k, h_k)$ (for $k \leq n$), are such that
for some $A_v, H_v, h_v, A'$, $H'$ and $h'$,
$$q=s^{\frown} \langle v, \lambda, A_v, H_v, h_v \rangle ^{\frown} \langle w, \k_v, A', H', h'    \rangle \in \Rforce_w$$
 and
 $$q \leq \Add(p, v).$$
  Then  $\k_{q_n}=\k_{w^q_{n}}=\lambda,$ in particular there are less than $\lambda^{+5}$-many such stems $s$.
 For each $v \in A$ and each stem $s \in \mathbf{L}$ define the sets $D^{\text{top}}(0, s, v)$ and $D^{\text{top}}(1, s, v)$,
  as follows:
\begin{itemize}
\item
$D^{\text{top}}(0, s, v)$ is the set of all $g \leq H(\k_v)$ for which there exist  $A_v, H_v, h_v, A'$ and $H'$ such that
 $s^{\frown} \langle v, \lambda, A_v, H_v, h_v \rangle ^{\frown} \langle w, \k_v, A', H', g    \rangle\leq \Add(p, v)$ and
it decides $\sigma.$

\item $D^{\text{top}}(1, s, v)$ is the set of all $g \leq H(\k_v)$ such that for all $A_v, H_v, h_v, A', H'$ and $g' \leq g,$
$s^{\frown} \langle v, \lambda, A_v, H_v, h_v \rangle ^{\frown} \langle w, \k_v, A', H', g'    \rangle$
does not decide $\sigma.$

\end{itemize}
Clearly, $D^{\text{top}}(0, s, v) \cup D^{\text{top}}(1, s, v)$ is dense in $\Col(\k_v^{+5}, < \k_w)/H(\k_v)$, and so by the distributivity of
$\Col(\k_v^{+5}, < \k_w),$ the intersection
\[
D^{\text{top}}_v= \bigcap_{s \in \mathbf{L} \cap V_{\k_v}} (D^{\text{top}}(0, s, v) \cup D^{\text{top}}(1, s, v))
\]
is also dense in $\Col(\k_v^{+5}, < \k_w)/H(\k_v)$.
Take $\tilde H \in F^*_w$ such that
\[
\tilde A=\{ v \in A: \tilde H(\k_v) \in D^{\text{top}}_v            \} \in \mathcal{F}_w.
\]
Let $H^* \in F^*_w$ extends both of $H$ and $\tilde H.$

Next define the sets
 $D^{\text{low}}(0, s, v)$ and $D^{\text{low}}(0, s, v)$
 as follows:
\begin{itemize}
\item $D^{\text{low}}(0, s, v)$ is the set of all $g \leq h$ for which there exist  $A_v, H_v, A'$ and $H'$ such that
 $s^{\frown} \langle v, \lambda, A_v, H_v, g \rangle ^{\frown} \langle w, \k_v, A', H', H^*(\k_v)    \rangle\leq \Add(p, v)$ and
it decides $\sigma.$

\item $D^{\text{low}}(1, s, v)$ is the set of all $g \leq h$ such that for all $A_v, H_v,  A', H'$ and $g' \leq H^*(\k_v),$
$s^{\frown} \langle v, \lambda, A_v, H_v, g \rangle ^{\frown} \langle w, \k_v, A', H', H^*(\k_v)    \rangle$
does not decide $\sigma.$
\end{itemize}
The set $D^{\text{low}}(0, s, v) \cup D^{\text{low}}(1, s, v)$ is dense in $\Col(\l^{+5}, < \k_v)/h$, and so by the distributivity of
$\Col(\l^{+5}, < \k_v)$, the intersection
\[
D^{\text{low}}_v= \bigcap_{s \in \mathbf{L} \cap V_{\k_v}} D^{\text{low}}(0, s, v) \cup D^{\text{low}}(1, s, v)
\]
is also dense in $\Col(\l^{+5}, < \k_v)/h$.
Take $ \tilde h_v \in D^{\text{low}}_v.$

Now consider $$p'=(w, \lambda, \tilde A, H^*, h)\leq p.$$
For any stem
$s$ of a condition in $\Rforce_w$ extending $p'$ and every $\alpha < \len(w),$
let $A(s, \alpha) \in \mathcal{F}_w$ be such that one of the following three possibilities holds for it:
\begin{enumerate}
\item [($1_{s, \alpha}$):] For every $v \in A(s, \alpha)$ there exists $q' \leq p'$ such that  $q'$ forces $\sigma$
and $q'$ is of the form
$$q'= s^{\frown} \langle   v, \lambda, A'_{s, v}, H'_{s, v}, h'_{s, v}      \rangle ^{\frown} \langle w, \kappa_v, A_{s, v},  H_{s, v}, h_{s,v}     \rangle,$$ for some $A'_{s, v}, H'_{s, v}, h'_{s, v} \leq \tilde h_v, A_{s, v}, H_{s, v}$ and $h_{s, v} \leq H^*(\k_v).$

\item [($2_{s, \alpha}$):]  For every $v \in A(s, \alpha)$ there exists $q' \leq p'$ such that  $q'$ forces $\neg \sigma$
and $q'$ is of the above form.

\item [($3_{s, \alpha}$):] For every $v \in A(s, \alpha)$ there does not exist $q' \leq p'$ of the  above form  such that $q'$ decides  $\sigma.$
\end{enumerate}
For every $v$,  we may suppose that $H_{s, v}, h_{s, v}, H'_{s, v}$ and $h'_{s, v}$'s depend only on $v$, and so we denote them by $H_v, h_v, H'_v$ and $h'_v$ respectively. For each $\alpha$
let $A(\alpha)=  \bigtriangleup_{s} A(s, \alpha)$ be the diagonal intersection of the $A(s, \alpha)$'s and set
\[
A^*= A' \cap \bigcup_{\alpha < \len(w)} A(\alpha) \in \mathcal{F}_w.
\]
Also let $$p^*= (w, \lambda, A^*, H^*, h).$$

Note that if $v \in A' \cap A(s, \alpha)$
and if one of the   $(1_{s, \alpha})$
or $(2_{s, \alpha})$ happen, then we may take $h_v=H^*(\k_v)$ and $h'_v=\tilde h_v$. This is because if one of these possibilities happen, then
$H^*(\k_v) \in D(0, s, v)$, so there are
 $\tilde A_v, \tilde H_v,  \tilde A'$ and $\tilde H'$ such that
 $$q=s^{\frown} \langle v, \lambda, \tilde A_v, \tilde H_v, \tilde h_v \rangle ^{\frown} \langle w, \k_v, \tilde A', \tilde H', H^*(\k_v)    \rangle\leq \Add(p, v)$$
  and
it decides $\sigma.$
On the other hand, there exists
\[
q'= s^{\frown} \langle   v, \lambda, A'_{s, v}, H'_{s}, h'_{s}      \rangle ^{\frown} \langle w, \kappa_v, A_{s, v},  H_{v}, h_{v}     \rangle \leq p'
\]
which also decides $\sigma.$
But the conditions $q$
and $q'$ are compatible and they decide the same truth value; hence we can take $h_v=H^*(\k_v)$ and $h'_v=\tilde h_v$.

We show that there exists a direct extension of $p^*$ which decides $\sigma.$
Assume not and let $r \leq p^*$ be of minimal length which decides $\sigma,$ say it forces $\sigma$.  Let us write
\[
\stem(r)=s^{\frown} \langle u, \lambda, A^r, H^r, h^r     \rangle,
\]
where $s \in V_{\k_u}.$
By our assumption, there exists $\alpha < \len(w)$
such that $A(s, \alpha) \in w(\alpha)$ satisfies
$(1_{s, \alpha})$,  so for every $v \in A(s, \alpha)$,
there exists
$q'_v \leq p'$ such that  $q'_v$ forces $\sigma$
and $q'_v$ is of the form
$$q'_v= s^{\frown} \langle   v, \lambda, A'_{v}, H'_{v}, \tilde h_{v}      \rangle ^{\frown} \langle w, \kappa_v, A_{v},  H_{v}, H^*(\k_v)     \rangle,$$ for some $A'_{v}, H'_{v}, , A_{v}$ and $H_{v}.$

We show that there exists $q^* \leq p^*$ with $\stem(q^*)=s$
such that every extension of $q^*$  is compatible with  $q'_v,$ for some $v \in A(s, \alpha).$
This property implies
that $q^*$ forces $\sigma$, contradicting the minimal choice of $\len(r)$. We note that by the
definition of extension in the forcing $\Rforce_w$, we may assume from this point on that $s$ is empty.

Consider the map $\phi: A(\langle \rangle, \alpha) \to V$ which is defined by
$$\phi: v \mapsto (\phi_0(v), \phi_1(v))=(A_\nu, H_\nu).$$
As $A(\langle \rangle, \alpha) \in w(\alpha)$, we have $w \upharpoonright \alpha \in j(A(\langle \rangle, \alpha))$ (where $j$ is the constructing embedding for $w$).
Let
\[
(A^{<\alpha}, H^{<\alpha}) = j(\phi)(w \upharpoonright \alpha).
\]
Also let
\[
A^\alpha=\{v \in A(\langle \rangle, \alpha): A^{<\alpha} \cap V_{\k_v}=A_v \text{~and~} H^{<\alpha} \upharpoonright V_{\k_v}=H_v                   \}
\]
and
\[
A^{>\alpha} = \bigcup_{\alpha < \beta < \len(w)}\{v \in A^*:  A^\alpha \cap V_{\k_v} \in w(\beta)               \}.
\]
Then $A^{**}=A^{<\alpha} \cup A^\alpha \cup A^{>\alpha} \in \mathcal{F}_w$.
Set $H^{**}=H^{<\alpha} \wedge H^*$ and finally set
\[
q^*= \langle  w, \lambda, A^{**}, H^{**}, h      \rangle \leq p^*.
\]
We show that $q^*$ is as required. Thus let
$$q= \langle (w_k, \lambda_k, A_k, H_k, h_k): k \leq n                  \rangle$$
be an extension of $q^*.$ There are various cases:
\begin{enumerate}
\item There is no index $k$ such that $\len(u_k)>0$ and $(A^\alpha \cup A^{>\alpha}) \cap V_{\k_{w_k}} \in \bigcup_{\beta < \len(w_k)} w_k(\beta).$
Then pick some non-trivial measure sequence $v \in A^\alpha \cap A_n,$ and note that for all $k<n,$
$A^{<\alpha} \cap A_k \in \bigcap_{\beta < \len(w_k)}w_k(\beta)$. Then one can easily show that $q$ is compatible with $q'_v$.

\item There is an index $k$ with $\len(u_k)>0$ and $(A^\alpha \cup A^{>\alpha}) \cap V_{\k_{w_k}} \in \bigcup_{\beta < \len(w_k)} w_k(\beta)$
and $A^\alpha \in w_k(\beta)$ for some $\beta < \len(w_k).$
Let us pick $k$ to be the least such an index.
Let $v \in A_k$ be such that $A^{<\alpha} \cap A_k \in \bigcap_{\beta < \len(v)} v(\beta)$.
Then $q$ is compatible with $q'_v$.

\item There is an index $k$ with $\len(u_k)>0$ and $(A^\alpha \cup A^{>\alpha}) \cap V_{\k_{w_k}} \in \bigcup_{\beta < \len(w_k)} w_k(\beta)$
and $A^{>\alpha} \in w_k(\beta)$ for some $\beta < \len(w_k).$
Then by our choice of $A^{>\alpha},$ there is some $v \in A_k$
that can be added to $q$ such that we reduce to the  case (2).
\end{enumerate}
This completes the proof for the case $\len(p)=1.$ We now prove the lemma for an arbitrary condition $p$, by induction on $\len(p)$.
Thus suppose that $\len(p) \geq 2$; say
$$p= s^{\frown} \langle (u, \lambda', A', H', h') \rangle ^{\frown} \langle  (w, \lambda, A', H', h)          \rangle.$$
By the factorization Lemma \ref{thm:factorization lemma}, we have
\[
\Rforce_w/p \simeq (\Rforce_u / s^{\frown} \langle (u, \lambda', A', H', h') \rangle) \times (\Rforce_w / \langle  (w, \lambda, A', H', h)          \rangle).
\]
Let $\langle  s_i: i < \k_u     \rangle$
enumerate $\mathbf{L} \cap V_{\k_u}$, and define by recursion on $i$ a $\leq^*$-decreasing chain $\langle p_i: i \leq \k_u    \rangle$
of conditions in $\Rforce_w / \langle  (w, \lambda, A', H', h)          \rangle$
as follows:

Set $p_o=\langle  (w, \lambda, A', H', h)          \rangle.$ Given $p_i,$ let $p_{i+1} \leq^* p_i$
decide whether there is a condition in $\Rforce_u / s^{\frown} \langle (u, \lambda', A', H', h') \rangle$
with stem $s_i$ which decides $\sigma$ and if so, then it forces one of $\sigma$
or $\neg \sigma.$
At limit ordinals $i \leq \k_u,$ use the fact that $(\Rforce_w / \langle  (w, \lambda, A', H', h)          \rangle, \leq^*)$
is $\k_w$-closed to find
a $p_i$ which $\leq^*$-extends all
$p_j, j<i.$

By our construction,
\begin{center}
$\Vdash_{\Rforce_u / s^{\frown} \langle (u, \lambda', A', H', h') \rangle}$``$p_{\k_u}$ decides $\sigma$''.
\end{center}
By the induction hypothesis, there exists $q \leq^* s^{\frown} \langle (u, \lambda', A', H', h') \rangle$ which decides which way
$p_{\k_u}$ decides $\sigma,$ and then $q^{\frown} p_{\k_u} \leq^* p$ decides $\sigma.$

The lemma follows.
\end{proof}
Now suppose that $w=u \upharpoonright \kappa^+$, where $u$ is the measure sequence constructed by the pair $(j, F)$ and let $K \subseteq \Rforce_w$ be a generic filter over $V$. Set
\begin{center}
$C=\{\kappa_u \mid \exists p \in K, \exists \xi < \len(p), p_\xi= (u, \lambda, A, H, h)  \}.$
\end{center}
By standard arguments, $C$ is a club of $\kappa$, also we can suppose that $\min(C)=\aleph_0$.
Let $\langle \kappa_\xi: \xi<\kappa \rangle$ be the increasing enumeration of the club $C$ and let $\vec{u}=\langle u_\xi \mid \xi <\kappa \rangle$
be the enumeration of
$$\{u \mid \exists p \in K, \exists \xi< \len(p), p_\xi= (u, \lambda, A,H, h) \}$$
 such that for $\xi<\zeta<\kappa, \kappa_{u_\xi}=\kappa_\xi < \kappa_\zeta=\kappa_{u_\zeta}.$ Also let $\vec{F}=\langle F_\xi \mid \xi<\kappa \rangle$ be such that each $F_\xi$ is $\Col(\kappa_\xi^{+5}, <\kappa_{\xi+1})$-generic over $V$ produced by $K.$
 \begin{lemma}\label{thm: bounding sets}
 \begin{enumerate}
\item [(a)] $V[K]=V[\vec{u}, \vec{F}].$

\item [(b)] For every limit ordinal $\xi<\kappa, \langle \vec{u}\upharpoonright \xi, \vec{F}\upharpoonright \xi  \rangle$  is $\RR_{u_\xi}$-generic over $V$,
and $\langle \vec{u} \upharpoonright [\xi, \kappa), \vec{F}\upharpoonright$
 $[\xi, \kappa) \rangle$ is $\RR_w$-generic over $V[\vec{u}\upharpoonright \xi, \vec{F}\upharpoonright \xi].$

\item [(c)] For every $\gamma <\kappa$ and every $A \subseteq \gamma$ with $A\in V[\vec{u}, \vec{F}],$ we have $A\in V[\vec{u}\upharpoonright \xi, \vec{F}\upharpoonright \xi],$
 where $\xi$ is the least ordinal such that $\gamma< \kappa_\xi.$
\end{enumerate}
\end{lemma}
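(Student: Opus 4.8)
The plan is to prove the three clauses together by exploiting the factorization properties of $\Rforce_w$ established in Lemma \ref{thm:factorization lemma}, together with the Prikry property (Lemma \ref{thm:prikry property}) and the chain condition (Lemma \ref{chain condition for radin forcing}). The essential observation is that each point $\kappa_\xi \in C$ arises from some $p \in K$ in which a coordinate has the form $p_\zeta = (u_\xi, \lambda, A, H, h)$, and when $\xi$ is a limit point of $C$ such a condition can be split, via the isomorphism of Lemma \ref{thm:factorization lemma}(a), into a lower part living in $\Rforce_{u_\xi}$ and an upper part living in $\Rforce_w$; the interleaved collapse generics $F_\eta$ for $\eta<\xi$ come from the lower factor and those for $\eta\geq\xi$ from the upper factor. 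I would first establish (b), then deduce (a) and (c) from it.

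\textbf{Step 1 (clause (b)).} Fix a limit ordinal $\xi<\kappa$. I would show that the set $D_\xi$ of conditions $p\in\Rforce_w$ that have a coordinate of the form $p_\zeta=(u_\xi,\lambda,A,H,h)$ is dense below any condition forcing $\kappa_\xi\in C$; this is exactly the kind of one-point-extension density argument one gets from the definition of $\Add(p,u)$ and Lemma \ref{almost all can be added}. For $p\in D_\xi$ with such a coordinate at position $m$, Lemma \ref{thm:factorization lemma}(a) gives $\Rforce_w/p\cong \Rforce_{u_\xi}/p^{\leq m}\times \Rforce_w/p^{>m}$, an isomorphism for both $\leq$ and $\leq^*$. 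Pulling the generic $K$ through this on a cofinal-in-$D_\xi$ family of conditions and checking coherence (any two such factorizations agree on their common refinement, which is again in $D_\xi$) yields that $K$ splits as a product $K^{<\xi}\times K^{\geq\xi}$, where $K^{<\xi}$ is $\Rforce_{u_\xi}$-generic over $V$ and, by the usual product-forcing lemma, $K^{\geq\xi}$ is $\Rforce_w$-generic over $V[K^{<\xi}]$. Matching the club points and the collapse generics read off from each factor against the global definitions of $C$, $\vec u$ and $\vec F$ shows $K^{<\xi}$ computes $\langle \vec u\restriction\xi,\vec F\restriction\xi\rangle$ and $K^{\geq\xi}$ computes $\langle\vec u\restriction[\xi,\kappa),\vec F\restriction[\xi,\kappa)\rangle$, which is clause (b).

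\textbf{Step 2 (clauses (a) and (c)).} For (a): the inclusion $V[\vec u,\vec F]\subseteq V[K]$ is immediate since both sequences are read off $K$; conversely, each condition in $K$ is, up to its finitely many measure-sequence coordinates and finitely many $\Col$-conditions, determined by information available from $\vec u$ and $\vec F$ — here one uses the Prikry property to see that $K$ is generated by its "stem-like" data together with the collapse generics, so $K\in V[\vec u,\vec F]$. For (c): given $A\subseteq\gamma$ in $V[K]$ with $\gamma<\kappa_\xi$, apply (b) at the limit point $\xi$ (or the next limit point of $C$ above $\gamma$, which exists since $C$ is club in $\kappa$) to write $V[K]=V[\vec u\restriction\xi,\vec F\restriction\xi][\text{tail}]$ where the tail forcing $\Rforce_w$ over $V[\vec u\restriction\xi,\vec F\restriction\xi]$ is $\kappa_\xi$-closed (indeed $\leq^*$ is $\kappa_w$-closed and below a condition with lowest coordinate $\lambda\geq\kappa_\xi$ the full order is $\kappa_\xi$-closed), hence adds no new subsets of $\gamma$; so $A\in V[\vec u\restriction\xi,\vec F\restriction\xi]$, and minimizing $\xi$ gives the stated bound.

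\textbf{Main obstacle.} The delicate point is Step 1: verifying that the factorizations at different conditions of $D_\xi$ cohere well enough that $K$ genuinely splits as a product of generics, and in particular that the upper factor $K^{\geq\xi}$ is generic over the extension $V[K^{<\xi}]$ rather than merely over $V$. This requires knowing that the density of $D_\xi$ is robust (every condition forcing $\kappa_\xi\in C$ has an extension in $D_\xi$ that, moreover, can be taken $\leq^*$-above any prescribed refinement of its lower part), which in turn leans on Lemma \ref{almost all can be added} and the factorization isomorphism respecting $\leq^*$. The chain condition (Lemma \ref{chain condition for radin forcing}) is used to control names in $V[K^{<\xi}]$ for the tail generic. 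Everything else is bookkeeping with the definitions of $C$, $\vec u$ and $\vec F$.
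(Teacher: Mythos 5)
Your treatment of (a) and (b) is essentially sound and close in spirit to the paper: the paper proves (a) first, by defining from $(\vec u,\vec F)$ an explicit filter $K'$ (all measure sequences appearing in a condition are among the $u_\xi$'s, all collapse parts are pieces of the $F_\xi$'s, plus the obvious extendability requirements), observing $K\subseteq K'$ and invoking maximality of generic filters — no Prikry property is needed there — and then (b) follows from (a) together with the factorization Lemma \ref{thm:factorization lemma}. Your coherence worry in Step 1 is also avoidable: it suffices to work below a single $p\in K$ whose coordinates mention $u_\xi$, since $K$ is determined by its restriction below $p$.

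The genuine gap is in (c). First, the parenthetical claim that below a condition whose lowest coordinate is $\geq\kappa_\xi$ the full ordering $\leq$ of the tail is $\kappa_\xi$-closed is false: conditions of $\Rforce_w$ are finite sequences, and a $\leq$-decreasing $\omega$-chain in which the lengths of the conditions tend to infinity has no lower bound, so $\leq$ is not even countably closed; hence ``adds no new subsets of $\gamma$'' cannot be extracted from closure of $\leq$. What is true, and what the paper uses, is the combination of the Prikry property (Lemma \ref{thm:prikry property}) with the $\kappa_{\xi+1}^{+}$-closure of $\leq^*$ on the tail: fix $p\in K$ mentioning both $u_\xi$ and $u_{\xi+1}$, where $\xi+1$ is least with $\gamma<\kappa_{\xi+1}$, factor $\Rforce_w/p$ as $\Rforce_{u_\xi}/p^{\leq m}\times\Col(\kappa_\xi^{+5},<\kappa_{\xi+1})\times\Rforce_w/p^{>m+1}$, let $\dot B$ be a tail-name recording which pairs $(r,f)$ from the first two factors force $\eta\in\dot A$, and use a $\leq^*$-decreasing sequence of length $\kappa_{\xi+1}$, together with the Prikry property, to find a single tail condition deciding all statements ``$y_\alpha\in\dot B$''; then $A$ is computable from the generics of the first two factors, i.e.\ from $(\vec u\upharpoonright\xi+1,\vec F\upharpoonright\xi+1)$. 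Second, even granting a corrected closure argument, your reduction to limit points of $C$ proves a strictly weaker statement: the least $\xi$ with $\gamma<\kappa_\xi$ is never a limit ordinal, and ``minimizing $\xi$'' is not a step you have justified — to land in the model indexed by that successor ordinal you need precisely the three-factor decomposition above, which keeps the interleaved collapse generic (a member of $\vec F\upharpoonright\xi$) on the lower side; moreover, arguing that the tail adds no bounded sets over the extension $V[\vec u\upharpoonright\xi,\vec F\upharpoonright\xi]$, rather than over $V$ with names as the paper does, would additionally require establishing the Prikry property and the $\leq^*$-closure of the tail over that extension, which your sketch does not address.
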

\begin{proof}

$($a$)$ It suffices to show that $K$ is definable from $\vec{u}$ and $ \vec{F}$. Let $K'$ be the set of all conditions $p \in \Rforce_w$
such that
\begin{itemize}
\item For all measure sequences $u \in V_\kappa$, if $u$ appears in $p$, then $u=u_\xi$, for some $\xi < \kappa$,

\item For all $\xi< \kappa$, there exists $q \leq p$ such that $u_\xi$ appears in $q$,

\item If $f \in V_\kappa$ appears in $p$, then $f \subset F_\xi$, for some $\xi< \kappa$,

\item For all $\xi< \kappa$ and all $f \in \ps(F_\xi)\cap\Col(\kappa_{\xi}^{+5},< \kappa_{\xi+1})$, there exists $q \leq p$ such that $f$ appears in $q$.
\end{itemize}
It is clear that $K' \in V[\vec{u}, \vec{F}]$. It is also easily seen that $K'$ is a filter which includes $K$. It follows from the genericity of $K$
that $K=K'$. So $K \in V[\vec{u}, \vec{F}]$, as required.

$($b$)$ Follows from $($a$)$ and the factorization lemma~\ref{thm:factorization lemma}.

$(c)$ First note that $\nu$ is not a limit ordinal, so assume $\nu=\xi+1$ is a successor ordinal (if $\nu=0$, then the proof is similar). Let $p \in K$ be such that $p$ mentions both $u_{\xi}$  and $u_{\xi+1}$, say
$u_{\xi}=u^{p_m}$ and $u_{\xi+1}=u^{p_{m+1}}$.
 By the Factorization Lemma~\ref{thm:factorization lemma},
 \[
 \Rforce_w / p \simeq \Rforce_{u_\xi}/p^{\leq m} \times \Col(\kappa_\xi^{+5}, <\kappa_{\xi+1}) \times \Rforce_w / p^{>m+1}.
 \]
Let $\dot{A}$ be an $\Rforce_w$-name for $A$
such that $\Vdash_{\Rforce_w}\dot{A} \subseteq \gamma$. Let $\dot{B}$ be an $\Rforce_w / p^{> m+1}$-name for a subset
of $\Rforce_{u_\xi}/p^{\leq m} \times \Col(\kappa_\xi^{+5}, <\kappa_{\xi+1}) \times \gamma$ such that
\[
\Vdash_{\Rforce_w / p^{>m+1}} \forall \eta < \gamma, ((r,f, \eta) \in
\dot{B}
\iff (r, f) \Vdash_{\Rforce_{u_\xi}/p^{\leq m} \times \Col(\kappa_\xi^{+5}, <\kappa_{\xi+1})} \eta \in \dot{A} ).
\]
Let $\langle y_\alpha: \alpha < \kappa_{\xi+1}      \rangle$ be an enumeration of $\Rforce_{u_\xi}/p^{\leq m} \times \Col(\kappa_\xi^{+5}, \kappa_{\xi+1}) \times \gamma$.
Define a $\leq^*$-decreasing sequence $\langle  q_\alpha \mid \alpha < \kappa_{\xi+1}        \rangle$
of conditions in $\Rforce_w / p^{>m+1}$ such that for all $\alpha, q_\alpha$ decides ``$y_\alpha \in \dot{B}$''. This is possible as $(\Rforce_w / p^{> m+1}, \leq^*)$
is $\kappa_{\xi+1}^+$-closed and by Lemma~\ref{thm:prikry property} it satisfies the Prikry property. Let $q \leq^* q_\alpha$ for all $\alpha < \kappa_{\xi+1}$.
Then $q$ decides each ``$y_\alpha \in \dot{B}$''. It follows that
 $A\in V[\vec{u}\upharpoonright \nu, \vec{F}\upharpoonright \nu]$
\end{proof}

We now state a geometric characterization of generic filters for $\MRB_{w}$. Such a characterization was first given by Mitchell \cite{mitchell82} for Radin forcing. The characterization given bellow is essentially due to Cummings \cite{cummings}.
\begin{lemma} (Geometric characterization)
\label{geometric characterization}
The pair $(\vec{u}, \vec{F})$ is $\MRB_{w}$-generic over $V$ if and only if it satisfies the following conditions:
\begin{enumerate}
\item If $\xi < \k$ and $\len(u_\xi)>1,$ then the pair $(\vec{u} \upharpoonright \xi, \vec{F} \upharpoonright \xi)$ is $\MRB_{u_\xi}$-generic over $V$.

\item For all $A \in V_{\kappa+1}~(A \in \mathcal{F}_w \iff \exists \a<\kappa~ \forall \xi>\a,~ u_\xi \in A)$.

\item For all $f \in w(1)$ there exists $\a < \kappa$ such that $ \forall \xi>\a, f(\k_\xi) \in F_\xi.$
\end{enumerate}
\end{lemma}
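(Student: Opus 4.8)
\emph{The plan.} The statement is a ``geometric characterization'' in the style of Mitchell and Cummings, so I would prove both implications separately. The forward implication ($(\vec u,\vec F)$ generic $\Rightarrow$ (1)--(3)) is a sequence of density computations that lean on the factorization and Prikry lemmas already proved. The backward implication ((1)--(3) $\Rightarrow$ $(\vec u,\vec F)$ generic) is the substantial one, and I would prove it by induction on $\kappa_w$ exactly as in the proof of the Prikry property, Lemma~\ref{thm:prikry property}.

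\emph{The direction $(\Rightarrow)$.} Assume $(\vec u,\vec F)$ is $\MRB_w$-generic and let $K$ be the associated filter. For clause (1): if $\len(u_\xi)>1$ then $\MRB_{u_\xi}$ is a nontrivial Radin forcing adding a club into $\kappa_{u_\xi}$, so $\kappa_{u_\xi}$ is a limit point of $C$ and $\xi$ is a limit ordinal; thus clause (1) is exactly Lemma~\ref{thm: bounding sets}(b) (alternatively, project $K$ through the isomorphism of Lemma~\ref{thm:factorization lemma} applied to any $p\in K$ mentioning $u_\xi$). For clause (2): if $A\in\mathcal F_w$, the set of $p$ with $A^p_{n^p}\subseteq A$ is dense (shrink the top measure-one set by a direct extension, using that $\mathcal F_w$ is a filter and that, by Lemma~\ref{almost all can be added}, we may assume all relevant $\Add(p,\cdot)$ are defined), so some such $p$ lies in $K$; inspecting the definition of $\Add(p,\cdot)$ one sees that every one-point extension performed on the top block keeps the new blocks' measure-one sets inside $A$, so $u_\xi\in A$ for all $\xi$ with $\kappa_\xi>\kappa_{w^p_{n^p-1}}$. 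Conversely, if $A\notin\mathcal F_w$ then $V_\kappa\setminus A$ lies in one of the ultrafilters $\bar\mu_w$ or $w(\alpha)$, $1<\alpha<\len(w)$; since $\mathcal F_w$ is contained in that ultrafilter, for every $p$ and every $\gamma<\kappa$ the set $A^p_{n^p}\cap(V_\kappa\setminus A)\setminus V_\gamma$ is in that ultrafilter, hence nonempty, so we may pick a measure sequence $v$ in it with $\Add(p,v)$ defined and get an extension of $p$ mentioning some $v\notin A$ with $\kappa_v>\gamma$; by genericity there are cofinally many $\xi$ with $u_\xi\notin A$, so the right-hand side of (2) fails. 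Clause (3) is handled the same way: the key density facts are that $\{p:H^p_{n^p}\le f\}$ is dense for $f\in w(1)=F^*_w$ (using directedness of $F^*_w$), which forces $f(\kappa_\xi)\in F_\xi$ for all large $\xi$, and, for the converse, that any condition can be extended so as to drive the collapse generic $F_\xi$ below a prescribed condition incompatible with $f(\kappa_\xi)$ for cofinally many $\xi$, using the $\kappa_w$-chain condition of the intermediate L\'evy collapses and the way the $F_\xi$ are assembled from the $H$- and $h$-components of conditions.

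\emph{The direction $(\Leftarrow)$.} Assume (1)--(3) and let $G$ be the set of $p\in\MRB_w$ that are realized by $(\vec u,\vec F)$: the measure sequences appearing in $p$ are among the $u_\xi$ with matching cardinals, each measure-one set $A^p_k$ eventually contains the $u_\zeta$ in the relevant interval (meaningful by (2)), and each $H^p_k(\kappa_v)$ and $h^p_k$ lies in the appropriate collapse generic read off from $\vec F$ (meaningful by (3)). That $G$ is a filter is a routine compatibility check. To see $G$ is generic it suffices to meet every dense open $D\in V$, and I would prove this by induction on $\kappa_w$. When $\len(w)=1$ the forcing reduces, via Lemma~\ref{thm:factorization lemma}, to a L\'evy collapse above a shorter piece, and the claim follows from clause (3) and the induction hypothesis applied to the piece below. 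For the inductive step, given $D$ and $p\in G$, apply the Prikry property (Lemma~\ref{thm:prikry property}) to obtain $q\le^* p$ such that, for each stem $s$ extending $q$ and each measure sequence $v$ that can be added at the top, whether the resulting condition has an extension into $D$ is decided uniformly on a set in $\mathcal F_w$; by clause (2) the sequence $\vec u$ eventually realizes a $v$ for which the answer is ``yes'', by clause (3) the collapse coordinate attached to that $v$ can be filled from $\vec F$, and by clause (1) the fragment of the forcing below $\kappa_v$ is handled by the induction hypothesis. Splicing these together produces an element of $G\cap D$.

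\emph{Main obstacle.} The delicate direction is $(\Leftarrow)$: the filter axioms and the density arguments of $(\Rightarrow)$ are routine, but the inductive genericity argument must interlock the Prikry property with the reflection encoded in clause (2), the collapse genericity of clause (3), and the coherence of clause (1), all while tracking the lengths of conditions. Moreover, because of the interleaved L\'evy collapses one cannot simply quote the argument from \cite{cummings} or \cite{mitchell82}; it has to be re-run with the $\Col$-components carried along. Once this bookkeeping is in place, each individual step is a short density or compatibility verification.
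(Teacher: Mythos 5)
The paper does not actually prove this lemma: the statement is immediately preceded by the remark that such a characterization ``was first given by Mitchell \cite{mitchell82} for Radin forcing'' and is ``essentially due to Cummings \cite{cummings}'', and the text then proceeds directly to Lemma~\ref{thm:preserving inaccessibility} without giving an argument. So there is no proof in the paper against which to compare your sketch; one has to judge it on its own. Your architecture is the Mitchell--Cummings one, which is the right blueprint: routine density for the forward implication, and an inductive genericity argument interlocked with the factorization and Prikry lemmas for the backward one.

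Two remarks. First, clause (3) is stated as a one-way implication (every $f\in w(1)$ is eventually absorbed by the $F_\xi$), not a biconditional, so the ``converse'' density fact you discuss at the end of your $(\Rightarrow)$ direction is not required; you are probably conflating it with the internal ``iff'' in clause (2). Second, there is an implicit assertion hidden in clause (3) that your sketch does not address: $f(\kappa_\xi)$ is a priori a condition in $\Col(\kappa_\xi^{+5},<\kappa)$, whereas $F_\xi$ is generic for the smaller $\Col(\kappa_\xi^{+5},<\kappa_{\xi+1})$, so one needs a further density argument --- keyed to the choice of $\eta=\sup\range\bigl(H(\kappa_{w'})\bigr)$ in the definition of $\Add(p,w')$ --- to see that for large $\xi$ the range of $f(\kappa_\xi)$ is in fact bounded below $\kappa_{\xi+1}$, so that the membership even makes sense. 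For the backward direction your outline is quite schematic; the genuine burden is to show that the filter you define meets every $V$-dense open set, and in particular that each $F_\xi$ is itself generic for the right collapse, which follows from clause (3) only after an additional argument using that $F$ is $\Col(\kappa^{+5},<i(\kappa))_N$-generic. But the decomposition you indicate (Prikry property plus clauses (1)--(3) plus the factorization lemma, by induction on $\kappa_w$) is the correct skeleton, and given that the paper elects to cite rather than prove the lemma, your sketch is a reasonable stand-in for what one would actually have to write out.
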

\

As $\len(w)=\kappa^+,$ it follows from Mitchell \cite{mitchell82} (see also \cite{gitik}) that
\begin{lemma}  \label{thm:preserving inaccessibility}
$\kappa$ remains strongly inaccessible in $V[K]$.
\end{lemma}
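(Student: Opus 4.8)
The statement to prove is that $\kappa$ remains strongly inaccessible in $V[K]$, where $K$ is $\Rforce_w$-generic and $w = u\restriction\kappa^+$ has length $\kappa^+$. The plan is to use the geometric/bounding analysis already available (Lemmas~\ref{thm: bounding sets} and~\ref{geometric characterization}) to reduce the inaccessibility of $\kappa$ to two things: that $\kappa$ remains a cardinal (indeed a limit cardinal), and that $2^\gamma < \kappa$ for every $\gamma < \kappa$ in $V[K]$. Both will follow by localizing the forcing below $\kappa$.

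First I would observe, using Lemma~\ref{thm: bounding sets}(c), that every bounded subset of $\kappa$ in $V[K]$ already appears in some initial segment $V[\vec u\restriction\xi,\vec F\restriction\xi]$, and by Lemma~\ref{thm: bounding sets}(b) this is a generic extension of $V$ by $\Rforce_{u_\xi}$, a forcing of size $\kappa_\xi < \kappa$. Since $\kappa$ is inaccessible in $V$ (being hypermeasurable) and $\kappa_\xi < \kappa$, a forcing of size $<\kappa$ cannot collapse $\kappa$ nor push $2^{\kappa_\xi}$ up to $\kappa$; hence $\kappa$ is still a regular cardinal and $2^\gamma < \kappa$ for all $\gamma < \kappa$ in $V[K]$. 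The one remaining point is that $\kappa$ is a \emph{limit} cardinal in $V[K]$: this is where the hypothesis $\len(w) = \kappa^+$ enters, exactly as in the cited work of Mitchell~\cite{mitchell82} and Gitik~\cite{gitik}. Because $w$ is long, the club $C$ added by $\Rforce_w$ is a club of $\kappa$ whose limit points are cofinal in $\kappa$, and on a long measure sequence the Radin generic diagonalizes through the measures so that unboundedly many $\kappa_\xi$ are themselves former measurable (hence inaccessible) cardinals, or at least genuine cardinals of $V[K]$; in any case $C$ witnesses that $\kappa$ is a limit of $V[K]$-cardinals, so it is a limit cardinal. Combining regularity, limit-cardinality, and the strong-limit computation $2^\gamma<\kappa$, we conclude $\kappa$ is strongly inaccessible in $V[K]$.

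The main obstacle is the limit-cardinal (equivalently, non-collapsing-cofinally-below-$\kappa$) part: one must know that $\Rforce_w$ with $\len(w) = \kappa^+$ does not collapse $\kappa$ to have cofinality $<\kappa$, i.e.\ that $C$ is genuinely unbounded in $\kappa$ in $V[K]$ and that cardinals are not collapsed on a tail. This is precisely the content imported from Mitchell~\cite{mitchell82}: with a measure sequence of length $\kappa^+$ (rather than length $1$, which would give a Prikry-type sequence collapsing cofinality), Radin forcing preserves inaccessibility. So rather than reprove that, I would state the reduction above carefully and then cite Mitchell (and Gitik's exposition) for the key non-collapsing fact, which the paper already does. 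The remaining steps — that size-$<\kappa$ forcing preserves regularity of $\kappa$ and keeps $2^\gamma$ below $\kappa$ — are routine given the chain condition Lemma~\ref{chain condition for radin forcing} and the bounding Lemma~\ref{thm: bounding sets}.
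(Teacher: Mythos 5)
Your strong limit computation is fine: by Lemma \ref{thm: bounding sets}(c) every bounded subset of $\kappa$ in $V[K]$ lives in some $V[\vec u\upharpoonright\xi,\vec F\upharpoonright\xi]$, which is an extension of $V$ by a forcing of size $<\kappa$, so $2^\gamma<\kappa$ for all $\gamma<\kappa$. The genuine gap is in the regularity part, which is the actual content of the lemma. Your claim that ``a forcing of size $<\kappa$ cannot collapse $\kappa$ \dots hence $\kappa$ is still a regular cardinal'' is a non sequitur: a cofinal function $f\colon\delta\to\kappa$ with $\delta<\kappa$ is not a bounded subset of $\kappa$, so Lemma \ref{thm: bounding sets} says nothing about where it lives, and smallness of the intermediate forcings does not transfer to $V[K]$. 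Indeed the paper itself records that $\Rforce_{w\upharpoonright\gamma}$ forces $\cf(\kappa)=\cf(\gamma)$ for $\gamma<\kappa^+$ of small cofinality, even though that forcing also captures all bounded subsets of $\kappa$ in small pieces --- so no argument of this shape can yield regularity; the hypothesis $\len(w)=\kappa^+$ must be used in an essential way.

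Deferring the key fact to a citation of Mitchell does not close the gap either: Mitchell's theorem concerns plain Radin forcing, whereas $\Rforce_w$ here has interleaved collapses (and, in the later variant, interleaved Mitchell forcings), which is exactly why the paper supplies its own proof, following Cummings \cite{cummings}. That proof is a reflection argument you would need to reproduce or adapt: assume $p\Vdash$``$\dot f\colon\delta\to\kappa$ is cofinal'', take $\mathbf X\prec H(\theta)$ of size $\kappa$ with $V_\kappa\subseteq\mathbf X$, ${}^{<\kappa}\mathbf X\subseteq\mathbf X$ and $p,\dot f,w,\Rforce_w,\kappa^+\in\mathbf X$, let $\pi\colon\mathbf X\to\mathbf N$ be the transitive collapse, and use the geometric characterization (Lemma \ref{geometric characterization}) to see that any $\Rforce_{w\upharpoonright\gamma}$-generic with $\sup(\mathbf X\cap\kappa^+)\leq\gamma<\kappa^+$ is $\pi(\Rforce_w)$-generic over $\mathbf N$; reflecting $\dot f$ through $\pi$ and choosing $\gamma$ of an appropriate cofinality below $\kappa$ then contradicts ``$\cf(\kappa)=\cf(\gamma)$'' in the $\Rforce_{w\upharpoonright\gamma}$-extension. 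None of this machinery (the elementary submodel, the collapse, the use of Lemma \ref{geometric characterization}) appears in your proposal, so as written the central claim of the lemma is left unproved.
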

\begin{proof}
We follow Cummings \cite{cummings}.
Suppose not and let $p \in \Rforce_w, \delta < \kappa$ and $\dot{f}$ be such that
\begin{center}
$p \Vdash$``$\dot{f}: \delta \to \k$ is cofinal''.
\end{center}
Let $\theta > \kappa$ be large enough regular such that $p, \dot{f}, w, \Rforce_w \in H(\theta)$
and let $\mathbf{X} \prec H(\theta)$
be such that
\begin{enumerate}
\item $p, \k^+, \dot{f}, w, \Rforce_w \in \mathbf{X}.$
\item $V_{\k} \subseteq \mathbf{X}.$
\item $^{<\k}$$\mathbf{X} \subseteq \mathbf{X}.$
\item $|\mathbf{X}|=\k$.
\end{enumerate}
Let $\pi: \mathbf{X} \to \mathbf{N}$ be the Mostowski collapse of $\mathbf{X}$ onto a transitive model $\mathbf{N}.$ Note that
$\pi \upharpoonright \mathbf{X} \cap V_{\k+1} =id \upharpoonright \mathbf{X} \cap V_{\k+1}.$

Let $v=\pi(w)$ and $\beta=\pi(\k^+).$ Then
\[
\pi(\mathcal{F}_w) = \mathcal{F}_w \cap \mathbf{X} = \mathcal{F}_w \cap \mathbf{N}
\]
and
\[
\forall \alpha  \in \mathbf{X} \cap \k^+,~  \pi(v(\alpha))=v(\pi(\alpha)) = v(\alpha) \cap \mathbf{X} = w(\alpha  ) \cap \mathbf{N}.
\]
Let $\bar \beta = \sup (\mathbf{X} \cap \k^+) < \k^+.$ Using  Lemma \ref{geometric characterization},
if $G$ is $\Rforce_{w \upharpoonright  \gamma}$-generic over $V$, where $\bar \beta \leq \gamma < \k^+$, then $G$ is $\pi(\Rforce_w)$-generic over $N$.
But note that for any limit ordinal $\gamma$ as above with $\cf(\gamma)< \k,$ we have
\[
 \Vdash_{\Rforce_{w \upharpoonright  \gamma}} \text{``}\cf(\k)=\cf(\gamma)\text{''}.
\]
We get a contradiction and the lemma follows.
\end{proof}
It follows that
\[
CARD^{V[K]} \cap \kappa = \bigcup_{\a \in C} \{\alpha, \alpha^+, \alpha^{++}, \a^{+3}, \a^{+4}, \alpha^{+5}  \}.
\]
As every limit point of $C$ is singular in $V[K],$ it follows that $\kappa$ is the least inaccessible cardinal. Also note that $\lim(C)$, the set of limit points of $C$, is exactly the set of all singular cardinals below $\k$ in $V[K]$.



\subsection{The final model}
\label{sec: final model}
Suppose that  $GCH$ holds and $\k$ is an $H(\l^{++})$-hypermeasurable cardinal where  $\l$ is a measurable cardinal above $\k$.
We define a generic extension $W$ of $V$ which satisfies $W \models$``$\k$ is inaccessible and for all singular cardinals $\delta< \k, 2^\delta=\delta^{+3}$
and $\TP(\delta^{++})$ holds''.

We will next give a vague and incomplete description of the way the model $W$ is constructed.
Thus we start with $GCH$ and an $H(\l^{++})$-hypermeasurable embedding $j: V \to M$ with $i: V \to N$
being its ultrapower embedding.

We first define a generic extension $V^1$ of $V$
in which $\k$ remains $H(\l^{++})$-hypermeasurable as witnessed by an elementary embedding $j^1: V^1 \to M^1$ which extends $j$  and in which there exists a generic filter for a suitably chosen forcing notion defined in $\Ult(V^1, U^1),$ where $U^1$ is the normal measure on $\k$ derived from $j^1$.

We then define a generic extension $V^2$ and $V^1$ in which $\kappa$ remains $H(\k^{++})$-hypermeasurable  witnessed by an elementary embedding $j^2: V^2 \to M^2$ which extends $j^1$ and such that if $U^2$ is the normal measure derived from $j^2,$ then for $U^2$-measure one many $\delta < \k$ we have $\delta$ is measurable, $2^\delta=\delta^{+3}$ and $\TP(\delta^{++})$ holds. Further the model $V^2$ satisfies the hypotheses at the beginning of Subsection \ref{measure sequences}.

Working in $V^2$ we force with the forcing notion $\Rforce_w,$ for a suitably chosen measure sequence $w$, to find a generic extension $V^3$
 of $V^2$. We  show that in $V^3$ the tree property holds at double successors of the limit points of the Radin club and using it we conclude that $W=V^3$ is the required model.

Thus suppose that $V$ satisfies $GCH$ and let $\k$ be an $H(\l^{++})$-hypermeasurable cardinal in it where  $\l$ is the least measurable cardinal above $\k$.
 Also let $f: \k \to \k$ be defined by
\[
f(\a)=(\min\{\beta> \a: \beta \text{~is a measurable cardinal~}     \})^+.
\]
 Let $j: V \to M$ witness the  $H(\l^{++})$-hypermeasurability of $\k$ and suppose $j$  is generated by a $(\k, \l^{++})$-extender, i.e.,
\[
M=\{j(g)(\a): g: \k \to V, \a < \l^{++}             \}.
\]
Then $j(f)(\k)=\l^+$.
Also let $U$ be the normal measure derived from $j$; $U=\{ X \subseteq \k: \k \in j(X)    \}$ and let $i: V \to N \simeq \Ult(V, U)$
be the induced ultrapower embedding. Let $k: N \to M$ be elementary so that $j=k \circ i.$
\begin{notation}
\begin{enumerate}
\item [(a)] For each infinite cardinal $\alpha \leq \k$ let $\alpha_*$ denote the least measurable cardinal above $\alpha.$ Note that $\k_*=\lambda.$

\item [(b)] For an infinite cardinal $\a \leq \k$ let $\mathbb{M}_\a= \mathbb{M}(\alpha, \alpha_*, \alpha_*^+)$.
\end{enumerate}
\end{notation}
We start with the following lemma.
\begin{lemma}
\label{a very basic iteration lemma}
 There exists a cofinality preserving generic extension $V^1$ of $V$ satisfying the following conditions:
\begin{enumerate}
\item [(a)] There is $j^1: V^1 \to M^1$ with critical point $\k$
such that $H(\l^{++}) \subseteq M^1$ and $j^1 \upharpoonright V= j.$

\item [(b)] $j^1$ is generated by a $(\k, \l^{++})$-extender.

\item [(c)] If $U^1$ is the normal measure derived from $j^1$ and if $i^1: V^1 \to N^1 \simeq \Ult(V^1, U^1)$
is the ultrapower embedding, then there exists  $\bar g \in V^1$
which is $i^1(\Add(\k, \l^{+})_{V^1})$-generic over $N^1.$
Further $i^1 \upharpoonright V=i.$
\end{enumerate}
\end{lemma}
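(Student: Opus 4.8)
The goal is to prepare the universe so that $j$ lifts to a generic extension $V^1$ while simultaneously arranging a generic object, over the ultrapower $N^1$, for the image of $\Add(\kappa,\lambda^+)$. The standard tool is a reverse Easton iteration that adds Cohen subsets along the way, where at each inaccessible (or measurable) stage $\alpha<\kappa$ one forces with something like $\Add(\alpha,\alpha^{++})$ (or, more precisely, whatever is dictated by the bookkeeping function $f$ already introduced), and at $\kappa$ itself one forces with $\Add(\kappa,\lambda^+)$. The key structural facts are: (i) each initial segment $\mathbb{P}_\alpha$ has size $<\kappa$ and is $\kappa$-c.c. (being essentially a small support iteration of small forcings), so cofinalities are preserved below $\kappa$; (ii) the tail of the iteration above $\kappa$ is sufficiently closed; and (iii) $\mathbb{P}_\kappa$ is $\kappa$-c.c.\ with the Easton property, so that the quotient term forcing $\Add(\kappa,\lambda^+)_{V[G_\kappa]}/\mathbb{P}_\kappa$ is forcing isomorphic to $\Add(\kappa,\lambda^+)_V$, exactly as in the application of \cite{cummings} Fact 2, \S1.2.6 used later in Lemma \ref{laver vs mitchell}.

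First I would fix the iteration $\mathbb{P}=\langle\mathbb{P}_\alpha:\alpha\le\kappa+1\rangle$ with reverse Easton support, where $\dot{\mathbb{Q}}_\alpha$ is $\dot{\Add}(\alpha,f(\alpha))$ (trivial unless $\alpha$ is, say, measurable or inaccessible) for $\alpha<\kappa$, and $\dot{\mathbb{Q}}_\kappa=\dot{\Add}(\kappa,\lambda^{+})$; here $f$ is chosen so that $j(f)(\kappa)=\lambda^{+}$, which is possible since $j$ is generated by a $(\kappa,\lambda^{++})$-extender and $\lambda^+<\lambda^{++}$. Let $G=G_\kappa * g$ be $\mathbb{P}_{\kappa+1}$-generic over $V$ and set $V^1=V[G]$. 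Cofinality preservation is immediate from the chain-condition and closure facts in (i)--(ii). To lift $j$, I would use the usual factorization $j(\mathbb{P})\cong \mathbb{P}_\kappa * \dot{\Add}(\kappa,\lambda^{+})*\dot{\mathbb{P}}_{(\kappa,j(\kappa))}*\dot{\Add}(j(\kappa),j(\lambda^{+}))$, where $\dot{\mathbb{P}}_{(\kappa,j(\kappa))}$ is forced to be $\kappa^{+}$-closed (in $M$-computation, using $H(\lambda^{++})\subseteq M$) and of size below $j(\kappa)$. One builds, working in $V^1$, a generic $K$ for $\mathbb{P}_{(\kappa,j(\kappa))}$ over $M[G]$ by a master-condition/closure argument (the number of dense sets is at most $\lambda^{++}=|j(\mathbb{P})|$-many, enumerated in $V^1$, and the forcing is $\kappa^+$-closed, which suffices since $2^{\kappa}=\kappa^+$ in the relevant model and one can thread through), and then one transfers $g$ along $j$ after Woodin's surgery to obtain a generic for $\Add(j(\kappa),j(\lambda^+))$ over $M[G*K]$ containing $j''g$. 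This yields $j^1:V^1\to M^1$ extending $j$, with $H(\lambda^{++})\subseteq M^1$ since the extra forcing over $M$ is small relative to $\lambda^{++}$ and $\kappa^+$-closed; moreover $j^1$ is still generated by the same $(\kappa,\lambda^{++})$-extender, giving (a) and (b).

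For (c), the point is that the ultrapower part can be treated in parallel: let $i:V\to N$ be the ultrapower by $U$ and $k:N\to M$ with $j=k\circ i$; lift $i$ to $i^1:V^1\to N^1$ exactly as above (using that $i(\mathbb{P})$ has the analogous factorization and that $N$ is closed under $\kappa$-sequences), with $i^1\restriction V=i$, and arrange $k$ to lift compatibly so $j^1=k^1\circ i^1$. Now I need $\bar g\in V^1$ that is $i^1(\Add(\kappa,\lambda^{+})_{V^1})$-generic over $N^1$. Since $\mathbb{P}_\kappa$ is $\kappa$-c.c.\ of size $\kappa$, the quotient $\Add(\kappa,\lambda^+)_{V^1}/\mathbb{P}_\kappa$ is isomorphic to $\Add(\kappa,\lambda^+)_V$ (\cite{cummings} Fact 2, \S1.2.6), and applying $i^1$ this transfers the problem to finding a generic over $N^1=N[i^1(G)]$ for $i(\Add(\kappa,\lambda^+)_V)$ — but $i(\Add(\kappa,\lambda^+)_V)=\Add(i(\kappa),i(\lambda^+))_N$, and since $i$ has critical point $\kappa$ and $|i(\kappa)|,|i(\lambda^+)|\le (2^\kappa)^+=\kappa^{++}$ in $V$, while $N$ is computed inside $V$ with $N^\kappa\subseteq N$, the forcing $\Add(i(\kappa),i(\lambda^+))_N$ has only $\kappa^{++}$-many maximal antichains from the point of view of $V$ (hence of $V^1$), and it is $<i(\kappa)$-closed in $N$, in particular $\kappa^+$-closed and of size $\le\kappa^{++}$; enumerating its dense sets in $V^1$ (where $GCH$ at $\kappa^+$ still holds after the $\kappa$-c.c.\ forcing $\mathbb{P}_\kappa$ followed by $\Add(\kappa,\lambda^+)$, so the count is $\kappa^{++}$) and meeting them one at a time, possible since each of fewer than $i(\kappa)$ steps stays within $N^1$'s closure, one constructs $\bar g\in V^1$ directly (a build of length $\kappa^{++}$ inside $V^1$). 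Pulling back along the isomorphism gives the required $i^1(\Add(\kappa,\lambda^+)_{V^1})$-generic.

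The main obstacle is getting the cardinal arithmetic and closure bookkeeping to line up so that the diagonal generic $\bar g$ can actually be built \emph{in} $V^1$: one must check that after $\mathbb{P}_{\kappa+1}$ the relevant instance of $GCH$ (namely $2^{\kappa}=\kappa^+$, or whatever bound is needed to enumerate the $\kappa^{++}$-many dense sets of the image forcing) survives, and that $N^1$ — equivalently $\Ult(V^1,U^1)$ — remains sufficiently closed (e.g.\ $\kappa^+$-closed in $V^1$, inherited from $N^\kappa\subseteq N$ plus the lift) for the step-by-step meeting of dense sets to stay inside it. This is routine but must be done carefully, and it is exactly where the hypothesis that $\lambda$ is the \emph{least} measurable above $\kappa$ and that $j$ is an extender embedding (so $M,N$ have the right closure and size properties) gets used.
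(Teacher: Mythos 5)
The paper's own ``proof'' of Lemma~\ref{a very basic iteration lemma} is simply a citation to Theorem~3.1 and Remark~3.6 of \cite{friedman-honzik2}, so what must be compared here is your reconstruction against what that reference actually requires. The structural outline (reverse Easton preparation, factorization of $j(\mathbb{P})$, term-forcing transfer, and lifting $i$ and $k$ in parallel) is the right framework for parts (a) and (b). The problem is entirely in your construction of the guiding generic $\bar g$ for part (c), and it hinges on a cardinal-arithmetic error. You assert that $|i(\lambda^+)| \le (2^\kappa)^+ = \kappa^{++}$ in $V$. This is false: $i$ is order-preserving on ordinals, so $i(\lambda^+) \ge \lambda^+$, and since $\lambda$ is a measurable cardinal above $\kappa$ we have $\lambda^+ > \kappa^{++}$. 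The correct estimate (under $GCH$) is $|i(\lambda^+)|^V = \lambda^+$, and the forcing $\Add(i(\kappa),i(\lambda^+))_N = i(H(\lambda^+))$-sized, i.e.\ of $V$-cardinality $\lambda^+$, while the family of its maximal antichains lying in $N$ has $V$-cardinality at least $\lambda^+$ (it lives in $i(H(\lambda^{++}))$, of $V$-size $\le \lambda^{++}$). None of these numbers is $\kappa^{++}$.

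Once the sizes are corrected, the diagonal construction of $\bar g$ collapses. The forcing $\Add(i(\kappa),i(\lambda^+))_N$ is only $\kappa^+$-closed in $V$ (decreasing $\kappa$-sequences lie in $N$ by $N^\kappa\subseteq N$, but $\kappa^+$-sequences generally do not, and even their unions need not be in $N$ or have $N$-size $<i(\kappa)$). You therefore cannot run a step-by-step enumeration of length $\lambda^+$ or even $\kappa^{++}$ to meet all the dense sets of $N$; the closure is far too weak and the usual diagonal argument simply does not apply. This is precisely the obstacle that forces Friedman, Honzik and Stejskalova to use a more refined preparation than a bare Cohen iteration (a Sacks-style iteration with fusion, or a comparable device that lets one build the guiding generic inside the extension despite this cardinality gap), together with the factorization of the extender embedding through intermediate models $N' \to \bar N' \to M$ as later deployed in Lemma~\ref{preparation model for double succseeor model} of this paper. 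Your write-up correctly senses that ``the bookkeeping must line up,'' but the specific claim that makes it line up is wrong, and repairing it requires a genuinely different idea, not just more careful accounting.
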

\begin{proof}
See \cite{friedman-honzik2} Theorem 3.1 and Remark 3.6.
\end{proof}
 Let $V^1$ be the model constructed above.
\begin{lemma}
\label{preparation model for double succseeor model}
Work in $V^1$.
There exists a forcing iteration $\MPB_\k$ of length $\k$ such that if $G_\k \ast g$ is $\MPB_\k \ast \dot{\mathbb{M}}(\k, \l, \l^+)$-generic over
$V^1$ and $V^2=V^1[G_\k \ast g]$, then the following conditions hold:
\begin{enumerate}
\item [(a)] There is $j^2: V^2 \to M^2$ with critical point $\k$
such that $H(\k^{++}) \subseteq M^2$ and $j^2 \upharpoonright V^1= j^1.$

\item [(b)] $j^2$ is generated by a $(\k, \l^{+})$-extender.

\item [(c)] $V^2 \models$``$\l= \k^{++}+2^\k=\l^+=\k^{+3} + \TP(\l)$''.

\item [(d)] If $U^2$ is the normal measure derived from $j^2$ and if $i^2: V^2 \to N^2 \simeq \Ult(V^2, U^2)$
is the ultrapower embedding, then there exists  $F \in V^2$
which is $\Col(\k^{+5}, < i(\k))_{N^2}$-generic over $N^2.$

\end{enumerate}
\end{lemma}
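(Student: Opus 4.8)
The plan is the standard ``prepare, then lift'' argument. First I would let $\mathbb{P}_\kappa$ be the reverse Easton iteration of length $\kappa$ which at each inaccessible $\alpha<\kappa$ forces with $\dot{\mathbb{M}}_\alpha=\dot{\mathbb{M}}(\alpha,\alpha_*,\alpha_*^{+})$ (the parameters being those prescribed by $f$) and with trivial forcing otherwise. Since $f\upharpoonright\kappa=j^1(f)\upharpoonright\kappa$ and $j^1(f)(\kappa)=\lambda^{+}$, the stage-$\kappa$ part of $j^1(\mathbb{P}_\kappa)$ is exactly $\mathbb{M}(\kappa,\lambda,\lambda^{+})$, so $j^1(\mathbb{P}_\kappa)\upharpoonright(\kappa+1)=\mathbb{P}_\kappa\ast\dot{\mathbb{M}}(\kappa,\lambda,\lambda^{+})$ and
\[
j^1\bigl(\mathbb{P}_\kappa\ast\dot{\mathbb{M}}(\kappa,\lambda,\lambda^{+})\bigr)=\mathbb{P}_\kappa\ast\dot{\mathbb{M}}(\kappa,\lambda,\lambda^{+})\ast\dot{\mathbb{P}}_{(\kappa,\,j^1(\kappa))}\ast\dot{\mathbb{M}}\bigl(j^1(\kappa),\,j^1(\lambda),\,j^1(\lambda)^{+}\bigr),
\]
where the middle tail $\dot{\mathbb{P}}_{(\kappa,\,j^1(\kappa))}$ is forced to be $({<}\theta)$-directed closed for $\theta$ the least inaccessible of $V^1$ above $\lambda$, hence in particular far more than $\lambda^{+3}$-closed. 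The usual analysis of the Mitchell iteration (cf. Lemma~\ref{iteration of mitchell} and \cite{cummings}) shows $\mathbb{P}_\kappa$ preserves cardinals and has the expected chain condition and closure; clause (c) then follows in $V^2=V^1[G_\kappa\ast g]$ from Lemma~\ref{basic facts on mitchell forcing}, $GCH$ in $V^1$, and Levy--Solovay (as $|\mathbb{P}_\kappa|=\kappa<\lambda$ keeps $\lambda$ measurable, so $\TP(\lambda)$ holds after $\mathbb{M}(\kappa,\lambda,\lambda^{+})$).

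To get the lift, work in $V^2$. Because $\dot{\mathbb{P}}_{(\kappa,\,j^1(\kappa))}$ is $({<}\theta)$-directed closed in $M^1[G_\kappa\ast g]$ with $\theta$ far larger than the number (at most $\lambda^{+3}$, using $H(\lambda^{++})^{V^1}\subseteq M^1$ and $GCH$) of its dense subsets there, I can build a generic $G^{\mathrm{tail}}$ for it over $M^1[G_\kappa\ast g]$ inside $V^2$. For the top coordinate, factor $\mathbb{M}(\kappa,\lambda,\lambda^{+})\cong\Add(\kappa,\lambda^{+})\ast\dot{\mathbb{Q}}$ by Lemma~\ref{more basic facts on mitchell forcing}(c) and split $g=g_0\ast g_1$; then $\bigcup j^1[g_0]$ is a condition in $\Add(j^1(\kappa),j^1(\lambda)^{+})$ (a function of size $\lambda^{+}<j^1(\kappa)$), so it serves as a master condition. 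The generic for $\Add(j^1(\kappa),j^1(\lambda)^{+})$ over $M^1[G_\kappa\ast g\ast G^{\mathrm{tail}}]$ below this master condition cannot be produced by mere closure; here I would invoke the generic $\bar g$ furnished by Lemma~\ref{a very basic iteration lemma}(c): first lift $i^1$ through the $i^1$-image of the preparation (using $\bar g$ for its top Cohen part), transfer along $k$, and then apply Woodin's surgery to adjust the resulting filter so that it contains $j^1[g_0]$. A master condition obtained from closure together with the $\kappa^{+}$-distributivity of $\dot{\mathbb{Q}}$ handles the $g_1$-coordinate. This yields $j^2\colon V^2\to M^2:=M^1[j^1(G_\kappa\ast g)]$ with $j^2\upharpoonright V^1=j^1$; since $H(\lambda^{++})^{V^1}\subseteq M^1$ and $M^2$ contains $G_\kappa\ast g$, every element of $H(\kappa^{++})^{V^2}=H(\lambda)^{V^2}$ has a name in $M^1$ and so lies in $M^2$, giving (a). For (b) one derives from $j^2$ the $(\kappa,\lambda^{+})$-extender and checks it generates $M^2$: the lift was carried out using only the $\lambda^{+}$ new Cohen subsets of $\kappa$ coming from $g_0$ (equivalently from $\bar g$), so $\lambda^{+}$ generators suffice.

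Finally, since $j^2\upharpoonright V^1=j^1$ we get $U^2\cap V^1=U^1$ and a coherent lift $i^2\colon V^2\to N^2$ of $i^1$ with $k\circ i^2=j^2$, where $N^2$ is the generic extension of $N^1$ by $i^1$ applied to the (suitably augmented) preparation. For (d) I would extract the desired $\Col(\kappa^{+5},{<}\,i^2(\kappa))_{N^2}$-generic $F$ inside $V^2$ from the $i^2$-image of the tail, using the projection from a product of an $\Add$-forcing and a term forcing onto Mitchell forcing (Lemma~\ref{more basic facts on mitchell forcing}(b)) together with term-forcing absorption, in the style of Cummings' treatment of Radin forcing with interleaved collapses \cite{cummings}; equivalently one interleaves the collapses $\Col(\alpha^{+5},{<}\,\alpha_{**})$ into $\mathbb{P}_\kappa$ at the nontrivial stages and reads $F$ off $i^2(\mathbb{P}_\kappa)$ at $\kappa$. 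I expect the hard part to be the middle step: performing the surgery so that the lifted top generic simultaneously contains $j^1[g]$ and lies in $V^2$, while keeping the three embeddings $j^2,i^2,k$ mutually coherent and arranging (d); the remaining verifications are routine once $\mathbb{P}_\kappa$ is set up as above. The whole argument runs parallel to, and uses the same techniques as, the corresponding preparation in \cite{friedman-honzik2}.
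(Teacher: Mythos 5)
Your overall architecture (Mitchell iteration below $\kappa$, top forcing $\mathbb{M}(\kappa,\lambda,\lambda^{+})$ factored as $\Add(\kappa,\lambda^{+})\ast\dot{\MQB}$, the guiding generic $\bar g$ plus Woodin surgery for the Cohen coordinate, distributivity for the rest) matches the paper, but there is a genuine gap at the step where you claim to ``build a generic $G^{\mathrm{tail}}$ for $\dot{\MPB}_{(\kappa,j^1(\kappa))}$ over $M^1[G_\kappa\ast g]$ inside $V^2$'' by counting dense sets against the closure of the tail. The closure of the tail is \emph{internal} to $M^1[G_\kappa\ast g]$, and $M^1$ is only an $H(\lambda^{++})$-sized extender model: it is closed under $\kappa$-sequences but certainly not under $\kappa^{+}$-sequences (let alone $\lambda^{++}$-sequences) in $V^1$ or $V^2$. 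Externally there are about $\lambda^{++}$ maximal antichains of the tail lying in $M^1[G_\kappa\ast g]$, so your decreasing sequence must pass limit stages of cofinality well above $\kappa$; at such stages the initial segment need not belong to $M^1[G_\kappa\ast g]$, the internal closure cannot be applied, and the forcing has no closure in $V^2$ itself. So the construction stalls. This is exactly why the paper never works directly over $M^1$: it factors $j^1$ through the normal ultrapower $N'$ and the intermediate model $\bar N'$ (the length-$\lambda^{+}$ extender), builds the tail generic $H$ over $N'[G_\kappa][g(0)_1]$ --- where GCH gives only $\kappa^{+}$ many antichains externally and $^{\kappa}N'[i'(G_\kappa)]\subseteq N'[i'(G_\kappa)]$ holds in $V^1[G_\kappa][g(0)_1]$ --- and then \emph{transfers} $H$, the Cohen generic obtained from $\bar g$, and the surgered filter along $\bar i'$ and $\bar k'$/$k'$ to reach the $M^1$ side. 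Without this factor-and-transfer structure the lift of $j^1$ does not get off the ground.

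A second, related problem is your treatment of clause (d). The proposed shortcut ``interleave the collapses $\Col(\alpha^{+5},<\alpha_{**})$ into $\MPB_\kappa$ and read $F$ off $i^2(\MPB_\kappa)$ at $\kappa$'' cannot produce what is needed: the stage-$\kappa$ component of $i^2(\MPB_\kappa)$ would be a collapse up to (the $N^2$-version of) $\kappa_{**}$, i.e.\ far below $i^2(\kappa)$, not $\Col(\kappa^{+5},<i^2(\kappa))_{N^2}$; and altering $\MPB_\kappa$ in this way would in any case interfere with the cardinal pattern the iteration is supposed to arrange at its nontrivial stages. In the paper, $F$ is obtained by the same small-model device as above: one constructs it by hand as a filter meeting the $\kappa^{+}$ many relevant antichains of $\Col(\kappa^{+5},<i'(\kappa))_{N'[i'(G_\kappa)]}$, using $^{\kappa}N'[i'(G_\kappa)]\subseteq N'[i'(G_\kappa)]$ and the closure of the collapse, checks mutual genericity with the Cohen guiding generic $g_a$, and transfers the result along $\bar i'$ so that it is generic over $N^2\simeq\Ult(V^2,U^2)$. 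Your clauses (a)--(c) and the surgery step for the Cohen coordinate are essentially right, but the proof needs the factorization through $N'$ and $\bar N'$ and the explicit transfer arguments to repair the two steps above.
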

\begin{proof}
Work in $V^1$. Factor $j^1$ in two steps through the models
\[
N'=\text{~the transitive collapse of~}\{j^1(f)(\k): f: \k \to V^1            \}
\]
\[
 \bar N'=\text{~the transitive collapse of~}\{j^1(f)(\a): f: \k \to V^1, \a < \l^+            \}.
\]
$N'$ is the familiar ultrapower approximating $M^1$, while $\bar N'$ corresponds to the
extender of length $\l^+$. We have maps

$\hspace{6.cm}$$i': V^1 \to N'$,

$\hspace{6.cm}$$k': N' \to M^1,$

$\hspace{6.cm}$$\bar i': N' \to \bar N',$

$\hspace{6.cm}$$\bar k': \bar N' \to M^1$

such that

$\hspace{5.cm}$$k' \circ i'=j^1$~~~~ $\&$~~~~ $\bar k' \circ \bar i' =k'$.

Let
\[
\MPB_{\k}=\langle  \langle \MPB_\a: \a \leq \k      \rangle, \langle \dot{\MQB}_\a: \a < \k          \rangle\rangle
\]
be the reverse Easton iteration of forcing notions  such that
\begin{enumerate}
\item If $\a < \k$ is a measurable limit of measurable cardinals, then $\Vdash_\a$``$\dot{\MQB}_\a =\dot{\mathbb{M}}(\a, \a_*, \a_*^+)$''.
\item Otherwise, $\Vdash_\a$``$\dot{\MQB}_\a$ is the trivial forcing''.
\end{enumerate}
Let
$G_k \ast g$
be $\MPB_\k \ast \dot{\mathbb{M}}(\k, \l, \l^+)$-generic over $V^1$.

Note that  we can factor $\mathbb{M}(\k, \l, \l^+)$
as
$$\mathbb{M}(\k, \l, \l^+)=\Add(\k, \l^+) \ast \dot{\tilde \MQB},$$
where $\dot{\tilde \MQB}$ is forced to be $\k^+$-distributive. Let us factor $g$ as  $g=g(0) \ast g(1).$

As $\MPB_\k$ is computed in all of the models the same and the embeddings $\bar i', k', \bar k'$ have critical point bigger than $\k,$ we can easily lift them to get

$\hspace{6.cm}$$k': N'[G_\k] \to M^1[G_\k],$

$\hspace{6.cm}$$\bar i': N'[G_\k] \to \bar N'[G_\k],$

$\hspace{6.cm}$$\bar k': \bar N'[G_\k] \to M^1[G_\k]$,

where $\bar k' \circ \bar i' =k'.$ The models $\bar N'[G_\k]$ and $M^1[G_\k]$ are closed under $\k$-sequences in $V^1[G_\k]$
and they compute the cardinals up to $\l^+$ in the correct way, in particular, the least measurable above $\k$
in these models is $\l$, and so if we set $\MQB_\k =\Add(\k, \l^+)_{V^1[G_\k]},$ then it is computed in the same way in the models
$\bar N'[G_\k]$ and $M^1[G_\k]$, i.e.,
$$\MQB_\k=(\MQB_\k)_{\bar N'[G_\k]}=(\MQB_\k)_{M^1[G_\k]}.$$
On the other hand
\[
(\MQB_\k)_{N'[G_\k]}=\Add(\k, \bar \l)_{V^1[G_\k]},
\]
where  $\bar \l = (i'(f)(\k)^+)_{N'}.$ Note that $\k^+ < \bar \l < \k^{++}.$
Factor $g(0)$ as $g(0)_1 \times g(0)_2$, which corresponds to
\[
\Add(\k, \l^+)_{V^1[G_\k]} = \Add(\k, \bar \l)_{V^1[G_\k]} \times \Add(\k, \l^+ \setminus \bar \l)_{V^1[G_\k]}.
\]
We build further
extensions

$\hspace{5.cm}$$k': N'[G_\k][g(0)_1] \to M^1[G_\k][g(0)],$

$\hspace{5.cm}$$\bar i': N'[G_\k][g(0)_1] \to \bar N'[G_\k][g(0)],$

$\hspace{5.cm}$$\bar k': \bar N'[G_\k][g(0)] \to M^1[G_\k][g(0)]$,

still preserving the relation $\bar k' \circ \bar i' =k'.$ 

Let us now write $i'(\MPB_\k)$ as
$$i'(\MPB_\k)=\MPB_\k \ast \dot{\Add}(\k, \bar \l)\ast \dot{\MQB}' \ast i'(\MPB_\k)_{(\k+1, i'(\k))} \ast \dot{\Add}(i'(\k), i'(\bar\l))\ast i'(\dot{\MQB}'),$$
where $\Vdash_{\MPB_\k \ast \dot{\Add}(\k, \bar \l)}$``$ \dot{\MQB}'$ is $\k^+$-distributive''.
Let $g(1)'$ be the filter generated by $i'^{''}(g(1))$. Then $g(1)'$ is $\dot{\MQB}'[G_\k \ast g(0)_1]$-generic over 
$N'[G_\k \ast g(0)_1].$

By standard arguments, we can find $H \in V^1[G_\k][g(0)_1]$,
which is $i'(\MPB_{\k})_{(\k+1, i'(\k))}$-generic over $N'[G_\k][g(0)_1]$ and hence we can get 

$\hspace{5.cm}$$i': V^1[G_\k] \to N'[i'(G_\k)].$

Also transfer $H$ along $\bar i', k'$ to get


$\hspace{5.cm}$$k': N'[i'(G_\k)] \to M^1[j^1(G_\k)],$

$\hspace{5.cm}$$\bar i': N'[i'(G_\k)] \to \bar N'[ \bar i' \circ i'(G_\k)],$

$\hspace{5.cm}$$\bar k': \bar N'[\bar i' \circ i'(G_\k)] \to M^1[j^1(G_\k)]$,

where all the maps are defined in $V^1[G_\k][g(0)]$. Let $l =\bar i' \circ i'.$
Since $\MPB_\k$ has size $\k$ and is $\k$-c.c., so the term forcing
\[
\Add(\k, \l^+)_{V^1[G_\k]} / \MPB_\k
\]
is forcing isomorphic to $\Add(\k, \l^+)_{V^1}$ (see \cite{cummings} Fact 2, $\S$1.2.6). By our assumption, we have
$\bar g \in V^1$,
which is $i'(\Add(\k, \l^+)_{V^1})$-generic over $N'$,
and using it we can define $g_a$ which is
\[
\Add(i(\k), i(\l^+))_{N'[i'(G_\k)]}
\]
generic over $N'[i'(G_\k)]$. Using the fact that
\[
V^1[G_\k][g(0)_1] \models ~``~^\k N'[i'(G_\k)] \subseteq N'[i'(G_\k)] \text{''}
\]
we also build $F$, which is $\Col(\k^{+5}, <i'(\k))_{N'[i'(G_\k)]}$ generic over $N'[i'(G_\k)]$.
Note that $g_a$ and $F$ are mutually generic.

Transfer $g_a$ and $F$ along $\bar i'$ to get new generics $\bar g_a$ and $\bar F.$
Now using Woodin's surgery argument, we can alter the filter $\bar g_a$ to find a generic filter $h_a$
with the additional property
$l''[g(0)] \subseteq h_a$. Also $h_a$ is easily seen to be mutually generic with
$\bar F.$

We now transfer $h_a$ along $\bar k'$ to get $H_a$ which is  $j^1(\MQB_\k)$-generic over $M^1$. Further,
$j^{1''}[g(0)] \subseteq H_a$, so we can build maps

$\hspace{4.cm}$$\bar j: V^1[G_\k \ast g(0)] \to M^1[j^1(G_\k \ast g(0))],$

$\hspace{4.cm}$$\bar k': \bar N'[l(G_\k \ast g(0))] \to M^1[j^1(G_\k \ast g(0))]$,

$\hspace{4.cm}$$l: V^1[G_\k \ast g(0)] \to \bar N'[ l(G_\k \ast g(0))].$

such that $\bar j= \bar k' \circ l.$ 

Now let us look at $\dot{\tilde\MQB}[G_\k \ast g(0)]$. It is $\k^+$-distributive in $V^1[G_\k \ast g(0)]$,
so  we can further extend the above embeddings and get

$\hspace{3.cm}$$\bar j: V^1[G_\k \ast g(0) \ast g(1)] \to M^1[j^1(G_\k \ast g(0) \ast g(1))],$

$\hspace{3.cm}$$\bar k': \bar N'[l(G_\k \ast g(0)\ast g(1))] \to M^1[j^1(G_\k \ast g(0)\ast g(1))]$,

$\hspace{3.cm}$$l: V^1[G_\k \ast g(0)\ast g(1)] \to \bar N'[ l(G_\k \ast g(0)\ast g(1))].$

Let
$$V^2=V^1[G_\k \ast g(0) \ast g(1)],$$
 $$M^2=M^1[j^1(G_\k \ast g(0) \ast g(1))] $$
and $$N^2=\bar N'[ l(G_\k \ast g(0)\ast g(1))].$$
Also let $j^2=\bar j.$ We argue
\[
\Ult(V[G_\k \ast g(0) \ast g(1)], U^2) \simeq N^2,
\]
where $U^2$ is the normal measure derived from $j^2$.
To see this, factor $l$ through $l^\dag: V^2 \to N^\dag \simeq \Ult(V^2, U'),$
where $U'$ is the normal measure derived from $l.$ Also let $k^\dag: N^\dag \to N^2.$
Then
$P(\k)_{V^2} \subseteq N^\dag$ and
 $N^\dag \models$``$2^\k=\l^+$''.
So $\crit(k^\dag)> \l^+$. Since $\l^+ \subseteq \range(k^\dag)$
and $N^2$ is generated by a $(\k, \l^+)$-extender, we have $N^2=N^\dag$
and we are done.

So if we let $i^2=l,$ then
\[
i^2: V^2 \to N^2
\]
is the ultrapower embedding.
Finally note that $F$ is generic for the appropriate collapse ordering.
The lemma follows.
\end{proof}
Note that in the model $V^2=V^1[G_\k \ast g]$, the following conditions are satisfied:
\begin{itemize}
\item  $V^2 \models$``$\l= \k^{++}+2^\k=\l^+=\k^{+3}$''.

\item There is $j^2: V^2 \to M^2$ with critical point $\k$
such that $H(\k^{++}) \subseteq M^2$ and $j^2 \upharpoonright V^1= j^1.$

\item  $j^2$ is generated by a $(\k, \l^+)$-extender.

\item  If $U^2$ is the normal measure derived from $j^2$ and if $i^2: V^2 \to N^2 \simeq \Ult(V^2, U^2)$
is the ultrapower embedding, then there exists  $F \in V^2$
which is $\Col(\k^{+5}, < i(\k))_{N^2}$-generic over $N^2.$
\end{itemize}
Thus the hypotheses of the beginning of Subsection  \ref{measure sequences}
are satisfied, and so, working in $V^2$, we can construct the pair $(j, F)$.
Let $u$ be the measure sequence constructed from it. Set $w= u \upharpoonright \k^+$ and let $\MRB_{w}$
be the corresponding forcing notion as in Definition \ref{radin forcing definition}. Also let $K$ be $\MRB_{w}$-generic over $V^2.$
Build the sequences $\vec{\k}= \langle \kappa_\xi: \xi < \kappa \rangle, \vec{u}= \langle u_\xi: \xi < \kappa \rangle$ and $\vec{F} = \langle F_\xi: \xi<\kappa \rangle$ from $K$, as in Subsection \ref{sec: basic properties}.

\subsection{$\TP(\kappa^{++})$ holds in $V^1[G_\k \ast g \ast K]$}
\label{sec:tree property at double successors}
In this subsection we show that $\TP(\kappa^{++})$ holds in $V^1[G_\k \ast g \ast K],$ and then in the next subsection, we complete the proof of  Theorem \ref{thm:main theorem} by showing that
\begin{center}
$V^1[G_\k \ast g \ast K] \models$``$\TP(\alpha^{++})$ holds for all singular cardinals $\alpha<\kappa$''.
\end{center}
As $V^1[G_\k \ast g \ast K] \models$``$\kappa^{++}=\lambda$'', it suffices to prove the following:
\begin{theorem}
\label{tree property at kappa}
$V^1[G_\k \ast g \ast K] \models$``$\TP(\lambda)$''.
\end{theorem}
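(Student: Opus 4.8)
The plan is to deduce $\TP(\lambda)$ in $V^2[K]:=V^1[G_\k\ast g\ast K]$, where $\lambda=\k^{++}$, by lifting an elementary embedding with critical point $\lambda$ over the whole of $\mathbb{M}(\k,\lambda,\lambda^+)\ast\dot{\RR}_w$, reading off a cofinal branch of a putative $\lambda$-Aronszajn tree in a generic extension of $V^2[K]$, and then showing that the intervening forcing cannot add such a branch. The embedding comes from $\lambda$ itself: since $\MPB_\k$ has size $\k<\lambda$, $\lambda$ is still measurable in $V^1[G_\k]$, so fix the ultrapower $j\colon V^1[G_\k]\to M$ by a normal measure on $\lambda$, so that $\crit(j)=\lambda$, $M$ is closed under $\lambda$-sequences in $V^1[G_\k]$, and $j(\lambda)<\lambda^{++}$. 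Since $\lambda$ is inaccessible in $V^1[G_\k]$, every condition of $\mathbb{M}(\k,\lambda,\lambda^+)$, and every condition of $\RR_w$ other than (possibly) in its top coordinate, lies in $V_\lambda=H(\lambda)$ and is hence fixed by $j$. Assume toward a contradiction that $T$ is a $\lambda$-Aronszajn tree in $V^2[K]$, with name $\dot{T}$.

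First I would lift $j$ through $\mathbb{M}(\k,\lambda,\lambda^+)$. We have $j(\mathbb{M}(\k,\lambda,\lambda^+))=\mathbb{M}(\k,j(\lambda),j(\lambda^+))^M$, of which $\mathbb{M}(\k,\lambda,\lambda^+)$ is a complete subforcing; write $\mathbb{S}$ for the quotient, a Mitchell-type forcing over $V^2$ supported on the coordinates in $[\lambda,j(\lambda))$ and $[\lambda^+,j(\lambda^+))$. Choose $\tilde g$ which is $\mathbb{S}$-generic over $V^2[K]$; since $\mathbb{S}\in V^2$, mutual genericity gives that $K\times\tilde g$ is $\RR_w\times\mathbb{S}$-generic over $V^2$, hence $g\ast\tilde g$ is $j(\mathbb{M}(\k,\lambda,\lambda^+))$-generic over $M$, and $j$ lifts (in $V^2[K][\tilde g]$) to $j\colon V^2\to M[g\ast\tilde g]=:M^*$. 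Next I would lift $j$ through $\RR_w$. As $\crit(j)=\lambda>\k=\k_w$, the image $j(w)$ is again a measure sequence with $\k_{j(w)}=\k$ and $\len(j(w))=\k^+$, so $j(\RR_w)=\RR_{j(w)}^{M^*}$ is a Radin forcing at $\k$ over $M^*$; moreover $j$ sends a condition of $\RR_w$ to the one obtained by replacing its top measure sequence $w$ (and the sets $A\in\mathcal{F}_w$, $H\in F^*_w$ recorded in that coordinate) by $j(w)$, resp.\ $j(A),j(H)$, leaving all lower coordinates and all collapse conditions fixed. The crucial claim is that the pair $(\vec u,\vec F)$ already read off from $K$ is $\RR_{j(w)}^{M^*}$-generic over $M^*$, so that this second lift uses no new generic: this is verified against the geometric characterization (Lemma \ref{geometric characterization}), using that $j$ fixes the relevant collapse functions and the members of $w(1)$ (of size $\k<\lambda$) and that $j$ relates the measures of $w$ and $j(w)$, so that $\mathcal{F}_{j(w)}$ and $\mathcal{F}_w$ agree on $V_{\k+1}^{M^*}$ and $j(w)(1)$ reflects $w(1)$ on the collapses appearing in $\vec F$, while the recursive clause is handled by Lemma \ref{thm: bounding sets}. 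This produces a lift $\tilde j\colon V^2[K]\to M^*[K^*]$ with $M^*[K^*]\subseteq V^2[K][\tilde g]$, where $K^*$ is the filter on $\RR_{j(w)}^{M^*}$ generated by $(\vec u,\vec F)$.

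Applying $\tilde j$ to $T$ gives a $j(\lambda)$-tree $\tilde j(T)$ in $M^*[K^*]$ whose restriction to its first $\lambda$ levels is $T$ (since $\crit(\tilde j)=\lambda$ fixes every node and every level of $T$); any node of $\tilde j(T)$ of height $\lambda$ then determines a cofinal branch $b$ of $T$, and $b\in M^*[K^*]\subseteq V^2[K][\tilde g]$. It remains to show that $\mathbb{S}$, forced over $V^2[K]$, adds no cofinal branch to the $\lambda$-tree $T\in V^2[K]$, for then $b\in V^2[K]$, contradicting that $T$ is Aronszajn there. Here one uses that $2^\k\ge\lambda$ and $2^{<\k}=\k<\lambda$ in $V^2$ (and in $V^2[K]$). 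By Lemma \ref{more basic facts on mitchell forcing}(b) (applied in $M^*$ and transported), $\mathbb{S}$ is a projection of a product $\Add(\k,\nu)\times\mathbb{T}$ with $\mathbb{T}$ being $\k^+$-closed; and since $\Add(\k,\nu)$ is $\k^+$-Knaster while $\RR_w$ is $\k^+$-c.c.\ (Lemma \ref{chain condition for radin forcing}), the product $\RR_w\times\Add(\k,\nu)$ is $\k^+$-c.c.\ over $V^2$ and forces $\Add(\k,\nu)$ to be $\k^+$-c.c.\ over $V^2[K]$. Hence, by Lemma \ref{lem: preservation product cc-2} (with $\rho=\k$, $\mu=\k^+$, $\lambda$ in place of the regular cardinal in its statement, ground model $V^2$, and the $\k^+$-c.c.\ forcing $\RR_w\times\Add(\k,\nu)$), $\mathbb{T}$ adds no cofinal branch to $T$ after forcing with $\Add(\k,\nu)$ over $V^2[K]$; and by Lemma \ref{lem: < kappa c.c.} the $\k^+$-c.c.\ forcing $\Add(\k,\nu)$ adds no cofinal branch to the $\lambda$-Aronszajn tree $T$ over $V^2[K]$. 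Since any branch of $T$ added by $\mathbb{S}$ over $V^2[K]$ is, through the projection, added by $\Add(\k,\nu)\times\mathbb{T}$, we conclude $b\in V^2[K]$, as desired.

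The main obstacle is the second lifting, through the Radin forcing $\RR_w$: unlike Cohen or Mitchell forcing, Radin forcing has only finite conditions and admits no master condition, so one cannot simply assemble a generic for $j(\RR_w)$; instead one must show, via the geometric characterization (Lemma \ref{geometric characterization}) and the bounding-sets lemma (Lemma \ref{thm: bounding sets}), that the Radin generic $K$ over $V^2$ is already generic for $j(\RR_w)=\RR_{j(w)}^{M^*}$ over $M^*$. Pinning down the comparison between the measures carried by $w$ and by $j(w)$, keeping track of what the auxiliary generic $\tilde g$ contributes over $V^2[K]$, and identifying $\mathbb{S}$ precisely enough to feed it into the preservation lemmas of Section \ref{Some preservation lemmas}, is where the real work lies.
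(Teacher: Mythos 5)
Your plan stands or falls on the claim that the pair $(\vec u,\vec F)$ read off from $K$ is already $\Rforce_{j(w)}$-generic over $M^*=M[g\ast\tilde g]$, and this claim is not just unproven but false. The quotient $\mathbb{S}$ of $j(\mathbb{M}(\k,\l,\l^+))$ over $\mathbb{M}(\k,\l,\l^+)$ contains new Cohen coordinates $\Add(\k,\cdot)$, so any $\mathbb{S}$-generic $\tilde g$ adds subsets $B\subseteq\k$ that are Cohen-generic over $V^2[K]$ and lie in $M^*$. Now every measure $j(w)(\beta)$ ($\beta\geq 2$), as well as $\bar\mu_{j(w)}$, projects under $v\mapsto\k_v$ to the single normal ultrafilter $\mu_{j(w)}$ on $P(\k)^{M^*}$; hence for such a $B$, one of $B$ or $\k\setminus B$ is in $\mu_{j(w)}$, and the corresponding set $A=\{v\in V_\k:\k_v\in B\}$ or $\{v\in V_\k:\k_v\notin B\}$ belongs to $\mathcal{F}_{j(w)}$ as computed in $M^*$. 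Clause (2) of the geometric characterization (Lemma \ref{geometric characterization}), applied in $M^*$ to $j(w)$, would then force a tail of the Radin club $C$ into $B$ or into $\k\setminus B$; but $C\in V^2[K]$ is unbounded in $\k$ and $B$ is Cohen-generic over $V^2[K]$, so by a routine density argument both $B\cap C$ and $C\setminus B$ are unbounded. So $(\vec u,\vec F)$ cannot be generic for $\Rforce_{j(w)}$ over $M^*$. The error is hidden in your parenthetical "members of $w(1)$ (of size $\k<\lambda$)" and in "$\mathcal{F}_{j(w)}$ and $\mathcal{F}_w$ agree on $V_{\k+1}^{M^*}$": it is true that each member of $w(1)$ and each subset of $V_\k$ lying in $V^2$ is fixed by $j$ and measured consistently, but $j(w(1))$ and $\mathcal{F}_{j(w)}$ are computed in $M^*$, which is not an inner model of $V^2$ (nor of $V^2[K]$) and contains new measure-one sets and new collapse functions coming from $\tilde g$; $\mathcal{F}_w$ does not even measure these, and the transfer-of-genericity argument you have in mind requires $P(V_\k)^{M^*}\subseteq V^2$, which fails here.

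Because of this, the "no new generic for the Radin part" shortcut collapses, and one is pushed back to what the paper actually does: use the $\l$-c.c.\ of the whole iteration $\MPB_\k\ast\dot{\mathbb{M}}\ast\dot{\Rforce}_w$ (Lemma \ref{chain condition of forcing}) to see that $k\restriction(\mathbb{M}\ast\dot\Rforce_w)$ is a regular embedding into $\mathbb{M}^*\ast\dot\Rforce^*_w$ (Lemma \ref{regular embedding}), pass to an outer generic $g^*\ast K^*$ extending $k''[g\ast K]$, and then show that the resulting quotient forcing $\MQB_\pi$ over $N^1[G_\k\ast g\ast K]$ adds no branch to $T$. That last step is where the real content lies and is entirely absent from your proposal: one decomposes $\MQB_\pi$ as a projection of $\MCB_\pi\times\mathbb{T}_\pi$ with $\mathbb{T}_\pi$ being $\k^+$-closed, and proves that $\MCB_\pi\times\MCB_\pi$ is $\k^+$-c.c.\ (Lemma \ref{chain condition of c}), a nontrivial argument that uses the Prikry property of $\Rforce_w$ (Lemma \ref{thm:prikry property}) together with a $\Delta$-system analysis of the interleaved Radin--collapse conditions, before the preservation lemmas of Section \ref{Some preservation lemmas} (in particular Lemma \ref{lem: preservation product cc-2}) can be invoked. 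Your final branch-preservation paragraph only treats the Mitchell quotient $\mathbb{S}$, which would suffice only if the false lifting claim held; as it stands the proof has a genuine gap at its central step.
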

The rest of this subsection is devoted to the proof of the above theorem. The proof we present follows ideas of \cite{friedman-honzik2}, but is more involved as instead of working with the Prikry collapse forcing of \cite{friedman-honzik2} we are working with the Radin forcing $\Rforce_w$.

Let
$\dot{\mathbb{M}}$ be  such that
$\Vdash_{\MPB_\k}$``$\dot{\mathbb{M}}=\dot{\mathbb{M}}(\k, \l, \l^+)$''.
\begin{lemma}
\label{chain condition of forcing}
The forcing $\mathbb{P}_\k \ast \dot{\mathbb{M}} \ast \dot{\mathbb{R}}_{w}$
satisfies the $\lambda$-c.c.
\end{lemma}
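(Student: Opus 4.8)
The plan is to factor the three-step iteration $\MPB_\k \ast \dot{\mathbb{M}} \ast \dot{\mathbb{R}}_w$ and control the chain condition of each piece separately, then combine. First I would recall that $\MPB_\k$ is a reverse Easton iteration of length $\k$ whose nontrivial stages are Mitchell-type forcings $\dot{\mathbb{M}}(\a,\a_*,\a_*^+)$ occurring only at measurable limits of measurables $\a < \k$; a standard bookkeeping argument (together with Lemma \ref{basic facts on mitchell forcing}(b), which gives that each $\mathbb{M}(\a,\a_*,\a_*^+)$ is $\a_*$-Knaster, hence certainly $\k$-c.c., and GCH below $\k$ in $V^1$) shows $\MPB_\k$ is $\k$-c.c.; in fact it has size $\k$. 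Next, over $V^1[G_\k]$ the forcing $\dot{\mathbb{M}}=\mathbb{M}(\k,\l,\l^+)$ is $\l$-Knaster by Lemma \ref{basic facts on mitchell forcing}(b), hence $\l$-c.c. Since $\l$ is (still) $\k^{++}$ in the relevant model and a two-step iteration of a $\k$-c.c. forcing followed by an (in the extension) $\l$-c.c. forcing is $\l$-c.c., we get that $\MPB_\k \ast \dot{\mathbb{M}}$ is $\l$-c.c.

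For the last factor, the key point is the $\k_w^+$-chain condition of Radin forcing, Lemma \ref{chain condition for radin forcing}, applied with $w$ of length $\k^+$, so $\k_w = \k$ and $\Rforce_w$ is $\k^+$-c.c. in $V^2$. I would then invoke the \emph{same} Easton-style combination lemma once more: if $\mathbb{Q}_0$ is $\l$-c.c. and $\Vdash_{\mathbb{Q}_0}$``$\dot{\mathbb{Q}}_1$ is $\k^+$-c.c.'', then $\mathbb{Q}_0 \ast \dot{\mathbb{Q}}_1$ is $\l$-c.c., since $\k^+ \le \l$. Here $\mathbb{Q}_0 = \MPB_\k \ast \dot{\mathbb{M}}$ and $\dot{\mathbb{Q}}_1 = \dot{\Rforce}_w$; the hypothesis that $\Rforce_w$ is $\k^+$-c.c.\ is exactly Lemma \ref{chain condition for radin forcing} applied in the model $V^2 = V^1[G_\k \ast g]$, where the construction of $w$ takes place. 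This yields that $\MPB_\k \ast \dot{\mathbb{M}} \ast \dot{\Rforce}_w$ is $\l$-c.c., as desired.

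The one subtlety — and the step I expect to need the most care — is matching the models: Lemma \ref{chain condition for radin forcing} is proved assuming the hypotheses listed at the start of Subsection \ref{measure sequences}, which hold in $V^2$, so the $\k^+$-c.c.\ of $\Rforce_w$ is available exactly over $V^2 = V^1[G_\k \ast g]$, i.e.\ after the first two steps. Thus the order in which one factors matters: one must check the $\l$-c.c.\ of $\MPB_\k \ast \dot{\mathbb{M}}$ absolutely (from Knaster-ness and GCH, which is unproblematic), and only then append $\dot{\Rforce}_w$, whose chain condition is known only in the intermediate extension. I would also remark that no cardinals in the interval $(\k^+, \l)$ are relevant obstructions here: the intermediate forcing $\dot{\mathbb{M}}$ collapses nothing below $\l$ other than making $\l = \k^{++}$, and $\Rforce_w$ lives on $\k$, so $\l$ genuinely remains a cardinal and the $\l$-c.c.\ claim is the sharp one.
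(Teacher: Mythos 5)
Your proposal is correct and follows essentially the same route as the paper's proof: decompose the three-step iteration, cite the $\kappa$-c.c.\ of $\MPB_\kappa$, the $\lambda$-Knasterness (hence $\lambda$-c.c.) of $\mathbb{M}(\kappa,\lambda,\lambda^+)$ from Lemma \ref{basic facts on mitchell forcing}, and the $\kappa^+$-c.c.\ of $\Rforce_w$ from Lemma \ref{chain condition for radin forcing}, then combine via the standard iteration theorem for chain conditions. The paper states these three facts and declares the result follows; you have simply spelled out the combining step and the model-matching bookkeeping, both of which are accurate.
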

\begin{proof}
The forcing $\mathbb{P}_\k$ is $\kappa$-c.c. Now the result follows
from the facts that $\MPB_\k$ forces ``$\dot{\mathbb{M}}$ is $\lambda$-c.c.'' (by Lemma \ref{basic facts on mitchell forcing}) and
$\MPB_\k \ast \dot{\mathbb{M}}$ forces ``$\dot{\mathbb{R}}_{w}$ is $\kappa^+$-c.c.''  (by Lemma \ref{chain condition for radin forcing}).
\end{proof}
Assume towards contradiction that $\TP(\lambda)$ fails in
$V^1[G_\k \ast g][\vec{u}, \vec{F}]$
and let $\dot{T} \in V^1[G_\k]$ be an $\mathbb{M} \ast \dot{\mathbb{R}}_{w}$-name
for a $\lambda$-Aronszajn tree in $V^1[G_\k \ast g][\vec{u}, \vec{F}]$.
Suppose for simplicity that the trivial condition forces that $\dot{T}$ is
a $\lambda$-Aronszajn tree and let us view it as a nice name for a subset of $\lambda$; so that $\dot{T}= \bigcup_{\xi<\lambda} \{\check{\xi} \} \times A_\xi,$
where each $A_\xi$ is a maximal antichain in $\mathbb{M} \ast \dot{\mathbb{R}}_{w}$. By Lemma
\ref{chain condition of forcing}, each $A_\xi$ has size less than $\lambda$.

Recall from the remarks after Lemma \ref{basic facts on mitchell forcing} that   the forcing $\mathbb{M}$ is forcing isomorphic to $\Add(\kappa, \lambda^+) \ast \dot{\MQB}$,
where $\dot{\MQB}$ is some  $\Add(\kappa, \lambda^+)$-name for a forcing notion which is forced to be $\kappa^+$-distributive.
\begin{lemma}
\label{mitchell followed by radin}
Work in $V^1[G_\k]$. The set
\[
\{r= ((p, \dot{q}), \check{d}^{\frown}\langle w, \lambda,   \dot{A}, \dot{H}, \check{h} \rangle ) \in \mathbb{M} \ast \dot{\mathbb{R}}_{w}: d, h \in V^1[G_\k] \text{~and~}  \dot{A}, \dot{H} \text{~are~} \Add(\kappa, \lambda^+)-\text{names~}        \}
\]
is dense in $\mathbb{M} \ast \dot{\mathbb{R}}_{w}.$
\end{lemma}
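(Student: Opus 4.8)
The plan is a routine name--capturing argument. The point is that any condition of $\mathbb{M}\ast\dot{\mathbb{R}}_w$ can be extended so that the ``small'' data of its top Radin tuple --- the stem $d$, the parameter $\lambda$ and the bottom collapse $h$ --- are decided outright and lie in $V^1[G_\k]$, while the two ``large'' pieces, the measure--one set $A\in\mathcal{F}_w$ and the collapse--valued function $H\in F^*_w$ (both subsets of $V_\kappa$), are already decided over the $\Add(\kappa,\lambda^+)$--part of $\mathbb{M}$.

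First I would isolate the two closure facts to be used. Since $\mathbb{P}_\k$ has size $\kappa$ and is $\kappa$-c.c., $\kappa$ is strongly inaccessible in $V^1[G_\k]$, so $H(\kappa)=V_\kappa$ there; and by Lemma~\ref{basic facts on mitchell forcing}(a), $\mathbb{M}=\mathbb{M}(\kappa,\lambda,\lambda^+)$ is $\kappa$-directed closed, hence $<\kappa$-distributive, so $V_\kappa^{V^1[G_\k\ast g]}=V_\kappa^{V^1[G_\k]}$. Consequently, for any Radin condition $d^{\frown}\langle w,\lambda,A,H,h\rangle$ of $\mathbb{R}_w$ (computed in $V^2=V^1[G_\k\ast g]$), the stem $d$, the parameter $\lambda$ and the bottom collapse $h$ lie in $V^1[G_\k]$: each has hereditary size $<\kappa$, since every measure sequence $v$ occurring in $d$ has $\kappa_v<\kappa$, $\len(v)<\kappa_v^{++}$ and ultrafilters on $V_{\kappa_v}$, the remaining entries of $d$ are subsets of, or collapse conditions below, some $V_{\kappa_v}$, and $h\in\Col(\lambda^{+5},<\kappa)$ has size $<\kappa$. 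Secondly, by the remark following Lemma~\ref{basic facts on mitchell forcing} we have $\mathbb{M}\cong\Add(\kappa,\lambda^+)\ast\dot{\MQB}$ with $\dot{\MQB}$ forced to be $\kappa^+$-distributive; writing $g=g_0\ast g_1$ for this factorization, and noting that $\Add(\kappa,\lambda^+)$ preserves the regularity of $\kappa$ and the value $|V_\kappa|=\kappa$ (so that subsets of $V_\kappa$ in $V^1[G_\k\ast g_0]$ are coded there by subsets of $\kappa$), the $\kappa^+$-distributivity of $\dot{\MQB}$ yields $\Pset(V_\kappa)^{V^1[G_\k\ast g]}=\Pset(V_\kappa)^{V^1[G_\k\ast g_0]}$. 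Hence $A$ and $H$ lie in $V^1[G_\k\ast g_0]$ and admit $\Add(\kappa,\lambda^+)$-names.

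Then I would run the density argument. Fix $r=((p,\dot q),\dot s)\in\mathbb{M}\ast\dot{\mathbb{R}}_w$; working below $r$, let $G=g_0\ast g_1$ be $\mathbb{M}$-generic over $V^1[G_\k]$ with $(p,\dot q)\in G$, and put $s=\dot s^{G}=d^{\frown}\langle w,\lambda,A,H,h\rangle\in\mathbb{R}_w$. By the two facts above $d,h\in V^1[G_\k]$, and there are $\Add(\kappa,\lambda^+)$-names $\dot A',\dot H'$ with $(\dot A')^{g_0}=A$ and $(\dot H')^{g_0}=H$. Hence $\check d^{\frown}\langle w,\lambda,\dot A',\dot H',\check h\rangle$ is an $\mathbb{M}$-name whose $G$-value is again $s$, so some $(p',\dot q')\in G$ below $(p,\dot q)$ forces $\dot s=\check d^{\frown}\langle w,\lambda,\dot A',\dot H',\check h\rangle$; in particular this name is forced to be a condition of $\dot{\mathbb{R}}_w$. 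Then $\bigl((p',\dot q'),\ \check d^{\frown}\langle w,\lambda,\dot A',\dot H',\check h\rangle\bigr)$ extends $r$ and lies in the set in the statement. As $r$ was arbitrary, that set is dense.

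I do not anticipate a genuine obstacle: the two points needing care are the verification that the stem entries and $h$ really are of hereditary size $<\kappa$ (this is what licenses applying $<\kappa$-distributivity of $\mathbb{M}$ to them), and the standard ``density in the quotient'' bookkeeping that converts ``the value already lies in the smaller model'' into ``there is a condition forcing equality with a name of the prescribed syntactic shape''.
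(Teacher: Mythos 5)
Your proof is correct and follows essentially the same route as the paper: first use the $\kappa$-closure of $\mathbb{M}$ to check the bounded pieces $d,\lambda,h$, then use the factorization $\mathbb{M}\cong\Add(\kappa,\lambda^+)\ast\dot{\MQB}$ with $\dot{\MQB}$ forced $\kappa^+$-distributive to push $A$ and $H$ down into the Cohen stage. The only difference is cosmetic: where the paper simply asserts that conditions of $\MPB_w$ already exist in the $\Add(\kappa,\lambda^+)$-extension, you supply the explicit justification (coding $A,H$ as subsets of $\kappa$ via $|V_\kappa|=\kappa$ and invoking the distributivity of the quotient), which is a welcome clarification rather than a departure.
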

\begin{proof}
Recall that a condition in $\mathbb{R}_{w}$ is of the form $p= d^{\frown} \langle w, \lambda,   A, H, h \rangle$
where
 \begin{enumerate}
 \item $d \in V_\kappa$.

 \item $ \langle w, \lambda,   A, H, h \rangle \in \MPB_w.$

 \item $h \in \Col(\lambda^{+5}, < \kappa).$
 \end{enumerate}
 As $\mathbb{M}$ does not add bounded subsets to $\kappa,$ so any condition $((p, \dot{q}), \dot{d}^{\frown}\langle w, \lambda,   \dot{A}, \dot{H}, \dot{h} \rangle )$ has an extension of the form $((p', \dot{q}'), \check{d}'^{\frown}\langle \kappa, \lambda,   \dot{A}', \dot{H}', \check{h}' \rangle )$,
 where $d', h' \in V^1[G_\k].$

Also note that all conditions in $\MPB_w$ and hence in $\mathbb{R}_{w }$ exist already in the extension by
$\Add(\kappa, \lambda^+)$, the Cohen part of $\mathbb{M}$ (though the definition of $\mathbb{R}_{w}$ may require the whole
$\mathbb{M}$). Thus we can further extend $((p', \dot{q}'), \check{d}'^{\frown}\langle \kappa, \lambda,   \dot{A}', \dot{H}', \check{h}' \rangle )$
to another condition $$((p'', \dot{q}''), \check{d}''^{\frown}\langle w, \lambda,   \dot{A}'', \dot{H}'', \check{h}'' \rangle )$$
where $\dot{A}''$ and $\dot{H}''$ are forced to be $\Add(\kappa, \lambda^+)$-names (over $V^1[G_\k]$). The result follows immediately.
\end{proof}
From now on, we assume that all the conditions in $\mathbb{M} \ast \dot{\mathbb{R}}_{w}$ are of the above form. This is useful in some of the arguments below (see for example Lemma \ref{lem:projection properties}$(a)$).
Let us define
\[
\MCB=\{((p, \emptyset), r):  ((p, \emptyset), r) \in  \mathbb{M} \ast \dot{\mathbb{R}}_{w}           \}
\]
and
\[
 \mathbb{T} = \{(\emptyset, q): (\emptyset, q) \in \mathbb{M}            \}.
\]
Let $\tau: \MCB \times \mathbb{T} \to \mathbb{M} \ast \dot{\mathbb{R}}_{w}$
be defined by
\[
\tau(\langle ((p, \emptyset), r), (\emptyset, q)            \rangle)= ((p, q), r).
\]

\begin{lemma}
\label{lem:projection properties}
\begin{itemize}
\item [(a)] $\tau$ is a projection from  $\MCB \times \mathbb{T}$ onto $\mathbb{M} \ast \dot{\mathbb{R}}_{w}$.

\item [(b)] $\mathbb{T}$ is $\kappa^+$-closed in $V^1[G_\k]$.

\item [(c)] $\mathbb{C}$ is $\kappa^+$-c.c. in $V^1[G_\k]$.
\end{itemize}
\end{lemma}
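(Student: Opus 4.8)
The plan is to verify the three clauses (a), (b), (c) of Lemma \ref{lem:projection properties} in turn, all working over the ground model $V^1[G_\k]$.

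\textbf{Clause (a): $\tau$ is a projection.} First I would check the two defining properties of a projection. That $\tau$ is order-preserving and onto is immediate from the definition, since given $((p,q),r) \in \mathbb{M}\ast\dot{\mathbb{R}}_w$ we have $\tau(\langle((p,\emptyset),r),(\emptyset,q)\rangle)=((p,q),r)$, and if $\langle((p',\emptyset),r'),(\emptyset,q')\rangle$ extends $\langle((p,\emptyset),r),(\emptyset,q)\rangle$ then $p'\le p$, $q'$ extends $q$ in the term-forcing sense, and $r'\le r$, so $((p',q'),r')\le((p,q),r)$. The substantive point is the mixing/amalgamation property: given $((p,\emptyset),r) \in \MCB$, $(\emptyset,q)\in\mathbb{T}$, and an extension $((p^*,q^*),r^*) \le \tau(\langle((p,\emptyset),r),(\emptyset,q)\rangle)=((p,q),r)$, I must produce $\langle((p',\emptyset),r'),(\emptyset,q')\rangle$ below $\langle((p,\emptyset),r),(\emptyset,q)\rangle$ whose image under $\tau$ lies below $((p^*,q^*),r^*)$. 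The natural choice is $p'=p^*$, $r'=r^*$ and $q'=q^*$; one needs that $((p^*,\emptyset),r^*)$ is a legitimate condition of $\MCB$ (it is, since the first-coordinate constraint on $\dot{\mathbb{R}}_w$-conditions in Lemma \ref{mitchell followed by radin} only involves the Cohen part $\Add(\k,\l^+)$ of $\mathbb{M}$, which lives inside $p^*$, not the side condition $q^*$), that $(\emptyset,q^*)\in\mathbb{T}$, and that $\tau(\langle((p^*,\emptyset),r^*),(\emptyset,q^*)\rangle)=((p^*,q^*),r^*)\le((p^*,q^*),r^*)$. This is where the normalization of conditions after Lemma \ref{mitchell followed by radin} is used: because $\dot A,\dot H$ in $r$ are $\Add(\k,\l^+)$-names, the Radin part $r^*$ makes sense already over the Cohen extension and is unaffected by dropping $q^*$ from the first coordinate.

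\textbf{Clause (b): $\mathbb{T}$ is $\k^+$-closed in $V^1[G_\k]$.} This is immediate from Lemma \ref{more basic facts on mitchell forcing}(a): $\mathbb{T}=\mathbb{T}(\k,\l,\l^+)$ in the present notation, and that term forcing is $\k^+$-closed. One should just note that $\mathbb{M}=\mathbb{M}(\k,\l,\l^+)$ here and $\mathbb{T}$ is the set of conditions $(\emptyset,q)$, so it coincides with the term forcing $\mathbb{T}(\k,\l,\l^+)$.

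\textbf{Clause (c): $\MCB$ is $\k^+$-c.c. in $V^1[G_\k]$.} The forcing $\MCB$ is isomorphic to $\Add(\k,\l^+)\ast\dot{\mathbb{R}}_w$ (via $((p,\emptyset),r)\mapsto(p,r)$, using again that the second coordinate of the $\mathbb{M}$-part is trivial and that $\dot{\mathbb{R}}_w$-conditions only reference the Cohen part). Now $\Add(\k,\l^+)$ is $\k^+$-c.c.\ (indeed $\k^+$-Knaster under $GCH$), and it forces $\dot{\mathbb{R}}_w$ to be $\k_w^+ = \k^+$-c.c.\ by Lemma \ref{chain condition for radin forcing}. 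A two-step iteration of a $\k^+$-c.c.\ forcing followed by a $\k^+$-c.c.\ forcing is $\k^+$-c.c., so $\MCB$ is $\k^+$-c.c. (Alternatively, one can quote Lemma \ref{chain condition of forcing}: $\mathbb{P}_\k\ast\dot{\mathbb{M}}\ast\dot{\mathbb{R}}_w$ is $\l$-c.c., hence over $V^1[G_\k]$ the tail $\mathbb{M}\ast\dot{\mathbb{R}}_w$ is $\l$-c.c.; but we want the sharper $\k^+$-c.c.\ for $\MCB$, which needs the observation that the only ``wide'' part of $\mathbb{M}$, namely the term forcing $\mathbb{T}$, has been stripped away.)

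The main obstacle is clause (a), and specifically the amalgamation property together with the bookkeeping that the Radin-part of a condition of $\mathbb{M}\ast\dot{\mathbb{R}}_w$ really does live over the Cohen part $\Add(\k,\l^+)$ alone; this is exactly what the density lemma \ref{mitchell followed by radin} was set up to guarantee, so the proof should cite it explicitly at the point where $((p^*,\emptyset),r^*)$ is asserted to be a condition of $\MCB$. Clauses (b) and (c) are short quotations of earlier results once the identifications $\mathbb{M}=\mathbb{M}(\k,\l,\l^+)$, $\mathbb{T}=\mathbb{T}(\k,\l,\l^+)$ and $\MCB\cong\Add(\k,\l^+)\ast\dot{\mathbb{R}}_w$ are made.
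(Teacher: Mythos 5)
Your clauses (b) and (c) coincide with the paper's proof: (b) is exactly the $\kappa^{+}$-closure of the term part (the paper phrases it as $\Add(\kappa,\lambda^{+})\Vdash$``$\Add(\kappa^{+},1)$ is $\kappa^{+}$-closed''), and (c) is the same combination of Lemma \ref{chain condition for radin forcing} with the $\kappa^{+}$-c.c.\ of $\Add(\kappa,\lambda^{+})$. The real divergence is in (a). Given $((p^{*},q^{*}),r^{*})\leq ((p,q),r)$ you amalgamate by reusing $q^{*}$ itself as the $\mathbb{T}$-part, with no mixing. The paper instead keeps the Cohen and Radin parts of the extension but replaces its second coordinate by a \emph{mixed} name $\dot q^{*}$, forced equal to $\dot q'$ below $p'$ and equal to $\dot q$ below conditions incompatible with $p'$. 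The mixing is what guarantees $(\emptyset,\dot q^{*})\leq_{\mathbb{T}}(\emptyset,\dot q)$ when the second-coordinate clause of the Mitchell ordering is read in the standard way, i.e.\ with the Cohen coordinate (rather than the trivial condition) forcing $q'(\xi)\leq q(\xi)$: under that reading, $(p^{*},q^{*})\leq_{\mathbb{M}}(p,q)$ only gives $p^{*}\Vdash q^{*}(\xi)\leq q(\xi)$, which does not yield the term-ordering inequality $(\emptyset,q^{*})\leq_{\mathbb{T}}(\emptyset,q)$, so your choice breaks down and the mixing is genuinely needed. Your shortcut is legitimate under the ordering as literally printed in Section \ref{sec:preparation model}, where $1_{\Add(\alpha,\xi)}$ is required to force the extension; but the paper's own proof of (a) is precisely the mixing argument tailored to the standard ordering (and, conversely, the mixed name would not satisfy the displayed inequality $\tau(\cdot)\leq ((p',\dot q'),\dot r')$ under the literal ``$1$ forces'' clause), so you should either state explicitly that you are invoking the literal clause, or carry out the mixing as the paper does.

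Separately, you list what ``one needs'' but never verify the inequality $\langle((p^{*},\emptyset),r^{*}),(\emptyset,q^{*})\rangle\leq_{\MCB\times\mathbb{T}}\langle((p,\emptyset),r),(\emptyset,q)\rangle$; the delicate half is the $\MCB$-coordinate, which requires that $(p^{*},\emptyset)$ --- not merely $(p^{*},q^{*})$ --- forces $r^{*}\leq r$ (and forces $r^{*}$ to be a condition of $\dot{\MRB}_{w}$). This is exactly where the normalization of Lemma \ref{mitchell followed by radin} does its work: since the components of $r$ and $r^{*}$ are checks or $\Add(\kappa,\lambda^{+})$-names and the ordering of $\MRB_{w}$ between two given conditions is absolute, the statement ``$r^{*}\leq r$'' depends only on the Cohen coordinate of the $\mathbb{M}$-generic, and a one-line density argument upgrades $(p^{*},q^{*})\Vdash r^{*}\leq r$ to $(p^{*},\emptyset)\Vdash r^{*}\leq r$. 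You gesture at the right reason but should state and prove this step; the paper's proof uses the same fact tacitly at the same point, so spelling it out would strengthen either version.
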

\begin{proof}

$(a)$ It is clear that $\tau$ is order preserving. Suppose that
$$(p', \dot q'), \dot r') \leq_{\mathbb{M} \ast \dot{\mathbb{R}}_{w}} \tau(\langle ((p, \emptyset), \dot r), (\emptyset, \dot q)            \rangle)= ((p, \dot q), \dot r).$$
We are going to find $p^*, \dot q^*$ and $\dot r^*$ such that  $\langle ((p^*, \emptyset), \dot r^*), (\emptyset, \dot q^*)            \rangle \leq_{\MCB \times \mathbb{T}} \langle ((p, \emptyset), \dot r), (\emptyset, \dot q)            \rangle$ and
$$\tau(\langle ((p^*, \emptyset), \dot r^*), (\emptyset, \dot q^*)            \rangle)= (p^*, \dot q^*), \dot r^*)  \leq_{\mathbb{M} \ast \dot{\mathbb{R}}_{w}} (p', \dot q'), \dot r').$$
Let $p^*=p'$. Let $\dot q^*$ be a name such that
\begin{itemize}
\item $p^* \Vdash$``$\dot q^*=\dot q'$''.
\item If $\tilde{p}$ is incompatible with $p'$, then $\tilde{p}\Vdash$``$\dot q^*=\dot q$''.
\end{itemize}
Also set $\dot r^*=\dot r'$. Then $p^*, \dot q^*$ and $\dot r^*$ are as required.

$(b)$ follows from the fact that $1_{\Add(\kappa, \lambda^+)} \Vdash$``$\Add(\k^+, 1)$ is $\k^+$-closed''.

$(c)$ follows from Lemma \ref{chain condition for radin forcing} and the fact that $\Add(\kappa, \lambda^+)$ is $\k^+$-c.c.
\end{proof}
Let $k: V^1 \to N^1$ witness the measurability of $\lambda$ in $V^1$.
As $|\MPB_\k|=\kappa < \lambda,$  so by the Levy-Solovay's theorem \cite{levy-solovay}, we can lift $k$ to
 $k: V^1[G_\k] \to N^1[G_\k].$

Let $\mathbb{M}^* \ast \dot{\MRB}^*_{w}= k(\mathbb{M} \ast \dot{\MRB}_{w}).$
The next lemma follows from Lemma  \ref{chain condition of forcing}.
\begin{lemma}
\label{regular embedding}
$($in $V^1[G_\k]$$)$
$k \upharpoonright \mathbb{M} \ast \dot{\MRB}_{w}: \mathbb{M} \ast \dot{\MRB}_{w} \to \mathbb{M}^* \ast \dot{\MRB}^*_{w}$ is a regular embedding.
\end{lemma}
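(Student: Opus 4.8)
We must show that in $V^1[G_\k]$, the restriction $k \upharpoonright (\mathbb{M} \ast \dot{\MRB}_{w})$ is a regular embedding of $\mathbb{M} \ast \dot{\MRB}_{w}$ into $\mathbb{M}^* \ast \dot{\MRB}^*_{w} = k(\mathbb{M} \ast \dot{\MRB}_{w})$.

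**Plan.** The proof follows the standard criterion for elementary embeddings to give regular embeddings of forcing posets: if $k \colon V^1[G_\k] \to N^1[G_\k]$ is elementary, $\mathbb{Q} \in V^1[G_\k]$ is a forcing notion, and $\mathbb{Q}$ is $\delta$-c.c.\ in $V^1[G_\k]$ for some $\delta \leq \crit(k)$, then $k \upharpoonright \mathbb{Q} \colon \mathbb{Q} \to k(\mathbb{Q})$ is a regular embedding. Here $\crit(k) = \lambda$, and by Lemma \ref{chain condition of forcing} the forcing $\mathbb{P}_\k \ast \dot{\mathbb{M}} \ast \dot{\mathbb{R}}_{w}$ is $\lambda$-c.c.\ in $V^1$; since $\MPB_\k$ is $\k$-c.c.\ and $\k < \lambda$, it follows that $\mathbb{M} \ast \dot{\MRB}_{w}$ is $\lambda$-c.c.\ in $V^1[G_\k]$. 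This is exactly the hypothesis we need.

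**Key steps.** First I would verify that $k \upharpoonright (\mathbb{M}\ast\dot{\MRB}_w)$ preserves the order and incompatibility: order preservation is immediate from elementarity, and incompatibility preservation uses that $\crit(k)=\lambda$ is above $\aleph_0$ together with elementarity (if $p, p'$ are incompatible in $\mathbb{M}\ast\dot{\MRB}_w$, then so are $k(p), k(p')$ in $\mathbb{M}^*\ast\dot{\MRB}^*_w$, by applying $k$ to the statement "$p \perp p'$"). Second, and this is the heart of the matter, I would show that $k$ maps maximal antichains to maximal antichains. Let $\mathcal{A} \subseteq \mathbb{M}\ast\dot{\MRB}_w$ be a maximal antichain in $V^1[G_\k]$. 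By the $\lambda$-c.c., $|\mathcal{A}| < \lambda = \crit(k)$, so $k \upharpoonright \mathcal{A}$ is a bijection onto $k(\mathcal{A}) = k[\mathcal{A}]$, and $k[\mathcal{A}] \in N^1[G_\k]$ equals $k(\mathcal{A})$. By elementarity, $k(\mathcal{A})$ is a maximal antichain in $k(\mathbb{M}\ast\dot{\MRB}_w) = \mathbb{M}^*\ast\dot{\MRB}^*_w$. Hence the image under $k$ of every maximal antichain of $\mathbb{M}\ast\dot{\MRB}_w$ remains maximal in $\mathbb{M}^*\ast\dot{\MRB}^*_w$, which (together with step one) is precisely the definition of a regular (complete) embedding.

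**Main obstacle.** The only genuine point requiring care is confirming that $\mathbb{M}\ast\dot{\MRB}_w$ is $\lambda$-c.c.\ in $V^1[G_\k]$ rather than merely in $V^1$, and that this c.c.\ bound is what licenses the key equality $k(\mathcal{A}) = k[\mathcal{A}]$ for maximal antichains $\mathcal{A}$. Both are straightforward: the first follows from Lemma \ref{chain condition of forcing} combined with the observation that after forcing with the $\k$-c.c.\ poset $\MPB_\k$ a tail of the iteration retains the $\lambda$-c.c.\ (standard, since $\k<\lambda$); the second is the routine fact that an elementary embedding with critical point $\lambda$ is the identity on (codes for) sets of size $<\lambda$ and hence fixes, setwise and pointwise up to its action on elements, any antichain of size $<\lambda$. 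No Woodin-style surgery or master-condition argument is needed here — this lemma is purely bookkeeping built on the chain condition, exactly as in \cite{friedman-honzik2}.
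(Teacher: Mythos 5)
Your proposal is correct and follows essentially the same route as the paper: order and incompatibility preservation come from elementarity, and maximality of the image of an antichain follows from the $\lambda$-c.c.\ of $\mathbb{M}\ast\dot{\MRB}_w$ (Lemma \ref{chain condition of forcing}) together with $\crit(k)=\lambda$, which gives $k''[\mathcal{A}]=k(\mathcal{A})$. Nothing essential differs from the paper's argument.
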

\begin{proof}
It is clear that $k$ is order preserving and if $p \perp_{\mathbb{M} \ast \dot{\MRB}_{w}} q$ (~$p$ is incompatible with $q$ in $\mathbb{M} \ast \dot{\MRB}_{w}$), then $k(p)\perp_{\mathbb{M}^* \ast \dot{\MRB}^*_{w}} k(q)$ (~$k(p)$ and $k(q)$
are incompatible in $\mathbb{M}^* \ast \dot{\MRB}^*_{w}).$ Now suppose that $A \subseteq \mathbb{M} \ast \dot{\MRB}_{w}$ is a maximal antichain in
$\mathbb{M} \ast \dot{\MRB}_{w}$. By Lemma  \ref{chain condition of forcing}, $|A|< \lambda$ and so by elementarity of $k$, $k''[A]=k(A)$ is a
maximal antichain in $\mathbb{M}^* \ast \dot{\MRB}^*_{w}.$
\end{proof}
Thus let $g^* \ast K^*$ be $\mathbb{M}^* \ast \dot{\MRB}^*_{w}$-generic over $V^1[G_\k]$
such that $k''[g \ast K] \subseteq g^* \ast K^*.$
It follows that we can lift $k$ to
\[
k: V^1[G_\k \ast g \ast K] \to N^1[G_\k \ast g^* \ast K^*].
\]
Hence, in $V^1[G_\k],$ by Lemma \ref{regular embedding}, there is a projection
\[
\pi: \mathbb{M}^* \ast \dot{\MRB}^*_{w} \to RO( \mathbb{M} \ast \dot{\MRB}_{w}),
\]
 where $RO( \mathbb{M} \ast \dot{\MRB}_{w})$ denotes the Boolean completion of $ \mathbb{M} \ast \dot{\MRB}_{w}$.

Given a condition $(((p, q), r) \in \mathbb{M}^* \ast \dot{\MRB}^*_{w},$ let us identify
$\pi(p)=\pi((p, \emptyset), 1_{\MRB^*_{w}})$ with
\[
(k^{-1})''[p] = p \upharpoonright (\kappa \times \lambda) \cup \{ ((\gamma, \alpha), i): \gamma < \kappa, \alpha \geq \lambda, i \in \{0, 1  \}, ((\gamma, k(\alpha)), i) \in p                  \}.
\]

Let $\MQB_\pi$
be the quotient forcing determined by $\pi$:
\[
\MQB_\pi=\{((p, \dot{q}), \dot{r}) \in \mathbb{M}^* \ast \dot{\MRB}^*_{w}: \pi((p, \dot{q}), \dot{r}) \in g \ast K            \}.
\]
Let us define
\[
\MCB_\pi=\{((p, \emptyset), r):  ((p, \emptyset), r) \in  \MQB_\pi          \}
\]
where the ordering is the one inherited from $\MQB_\pi$,  and let
\[
 \mathbb{T}_\pi =\{(\emptyset, q) \in \mathbb{M}^*: (\emptyset, q ) \in g          \},
\]
with the ordering inherited from $\mathbb{M}^*.$
Also define $\tau_\pi: \MCB_\pi \times \mathbb{T}_\pi \to \MQB_\pi$ by
\[
\tau_\pi(\langle ((p, \emptyset), r), (\emptyset, q)            \rangle)= ((p, q), r).
\]
This is well-defined.
\begin{lemma}
\label{tau-pi is a projection}
$\tau_\pi$ is a projection from $\MCB_\pi \times \mathbb{T}_\pi$ onto $\MQB_\pi$.
\end{lemma}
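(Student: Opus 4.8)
The plan is to verify directly that $\tau_\pi$ satisfies the three defining clauses of a projection, reusing the argument already given for $\tau$ in the proof of Lemma~\ref{lem:projection properties}(a). The first two clauses — that $\tau_\pi$ is order preserving and sends the trivial condition to the trivial condition — follow by unwinding the defining formula $\tau_\pi(\langle((p,\emptyset),r),(\emptyset,q)\rangle)=((p,q),r)$ exactly as there, bearing in mind that the orders on $\MCB_\pi$ and $\mathbb{T}_\pi$ are inherited from $\MQB_\pi\subseteq\mathbb{M}^*\ast\dot{\MRB}^*_{w}$ and from $\mathbb{M}^*$ respectively, and that the trivial condition of $\mathbb{M}^*\ast\dot{\MRB}^*_{w}$ lies in $\MQB_\pi$ since $\pi$ sends it to the trivial condition, which is in $g\ast K$.

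The substantive clause is the pull-back property. Given $\langle x,y\rangle=\langle((p,\emptyset),\dot r),(\emptyset,\dot q)\rangle\in\MCB_\pi\times\mathbb{T}_\pi$ and a condition $d=((p',\dot q'),\dot r')\in\MQB_\pi$ with $d\leq\tau_\pi(\langle x,y\rangle)=((p,\dot q),\dot r)$, I would copy the construction from Lemma~\ref{lem:projection properties}(a): put $p^*:=p'$, $\dot r^*:=\dot r'$, and let $\dot q^*$ be the $\Add(\kappa,k(\lambda^+))$-name forced by $p^*$ to equal $\dot q'$ and forced by every condition incompatible with $p^*$ to equal $\dot q$. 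The computations of that lemma then show $\langle((p^*,\emptyset),\dot r^*),(\emptyset,\dot q^*)\rangle\leq\langle x,y\rangle$ (the inequality in the $\mathbb{T}_\pi$-coordinate amounting to $1\Vdash\dot q^*\leq\dot q$, which holds by a density argument since $p^*\Vdash\dot q'\leq\dot q$) and $\tau_\pi(\langle((p^*,\emptyset),\dot r^*),(\emptyset,\dot q^*)\rangle)=((p',\dot q^*),\dot r')\leq d$ (since $p^*=p'\Vdash\dot q^*=\dot q'$).

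The one point that goes beyond Lemma~\ref{lem:projection properties}(a), and which I expect to be the main — though essentially bookkeeping — obstacle, is to check that the tuple $\langle((p^*,\emptyset),\dot r^*),(\emptyset,\dot q^*)\rangle$ actually lies in $\MCB_\pi\times\mathbb{T}_\pi$. For the first coordinate this is immediate: $((p',\emptyset),\dot r')$ lies above $d=((p',\dot q'),\dot r')$ in $\mathbb{M}^*\ast\dot{\MRB}^*_{w}$, so since $\pi$ is order preserving and $g\ast K$ is upward closed, $\pi((p',\emptyset),\dot r')\geq\pi(d)\in g\ast K$ yields $\pi((p',\emptyset),\dot r')\in g\ast K$, i.e. $((p',\emptyset),\dot r')\in\MQB_\pi$, hence in $\MCB_\pi$. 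For the second coordinate I would argue similarly, tracking how $\pi$ and the generic $g\ast K$ decompose along the factorisation of $\mathbb{M}^*\ast\dot{\MRB}^*_{w}$ into its Cohen, term-forcing and Radin parts: off $p^*$ the name $\dot q^*$ agrees with $\dot q$ and $(\emptyset,\dot q)\in\mathbb{T}_\pi$, while below $p^*$ it agrees with $\dot q'$, the term-forcing part of $d\in\MQB_\pi$; together with the upward closure of $g$ this gives $(\emptyset,\dot q^*)\in\mathbb{T}_\pi$. Once both coordinates are placed in the quotient, all three clauses are verified; and applying the pull-back clause to $\langle 1,1\rangle$ shows that $\tau_\pi''(\MCB_\pi\times\mathbb{T}_\pi)$ is dense in $\MQB_\pi$, so $\tau_\pi$ is onto in the required sense, completing the proof.
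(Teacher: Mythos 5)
Your construction is exactly the paper's: the proof there repeats the mixing argument of Lemma \ref{lem:projection properties}(a), taking $p^*=p'$, $\dot r^*=\dot r'$, and the mixed name $\dot q^*$ equal to $\dot q'$ below $p'$ and to $\dot q$ on conditions incompatible with $p'$, and then asserts these are as required. So your proposal is correct and follows essentially the same route; the only difference is that you additionally spell out the (routine, and in the paper implicit) check that the resulting pair lies in $\MCB_\pi\times\mathbb{T}_\pi$.
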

\begin{proof}
The proof is similar to the proof of Lemma \ref{lem:projection properties}$(a)$.
Clearly $\tau_\pi$ is order preserving. Suppose that
$$(p', \dot q'), \dot r') \leq_{\MQB_\pi} \tau(\langle ((p, \emptyset), \dot r), (\emptyset, \dot q)            \rangle)= ((p, \dot q), \dot r).$$
We are going to find $p^*, \dot q^*$ and $\dot r^*$ such that  $\langle ((p^*, \emptyset), \dot r^*), (\emptyset, \dot q^*)            \rangle \leq_{\MCB_\pi \times \mathbb{T}_\pi} \langle ((p, \emptyset), \dot r), (\emptyset, \dot q)            \rangle$ and
$$\tau(\langle ((p^*, \emptyset), \dot r^*), (\emptyset, \dot q^*)            \rangle)= (p^*, \dot q^*), \dot r^*)  \leq_{\MQB_\pi} (p', \dot q'), \dot r').$$
Let $p^*=p'$. Let $\dot q^*$ be a name such that
\begin{itemize}
\item $p^* \Vdash$``$\dot q^*=\dot q'$''.
\item If $\tilde{p}$ is incompatible with $p'$, then $\tilde{p}\Vdash$``$\dot q^*=\dot q$''.
\end{itemize}
Also set $\dot r^*=\dot r'$. Then $p^*, \dot q^*$ and $\dot r^*$ are as required.
\end{proof}
\begin{lemma}
\label{closure of t-pi}
$\mathbb{T}_\pi$
is $\kappa^+$-closed in $N^1[G_\k \ast g].$
\end{lemma}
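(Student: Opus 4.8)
The plan is to run the proof that $\mathbb{T}$ is $\kappa^+$-closed (Lemma~\ref{lem:projection properties}(b)) inside the quotient, the only new point being to check that the lower bound produced still lies in $g$. Observe first that a condition $(\emptyset,q)\in\mathbb{M}^*$ can belong to $g$ only when $\dom(q)\subseteq\lambda$, in which case $(\emptyset,q)$ is literally a condition of $\mathbb{M}$, hence of $\mathbb{T}$; so $\mathbb{T}_\pi=\mathbb{T}\cap g$, carrying the order inherited from $\mathbb{M}$. Now fix $\gamma<\kappa^+$ and a $\leq$-decreasing sequence $\langle(\emptyset,q_\alpha):\alpha<\gamma\rangle$ of conditions of $\mathbb{T}_\pi$, lying in $N^1[G_\k\ast g]$; the goal is a condition $(\emptyset,q)\in\mathbb{T}_\pi$ below every $(\emptyset,q_\alpha)$.

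I would work through the factorization $\mathbb{M}\cong\Add(\kappa,\lambda^+)\ast\dot{\MQB}$, writing $g=g_0\ast g_1$, where $\dot\MQB$ is, by the very definition of $\mathbb{M}$, the $\leq\kappa$-support product over $\xi<\lambda$ of copies of $\Add(\kappa^+,1)$, and is $\kappa^+$-distributive by Lemma~\ref{more basic facts on mitchell forcing}(c). Evaluating $(\emptyset,q_\alpha)$ at $g_0$ gives a condition $\bar s_\alpha\in\MQB$ with $\bar s_\alpha(\xi)=q_\alpha(\xi)^{g_0\upharpoonright\xi}$, and by the $\kappa^+$-distributivity of $\dot\MQB$ the sequence $\langle\bar s_\alpha:\alpha<\gamma\rangle$, which has length $<\kappa^+$ and consists of elements of $\MQB\subseteq N^1[G_\k][g_0]$, actually lies in $N^1[G_\k][g_0]$. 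There one forms its coordinatewise union $\bar s$, with $\bar s(\xi)=\bigcup_{\alpha}\bar s_\alpha(\xi)$; since each $\Add(\kappa^+,1)$-factor is $\kappa^+$-closed and $\gamma<\kappa^+$, $\bar s$ is again a condition of $\MQB$ and is the greatest lower bound of the $\bar s_\alpha$'s. On each coordinate $\xi$, the sequence $\langle\bar s_\alpha(\xi):\alpha<\gamma\rangle$ is a $\leq$-decreasing sequence of length $<\kappa^+$ of members of the $\xi$-th generic filter $h_\xi$ read off from $g_1$, and any generic filter for $\Add(\kappa^+,1)$ contains the union of such a sequence --- this is proved just as one checks that $h\upharpoonright\beta\in h$ for an ordinal $\beta<\kappa^+$ and $h$ generic for $\Add(\kappa^+,1)$, using only ground-model dense sets, so it is irrelevant that the sequence need not be in the ground model. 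Hence $\bar s(\xi)\in h_\xi$ for every $\xi$, so $\bar s\in g_1$, and translating $\bar s$ back through the isomorphism produces a $\mathbb{T}$-condition $(\emptyset,q)$ with $(\emptyset,q)\in g$ and $(\emptyset,q)\leq(\emptyset,q_\alpha)$ for all $\alpha$, i.e.\ the required element of $\mathbb{T}_\pi$.

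The main obstacle is precisely this: $\mathbb{T}_\pi$ is a subposet of the generic filter $g$, and a generic filter need not absorb lower bounds of infinite decreasing sequences of its members, so one must argue that the \emph{greatest} lower bound, built coordinatewise, does get absorbed. The two facts doing the work are the $\kappa^+$-distributivity of $\dot\MQB$, which puts the (possibly new) sequence into $N^1[G_\k][g_0]$ so that the coordinatewise union is a genuine $\MQB$-condition, and the ``restriction-closedness'' of $\Add(\kappa^+,1)$, i.e.\ that each of its generic filters contains the union of any decreasing $<\kappa^+$-sequence of its members. The one place still requiring care is the final translation of $\bar s$ back to a condition of $\mathbb{T}_\pi$; this can be organized using that the domains $\dom(q_\alpha)$ increase with $\alpha$, so that on each coordinate $\xi$ the union defining $\bar s(\xi)$ is a supremum of an increasing chain.
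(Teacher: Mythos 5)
Your plan --- pass to the factorization $\mathbb{M}\cong\Add(\kappa,\lambda^+)\ast\dot{\mathbb{Q}}$, amalgamate coordinatewise, and then check that the amalgam is still in $g$ --- is the right plan, and you correctly identified that ``staying in $g$'' is the only non-routine point (the paper's own proof is just the one-line ``as in Lemma~\ref{lem:projection properties}(b)'', i.e.\ amalgamate the names coordinatewise, with the membership in $g$ left implicit). But the crucial last step of your argument does not work as written. A lower bound in $\mathbb{T}_\pi$ must be an honest condition $(\emptyset,q)$ of $\mathbb{M}^*$: each $q(\xi)$ must be an $\Add(\kappa,\xi)$-name lying in $N^1[G_\kappa]$, and the ordering inherited from $\mathbb{M}^*$ (Definition~3.1(b)) requires $1_{\Add(\kappa,\xi)}\Vdash q(\xi)\leq q_\alpha(\xi)$ for every $\alpha$ and $\xi\in\dom(q_\alpha)$. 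Your object $\bar s$ is the union of the \emph{evaluations} $q_\alpha(\xi)^{g_0\restriction\xi}$ and lives only in $N^1[G_\kappa][g_0]$; for any name you then choose for $\bar s(\xi)$, the inequality $q(\xi)\leq q_\alpha(\xi)$ is forced only by suitable conditions of $g_0\restriction\xi$, not by the trivial condition. So ``translating $\bar s$ back through the isomorphism'' is precisely the missing content: it yields neither membership in $\mathbb{M}^*$ nor the inherited ordering. The natural repair is to amalgamate the names rather than their evaluations, taking $q(\xi)$ to be the canonical name for $\bigcup_\alpha q_\alpha(\xi)$ (this is what makes the comparison with Lemma~\ref{lem:projection properties}(b) work, since $1$ forces the interpretations to be decreasing); but that requires the sequence $\langle q_\alpha:\alpha<\gamma\rangle$ of conditions itself to lie in $N^1[G_\kappa]$, whereas your distributivity step only places it in $N^1[G_\kappa][g_0]$. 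For $\gamma<\kappa$ one can indeed pull it into $N^1[G_\kappa]$ using the $\kappa$-closure of $\mathbb{M}$ and the $\lambda$-closure of $N^1[G_\kappa]$, but for $\gamma=\kappa$ this genuinely fails: $\Add(\kappa,\lambda^+)$ does add new $\kappa$-sequences of ground-model elements, so an additional argument (for instance a density argument over $V^1[G_\kappa]$ applied to an $\Add(\kappa,\lambda^+)$-name for the sequence, or a characterization of membership in $g$ that tolerates the sequence being new) is needed both to build the lower bound and to verify it lies in $g$. Your proposal contains no such argument.

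Two smaller points in the same vein. First, your appeal to ``any generic filter for $\Add(\kappa^+,1)$ contains the union of a short decreasing sequence of its members'' needs the union to be a \emph{condition}, i.e.\ to lie in the model over which $h_\xi$ is generic (roughly $V^1[G_\kappa][g_0\restriction\xi]$); your sequence of evaluations at coordinate $\xi$ is only known to lie in $N^1[G_\kappa][g_0]$, which sees Cohen information beyond $\xi$, so this too requires justification rather than being ``irrelevant''. Second, $\mathbb{T}_\pi=\mathbb{T}\cap g$ is really only the inclusion $\mathbb{T}_\pi\subseteq\mathbb{T}\cap g$, since a condition of $\mathbb{M}$ in $g$ need not have its names in $N^1[G_\kappa]$; this is harmless for reading off the sequence, but it underlines that the lower bound you produce must be checked to land in $\mathbb{M}^*$, which is exactly the step that is missing.
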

\begin{proof}
Similar to the proof of Lemma \ref{lem:projection properties}$(b)$.
\end{proof}
Also, as in the proof of \ref{lem:projection properties}$(c)$, one can show that $\MCB_\pi$ is $\kappa^+$-c.c. in $N^1[G_\k \ast g \ast K].$
Here we prove something stronger, which is needed for the proof of Theorem \ref{tree property at kappa}.
\begin{lemma}
\label{chain condition of c}
$\MCB_\pi \times \MCB_\pi$
is $\kappa^+$-c.c. in $N^1[G_\k \ast g \ast K].$
\end{lemma}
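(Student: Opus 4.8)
The plan is to prove the (formally stronger) statement that $\MCB_\pi\times\MCB_\pi$ is $\kappa^+$‑Knaster in $N^1[G_\k\ast g\ast K]$. Note first that in this model $\kappa$ is still strongly inaccessible: $\mathbb{M}$ is $\kappa$‑directed closed and $\MRB_w$ preserves the inaccessibility of $\kappa$ by Lemma~\ref{thm:preserving inaccessibility} (recall $\len(w)=\kappa^+$), and inaccessibility is downward absolute to inner models; hence $2^{<\kappa}=\kappa$ and $|V_\kappa|=\kappa$ there. As in the argument sketched just before the statement — which parallels the proof of Lemma~\ref{lem:projection properties}(c) — the forcing $\MCB_\pi$ is analyzed as a two‑step iteration $\Add(\kappa,\delta)\ast\dot{\mathbb{S}}$, where $\Add(\kappa,\delta)$ is the Cohen forcing on those coordinates of the Cohen part of $\mathbb{M}^*$ that survive the reduction $\pi$ and the generic $g$, and $\dot{\mathbb{S}}$ is the residual Radin‑type forcing ($\dot\MRB^*_w$ modulo $\dot\MRB_w$), which is $\kappa^+$‑c.c.\ by Lemma~\ref{chain condition for radin forcing}. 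First I would upgrade each of the two factors to $\kappa^+$‑Knaster.

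For $\Add(\kappa,\delta)$ this is the usual $\Delta$‑system argument: given $\kappa^+$ conditions, thin out to a $\Delta$‑system of supports (possible since $\kappa^{<\kappa}=\kappa$) and then to a subfamily agreeing on the common root (possible since $2^{<\kappa}=\kappa$); the survivors are pairwise compatible. For $\dot{\mathbb{S}}$, I would revisit the proof of Lemma~\ref{chain condition for radin forcing}: a condition of $\mathbb{S}$ has a stem $d\in V_\kappa$, a top cardinal parameter $\lambda'<\kappa$, and a bottom Lévy‑collapse coordinate $h\in\Col((\lambda')^{+5},<\kappa)$, and there are at most $\kappa$ possibilities for the triple $(d,\lambda',h)$ since $|V_\kappa|=\kappa$ and the Lévy collapse has size $\kappa$; so any $\kappa^+$ conditions have a subfamily of size $\kappa^+$ on which $(d,\lambda',h)$ is fixed. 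Any two conditions with matching $(d,\lambda',h)$ are compatible: one amalgamates the top coordinate by intersecting the measure‑one sets and meeting the functions $H$ (using directedness of the top filter $F^*_w$), and the resulting common extension still lies in the residual forcing, because an intersection of sets containing tails of the Radin club again contains such a tail, and likewise for the collapse functions. Thus $\dot{\mathbb{S}}$ is forced to be $\kappa^+$‑Knaster.

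Putting this together, since a product of two $\kappa^+$‑Knaster forcings is $\kappa^+$‑Knaster and a two‑step iteration of $\kappa^+$‑Knaster forcings is $\kappa^+$‑Knaster,
$$\MCB_\pi\times\MCB_\pi\ \cong\ \big(\Add(\kappa,\delta)\ast\dot{\mathbb{S}}\big)\times\big(\Add(\kappa,\delta)\ast\dot{\mathbb{S}}\big)\ \cong\ \Add(\kappa,\delta)\ast(\dot{\mathbb{S}}\times\dot{\mathbb{S}}),$$
where $\Add(\kappa,\delta)$ is $\kappa^+$‑Knaster and forces $\dot{\mathbb{S}}\times\dot{\mathbb{S}}$ to be $\kappa^+$‑Knaster by the argument of the previous paragraph applied to both coordinates at once (thin out so that both triples $(d,\lambda',h)$ are fixed — still at most $\kappa$ choices). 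Hence $\MCB_\pi\times\MCB_\pi$ is $\kappa^+$‑Knaster, in particular $\kappa^+$‑c.c.

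The step I expect to require the most care is the presentation used in the first paragraph: extracting from the quotient $\MQB_\pi=(\mathbb{M}^*\ast\dot\MRB^*_w)/(g\ast K)$, and its subforcing $\MCB_\pi$, the precise ``residual Cohen $\ast$ residual Radin'' shape. This means unwinding the isomorphism $\mathbb{M}^*\ast\dot\MRB^*_w\cong k(\mathbb{M}\ast\dot\MRB_w)$, tracking how the reduction $\pi$ acts on the Cohen coordinates and on the Radin stems, and checking that the residual Radin forcing $\dot{\mathbb{S}}$ retains the stem/Lévy‑collapse amalgamation property of $\MRB_w$ exploited above; once that presentation is in hand, the chain‑condition bookkeeping is routine.
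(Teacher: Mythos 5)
Your overall intuition---refine to fixed stems and collapse data, then amalgamate the top coordinate---is shared with the paper. But the proposal has a genuine gap at its centre.

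The critical step is: given two conditions in $\MCB_\pi$ with the same stem, why is their amalgamation again in $\MCB_\pi$? Membership in $\MCB_\pi$ is not intrinsic to a condition of $\mathbb{M}^*\ast\dot{\MRB}^*_w$; it is the requirement $\pi(\cdot)\in g\ast K$, a statement about the fixed generic. Your justification---``an intersection of sets containing tails of the Radin club again contains such a tail''---is a heuristic, not a proof of this. When you take a greatest lower bound of two conditions of $\MCB_\pi$, you thin the measure-one set and meet the $H$-functions, and you must then verify that the projection of the thinned condition remains in the generic filter $g\ast K$. In the paper this is exactly what Claims~\ref{claim1} and~\ref{claim2} do, and the tool that makes them work is the Prikry property for $\MRB_w$ (Lemma~\ref{thm:prikry property}): to decide whether the candidate lies in the quotient, one passes to a \emph{direct} extension in $\mathbb{M}\ast\dot{\MRB}_w$, which preserves the stem, so that Claim~\ref{claim1} forces the decision to be positive. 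Your argument never invokes the Prikry property, and without it the amalgamation step does not close.

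A secondary issue: the factorization of $\MCB_\pi$ as $\Add(\kappa,\delta)\ast\dot{\mathbb{S}}$, with $\dot{\mathbb{S}}$ a residual Radin forcing whose stems admit amalgamation, is asserted rather than proved, and you acknowledge this as the delicate point. The quotient membership condition entangles the Cohen and Radin parts: the Radin components $\dot{A}^*,\dot{H}^*$ are $\Add(\kappa,k(\lambda^+))$-names, $\pi$ acts on both coordinates simultaneously, and $\pi$ lands in the regular-open algebra, not the poset. So extracting a clean two-step ``residual Cohen $\ast$ residual Radin'' presentation is itself a nontrivial task that the paper sidesteps entirely by working directly with the quotient and deploying the Prikry-property claims. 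Finally, note that the paper's argument produces a condition of $\mathbb{M}\ast\dot{\MRB}_w$ that \emph{forces} the two amalgamated conditions into $\MCB_\pi$; it does not exhibit an actual common extension inside $\MCB_\pi$ as computed in the given extension, so it is also not clear that the paper's technique upgrades from $\kappa^+$-c.c.\ of the square to $\kappa^+$-Knaster of the factor, as you claim.
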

\begin{proof}

Assume towards a contradiction that $A \in N^1[G_\k \ast g \ast K]$ is an antichain in $\MCB_\pi \times \MCB_\pi$
of size $\k^+.$ Let $ \langle (a_i^1, a_i^2): i<\k^+    \rangle$ be an enumeration of $A$, and for $i<\k^+$
and $k \in \{1, 2\}$ let us write $a^k_i$ as
\[
a_i^k= ((p^k_i, \emptyset), \check{d^k_i}^{\frown}\langle w, \lambda^k_i,   \dot{A}^k_i, \dot{H}^k_i, \check{h}^k_i \rangle ).
\]
By shrinking $A$ in necessary, we assume that there is a condition $((p, \dot{q}), \check{d}^{\frown}\langle w, \lambda,   \dot{A}, \dot{H}, \check{h} \rangle ) \in g \ast K$ which forces the following:
\begin{enumerate}
\item $\dot{A}$ is an antichain.

\item There exists $t_1$ such that all $i<\k^+, d^1_i=t_1$.

\item There exists $t_2$ such that all $i<\k^+, d^2_i=t_2$.

\item For some fixed $\eta_1<\k$ and all $i<\k^+, \lambda^1_i=\eta_1$.

\item For some fixed $\eta_2<\k$ and all $i<\k^+, \lambda^2_i=\eta_2$.

\item For some $f_1 \in \Col(\eta_1^{+5}, < \kappa)$ and all $i<\kappa^+, h^1_i=f_1$.

\item For some $f_2 \in \Col(\eta_2^{+5}, < \kappa)$ and all $i<\kappa^+, h^2_i=f_2$.
\end{enumerate}
For each $i$, choose $((p_i, \dot{q}_i), \check{d_i}^{\frown}\langle w, \lambda,   \dot{A}_i, \dot{H}_i, \check{h}_i \rangle) \in g \ast K$
which extends $((p, \dot{q}), \check{d}^{\frown}\langle w, \lambda,   \dot{A}, \dot{H}, \check{h} \rangle )$ and decides both $a^1_i$ and $a^2_i,$ say it forces (for $k \in \{1, 2\}$)
\[
a_i^k= ((p^k_i, \emptyset), \check{t_1}^{\frown}\langle w, \eta_k,   \dot{A}^k_i, \dot{H}^k_i, \check{f}_k \rangle ).
\]
By further shrinking and extending the conditions, we may assume that for some $s$
and all $i<\k^+, d=d_i=s.$

Let
$$((p^{*1}_i, \dot{q}^{*1}_i), \check{s}^{\frown}\langle w, \lambda,   \dot{A}^{*1}_i, \dot{H}^{*1}_i, \check{h}^{*1}_i \rangle)$$
extends $a_i^1$, $((p_i, \dot{q}_i), \check{d_i}^{\frown}\langle w, \lambda,   \dot{A}_i, \dot{H}_i,  \check{h}_i \rangle)$
and $((p, \dot{q}), \check{d}^{\frown}\langle w, \lambda,   \dot{A}, \dot{H}, \check{h} \rangle )$
and such that $\pi(p^{*1}_i)=(k^{-1})''[p^{*1}_i]$ is in the Cohen part of $g \ast K.$
Similarly let
$$((p^{*2}_i, \dot{q}^{*2}_i), \check{s}^{\frown}\langle w, \lambda,   \dot{A}^{*2}_i, \dot{H}^{*2}_i, \check{h}^{*2}_i \rangle)$$
extends $a_i^2$, $((p_i, \dot{q}_i), \check{d_i}^{\frown}\langle w, \lambda,   \dot{A}_i, \dot{H}_i, \check{h}_i \rangle)$
and $((p, \dot{q}), \check{d}^{\frown}\langle w, \lambda,   \dot{A}, \dot{H}, \check{h} \rangle )$
and such that $\pi(p^{*2}_i)=(k^{-1})''[p^{*2}_i]$ is in the Cohen part of $g \ast K.$
Note that in particular $\pi(p^{*1}_i) \| \pi(p^{*2}_i)$ (~$\pi(p^{*1}_i)$ is compatible with $\pi(p^{*2}_i)$).

By $\Delta$-system arguments, we can find $i<j$ such that
$p^{*1}_i \| p^{*1}_j$ and $p^{*2}_i \| p^{*2}_j$.
Let
\[
g^1= ((p^{*1}_i, \dot{q}^{*1}_i), \check{s}^{\frown}\langle w, \lambda,   \dot{A}^{*1}_i, \dot{H}^{*1}_i, \check{h}^{*1}_i,  \rangle) \wedge ((p^{*1}_j, \dot{q}^{*1}_j), \check{s}^{\frown}\langle w, \lambda,   \dot{A}^{*1}_j, \dot{H}^{*1}_j , \check{h}^{*1}_j \rangle)
\]
be the greatest lower bound of $$((p^{*1}_i, \dot{q}^{*1}_i), \check{s}^{\frown}\langle w, \lambda,   \dot{A}^{*1}_i, \dot{H}^{*1}_i, \check{h}^{*1}_i \rangle) $$
and $$ ((p^{*1}_j, \dot{q}^{*1}_j), \check{s}^{\frown}\langle w, \lambda,   \dot{A}^{*1}_j, \dot{H}^{*1}_j, \check{h}^{*1}_j \rangle).$$
Similarly let
\[
g^2= ((p^{*2}_i, \dot{q}^{*2}_i), \check{s}^{\frown}\langle w, \lambda,   \dot{A}^{*2}_i, \dot{H}^{*2}_i, \check{h}^{*2}_i \rangle) \wedge ((p^{*2}_j, \dot{q}^{*2}_j), \check{s}^{\frown}\langle w, \lambda,   \dot{A}^{*2}_j, \dot{H}^{*2}_j, \check{h}^{*2}_j \rangle).
\]
Let
\[
p'= \pi(p^{*1}_i) \cup \pi(p^{*1}_j) \cup \pi(p^{*2}_i) \cup \pi(p^{*2}_j),
\]
which is well-defined. Let
\[
g= ((p', \emptyset), \emptyset) \wedge ((p, \dot{q}), \check{d}^{\frown}\langle w, \lambda,   \dot{A},\dot{H}, \check{h} \rangle ) \bigwedge_{l=i, j} ((p_l, \dot{q}_l), \check{d_l}^{\frown}\langle w, \lambda,   \dot{A}_l, \dot{H}_l,  \check{h}_l \rangle),
\]
be the greatest lower bound of the conditions considered. To continue, we need the following two claims:

\begin{claim}
\label{claim1}
Assume that $$r=((p, \dot{q}), \check{d}^{\frown}\langle w, \lambda,   \dot{A}, \dot{H}, \check{h} \rangle )\in \mathbb{M} \ast \dot{\MRB}_{w}$$
and
$$r^*=((p^*, \emptyset), \check{d^*}^{\frown}\langle w, \lambda,   \dot{A}^*, \dot{H}^*, \check{h}^* \rangle ) \in \mathbb{M}^* \ast \dot{\MRB}^*_{w}$$ and the following conditions are satisfies:
\begin{enumerate}
\item $r \leq \pi(r^*)$.

\item Suppose that $d= \langle d_0, \dots, d_{n-1}                \rangle$ and $d^*= \langle d_0^*, \dots, d_{m-1}^*      \rangle$.
Then $n=m$ and for all $k<n, \kappa^{d_k}=\kappa^{d^*_k}$ and $\lambda^{d_k}=\lambda^{d^*_k}$.

\item For all $k<n, h^{d_k} \leq h^{d^*_k}$.

\item $h \leq h^*.$
\end{enumerate}
Then $r$ does not force
$r^*$  out the quotient $\MCB_\pi$.
\end{claim}
\begin{proof}
Consider the conditions $r$
and $r^*$. The above conditions imply that they are compatible, so let $r \wedge r^*$ be a common extension of them.
Let $\bar g \times \bar K$ be $\mathbb{M}^* \ast \dot{\MRB}^*_{w}$-generic over $V^1[G_\k]$
such that $r \wedge r^* \in \bar g \times \bar K.$ But then $\pi(r) \in \langle \pi''[\bar g \times \bar K]  \rangle$,
the filter on $\mathbb{M} \ast \dot{\MRB}_{w}$ generated by $\pi''[\bar g \times \bar K].$
The result follows immediately.
\end{proof}
\begin{claim}
\label{claim2}
Assume $r$ and $r^*$ are as in Claim \ref{claim1}. Then there exists  $\bar r \leq^* r$ such that $\bar r $ forces ``$r^* \in \MCB_\pi$''.
\end{claim}
\begin{proof}
By Lemma \ref{thm:prikry property}, there exists $\bar r \leq^* r$ which decides ``$r^* \in \MCB_\pi$''. By Claim \ref{claim1}, $\bar r$
cannot force ``$r^* \notin \MCB_\pi$''. So $\bar r \Vdash$``$r^* \in \MCB_\pi$''.
\end{proof}
Note that conditions $g$ and $g^1$ satisfy the conditions in Claim \ref{claim1}, hence by Claim \ref{claim2}, there exists $ \bar g_1 \leq^* g$
which forces ``$g^1 \in \MCB_\pi$''.
Then $\bar g_1$ and $g^2$ satisfy the conditions in Claim \ref{claim1}, so again by Claim \ref{claim2}, there exists $ \bar g_2 \leq^* \bar g_1$
which forces ``$g^2 \in \MCB_\pi$''.
It follows that
\[
\bar g_2 \Vdash \text{~``~} g^1, g^2 \in \MCB_\pi  \text{''}.
\]
But then
\begin{itemize}
\item $\bar g_2 \Vdash$``$(g^1, g^2) \in \MCB_\pi \times \MCB_\pi$''.
\item  $\bar g_2 \Vdash$``$(g^1, g^2) \leq (a_i^1, a_i^2), (a_j^1, a_j^2)$''.
\item $\bar g_2 \leq ((p, \dot{q}), \check{d}^{\frown}\langle \kappa, \lambda,   \dot{A}, \check{f}, \dot{F} \rangle )$.
\end{itemize}
It follows that $\bar g_2 \Vdash$``$\dot{A}$ is an antichain'', and from the above, we get a contradiction.
\end{proof}

We are now ready to complete the proof of Theorem \ref{tree property at kappa}. Note that by our assumption $\dot{T} \in V^1[G_\k]$ is an
 $\mathbb{M} \ast \dot{\MRB}_{w}$ name such that $T=\dot{T}[g \ast K]$
is a $\lambda$-Aronszajn tree in $V^1[G_\k \ast g \ast K].$
Also note that $\dot{T} \in N^1[G_\k].$

By standard arguments, $k(T)_{< \lambda} =T$ and so $T$ has  a cofinal branch in $N^1[G_\k \ast g^* \ast K^*] \subseteq V^1[G_\k \ast g^* \ast K^*].$

Suppose that $X \times Y$ is $\MCB_\pi \times \MQB_\pi$-generic over $V^1[G_\k \ast g \ast K]$
so that $N^1[G_\k \ast g^* \ast K^*] \subseteq N^1[G_\k \ast g \ast K][X \times Y],$ which is possible by Lemma \ref{tau-pi is a projection}.
It follows that $T$ has a cofinal branch in $N[G \ast H \ast K][X \times Y].$
Now lemmas  \ref{lem: preservation product cc}, \ref{lem: preservation product cc-2} \ref{closure of t-pi} and \ref{chain condition of c}  can be used to show that forcing with
$\MCB_\pi \times \MQB_\pi$ over $N^1[G_\k \ast g \ast K]$ does not add cofinal branches to $T$ (see \cite{friedman-honzik2} for details).
We get a contradiction
and Theorem \ref{tree property at kappa} follows.

\subsection{Completing the proof of Theorem \ref{thm:main theorem}}
\label{sec:completing the proof}
In this subsection we complete the proof of Theorem \ref{thm:main theorem}, by showing that in the model
$V^1[G_\k \ast g \ast K]$,~$\TP(\alpha^{++})$ holds for all singular cardinals $\alpha<\kappa$.

Recall that $C=\{\kappa_i: i<\kappa  \}$ is the Radin club added by $K$. Let us also assume that $\min(C)=\aleph_0$.
Recall that $G_\k$ is assumed to be $\MPB_\k$-generic over $V^1$. Let us write it as
\[
G_\k= \langle  \langle G_\a: \a \leq \k      \rangle, \langle G(\a): \a<\k       \rangle\rangle,
\]
which corresponds to the iteration
\[
\MPB_\k= \langle  \langle \MPB_\a: \a \leq \k      \rangle, \langle \dot{\MQB}_\a: \a<\k       \rangle\rangle.
\]
By simple reflecting arguments, we have the following lemma.
\begin{lemma}
\label{defining suitable x}
The set $X \in \mathcal{F}_w,$ where $X$ consists of all those $u \in \mathcal{U}_\infty $ such that $\a=\k_u$ satisfies the following conditions:
\begin{itemize}
\item $\alpha$ is a measurable cardinal.
\item $\MPB_\alpha$ is $\alpha$-c.c. and of size $\alpha.$
\item $\alpha$ remains measurable after forcing with $\MPB_\alpha$ and $\MPB_{\alpha+1}=\MPB_\a \ast \dot{\mathbb{M}}(\alpha, \alpha_*, \alpha_*^+)$.
\item Some  elementary embedding $j: V^1 \to M^1$ with $crit(j)=\alpha$ can be extended to
\[
j: V^1[G_\alpha] \to M^1[j(G_\alpha)]
\]
and then to
\[
j: V^1[G_\alpha \ast G(\alpha)] \to M^1[j(G_\alpha \ast G(\alpha))].
\]
\item $\MPB_{\alpha+1} \Vdash$ `` $\dot{\MPB}_{(\alpha+1, \kappa)}$ does not add any new subsets to $\alpha_*$''.

\end{itemize}
\end{lemma}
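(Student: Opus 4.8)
The plan is to reduce the assertion $X\in\mathcal{F}_w$, via the definition of $\mathcal{F}_w$ and the constructing embedding $j^2$, to a handful of statements about $\kappa$ itself, and then to read these off from the lemmas already proved in this section. First, recall that $\mathcal{F}_w=\bar\mu_w\cap\bigcap\{w(\beta)\mid 1<\beta<\len(w)\}$, that $\mu_w$ is the normal measure $U^2$ derived from $j^2$, and that for $Y\subseteq V_\kappa$ one has $Y\in\bar\mu_w$ iff $\langle\kappa\rangle\in j^2(Y)$, and $Y\in w(\beta)$ iff $w\upharpoonright\beta\in j^2(Y)$. Since $\kappa_{w\upharpoonright\beta}=w(0)=\kappa$ for every $\beta$ with $1\le\beta<\len(w)$, all of these demands collapse to the single requirement that $M^2$ believes that $\kappa$, together with the $j^2$-images of the parameters $\MPB_\kappa$, $G_\kappa$, $\langle G(\alpha):\alpha<\kappa\rangle$ appearing in the definition of $X$, satisfies clauses (1)--(5). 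As $\mathcal{F}_w$ is $\kappa$-complete and $\mathcal{U}_\infty\cap V_\kappa\in\mathcal{F}_w$, it is enough to handle the clauses one at a time.

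Next I would verify the clauses at $\kappa$. For clauses (1)--(3): $\kappa$ is a measurable limit of measurable cardinals in $V^1$ (Lemma~\ref{a very basic iteration lemma}), $\MPB_\kappa$ is $\kappa$-c.c.\ of size $\kappa$ by the design of the reverse Easton iteration under $GCH$, and, by Lemma~\ref{preparation model for double succseeor model}, $\MPB_{\kappa+1}=\MPB_\kappa\ast\dot{\mathbb{M}}(\kappa,\lambda,\lambda^+)=\MPB_\kappa\ast\dot{\mathbb{M}}(\kappa,\kappa_*,\kappa_*^+)$ with $\kappa$ remaining measurable in $V^2=V^1[G_\kappa\ast g]$; because $M^1$ agrees with $V^1$ on $H(\lambda^{++})$ and $M^2$ agrees with $V^2$ on $H(\kappa^{++})$, the relevant measures, posets and generics all lie in the respective target models, so these facts reflect. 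Clause (5) is vacuous at $\kappa$ since $(\kappa+1,\kappa)=\emptyset$; its $j^2$-image speaks of the genuine tail $j^2(\MPB_\kappa)_{(\kappa+1,j^2(\kappa))}$, which is trivial below the first measurable limit of measurables above $\kappa$ and is $\lambda^+$-closed over $\MPB_{\kappa+1}$ — a standard reverse Easton closure computation, using that this next active stage exceeds $\kappa_*=\lambda$ — and hence adds no new subset of $\lambda$.

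The main obstacle is clause (4), because it quantifies over a class embedding; here I would first recast it as a set-sized assertion, namely that there is an extender $E$ on $\alpha$ witnessing (in $V^1$) $H(\alpha^{++})$-hypermeasurability such that the induced ultrapower lifts over $\MPB_{\alpha+1}$ using $G_\alpha\ast G(\alpha)$. At $\kappa$ this holds: cutting the $(\kappa,\lambda^{++})$-extender of Lemma~\ref{a very basic iteration lemma} down to length $(\kappa^{++})^{V^1}$ produces such an $E$, and the lift $j^2$ over $\MPB_{\kappa+1}$ built in Lemma~\ref{preparation model for double succseeor model} restricts to a lift of $\Ult(V^1,E)$. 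The point needing care is that $E$ is visible to $M^2$: it has size $(\kappa^{++})^{V^1}$ in $V^1$, but $\mathbb{M}(\kappa,\lambda,\lambda^+)$ collapses the $V^1$-cardinals in $(\kappa^+,\lambda)$ and turns $\lambda$ into $\kappa^{++}$, so $E$ has size at most $\kappa^+$ in $V^2$ and therefore lies in $H(\kappa^{++})^{V^2}\subseteq M^2$. Once clause (4) is verified at $\kappa$ inside $M^2$, elementarity of $j^2$ gives that $\{u\in\mathcal{U}_\infty:\kappa_u\text{ satisfies }(4)\}\in\mathcal{F}_w$; intersecting this with the sets coming from clauses (1)--(3) and (5) and with $\mathcal{U}_\infty\cap V_\kappa$, and using the $\kappa$-completeness of $\mathcal{F}_w$, yields $X\in\mathcal{F}_w$.
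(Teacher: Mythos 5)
Your proposal is correct and follows essentially the same route as the paper: the paper's entire proof is the observation that it suffices to check $w\upharpoonright\alpha\in j(X)$ for all $\alpha<\kappa^+$ (equivalently, that $\kappa$ satisfies the $j^2$-images of the clauses in $M^2$), which it declares "easily checked," and your first paragraph is exactly this reduction via $\bar\mu_w$ and the measures $w(\beta)$. Your subsequent clause-by-clause verification (including the point that the cut-down extender becomes small, hence visible in $M^2$, after $\mathbb{M}(\kappa,\lambda,\lambda^+)$ collapses the interval $(\kappa^+,\lambda)$) is a reasonable filling-in of the details the paper omits.
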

\begin{proof}
It suffices to show that $\forall \a<\k^+, w\upharpoonright \a \in j(X)$, which can be easily checked.
\end{proof}
Thus  we can assume that  $$\aleph_0< \alpha \in C \implies \a \in X.$$

On the other hand,  if $\alpha < \kappa$ is a limit cardinal in $V^1[G_\k \ast g \ast K],$ then $\alpha \in \lim(C),$
the set of limit points of $C$,
and $2^\alpha=\alpha^{+3}$.
Thus the following completes the proof:
\begin{theorem}
\label{tree property at limits}
Assume $\alpha \in \lim(C).$ Then $V^1[G_\k \ast g \ast K] \models$``$\TP(\alpha^{++})$''.
\end{theorem}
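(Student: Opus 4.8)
Throughout fix $\alpha\in\lim(C)$ and let $\xi<\k$ be the ordinal with $\k_\xi=\alpha$; since $\alpha$ is a limit point of $C$, $\xi$ is a limit ordinal, $\len(u_\xi)>1$, and $\alpha\neq\aleph_0$, so by Lemma~\ref{defining suitable x} we may assume $\alpha\in X$. The plan is to localize the problem to $\alpha$, reducing it to an instance of the situation treated in Subsection~\ref{sec:tree property at double successors}. First I would check that $(\alpha^{++})^{V^1[G_\k\ast g\ast K]}=\alpha_*$: indeed $\MPB_{\alpha+1}=\MPB_\alpha\ast\dot{\mathbb{M}}(\alpha,\alpha_*,\alpha_*^+)$ forces $\alpha_*=\alpha^{++}$ and $2^\alpha=\alpha^{+3}$ (Lemma~\ref{basic facts on mitchell forcing}), $\dot{\MPB}_{(\alpha+1,\k)}$ adds no new subsets of $\alpha_*$ (Lemma~\ref{defining suitable x}), $g=\mathbb{M}(\k,\l,\l^+)$ is $\k$-directed closed, and $\Rforce_w$ keeps successive points $\alpha<\b$ of $C$ satisfying $\b=\alpha^{+6}$, so none of $\alpha^+,\dots,\alpha^{+5}$ is collapsed. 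Now assume for contradiction that $T$ is an $\alpha^{++}$-Aronszajn tree in $V^1[G_\k\ast g\ast K]$, coded as a subset of $\alpha^{++}$. The next point of $C$ above $\alpha$ is $\k_{\xi+1}=\alpha^{+6}>\alpha^{++}$, so Lemma~\ref{thm: bounding sets}(c) gives $T\in V^2[\vec u\upharpoonright(\xi+1),\vec F\upharpoonright(\xi+1)]$; since the single measure sequence $u_\xi$ is an element of $V^2$ and, by the factorization Lemma~\ref{thm:factorization lemma}(b), the step from $V_\xi:=V^2[\vec u\upharpoonright\xi,\vec F\upharpoonright\xi]$ to that model is forced by $\Col(\k_\xi^{+5},<\k_{\xi+1})$, which is $\alpha^{+5}$-closed and hence adds no new subsets of $\alpha^{++}$, we conclude $T\in V_\xi$. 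By Lemma~\ref{thm: bounding sets}(b), $V_\xi$ is an $\Rforce_{u_\xi}$-generic extension of $V^2$.

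So it suffices to show $V_\xi\models\TP(\alpha^{++})$, i.e.\ that $\Rforce_{u_\xi}$ forces $\TP(\alpha^{++})$ over $V^2$. Here I would pass back to $V^1[G_\alpha]$: since $(V_\alpha)^{V^2}=(V_\alpha)^{V^1[G_{\alpha+1}]}$, the forcing $\Rforce_{u_\xi}$ is computed the same way over $V^1[G_{\alpha+1}]$, and using Lemma~\ref{defining suitable x}, the $\k$-directed closure of $g$, and Easton's lemma together with the $\alpha^+$-c.c.\ of $\Rforce_{u_\xi}$ (Lemma~\ref{chain condition for radin forcing}) one checks that $\dot{\MPB}_{(\alpha+1,\k)}\ast\dot g$ adds no new subsets of $\alpha_*=\alpha^{++}$ over $V^1[G_{\alpha+1}]\ast\Rforce_{u_\xi}$; hence $P(\alpha^{++})$ is unchanged and it is enough to prove that $\mathbb{M}(\alpha,\alpha_*,\alpha_*^+)\ast\dot{\Rforce}_{u_\xi}$ forces $\TP(\alpha^{++})$ over $V^1[G_\alpha]$. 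This is now precisely the argument of Subsection~\ref{sec:tree property at double successors} with the dictionary $\k\mapsto\alpha$, $\l\mapsto\alpha_*$, $\mathbb{M}\mapsto\mathbb{M}(\alpha,\alpha_*,\alpha_*^+)$, $\Rforce_w\mapsto\Rforce_{u_\xi}$: one uses that $\alpha_*$ is measurable in $V^1[G_\alpha]$ (it is measurable in $V^1$, as needed for the Mitchell factor $\dot{\mathbb{M}}(\alpha,\alpha_*,\alpha_*^+)$ of $\MPB_{\alpha+1}$ to behave as in Lemma~\ref{basic facts on mitchell forcing}, and $|\MPB_\alpha|=\alpha<\alpha_*$ so Levy--Solovay applies), the $\alpha_*$-c.c.\ of $\MPB_\alpha\ast\dot{\mathbb{M}}(\alpha,\alpha_*,\alpha_*^+)\ast\dot{\Rforce}_{u_\xi}$, the projection from $\Add(\alpha,\alpha_*^+)\times\mathbb{T}(\alpha,\alpha_*,\alpha_*^+)$ onto $\mathbb{M}(\alpha,\alpha_*,\alpha_*^+)$ and the presentation $\mathbb{M}(\alpha,\alpha_*,\alpha_*^+)\cong\Add(\alpha,\alpha_*^+)\ast\dot\MQB$ with $\dot\MQB$ forced $\alpha^+$-distributive (Lemma~\ref{more basic facts on mitchell forcing}); a measurable embedding $k$ of critical point $\alpha_*$ is lifted to produce a cofinal branch of $T$ in a larger model $N[\cdots]$, the associated quotient forcing is exhibited as a projection of $\MCB_\pi\times\mathbb{T}_\pi$ with $\mathbb{T}_\pi$ being $\alpha^+$-closed and $\MCB_\pi\times\MCB_\pi$ being $\alpha^+$-c.c., and the branch Lemmas~\ref{lem: preservation product cc} and~\ref{lem: preservation product cc-2} then show that $\MCB_\pi\times\MQB_\pi$ adds no cofinal branch to $T$, a contradiction.

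The main obstacle, exactly as in Subsection~\ref{sec:tree property at double successors}, is the analogue of Lemma~\ref{chain condition of c}: that the square $\MCB_\pi\times\MCB_\pi$ of the quotient of $\Rforce_{u_\xi}$ determined by $k$ remains $\alpha^+$-c.c.\ in the relevant model. Its proof runs through the analogues of Claims~\ref{claim1} and~\ref{claim2}, which depend essentially on the Prikry property of $\Rforce_{u_\xi}$ (Lemma~\ref{thm:prikry property}); this is the reason the forcing below $\alpha$ must be of Radin type rather than arbitrary. The remaining points — that $\alpha_*$ stays measurable through $\MPB_\alpha$, and that $P(\alpha^{++})$ is correctly computed in the intermediate models so that the closure and chain-condition hypotheses of the branch lemmas (and of Silver's lemma applied to the interleaved collapses) hold with the stated parameters — are routine bookkeeping, drawing on Lemma~\ref{defining suitable x} and Lemmas~\ref{basic facts on mitchell forcing} and~\ref{more basic facts on mitchell forcing}.
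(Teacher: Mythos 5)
Your proposal is correct and takes essentially the same route as the paper: you localize the problem to the intermediate model $V^1[G_{\alpha+1}][\vec{u}\upharpoonright\xi,\vec{F}\upharpoonright\xi]$, obtained by $\MPB_\alpha\ast\dot{\mathbb{M}}(\alpha,\alpha_*,\alpha_*^+)\ast\dot{\Rforce}_{u_\xi}$, check that the remaining forcing adds no new subsets of $\alpha_*=\alpha^{++}$, and then repeat the argument of Theorem \ref{tree property at kappa} with $(\alpha,\alpha_*)$ in place of $(\kappa,\lambda)$. This is exactly the paper's proof (reduction plus ``very similar to Theorem \ref{tree property at kappa}''), with your version merely spelling out the routine bookkeeping (Easton's lemma together with the $\alpha^+$-c.c.\ of $\Rforce_{u_\xi}$) in more detail.
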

\begin{proof}
 Fix
$\alpha \in \lim(C)$, and let $\xi<\kappa$ be such that $\alpha = \kappa_{\xi}.$ Note that $\xi$ is a limit ordinal. We have
\[
V^1[G_\k \ast g \ast K] = V^1[G_{\alpha+1}][G_{(\alpha+1, \kappa]}][g][[\vec{u} \upharpoonright \xi, \vec{F} \upharpoonright \xi][\vec{u} \upharpoonright [\xi, \kappa), \vec{F} \upharpoonright [\xi, \kappa)].
\]
and the following hold:
\begin{enumerate}
\item $V^1[G_\k \ast g \ast K]$ is a generic extension of $V^1[G_\k \ast g][\vec{u} \upharpoonright \xi, \vec{F} \upharpoonright \xi]$
by a forcing notion which does not add any new subsets to $\alpha_*$\footnote{Recall that $\alpha_*$ is the least measurable cardinal above $\alpha$.}.

\item Forcing with $\MPB_{(\alpha+1, \kappa]} \ast \dot{\mathbb{M}}$ does not add any subsets to $\alpha_*$; in particular, the forcing notion $\RR_{u_\xi}$ is defined in the same way in the models
$V^1[G_\k \ast g]$ and $V^1[G_{\alpha+1}]$.
\end{enumerate}
It follows that $V^1[G_\k \ast g \ast K]$ is a generic extension of $V^1[G_{\alpha+1}][\vec{u} \upharpoonright \xi, \vec{F} \upharpoonright \xi]$,
by a forcing notion which does not add any new subsets to $\alpha_*.$ Also note that
\begin{enumerate}
\item [(3)] $V^1[G_{\alpha+1}][\vec{u} \upharpoonright \xi, \vec{F} \upharpoonright \xi]\models$``$\alpha^{++}=\alpha_*$''.
 \end{enumerate}
Thus it suffices to prove the following:
\begin{lemma} Tree property at $\alpha_*$
holds in the generic extension  $V^1[G_{\alpha+1}][\vec{u} \upharpoonright \xi, \vec{F} \upharpoonright \xi]$,
which is obtained  using the forcing notion
$$\MPB_{\alpha+1} \ast \dot{\mathbb{R}}_{u_\xi}=\MPB_{\alpha} \ast \dot{\mathbb{M}}_\alpha \ast \dot{\mathbb{R}}_{u_\xi}.$$
\end{lemma}
\begin{proof}
The proof  is very similar to the proof of Theorem \ref{tree property at kappa}.
\end{proof}
 This completes the proof of
Theorem \ref{tree property at limits}.
\end{proof}
Theorem \ref{thm:main theorem} follows.

\section{Tree property at $\aleph_{2n}$'s and $\aleph_{\omega+2}$}
\label{section:local evens}
In this section we prove Theorem \ref{thm:main theorem1}. Thus assume that $GCH$ holds and
 $\eta > \lambda$ are  measurable cardinals above $\k$. We  assume that they are the least such cardinals. Suppose $\kappa$
is an  $H(\eta)$-hypermeasurable cardinal as witnessed by the elementary embedding $j: V \to M \supseteq H(\eta)$.
 We may assume that it is generated by a $(\k, \eta)$-extender.
Let $i: V \to N$ be the ultrapower embedding derived from $j$ and let $k: N \to M$
be such that $j=k \circ i.$

The next lemma can be proved as in Lemma \ref{a very basic iteration lemma}
\begin{lemma}
\label{second very basic iteration lemma}
 Then there exists a cofinality preserving generic extension $V^1$ of $V$ satisfying the following conditions:
\begin{enumerate}
\item [(a)] $V^1 \models$``$GCH$''.
\item [(b)] There is $j^1: V^1 \to M^1$ with critical point $\k$
such that $H(\eta) \subseteq M^1$ and $j^1 \upharpoonright V= j.$

\item [(c)] $j^1$ is generated by a $(\k, \eta)$-extender.

\item [(d)] If $U^1$ is the normal measure derived from $j^1$ and if $i^1: V^1 \to N^1 \simeq \Ult(V^1, U^1)$
is the ultrapower embedding, then there exists  $\bar g \in V^1$
which is $i^1(\Add(\k, \l)_{V^1})$-generic over $N^1.$
Further $i^1 \upharpoonright V=i.$
\end{enumerate}
\end{lemma}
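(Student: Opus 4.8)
The plan is to run the argument of Lemma~\ref{a very basic iteration lemma}, i.e.\ Theorem~3.1 together with Remark~3.6 of \cite{friedman-honzik2}, essentially without change, replacing the degree of hypermeasurability $\l^{++}$ used there by $\eta$ and $\Add(\k,\l^+)$ by $\Add(\k,\l)$; the cardinals $\k<\l<\eta$ play exactly the roles played there by $\k$ and $\l,\l^{+},\l^{++}$. In more detail: first one fixes a reverse Easton iteration $\MPB$ of length $\k$ that forces, at each inaccessible $\alpha<\k$, with a Cohen/collapse-type forcing on $\alpha$ (the active stages and iterands being prescribed by a bookkeeping function), chosen so that $\MPB$ has size $\k$ and the $\k$-c.c., so that each iterand preserves $GCH$, and — automatically, since $\len(\MPB)=\k=\crit(j)$ — so that $\MPB$ is an initial segment of $i(\MPB)$ and of $j(\MPB)$. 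Let $G$ be $\MPB$-generic over $V$ and put $V^1=V[G]$. The $\k$-c.c.\ together with the closure of the iterands shows that $\MPB$ preserves cofinalities and cardinals, and the choice of iterands gives $V^1\models GCH$; this is clause~(a).

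Next one lifts $j$. Writing $j(\MPB)=\MPB\ast\dot{\MPB}^{\mathrm{tail}}$, where $\dot{\MPB}^{\mathrm{tail}}$ is the part of the iteration on the interval $[\k,j(\k))$ and is forced to be highly closed, and using that $V^1$ still sees $H(\eta)\subseteq M^1$ because $\MPB$ is small, one builds in $V^1$ an $M^1$-generic for $\dot{\MPB}^{\mathrm{tail}}[G]$ by the standard master-condition and diagonalisation argument, exploiting the closure of the tail and the $\k$-closure of $M^1$. The resulting lift $j^1\colon V^1\to M^1$ extends $j$ and is still generated by the $(\k,\eta)$-extender, since $j^1\upharpoonright H(\eta)^{V^1}$ is read off that extender; this gives (b) and (c). Factoring $j^1$ through the normal measure $U^1$ it derives yields $i^1\colon V^1\to N^1\simeq\Ult(V^1,U^1)$ and $k\colon N^1\to M^1$ with $j^1=k\circ i^1$, and since $i^1$ is the canonical lift of $i$ through $\MPB$ one has $i^1\upharpoonright V=i$ — the final clause of (d).

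The substantive point is the remaining half of (d): finding $\bar g\in V^1$ which is $i^1(\Add(\k,\l)_{V^1})$-generic over $N^1$. This is done by Woodin's method of obtaining a generic over an ultrapower. Since $\MPB$ has size $\k$ and the $\k$-c.c., the term forcing $\Add(\k,\l)_{V^1}/\MPB$ is forcing isomorphic to $\Add(\k,\l)_{V}$ (as in \cite{cummings}, Fact~2, $\S$1.2.6), and this, together with the closure of the appropriate tail forcing and the combinatorics of the $(\k,\eta)$-extender, allows one to produce, and then correct by Woodin's surgery argument, a genuine $N^1$-generic $\bar g$ for $\Add(i^1(\k),i^1(\l))^{N^1}=i^1(\Add(\k,\l)_{V^1})$ lying in $V^1$. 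I expect this to be the only delicate step, and the only place where the hypotheses ``$\eta>\l$ are measurable'' and ``$\k$ is $H(\eta)$-hypermeasurable'' are used with full force: it is here that one must have arranged the preparation $\MPB$ so that it is an initial segment of $i(\MPB)$, that $GCH$ persists at and below $\k$, and that the closure degrees and chain conditions of the forcings in play are correctly matched. Since this is exactly the argument carried out in \cite{friedman-honzik2}, I would simply invoke it, pointing out the parameter change.
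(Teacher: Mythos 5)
Your proposal is correct and matches the paper's route: the paper proves Lemma~\ref{second very basic iteration lemma} by pointing to Lemma~\ref{a very basic iteration lemma}, whose proof is itself the citation of Theorem~3.1 and Remark~3.6 of \cite{friedman-honzik2}, and you invoke the same reference with the same change of parameters ($\eta$ for $\l^{++}$, $\Add(\k,\l)$ for $\Add(\k,\l^{+})$). The one imprecision in your exposition is that the term-forcing isomorphism of \cite{cummings} and Woodin's surgery are ingredients of the subsequent Lemma~\ref{preparation model for aleph-omega model}, where the guiding generic $\bar g$ is put to work, rather than of the construction of $\bar g$ itself, which in \cite{friedman-honzik2} proceeds by a direct counting and diagonalisation over $N^1$; but this does not affect the correctness of your appeal to that source.
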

Let $V^1$ be the model constructed above. We need the following lemma which is an analogue of Lemma \ref{preparation model for double succseeor model}.
\begin{lemma}
\label{preparation model for aleph-omega model}
Work in $V^1$.
There exists a forcing iteration $\MPB_\k$ of length $\k$ such that if $G_\k \ast g \ast h$ is $\MPB_\k \ast \dot{\mathbb{M}}(\k,  \l) \ast \dot{\mathbb{M}}(\l,  \eta)$-generic over
$V^1$, then in $V^2=V^1[G_\k \ast g \ast h]$, the following holds:
\begin{enumerate}
\item [(a)] There is $j^2: V^2 \to M^2$ with critical point $\k$
such that $j^2 \upharpoonright V^1= j^1.$

\item [(b)] $j^2$ is generated by a $(\k, \eta)$-extender.

\item [(c)] $V^2 \models$``$\l=\k^{++}+\eta=\k^{+4}+TP(\l)+TP(\eta)$''.

\item [(d)] If $U^2$ is the normal measure derived from $j^2$ and if $i^2: V^2 \to N^2 \simeq \Ult(V^2, U^2)$
is the ultrapower embedding, then there exists  $F \in V^2$
which is $\dot{\mathbb{M}}(\k^{+4},  i^2(\k))_{N^2}$-generic over $N^2.$
\end{enumerate}
\end{lemma}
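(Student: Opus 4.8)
The plan is to mirror, step for step, the proof of Lemma~\ref{preparation model for double succseeor model}, using the pure Mitchell forcing $\dot{\mathbb{M}}(\k,\l)$ together with the extra block $\dot{\mathbb{M}}(\l,\eta)$ in place of the power‑function‑blowing forcing $\dot{\mathbb{M}}(\k,\l,\l^+)$ that occurs there. First I would let $\MPB_\k$ be the reverse Easton iteration of length $\k$ which, at a stage $\a<\k$ that is a measurable limit of measurable cardinals, forces with $\dot{\mathbb{M}}(\a,\a_*)\ast\dot{\mathbb{M}}(\a_*,\a_{**})$ (where $\a_*<\a_{**}$ are the first two measurable cardinals above $\a$) and is trivial otherwise. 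Since $|\MPB_\k|=\k$ and $\k<\l<\eta$, the L\'evy--Solovay theorem \cite{levy-solovay} keeps $\l$ measurable in $V^1[G_\k]$ and $\eta$ measurable in $V^1[G_\k\ast g]$, and a routine nice‑name count gives $2^\k=\k^+$ in $V^1[G_\k]$; clause~(c) then follows by applying Lemma~\ref{iteration of mitchell} inside $V^1[G_\k]$ to the triple $\k<\l<\eta$, which makes $2^\k=\l=\k^{++}$, $2^\l=\eta=\l^{++}=\k^{+4}$ and forces the tree property at $\l$ and at $\eta$ in $V^2=V^1[G_\k\ast g\ast h]$.

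The substance is the lifting argument for (a), (b), (d), which I would carry out exactly as in Lemma~\ref{preparation model for double succseeor model}, changing only the bookkeeping. Because $\k$ is a measurable limit of measurable cardinals in $M^1$ whose first two measurables above it are $\l$ and $\eta$, we have $j^1(\MPB_\k)\cong\MPB_\k\ast\dot{\mathbb{M}}(\k,\l)\ast\dot{\mathbb{M}}(\l,\eta)\ast j^1(\MPB_\k)_{(\k+1,\,j^1(\k))}$, where the last factor is forced to be $\delta$-directed closed for $\delta$ the least measurable limit of measurable cardinals above $\eta$, and $j^1(\MPB_\k\ast\dot{\mathbb{M}}(\k,\l)\ast\dot{\mathbb{M}}(\l,\eta))\cong j^1(\MPB_\k)\ast\dot{\mathbb{M}}(j^1(\k),j^1(\l))\ast\dot{\mathbb{M}}(j^1(\l),j^1(\eta))$. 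Working in $V^2$: (i) using the genuine $\delta$-directed closure of the tail and the $\k$-closure of $M^1[G_\k\ast g\ast h]$, diagonalize against the $|j^1(\k)|=\eta$ many maximal antichains of $j^1(\MPB_\k)_{(\k+1,j^1(\k))}$ lying in $M^1[G_\k\ast g\ast h]$ to build a generic $G^{\mathrm{tail}}$, lifting $j^1$ to $j^1\colon V^1[G_\k]\to M^1[j^1(G_\k)]$; (ii) factor $j^1=k^1\circ i^1$ through the normal ultrapower $i^1\colon V^1\to N^1$ (and, as in Lemma~\ref{preparation model for double succseeor model}, through the intermediate ultrapower matching the eventual value $2^\k=\l$), write $\mathbb{M}(\k,\l)=\Add(\k,\l)\ast\dot{\MQB}$ with $\dot{\MQB}$ forced $\k^+$-distributive and $g=g_0\ast g_1$ accordingly, and apply the term‑forcing isomorphism $\Add(\k,\l)_{V^1[G_\k]}/\MPB_\k\cong\Add(\k,\l)_{V^1}$ (\cite{cummings}, \S1.2.6) together with the ground model generic $\bar g$ from Lemma~\ref{second very basic iteration lemma}(d): transfer $\bar g$ along $i^1$ and $k^1$ and run Woodin's surgery to obtain a generic $g_0^+$ for $\Add(j^1(\k),j^1(\l))$ over $M^1[j^1(G_\k)]$ with $j^{1''}g_0\subseteq g_0^+$; (iii) extend through the remaining blocks — the $\k^+$-distributive $\dot{\MQB}$, and $\dot{\mathbb{M}}(\l,\eta)$ via the master condition $\bigcup j^{1''}h$, which is a legitimate condition since $j^1(\l)>\eta$ and $\mathbb{M}(j^1(\l),j^1(\eta))$ is $j^1(\l)$-directed closed in $M^1$ — by standard master‑condition and closure arguments, obtaining $j^2\colon V^2\to M^2$ with $j^2\upharpoonright V=j$. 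Since every piece of extra forcing used is captured by the $(\k,\eta)$-extender, $j^2$ is again generated by a $(\k,\eta)$-extender, which is (a) and (b). For (d) I would factor the lifted $i^1$ as $i^2\colon V^2\to N^2$, argue as in Lemma~\ref{preparation model for double succseeor model} that $N^2\simeq\Ult(V^2,U^2)$ for $U^2$ the normal measure derived from $j^2$ (using $P(\k)^{V^2}\subseteq N^2$ and the agreement of the extenders), and build $F\in V^2$ alongside the construction as a generic for $\dot{\mathbb{M}}(\k^{+4},i^2(\k))_{N^2}$ over $N^2$, using the directed closure of this Mitchell forcing and tracking mutual genericity with the other generics, exactly as in Lemma~\ref{preparation model for double succseeor model}, so that $F$ stays generic over the final $N^2$.

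I expect the main obstacle to be, as in Lemma~\ref{preparation model for double succseeor model}, the surgery step combined with the placement of $F$: one must arrange a single generic $g_0^+$ for the Cohen part of $\mathbb{M}(\k,\l)$ that is generic over $M^1[j^1(G_\k)]$ and contains $j^{1''}g_0$ while remaining mutually generic with the Mitchell generic feeding $F$, and propagate these compatibilities through the factorization $N^1\to\cdots\to M^1$; once the value $2^\k=\l=\k^{++}$ and the extender length are lined up, verifying $N^2\simeq\Ult(V^2,U^2)$ so that $F$ lands on the correct forcing is the other delicate point. Everything else is a routine adaptation of the arguments already given for Lemmas~\ref{iteration of mitchell} and~\ref{preparation model for double succseeor model}.
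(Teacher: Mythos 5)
Your overall plan (the same iteration $\MPB_\k$, clause (c) via L\'evy--Solovay and Lemma \ref{iteration of mitchell}, the term-forcing isomorphism plus the ground-model generic $\bar g$ and Woodin surgery for the Cohen part, and the identification $N^2\simeq\Ult(V^2,U^2)$ for clause (d)) is the same as the paper's, which simply runs the proof of Lemma \ref{preparation model for double succseeor model} again. But your step (i) contains a genuine gap. You propose to build the generic for the tail $j^1(\MPB_\k)_{(\k+1,j^1(\k))}$ by a direct diagonalization over $M^1[G_\k\ast g\ast h]$ against its $|j^1(\k)|=\eta$ many maximal antichains, using only the $\k$-closure of $M^1[\ldots]$. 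That construction requires a $\leq$-decreasing chain of length $\eta$, and at limit stages of cofinality in $(\k,\eta)$ the initial segment of the chain need not belong to $M^1[\ldots]$ (the model is closed only under $\k$-sequences), so the closure of the tail forcing, which is known only \emph{inside} $M^1[\ldots]$, cannot be invoked to find a lower bound. This is exactly the obstruction the paper's argument is designed to avoid: following Lemma \ref{preparation model for double succseeor model}, one factors $j^1$ through the normal ultrapower $N'$ and the length-$\l$ extender ultrapower $\bar N'$, builds the tail generic (and the generic $F$, and the Cohen generic from $\bar g$) over $N'[\,\cdot\,]$ --- where there are only $\k^+$ many antichains, so every proper initial segment of the diagonalization has size $\leq\k$ and lies in $N'[\,\cdot\,]$ by $\k$-closure --- performs the surgery at the $\bar N'$ level, and only then transfers the resulting generics along $\bar i'$, $k'$, $\bar k'$ up to $M^1$.

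A secondary problem is your treatment of $\dot{\mathbb{M}}(\l,\eta)$ by a master condition ``$\bigcup j^{1''}h$''. To get a condition below $j^{1''}h$ you need this set to be an element of $M^1[j^1(G_\k\ast g)]$ (and then use $j^1(\l)$-directed closure); but $j^{1''}h$ has size $\eta$, and neither $H(\eta)$-hypermeasurability nor $\k$-closure of $M^1$ gives you $j^1\restriction H(\eta)^{V^1[G_\k\ast g]}$ (or $j^{1''}h$) inside $M^1[\ldots]$. The paper sidesteps this entirely by factoring $\MPB_\k\ast\dot{\mathbb{M}}(\k,\l)\ast\dot{\mathbb{M}}(\l,\eta)$ as $\MPB_\k\ast\Add(\k,\l)\ast\dot{\MQB}$ with $\dot{\MQB}$ forced $\k^+$-distributive, lifting through $\dot{\MQB}$ at the level of the normal ultrapower (where the filter generated by the pointwise image is generic) and transferring; you should reorganize steps (i) and (iii) along these lines, after which the rest of your outline goes through as in Lemma \ref{preparation model for double succseeor model}.
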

\begin{proof}
We follow the proof of Lemma \ref{preparation model for double succseeor model}.
For an ordinal $\a \leq \k$ let $\a_*$ and $\a_{**}$
denote the first and second measurable cardinals above $\a.$
Note that $\k_*=\l$ and $\k_{**}=\eta.$

Work in $V^1$. Factor $j^1$ in two steps through the models
\[
N'=\text{~the transitive collapse of~}\{j^1(f)(\k): f: \k \to V^1            \}
\]
\[
 \bar N'=\text{~the transitive collapse of~}\{j^1(f)(\a): f: \k \to V^1, \a < \l            \}.
\]
Again, note that $N'$ is the familiar ultrapower approximating $M^1$, while $\bar N'$ corresponds to the
extender of length $\l$. We have maps

$\hspace{6.cm}$$i': V^1 \to N'$,

$\hspace{6.cm}$$k': N' \to M^1,$

$\hspace{6.cm}$$\bar i': N' \to \bar N',$

$\hspace{6.cm}$$\bar k': \bar N' \to M^1$

such that

$\hspace{5.cm}$$k' \circ i'=j^1$~~~~ $\&$~~~~ $\bar k' \circ \bar i' =k'$.

Let
\[
\MPB_{\k}=\langle  \langle \MPB_\a: \a \leq \k      \rangle, \langle \dot{\MQB}_\a: \a < \k          \rangle\rangle
\]
be the reverse Easton iteration, where
\begin{enumerate}
\item If $\a < \k$ is a measurable limit of measurable cardinals, then $\Vdash_\a$``$\dot{\MQB}_\a =\dot{\mathbb{M}}(\a,  \a_*) \ast \dot{\mathbb{M}}(\a_*,  \a_{**})$''.
\item Otherwise, $\Vdash_\a$``$\dot{\MQB}_\a$ is the trivial forcing''.
\end{enumerate}
Let
$$G_k = \langle \langle G_\a: \a \leq \k      \rangle, \langle  G(\a): \a < \k      \rangle\rangle $$
be
be $\MPB_\k $-generic over $V^1$.

Note that  we can factor
$\MPB_\k \ast \dot{\mathbb{M}}(\k,  \l) \ast \dot{\mathbb{M}}(\l,  \eta)$
as
$$\MPB_\k \ast \dot{\mathbb{M}}(\k,  \l) \ast \dot{\mathbb{M}}(\l,  \eta)=\MPB_\k \ast \Add(\k, \l) \ast \dot{\MQB},$$
where $\dot{\MQB}$ is forced to be $\k^+$-distributive.
So the arguments of the proof of Lemma
\ref{preparation model for double succseeor model} can be used to get the embeddings

$\hspace{3.cm}$$\bar j: V^1[G_\k \ast g \ast h] \to M^1[j^1(G_\k \ast g \ast h)],$

$\hspace{3.cm}$$\bar k': \bar N'[l(G_\k \ast g \ast h)] \to M^1[j^1(G_\k \ast g \ast h)]$,

$\hspace{3.cm}$$l: V^1[G_\k \ast g \ast h] \to \bar N'[ l(G_\k \ast g \ast h)],$

together a filter $F$ which is
 $\mathbb{M}(\k^{+4},  i^2(\k))_{N'[i'(G_\k)]}$ generic over $N'[i'(G_\k)]$.

Let
$$V^2=V^1[G_\k \ast g \ast h],$$
 $$M^2=M^1[j^1(G_\k \ast g \ast h)] $$
and $$N^2=\bar N'[ l(G_\k \ast g \ast h)].$$
Also let $j^2=\bar j.$ We argue
\[
\Ult(V[G_\k \ast g \ast h], U^2) \simeq N^2,
\]
where $U^2$ is the normal measure derived from $j^2$.
To see this, factor $l$ through $l^\dag: V^2 \to N^\dag \simeq \Ult(V^2, U'),$
where $U'$ is the normal measure derived from $l.$ Also let $k^\dag: N^\dag \to N^2.$
Then
$P(\k)_{V^2} \subseteq N^\dag$ and
 $N^\dag \models$``$2^\k=\l$''.
So $\crit(k^\dag)> \l$. Since $\l \subseteq \range(k^\dag)$
and $N^2$ is generated by a $(\k, \l)$-extender, we have $N^2=N^\dag$
and we are done.
So if we let $i^2=l,$ then
\[
i^2: V^2 \to N^2
\]
is the ultrapower embedding.
Finally note that $F$ is generic for the appropriate  ordering.
The lemma follows.
\end{proof}
Also note that $F \in M^2$.
Now, working in $V^2=V^1[G_\k \ast g \ast h]$, we would like to define a version of Prikry forcing.
Set

$\hspace{1.5cm}$ $P^*=\{  f: \kappa \to V^2_\kappa \mid \dom(f) \in U^2$ and $\forall \alpha, f(\alpha) \in    \mathbb{M}(\alpha^{+4}, \k)                \}$.

$\hspace{1.5cm}$ $F^* = \{ f \in P^* \mid i(f)(\kappa) \in F    \}$.

Now define the notion of a constructing pair as in Definition \ref{constructing pair}, where the forcing notions $\Col(\k^{+5}, <i(\k))_N$ and $\Col(\k^{+5}, <j(\k))_M$
are replaced by $\mathbb{M}(\k^{+4}, i(\k))_N$ and  $\mathbb{M}(\k^{+4}, j(\k))_M$
respectively. Then definitions 3.3-3.5 go in the same way.

We now define our  forcing notion as in Section \ref{section:double successor}, but using the different guiding generic filters that we obtained above.
For this aim, and as before, we define forcing notions $\MPB_w, w \in \mathcal{U}_\infty,$ which are the building blocks of our main forcing notion.
 \begin{definition}
If $ w \in \mathcal{U}_\infty$, then $\MPB_w$ is the set of tuples $p= \langle w, \lambda, A, H, h   \rangle$
 such that
 \begin{enumerate}
\item $w$ is a measure sequence.

\item $\lambda < \kappa_w$.

\item $A \in \mathcal{F}_w$.

\item $H \in F^*_w$ with  $\dom(H)=\{ \kappa_v > \lambda \mid v \in A  \}$.

\item $h \in \mathbb{M}(\lambda^{+4},   \kappa_w).$
\end{enumerate}
Note that if $\len(w)=1,$ then the above tuple is of the form  $(w, \lambda, \emptyset, \emptyset, h)$ (where  $\lambda < \kappa_w$ and $h \in \mathbb{M}(\lambda^{+4},   \kappa_w)).$
\end{definition}
The forcing notion $\Rforce_w$ is defined in the same way as before:
\begin{definition}
\label{modified radin forcing definition}
If $w$ is a measure sequence, then $\Rforce_w$ is the set of finite sequences
$$p= \langle  p_k \mid k \leq n     \rangle,$$
where
\begin{enumerate}
\item $p_k = (w_k, \lambda_k, A_k, H_k, h_k) \in \MPB_w$, for each $k \leq n.$

\item $w_n=w.$


\item If $k<n,$ then $\lambda_{k+1}=\kappa_{w_k}$.
\end{enumerate}
\end{definition}
Given $p \in \MRB_{w}$ as above,  $n$ is called the length of $p$ and we denote it by $\len(p).$
The order relations $\leq^*$ and $\leq$ are defined as before.

Now assume that $u$ is the measure sequence constructed using $(j^2, F)$
and set $w=u \upharpoonright 2.$ Let $\Rforce=\Rforce_w$.
Let $K$ be $\Rforce$-generic over $V^1[G_\k \ast g \ast h].$ Let $C$ be the $\omega$-sequence added by $K$
and let $\vec{\k}=\langle  \k_n: n<\omega         \rangle$
enumerate $C$ in the increasing order and note that $\sup_{n<\omega}\k_n=\k.$. We may further assume that  $\k_0=\aleph_0$.  Also let $\vec{F}=\langle  F_n: n<\omega \rangle$
be the $\omega$-sequence added by $K$, where each $F_n$ is $\mathbb{M}(\k_n^{+4},  \k_{n+1})$-generic over $V^1[G_\k \ast g \ast h]$.
The following lemma summarizes the basic properties of $\Rforce$.
\begin{lemma}
\label{properties of forcing R}
\begin{itemize}
\item [(a)] $(\Rforce, \leq)$ satisfies the $\k^+$-c.c.

\item [(b)] Assume $p \in \Rforce$ and $m < n^p$. Then
\[
\Rforce / p \simeq \prod_{i \leq m} \mathbb{M}(\k_i^{+4},  \k_{i+1}) \times \Rforce / p^{> m},
\]
where $p^{> m}= \langle p_{m+1}, \dots, p_{\len(p)}               \rangle$.

\item [(c)] $(\Rforce, \leq, \leq^*)$ satisfies the Prikry property.

\item [(d)] $V^1[G_\k \ast g \ast h \ast K] = V^1[G_\k \ast g \ast h][\vec{F}]$.

\item [(e)] In $V^1[G_\k \ast g \ast h \ast K], \k=\aleph_\omega,~ \lambda=\aleph_{\omega+2}$ and $\eta=\aleph_{\omega+4}$.

\item [(f)] $Card^{V^1[G_\k \ast g \ast h \ast K]} \cap \k = \{\k_n, \k_n^+, \k_n^{++}, \k_n^{+3}, \k_n^{+4}, \k_n^{+5}: n<\omega                 \}.$
\end{itemize}
\end{lemma}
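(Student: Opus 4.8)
The plan is to obtain (a)--(f) by transcribing, into the present Mitchell-collapse setting, the proofs of the corresponding facts about the Levy-collapse version of the Radin forcing from Subsections \ref{Radin forcing with interleaved collapses} and \ref{sec: basic properties}. The only systematic change is that wherever those proofs invoke a property of a Levy collapse $\Col(\mu^{+5},<\nu)$ one instead invokes the corresponding property of $\mathbb{M}(\mu^{+4},\nu)$: by Lemma \ref{basic facts on mitchell forcing} it is $\mu^{+4}$-directed-closed, it is $\nu$-Knaster, and forcing with it makes $\nu=(\mu^{+4})^{++}=\mu^{+6}$ while preserving all cardinals in $[\mu,\mu^{+5}]$; moreover, exactly as in Lemma \ref{almost all can be added}, one may assume throughout that every measure sequence appearing in the stem of a condition of $\Rforce$ has length $1$, so that below its last coordinate such a condition is just a finite tuple of Mitchell conditions, one for each gap between successive stem points.

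For (a), repeat the proof of Lemma \ref{chain condition for radin forcing}: thin a putative antichain of size $\k^+$ (using $|V_\k|=\k$) to one in which all conditions have a common length $n$, a common stem, and a common ordinal $\lambda_n$; since $F$ is a generic filter for a Mitchell forcing, the coordinates $H_n\in F^*_w$ of any two members are compatible on a set in $\mathcal{F}_w$, so any two incompatible members already have incompatible bottom Mitchell conditions $h_n\in\mathbb{M}(\lambda_n^{+4},\k)$, contradicting that $\mathbb{M}(\lambda_n^{+4},\k)$ is $\k$-Knaster. Item (b) follows by applying the factorization Lemma \ref{thm:factorization lemma}(b) a total of $m+1$ times, peeling off at the $i$-th step the Mitchell forcing $\mathbb{M}(\k_i^{+4},\k_{i+1})$ carried by the $h$-coordinate between the $i$-th and $(i+1)$-st stem points and leaving $\Rforce/p^{>m}$ as the final factor; this is the proof of Lemma \ref{thm:factorization lemma} verbatim with $\Col$ replaced by $\mathbb{M}$.

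For (c) I would run the induction of the proof of Lemma \ref{thm:prikry property} line by line. The points to check are that the density intersections defining $D^{\mathrm{top}}_v$ (formed inside $\mathbb{M}(\k_v^{+4},\k)/H(\k_v)$, over the at most $\k_v$ relevant stems) remain open dense, and that at the limit stages of the auxiliary $\leq^*$-chains the quotient $(\Rforce_w/\langle (w,\lambda,A,H,h)\rangle,\leq^*)$ still has the required $\k_u^{+}$-closure --- both of which hold because the bottom Mitchell forcings in question are, respectively, $\k_v^{+4}$- and $\k_u^{+4}$-directed-closed, which is far more closure than is used. (The analogous intersections defining $D^{\mathrm{low}}_v$ involve only the empty stem, since the $h$-coordinate of $p$ lives at the bottom level, so no distributivity is needed there.) Everywhere else the old argument, and in particular the $\k^{+}$-chain-condition appeals, carries over unchanged. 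I expect this verification --- confirming that the merely directed-closure of the collapses $\mathbb{M}(\mu^{+4},\cdot)$, which do add subsets of $\mu^{+4}$, still suffices for all the $\leq^*$-chain constructions and density arguments in the Prikry-property proof --- to be the main point requiring attention, and it is precisely the reason the interleaved collapse is taken with parameter $\mu^{+4}$.

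Items (d)--(f) then follow as in Subsection \ref{sec: basic properties}. For (d), the set $K'$ of all $p\in\Rforce$ such that every measure sequence occurring in some extension of $p$ is one of the $\langle\k_n\rangle$ and every Mitchell condition occurring in some extension of $p$ lies in the corresponding $F_n$ is, as in the proof of Lemma \ref{thm: bounding sets}(a), a filter in $V^1[G_\k\ast g\ast h][\vec{F}]$ containing $K$; by genericity $K=K'\in V^1[G_\k\ast g\ast h][\vec{F}]$, and the reverse inclusion is trivial. For (e) and (f): by the geometric characterization (Lemma \ref{geometric characterization}) the set $C=\{\k_n:n<\omega\}$ is a cofinal $\omega$-sequence in $\k$, so $\cf^{V^2[K]}(\k)=\omega$; by (b) the part of $\Rforce$ acting below $\k_n$ is the finite product $\prod_{i<n}\mathbb{M}(\k_i^{+4},\k_{i+1})$, which is $\k_n$-c.c.\ and, its factors acting on pairwise disjoint intervals, leaves exactly $\k_i,\k_i^{+},\dots,\k_i^{+5},\k_{i+1}=\k_i^{+6}$ as the cardinals in $[\k_i,\k_{i+1}]$ by Lemma \ref{basic facts on mitchell forcing}, while the complementary factor $\Rforce/p^{>m}$ (whose direct-extension order is $\k_n^{+4}$-directed-closed and which satisfies the Prikry property) adds no new subset of $\k_n$ and so collapses no cardinal $\le\k_n$. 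Starting from $\k_0=\aleph_0$ this yields $\k_n=\aleph_{6n}$, hence $\k=\aleph_\omega$ and the cardinal structure asserted in (f). Finally $\Rforce$ is $\k^{+}$-c.c.\ by (a), so all cardinals of $V^2$ above $\k$ are preserved; since $V^2\models$``$\lambda=\k^{++}$ and $\eta=\k^{+4}$'' we conclude $\lambda=\aleph_{\omega+2}$ and $\eta=\aleph_{\omega+4}$ in $V^2[K]$, completing (e).
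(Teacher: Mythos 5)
The paper does not actually spell out a proof of this lemma (it simply says the lemma ``summarizes the basic properties of $\Rforce$''), so the intended argument is precisely the transcription of the results of Subsections~\ref{Radin forcing with interleaved collapses}--\ref{sec: basic properties} into the Mitchell-collapse setting, which is what you do. Your proposal is correct and matches the expected approach; in particular you correctly isolate the two places where one must check that the weaker closure of $\mathbb{M}(\mu^{+4},\nu)$ (which is only $\mu^{+4}$-directed-closed, not $\mu^{+5}$-closed like $\Col(\mu^{+5},<\nu)$) still suffices, namely the distributivity used to form $D^{\text{top}}_v$ and the closure used for the $\leq^*$-chains in the Prikry-property induction, and you correctly observe that since $\len(w)=2$ the measure $\mathcal{F}_w$ concentrates on length-one sequences, which is what makes the factorization in (b) collapse to a pure product of Mitchell forcings.
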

Recall that, given a cardinal $\a \leq \k,$ we are using $\a_*$ to denote the least measurable cardinal above $\a$ and $\a_{**}$ to denote the second measurable cardinal above $\a$;
 so that $\a_{**}=(\a_*)_*$.
The next lemma can be proved by the same arguments as in \cite{friedman-honzik2} (and using Lemma \ref{baby iteration of mitchell}); see also Theorem \ref{tree property at kappa}:
\begin{lemma}
\label{tree property at aleph-omega-2}
In $V^1[G_\k \ast g \ast h \ast K],$ the tree property holds at $\aleph_{\omega+2}.$
\end{lemma}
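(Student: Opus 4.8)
The plan is to run the proof of Lemma~\ref{baby iteration of mitchell} --- the argument that $\mathbb{M}(\k,\l)\ast\dot{\mathbb{M}}(\l,\eta)$ forces $\TP(\l)$ --- over the preparation model $V^1[G_\k]$, carrying the Radin forcing $\Rforce=\Rforce_w$ of this section along as an inert passenger; the handling of the Radin part is as in Theorem~\ref{tree property at kappa}. Two facts make $\Rforce$ harmless for the embedding argument. First, every condition of $\Rforce$ has rank below $\k<\l$ (its components are measure sequences, sets in $\mathcal{F}_w$, functions in $F^*_w$, and conditions of Mitchell forcings below $\k$), so any elementary embedding with critical point $\l$ fixes $\Rforce$ pointwise and hence fixes $\Rforce$, i.e.\ $j(\Rforce)=\Rforce$. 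Second, none of the auxiliary forcings $\mathbb{M}(\l,\eta)$, $\Add(\l,\eta)$, $\Add(\l,1)$ that occur in the reduction add bounded subsets of $\k$, so $\Rforce$ is definable already over the $\mathbb{M}(\k,\l)$-extension and commutes (as a product) with all of them. That the interleaved collapses inside $\Rforce$ are now the Mitchell forcings $\mathbb{M}(\k_n^{+4},\k_{n+1})$ rather than Levy collapses affects only the internal combinatorics of $\Rforce$, which were set up precisely for these forcings in Lemmas~\ref{properties of forcing R} and~\ref{thm:prikry property}.

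Concretely, I would argue as follows. Assume toward a contradiction that $T$ is a $\l$-Aronszajn tree in $V^1[G_\k\ast g\ast h\ast K]$. As in the proof of Lemma~\ref{baby iteration of mitchell}, decompose $\mathbb{M}(\l,\eta)\cong\Add(\l,\eta)\ast\dot{\MQB}(\l,\eta,\eta)$ with $\MQB(\l,\eta,\eta)$ forced to be $\l^+$-distributive; since $\l^+$-distributivity is preserved by the $\k^+$-c.c.\ forcing $\Rforce$ (a standard argument, using $|\Rforce|\le\l$ and that $\Rforce$ is $\k^+$-c.c.\ over the ground model), $T$ is already added by $\mathbb{M}(\k,\l)\ast\dot{\Add}(\l,\eta)\ast\dot\Rforce$, and then, by the chain condition and homogeneity of $\Add(\l,\eta)$, by $\mathbb{M}(\k,\l)\ast\dot{\Add}(\l,1)\ast\dot\Rforce$. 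Write $\mathbb{Q}:=\mathbb{M}(\k,\l)\ast\dot{\Add}(\l,1)$, so that $T\in V^1[G_\k][J\times K]$ for a $\mathbb{Q}\times\Rforce$-generic $J\times K$. Let $j\colon V^1\to M$ have critical point $\l$ (possible as $\l$ is measurable in $V^1$) and lift it over $G_\k$ (trivial, as $|\MPB_\k|=\k<\l$). Exactly as in Lemma~\ref{baby iteration of mitchell}, $j(\mathbb{Q})\cong\mathbb{Q}\ast\dot{\Col}(\k^+,\l)\ast\dot{\mathbb{R}}\ast\dot{\Add}(j(\l),1)$ for an $\k^+$-distributive $\dot{\mathbb{R}}$, and since $j(\Rforce)=\Rforce$ this gives $j(\mathbb{Q}\times\Rforce)\cong(\mathbb{Q}\times\Rforce)\ast\big(\dot{\Col}(\k^+,\l)\ast\dot{\mathbb{R}}\ast\dot{\Add}(j(\l),1)\big)$. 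Using the closure of $\Add(j(\l),1)$ one obtains a master condition, forces a generic $K'$ for the quotient over $M[G_\k][J\times K]$, lifts $j$ to $\tilde{j}$, and reads off from any node on level $\l$ of $\tilde{j}(T)$ a cofinal branch $b$ of $T$ in $M[G_\k][J\times K][K']$. Finally, the quotient $\Col(\k^+,\l)\ast\dot{\mathbb{R}}\ast\dot{\Add}(j(\l),1)$ cannot add a branch to $T$: as in Lemma~\ref{baby iteration of mitchell} it is a projection of $\Add(\k,j(\l))\times\Col(\k^+,j(\l))$, and this product adds no branch to the $\l$-Aronszajn tree $T$ by Lemma~\ref{lem: preservation product cc-2} (with $\rho=\k$, $\mu=\k^+$, using that $2^\k=\l$ and $2^{<\k}<\l$ in $V^1[G_\k][J\times K]$). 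Hence $b\in V^1[G_\k][J\times K]$, contradicting that $T$ is Aronszajn there; therefore $\TP(\l)$ holds.

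The main obstacle is bookkeeping: one must carefully justify that $\Rforce$ really is ``inert'' throughout --- that it commutes with $\mathbb{M}(\l,\eta)$, $\Add(\l,\eta)$ and the $\k^+$-distributive factors, that $j(\Rforce)=\Rforce$ together with $j(\MPB_\k)=\MPB_\k$ lets the lifting proceed verbatim, and that $\l^+$-distributivity of $\MQB(\l,\eta,\eta)$ survives forcing with $\Rforce$ --- all of which rests on the Radin-forcing facts of Section~\ref{sec: basic properties} (conditions bounded below $\k$, the $\k^+$-chain condition, and the Prikry property, Lemma~\ref{thm:prikry property}). A second point, already present in Lemma~\ref{baby iteration of mitchell} but worth re-checking here, is the computation $j(\mathbb{Q})\cong\mathbb{Q}\ast\dot{\Col}(\k^+,\l)\ast\dot{\mathbb{R}}\ast\dot{\Add}(j(\l),1)$ and the verification of the hypotheses of Lemma~\ref{lem: preservation product cc-2} for the quotient; here the term-forcing projections of Lemma~\ref{more basic facts on mitchell forcing} play the structural role that the identities ``$\Col$ is a product'' play in the collapse case of Lemma~\ref{baby iteration of mitchell}.
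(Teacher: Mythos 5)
There is a gap in the central claim that $j(\Rforce) = \Rforce$, on which your factorization $j(\mathbb{Q}\times\Rforce)\cong(\mathbb{Q}\times\Rforce)\ast\bigl(\dot{\Col}(\k^+,\l)\ast\dot{\mathbb{R}}\ast\dot{\Add}(j(\l),1)\bigr)$ rests. It is true that $j$ with critical point $\l$ fixes each condition of $\Rforce$ (conditions live in $V_\l$), so $j''\Rforce=\Rforce$ and $j(\Rforce)\cap V_\l=\Rforce$; but pointwise fixing does not give $j(\Rforce)=\Rforce$ here. The forcing $\Rforce=\Rforce_w$ has cardinality $2^\k=\l$ in the Mitchell extension (the top coordinate ranges over $\mathcal{F}_w\subseteq\mathcal{P}(V_\k)$ and $F^*_w$, each of size $2^\k=\l$), so $\Rforce\notin H(\l)$, and elementarity forces $|j(\Rforce)|=j(\l)>\l$ in the target model. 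Concretely, $\Rforce$ is defined over $V^1[G_\k\ast g]$ from $H(\k^+)$-level data that depend on the Cohen subsets of $\k$ added by $g$; when $j$ is lifted through $\mathbb{M}(\k,\l)$, the target acquires strictly more subsets of $\k$ from $\Add(\k,j(\l))$, so $j(\mathcal{F}_w)$, $j(F^*_w)$ and hence $j(\Rforce_w)$ are all strictly bigger than their preimages. Thus $j(\mathbb{Q}\ast\dot{\Rforce})$ does not factor as $(\mathbb{Q}\ast\dot{\Rforce})\ast(\text{a remainder with no Radin part})$: there is a nontrivial Radin-type quotient through which $j$ must still be lifted, and your argument neither forces with it nor shows it to be branch-preserving.

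The intended proof, as the paper indicates, follows \cite{friedman-honzik2} and Theorem~\ref{tree property at kappa} rather than the master-condition argument of Lemma~\ref{baby iteration of mitchell}, precisely so as to avoid needing $j(\Rforce)=\Rforce$. One takes a measurability embedding $k$ at $\l$, lifts it over $G_\k$, and uses the $\l$-c.c.\ of the \emph{whole} iteration $\mathbb{M}\ast\dot{\Rforce}_w$ (as in Lemma~\ref{chain condition of forcing}) to conclude that $k\upharpoonright(\mathbb{M}\ast\dot{\Rforce}_w)$ is a regular embedding; one then analyzes the quotient $\MQB_\pi$ through the projection $\MCB_\pi\times\mathbb{T}_\pi$, using $\k^+$-closure of $\mathbb{T}_\pi$, the $\k^+$-c.c.\ of $\MCB_\pi\times\MCB_\pi$ (the analogue of Lemma~\ref{chain condition of c}), and the preservation lemmas of Section~\ref{Some preservation lemmas} to show no cofinal branch is added to $T$. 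Your opening reduction --- trading $\mathbb{M}(\l,\eta)$ for $\Add(\l,1)$ via $\l^+$-distributivity and homogeneity, which is where Lemma~\ref{baby iteration of mitchell} actually enters the picture --- is sound; it is the subsequent step of treating $\Rforce$ as a fixed ground-model product factor inert under $j$ that cannot be made to work.
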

We now show that the tree property holds at all $\aleph_{2n}$'s, $0<n<\omega.$
The next lemma can be proved by simple reflection arguments.
\begin{lemma}
The set $X \in \mathcal{F}_w,$ where $X$ consists of cardinals $\a < \k$ such that
\begin{enumerate}
\item $\MPB_\alpha$ is $\alpha$-c.c. and of size $\alpha.$
\item $\mathbb{P}_{\a}\Vdash~``~\mathbb{P}(\a)= \mathbb{M}(\a^{+4}, \a_*) \ast \dot{\mathbb{M}}(\a_*^{+4}, \a_{**})  \text{~''} $.
\item $\alpha$ remains measurable after forcing with $\mathbb{P}_\alpha$ and $\mathbb{P}_{\alpha+1}$.
\item Some  elementary embedding $j: V^1 \to M^1$ with $crit(j)=\alpha$ can be extended to
\[
j: V^1[G_\alpha] \to M^1[j(G_\alpha)]
\]
and then to
\[
j: V^1[G_\alpha \ast G(\alpha)] \to M^1[j(G_\alpha \ast G(\alpha))].
\]
\item $\mathbb{P}_{\alpha+1} \Vdash$ `` $\dot{\mathbb{P}}_{(\alpha+1, \kappa)}$ does not add any new subsets to $\alpha_{**}^{+4}$''.

\item $\forall \gamma<\a, \mathbb{P}_{(\gamma, \a]} \times \mathbb{M}(\gamma, \a)\Vdash~``~ \a=\gamma^{++} + TP(\a) \text{~''}.$
\end{enumerate}
\end{lemma}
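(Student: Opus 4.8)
The plan is to prove this by a routine reflection argument, exactly parallel to the proof of Lemma~\ref{defining suitable x}. Since $w=u\upharpoonright 2$ we have $\len(w)=2$, so the intersection $\bigcap\{w(\alpha)\mid 1<\alpha<\len(w)\}$ is vacuous and $\mathcal{F}_w=\bar{\mu}_w$; as $X$ is a set of ordinals this amounts to $X\in\mu_w=U^2$, equivalently to $\kappa\in j^2(X)$. Hence it suffices to verify that $\kappa$, together with the embedding $j^2\colon V^2\to M^2$ and the iteration $j^2(\mathbb{P}_\kappa)$, satisfies the $j^2$-images of clauses (1)--(6); by elementarity this yields that a $U^2$-measure-one set of $\alpha<\kappa$ satisfies (1)--(6), which is exactly the assertion $X\in\mathcal{F}_w$.

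Then I would check the six clauses one at a time, each being a reflection of an already-established fact. Clause (1) is a standard property of reverse Easton iterations: $\mathbb{P}_\kappa$ has size $\kappa$ and is $\kappa$-c.c.\ (see \cite{cummings}), and both properties reflect to a club of $\alpha<\kappa$. Clause (2) is immediate from the recursive definition of the iteration, which prescribes the stage forcing at measurable limits of measurable cardinals; this is a first-order property of $V_\kappa$ and so reflects. Clauses (3) and (4) are the local counterparts of the lifting carried out for $\kappa$ itself in Lemma~\ref{preparation model for aleph-omega model}: there it is shown that $\kappa$ stays measurable after $\mathbb{P}_\kappa$ and $\mathbb{P}_{\kappa+1}$ and that $j^2$ (hence $j^1=j^2\upharpoonright V^1$) lifts through these forcings, and since the preparation was arranged so that its behaviour at stage $\kappa$ agrees with that of $j^2(\mathbb{P}_\kappa)$, these facts reflect to a $U^2$-measure-one set of $\alpha$. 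Clause (5) holds because the tail $\dot{\mathbb{P}}_{(\alpha+1,\kappa)}$ of a reverse Easton iteration is forced to be highly closed, in our case more closed than $\alpha_{**}^{+4}$, and this is again a reflecting statement. Finally, clause (6) is precisely the assertion that the diagonal intersection $\bigtriangleup_{\gamma<\kappa}\Lambda_\gamma$ from the discussion following Lemma~\ref{laver vs mitchell}, now formed with the iteration of the present section in place of $\mathbb{P}$ and using Lemma~\ref{laver vs mitchell2}, lies in the relevant normal measure; so a $U^2$-measure-one set of $\alpha$ satisfies it as well.

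I would conclude by intersecting the six measure-one sets (for clauses (1) and (5) these are even club) to get that the set of $\alpha$ satisfying (1)--(6) belongs to $U^2=\mu_w$, i.e.\ $X\in\mathcal{F}_w$. I expect the only genuinely non-bookkeeping step to be clause (6): checking that the tree-property condition $\mathbb{P}_{(\gamma,\alpha]}\times\mathbb{M}(\gamma,\alpha)\Vdash$``$\alpha=\gamma^{++}+TP(\alpha)$'' reflects requires the full force of Lemma~\ref{laver vs mitchell2} together with the diagonalization sketched after Lemma~\ref{laver vs mitchell}; the remaining clauses are routine facts about reverse Easton iterations and the already-verified lifting properties of $j^2$.
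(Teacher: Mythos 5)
Your proposal is correct and follows essentially the same route as the paper, which disposes of this lemma (like its analogue, Lemma \ref{defining suitable x}) with a one-line reflection argument: your reduction to $\kappa\in j^{2}(X)$ correctly uses that here $\len(w)=2$, so $\mathcal{F}_w=\bar{\mu}_w$ corresponds to $U^{2}$, and your singling out of clause (6) as the only substantive point, handled by Lemma \ref{laver vs mitchell2} together with the diagonal-intersection remark following Lemma \ref{laver vs mitchell}, is exactly the paper's intent. The only caveat is not yours but the paper's: the iteration of Lemma \ref{preparation model for aleph-omega model} uses $\dot{\mathbb{M}}(\a,\a_*)\ast\dot{\mathbb{M}}(\a_*,\a_{**})$ at stage $\a$, so clause (2) as printed (with the $+4$'s) appears to be a typo, and what your appeal to the recursive definition actually verifies is the corrected form, as in Lemma \ref{suitable choice of elements}.
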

So we assume that each $\k_n \in X.$

The next lemma follows from Lemma \ref{properties of forcing R}$(f)$.
\begin{lemma}
In $V^1[G_\k \ast g \ast h \ast K],$
\[
\{ \aleph_{2n}: n<\omega               \} = \{ \k_m: m<\omega   \} \cup \{\k_m^{++}: m<\omega      \} \cup \{ \k_m^{+4}: m<\omega        \}.
\]
\end{lemma}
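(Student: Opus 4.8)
The plan is simply to read off the $\aleph$-indices of the cardinals below $\k$ from Lemma~\ref{properties of forcing R}(f) and to split them according to the parity of the index; no new forcing-theoretic input is needed beyond that lemma together with the normalization $\k_0=\aleph_0$.

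First I would record that, by Lemma~\ref{properties of forcing R}(f), in $V^1[G_\k \ast g \ast h \ast K]$ the cardinals below $\k$ are exactly the members of $\bigcup_{n<\omega}\{\k_n,\k_n^{+},\k_n^{++},\k_n^{+3},\k_n^{+4},\k_n^{+5}\}$, and that these are listed in strictly increasing order: indeed $\k_n^{+5}<\k_{n+1}$, and the open interval $(\k_n,\k_{n+1})$ contains precisely the five successor cardinals $\k_n^{+},\dots,\k_n^{+5}$, since the Radin club $C$ added by $K$ skips no cardinal of that interval other than these. Consequently each half-open block $[\k_n,\k_{n+1})$ contains exactly six cardinals.

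Next, using $\k_0=\aleph_0$ and the six-cardinals-per-block structure, a trivial induction on $n$ gives $\k_n=\aleph_{6n}$ for all $n<\omega$, and hence
\[
\k_n=\aleph_{6n},\quad \k_n^{+}=\aleph_{6n+1},\quad \k_n^{++}=\aleph_{6n+2},\quad \k_n^{+3}=\aleph_{6n+3},\quad \k_n^{+4}=\aleph_{6n+4},\quad \k_n^{+5}=\aleph_{6n+5}.
\]
Thus the cardinals below $\k$ with even $\aleph$-index are precisely $\k_n$, $\k_n^{++}$ and $\k_n^{+4}$ for $n<\omega$, i.e.\ those of index $\equiv 0,2,4\pmod 6$, while $\k_n^{+}$, $\k_n^{+3}$, $\k_n^{+5}$ carry the odd indices.

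Finally, since every even natural number is congruent to one of $0,2,4$ modulo $6$, as $n$ runs through $\omega$ the union $\bigcup_{n<\omega}\{6n,6n+2,6n+4\}$ is exactly the set of even natural numbers; translating this back through the displayed identities yields $\{\aleph_{2n}:n<\omega\}=\{\k_m:m<\omega\}\cup\{\k_m^{++}:m<\omega\}\cup\{\k_m^{+4}:m<\omega\}$, as required. There is no real obstacle in this argument; the only place where the earlier development enters is the exact list of cardinals furnished by Lemma~\ref{properties of forcing R}(f) together with the convention $\min(C)=\k_0=\aleph_0$.
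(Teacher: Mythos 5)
Your proposal is correct and takes essentially the same approach as the paper, which simply remarks that the lemma follows from Lemma~\ref{properties of forcing R}(f); you have just spelled out the index arithmetic ($\k_n=\aleph_{6n}$ via $\k_0=\aleph_0$ and six cardinals per block) that the paper leaves implicit.
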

We now prove, in a sequence of lemmas that the tree property holds at all $\aleph_{2n}$'s, $n<\omega$. The case $\k_0=\aleph_0$ follows from K\"{o}nig's theorem stated in the introduction.
We start with the simple case of the tree property at $\k_{m+1}$.
\begin{lemma}
\label{tp at  k-m plus 1}
For each $m,$ $V^1[G_\k \ast g \ast h \ast K] \models$``$\TP(\k_{m+1})$''.
\end{lemma}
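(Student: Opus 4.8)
The plan is to localise at the point $\k_{m+1}$ the arguments of Lemma~\ref{baby iteration of mitchell}, Lemma~\ref{iteration of mitchell} and Theorem~\ref{tree property at kappa}, using the factorisation of $\Rforce$ from Lemma~\ref{properties of forcing R}(b). Recall that in $V^1[G_\k\ast g\ast h\ast K]$ the cardinals of $[\k_m,\k_{m+1}]$ are exactly $\k_m,\k_m^{+},\dots,\k_m^{+5},\k_{m+1}$, so that $\k_{m+1}=(\k_m^{+4})^{++}$, and that --- reading off the $h$-coordinates of Definition~\ref{modified radin forcing definition} --- the interleaved collapse between consecutive Radin points $\k_i<\k_{i+1}$ is the Mitchell forcing $\mathbb{M}(\k_i^{+4},\k_{i+1})$. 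First I would fix $p\in K$ whose Radin points are exactly $\k_0<\dots<\k_{m+1}$ and apply Lemma~\ref{properties of forcing R}(b) twice, obtaining, as forcings in $V^2:=V^1[G_\k\ast g\ast h]$ with all four factors mutually generic and generated by $K$,
\[
\Rforce/p\;\cong\;\mathbb{P}_{<m}\times\mathbb{M}(\k_m^{+4},\k_{m+1})\times\mathbb{M}(\k_{m+1}^{+4},\k_{m+2})\times\mathbb{S},
\]
where $\mathbb{P}_{<m}=\prod_{i<m}\mathbb{M}(\k_i^{+4},\k_{i+1})$ has size $\k_m$ and $\mathbb{S}$ is the remaining Radin tail; write the induced decomposition of the final model as $V^2[A_{<m}][A_m][A_{m+1}][C]$.

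Since $|\mathbb{P}_{<m}|=\k_m<\k_{m+1}$, the L\'evy--Solovay theorem \cite{levy-solovay} gives that $\k_{m+1}$ is still measurable in $V^2[A_{<m}]$ ($\k_{m+1}$ is measurable in $V^2$ by the reflection list defining $X$, to which each $\k_n$ belongs, together with the closure of the forcings producing $V^2$ above $\k_{m+1}$). As the hypotheses of Lemma~\ref{basic facts on mitchell forcing}(c) are then met over $V^2[A_{<m}]$ for the instance $\mathbb{M}(\k_m^{+4},\k_{m+1})=\mathbb{M}(\k_m^{+4},\k_{m+1},\k_{m+1})$, that lemma gives that in $V^2[A_{<m}][A_m]$ we have $\k_{m+1}=(\k_m^{+4})^{++}$ and $\TP(\k_{m+1})$.

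It therefore suffices to show that the remaining forcing $\mathbb{M}(\k_{m+1}^{+4},\k_{m+2})\times\mathbb{S}$ adds to $V^2[A_{<m}][A_m]$ no new $\k_{m+1}$-tree and no cofinal branch to any of its $\k_{m+1}$-trees. The factor $\mathbb{M}(\k_{m+1}^{+4},\k_{m+2})$ is $\k_{m+1}^{+4}$-directed closed by Lemma~\ref{basic facts on mitchell forcing}(a), hence $(\k_{m+1}^{+3})^{+}$-closed, so adds no branch to a $\k_{m+1}$-tree by Silver's lemma (using $2^{\k_{m+1}^{+3}}\ge\k_{m+1}$) and no new $\k_{m+1}$-tree at all; and the Radin tail $\mathbb{S}$, by the Prikry property (Lemma~\ref{properties of forcing R}(c)) together with the high closure of its direct-extension ordering, adds no new subset of $\k_{m+1}$, hence neither a new $\k_{m+1}$-tree nor a branch. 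The one point I expect to require care --- and the reason such a statement is usually deferred to ``an argument as before'' --- is that $\mathbb{S}$ is not closed in its full ordering, so one must check that the product $\mathbb{M}(\k_{m+1}^{+4},\k_{m+2})\times\mathbb{S}$ still adds no branch to a $\k_{m+1}$-tree living over the intermediate $\k_{m+2}$-Knaster extension $V^2[A_{<m}][A_m][A_{m+1}]$. Exactly as in the proof of Lemma~\ref{baby iteration of mitchell} this is handled by writing each interleaved Mitchell block occurring inside $\mathbb{S}$ as a projection image of a product of an $\Add$-forcing with a sufficiently closed term forcing (Lemma~\ref{more basic facts on mitchell forcing}(b)) and then invoking Lemmas~\ref{lem: preservation product cc} and~\ref{lem: preservation product cc-2}, with the Prikry property absorbing the measure-sequence coordinates. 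Everything else is routine cardinal arithmetic, using Lemma~\ref{properties of forcing R}(f) and the list defining $X$.
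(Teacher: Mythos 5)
Your proposal is correct and follows essentially the same route as the paper's proof: decompose the final model so that $F_m$ is a generic for $\mathbb{M}(\k_m^{+4},\k_{m+1})$ over $V^2[\langle F_i:i<m\rangle]$, apply Lemma~\ref{basic facts on mitchell forcing}(c) to get $\TP(\k_{m+1})$ there, and then observe that the remaining forcing adds no new subsets of $\k_{m+1}$. The paper closes at exactly that point, since ``no new subsets of $\k_{m+1}$'' already implies that the collection of $\k_{m+1}$-trees and their cofinal branches is unchanged; the last paragraph of your proposal, splitting off $\mathbb{M}(\k_{m+1}^{+4},\k_{m+2})$ and re-running the branch-preservation machinery over a $\k_{m+2}$-Knaster intermediate extension, is therefore an unnecessary detour (and it is also aimed at the wrong model: the trees of interest live in $V^2[A_{<m}][A_m]$, not in $V^2[A_{<m}][A_m][A_{m+1}]$). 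Beyond that, your extra care about measurability of $\k_{m+1}$ via L\'evy--Solovay and the reflection set $X$ is a correct but implicit detail in the paper.
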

\begin{proof}
We can write
\[
V^1[G_\k \ast g \ast h \ast K] = V^1[G_\k \ast g \ast h][\langle F_i: i < m  \rangle][F_m][\langle F_i: m < i < \omega  \rangle].
\]
Working in $V^1[G_\k \ast g \ast h][\langle F_i: i < m  \rangle],$ the filter $F_m$ is generic filter for the forcing notion
$\mathbb{M}(\k_m^{+4},  \k_{m+1}),$ so by Lemma \ref{basic facts on mitchell forcing}$(c)$.
\[
V^1[G_\k \ast g \ast h][\langle F_i: i < m  \rangle][F_m] \models~``~\TP(\k_{m+1})\text{~''.}
\]
But $V^1[G_\k \ast g \ast h \ast K] $ is a generic extension of
$V^1[G_\k \ast g \ast h][\langle F_i: i < m  \rangle][F_m]$ by a forcing notion which does not add new subsets to $\k_{m+1}$,
and so
\[
V^1[G_\k \ast g \ast h \ast K] \models~``~\TP(\k_{m+1})\text{~''.}
\]
\end{proof}
Next we consider cardinals $\k_m^{++}.$
\begin{lemma}
\label{tp at double successor of k-m}
For each $m,$ $V^1[G_\k \ast g \ast h \ast K] \models$``$\TP(\k_m^{++})$''.
\end{lemma}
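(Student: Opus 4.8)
The case $\kappa_0=\aleph_0$ is K\"{o}nig's theorem, so fix $m\ge 1$. The plan is to follow the proof of Lemma \ref{tp at k-m plus 1}, but now to peel the final model all the way back to the preparation block forced at stage $\kappa_m$ of $\MPB_\k$, so that $\kappa_m^{++}$ becomes the lower of the two cardinals handled by the Mitchell pair $\dot{\mathbb{M}}(\kappa_m,(\kappa_m)_*)\ast\dot{\mathbb{M}}((\kappa_m)_*,(\kappa_m)_{**})$, and then to quote Lemma \ref{baby iteration of mitchell}.

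For the reduction, recall that $\dot{\MQB}_{\kappa_m}=\dot{\mathbb{M}}(\kappa_m,(\kappa_m)_*)\ast\dot{\mathbb{M}}((\kappa_m)_*,(\kappa_m)_{**})$, so that $(\kappa_m)_*=\kappa_m^{++}$ and $(\kappa_m)_{**}=\kappa_m^{+4}$ in $V^1[G_{\kappa_m+1}]$. A $\kappa_m^{++}$-tree of $V^1[G_\k\ast g\ast h\ast K]$, together with any cofinal branch of it there, is coded by subsets of $\kappa_m^{++}$; I would show that all such subsets already belong to
\[
W_m:=V^1[G_{\kappa_m}]\Bigl[\textstyle\prod_{i<m}\mathbb{M}(\kappa_i^{+4},\kappa_{i+1})\Bigr]\bigl[\dot{\MQB}_{\kappa_m}\bigr].
\]
Indeed: by the factorization Lemma \ref{properties of forcing R}(b) the portion of $\Rforce$ below a condition mentioning $\kappa_m$ and $\kappa_{m+1}$ splits off $\prod_{i\le m}\mathbb{M}(\kappa_i^{+4},\kappa_{i+1})$; the remaining tail of $\Rforce$ adds no new subset of $\kappa_{m+1}$ (as in Lemma \ref{thm: bounding sets}(c), using Lemma \ref{properties of forcing R}(b),(c)); the collapse $F_m$ is $\mathbb{M}(\kappa_m^{+4},\kappa_{m+1})$-generic, hence $\kappa_m^{+4}$-directed closed, so adds no new subset of $\kappa_m^{+3}$; $\MPB_{(\kappa_m+1,\kappa)}$ adds no new subset of $\kappa_m^{++}$ (a clause of the reflection lemma, valid since $\kappa_m\in X$); and $\dot{\mathbb{M}}(\kappa,\l)\ast\dot{\mathbb{M}}(\l,\eta)$ is $\kappa$-directed closed. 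Finally $\prod_{i<m}\mathbb{M}(\kappa_i^{+4},\kappa_{i+1})$, a poset in $V_{\kappa_m}$ of size $\le\kappa_m$, commutes with the $\kappa_m$-directed closed $\dot{\MQB}_{\kappa_m}$, so the iterations can be rearranged into the order displayed in $W_m$. Since all the further forcing adds no subset of $\kappa_m^{++}$, the cardinal $\kappa_m^{++}$ is computed in $W_m$ exactly as in the final model, and it suffices to prove $W_m\models\TP(\kappa_m^{++})$.

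This follows from Lemma \ref{baby iteration of mitchell} taken over the ground model $V':=V^1[G_{\kappa_m}][\prod_{i<m}\mathbb{M}(\kappa_i^{+4},\kappa_{i+1})]$, with $\alpha=\kappa_m$, $\beta=(\kappa_m)_*$ and $\gamma=(\kappa_m)_{**}$: in $V'$ enough of $GCH$ above $\kappa_m$ holds (by Lemma \ref{second very basic iteration lemma}(a), since $\MPB_{\kappa_m}$ and $\prod_{i<m}\mathbb{M}(\kappa_i^{+4},\kappa_{i+1})$ have size $\le\kappa_m$), $\kappa_m$ is regular, and $\beta,\gamma$ remain measurable (Levy--Solovay). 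Lemma \ref{baby iteration of mitchell} then yields $\TP(\beta)$ in $W_m$, where $\beta=(\kappa_m)_*=\kappa_m^{++}$; hence $W_m\models\TP(\kappa_m^{++})$ and therefore $V^1[G_\k\ast g\ast h\ast K]\models\TP(\kappa_m^{++})$. The delicate point is the bookkeeping in the reduction — establishing the bounding and factorization facts for this version of $\Rforce$ and verifying the various ``adds no new subset of $\kappa_m^{++}$'' and commutativity claims, which is where the hypotheses packaged into the set $X$ by the reflection lemma are used; once these are in place Lemma \ref{baby iteration of mitchell} (whose own proof already contains the needed branch-preservation argument, via the projection onto Mitchell forcing together with the branch Lemmas \ref{lem: preservation product cc} and \ref{lem: preservation product cc-2}) finishes the argument.
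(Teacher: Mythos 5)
Your proposal follows the same strategy as the paper's own proof: rearrange $V^1[G_\k\ast g\ast h\ast K]$ so that $\prod_{i<m}\mathbb{M}(\kappa_i^{+4},\kappa_{i+1})$ is forced over $V^1[G_{\kappa_m}]$ before $\dot{\MQB}_{\kappa_m}$, invoke Lemma~\ref{baby iteration of mitchell} over that ground model to get $\TP((\kappa_m)_*)=\TP(\kappa_m^{++})$, and then observe that the remaining forcing adds no new subset of $(\kappa_m)_*$. Your version is a little more explicit about which preservation and commutation facts make the rearrangement legitimate, but the decomposition, the appeal to Lemma~\ref{baby iteration of mitchell}, and the bounding/closure argument are the paper's argument.

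One remark on your opening sentence: you write ``The case $\kappa_0=\aleph_0$ is K\"onig's theorem, so fix $m\ge 1$.'' But the lemma asserts $\TP(\kappa_m^{++})$, and for $m=0$ that is $\TP(\aleph_2)$, not $\TP(\aleph_0)$; K\"onig's theorem says nothing about $\aleph_2$. The case $m=0$ cannot be dismissed this way, and in fact your argument (like the paper's own, which relies on ``$\kappa_m\in X$'' and on $G(\kappa_m)$ being a nontrivial two-step Mitchell iteration) requires $\kappa_m$ to be a measurable limit of measurables, which $\kappa_0=\aleph_0$ is not. So the reduction to $m\ge 1$ is correct, but the justification given for skipping $m=0$ is not; the $m=0$ case (TP at $\aleph_2$) needs an additional device, e.g.\ arranging a Mitchell block at $\aleph_0$ in the preparatory iteration. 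This is a wrinkle your proposal inherits from the text surrounding the lemma rather than one you introduced, but it should not be papered over with a K\"onig citation.
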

\begin{proof}
We have
\[
V^1[G_\k \ast g \ast h \ast K] = V^1[G_{\k_m}][G(\k_m)][G_{(\k_m+1, \k])}][g \ast h][ \langle F_i: i < m  \rangle][\langle F_i: m \leq i < \omega  \rangle].
\]
Since the forcing notion $\prod_{i < m} \mathbb{M}(\k_i^{+4},  \k_{i+1})$ is defined in the same way in the models $V^1[G_{\k_m}]$
and $V^1[G_\k \ast g \ast h],$ so
\[
V^1[G_\k \ast g \ast h \ast K] = V^1[G_{\k_m}][ \langle F_i: i < m  \rangle][G(\k_m)][G_{(\k_m+1, \k]}][g \ast h][\langle F_i: m \leq i < \omega  \rangle].
\]
By our convention $\k_m \in X$, $G(\k_m)$ is generic for $\mathbb{M}(\k_m^{+4},  (\k_m)_*) \ast \dot{\mathbb{M}}((\k_m)^{+4}_*,  (\k_m)_{**})$,
so by Lemma \ref{baby iteration of mitchell},
\[
V^1[G_{\k_m}][ \langle F_i: i < m  \rangle][G(\k_m)] \Vdash~``\k_m^{++}=(\k_m)_* ~+~\TP(\k_m^{++}) \text{~''}.
\]
But $V^1[G_\k \ast g \ast h \ast K]$ is a generic extension of $V^1[G_{\k_m}][ \langle F_i: i < m  \rangle][G(\k_m)]$ by a forcing notion which does not add any new subsets
to $(\k_m)_*,$ and so
$V^1[G_\k \ast g \ast h \ast K] \models$``$\TP(\k_m^{++})$''.
\end{proof}
Now we consider the cardinals $\k_m^{+4}.$
\begin{lemma}
\label{tp at fourth successor of k-m}
For each $m,$ $V^1[G_\k \ast g \ast h \ast K] \models$``$\TP(\k_m^{+4})$''.
\end{lemma}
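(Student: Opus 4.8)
The plan is to follow the template of Lemma~\ref{tp at double successor of k-m}, but this time to extract the \emph{upper} of the two tree-property conclusions of Lemma~\ref{baby iteration of mitchell}. Since $\k_m\in X$, the stage-$\k_m$ generic $G(\k_m)$ is, over $V^1[G_{\k_m}]$, generic for a two-step Mitchell iteration $\mathbb{M}(\k_m,(\k_m)_*)\ast\dot{\mathbb{M}}((\k_m)_*,(\k_m)_{**})$; by Lemma~\ref{baby iteration of mitchell} this makes $(\k_m)_*=\k_m^{++}$ and $(\k_m)_{**}=((\k_m)_*)^{++}=\k_m^{+4}$, gives $2^{(\k_m)_*}=(\k_m)_{**}$, and forces the tree property at \emph{both} $(\k_m)_*$ and $(\k_m)_{**}$. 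Exactly as in Lemma~\ref{tp at double successor of k-m} I would rearrange
\[
V^1[G_\k \ast g \ast h \ast K]=W_0[\text{rest}],\qquad W_0:=V^1[G_{\k_m}][\langle F_i: i<m\rangle][G(\k_m)],
\]
using that $\prod_{i<m}\mathbb{M}(\k_i^{+4},\k_{i+1})$ is computed identically in $V^1[G_{\k_m}]$ and in $V^1[G_\k\ast g\ast h]$ and is $\k_m$-c.c.\ of size at most $\k_m$, hence commutes with the $\k_m$-directed-closed $G(\k_m)$. Thus in $W_0$ we have $(\k_m)_{**}=\k_m^{+4}$ together with $\TP(\k_m^{+4})$, and it remains to see that passing from $W_0$ to the final model preserves this.

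The passage from $W_0$ to $V^1[G_\k\ast g\ast h\ast K]$ is given by the tail $G_{(\k_m+1,\k]}$ of $\mathbb{P}_\k$, then $g\ast h=\mathbb{M}(\k,\lambda)\ast\dot{\mathbb{M}}(\lambda,\eta)$, and then (conditioning on the first $m+1$ Radin points being $\k_0,\dots,\k_m$, and using the factorization Lemma~\ref{properties of forcing R}(b)) by $F_m=\mathbb{M}(\k_m^{+4},\k_{m+1})$ together with $\Rforce/p^{>m}$. All of these are harmless except $F_m$: the tail adds no new subsets of $(\k_m)_{**}$ by the choice of $X$; $g\ast h$ is $\k$-directed-closed with $\k\gg(\k_m)_{**}$; and $\Rforce/p^{>m}$ adds no new bounded subsets of $\k_{m+1}$ by the Prikry property (Lemma~\ref{properties of forcing R}(c)), hence none of $(\k_m)_{**}$. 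Each of these therefore preserves the statement ``$T$ has no cofinal branch'' for every $(\k_m)_{**}$-Aronszajn tree $T$ coded in the relevant intermediate model, and so none of them can destroy $\TP(\k_m^{+4})$.

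The one genuinely non-trivial factor is $F_m=\mathbb{M}(\k_m^{+4},\k_{m+1})=\mathbb{M}((\k_m)_{**},\k_{m+1})$, which \emph{does} add subsets of $(\k_m)_{**}$ and hence requires a branch-preservation, not a closure, argument; this is the main obstacle, and I would handle it exactly as in the proof of Lemma~\ref{baby iteration of mitchell}(c) (a scaled-down version of Theorem~\ref{tree property at kappa}). Assuming towards a contradiction that $T$ is a $(\k_m)_{**}$-Aronszajn tree in the final model, one first reduces a name for $T$ to a name over $\mathbb{M}((\k_m)_*,(\k_m)_{**})\ast\dot{\Add}((\k_m)_{**},1)$, using that between the second Mitchell step at stage $\k_m$ and $F_m$ the only Cohen-at-$(\k_m)_{**}$ forcing is the $\Add((\k_m)_{**},\k_{m+1})$ inside $F_m$, which by its $\k_{m+1}$-c.c.\ and homogeneity may be cut down to $\Add((\k_m)_{**},1)$, the remainder being $(\k_m)_{**}$-distributive or adding no subsets of $(\k_m)_{**}$. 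One then lifts an elementary embedding with critical point $(\k_m)_{**}$ — a genuine measurable in the model obtained just before the second Mitchell step at stage $\k_m$ — to produce a cofinal branch of $T$ in a further generic extension, and finally, using the projection of Lemma~\ref{more basic facts on mitchell forcing}(b) and the standard equivalence of the corresponding term forcing with an appropriate L\'evy collapse, exhibits a projection from a product $\Add((\k_m)_*,j((\k_m)_{**}))\times\Col((\k_m)_*^{+},j((\k_m)_{**}))$ onto the quotient forcing responsible for that branch; Lemma~\ref{lem: preservation product cc-2} then shows this product, hence the quotient, adds no branch, so the branch already lay in $W_0$. This contradicts $\TP(\k_m^{+4})$ in $W_0$, completing the proof.
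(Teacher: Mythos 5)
Your overall architecture diverges from the paper's and contains a genuine logical gap, even though the ingredients appearing in your last paragraph are the right ones.

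The paper does \emph{not} first establish $\TP(\k_m^{+4})$ in $W_0=V^1[G_{\k_m}][\langle F_i:i<m\rangle][G(\k_m)]$ and then argue preservation. Instead, it commutes $F_m$ one step further: since $\mathbb{M}(\k_m^{+4},\k_{m+1})$ is computed identically in $W_0$ and in $V^1[G_\k\ast g\ast h]$, one may rewrite the final model with $G_{(\k_m+1,\k_{m+1})}\times F_m$ appearing immediately after $G(\k_m)$, take the intermediate model
$M_0:=V^1[G_{\k_m}][\langle F_i:i<m\rangle][G(\k_m)][G_{(\k_m+1,\k_{m+1})}\times F_m]$, observe that everything past $M_0$ adds no new subsets of $(\k_m)_{**}$, and then note that $M_0$ is obtained from $V^1[G_{\k_m}][\langle F_i:i<m\rangle]$ by $\mathbb{P}(\k_m)\ast\bigl(\dot{\mathbb{P}}_{(\k_m+1,\k_{m+1})}\times\dot{\mathbb{M}}(\k_m^{+4},\k_{m+1})\bigr)$, to which Lemmas~\ref{iteration of mitchell} and \ref{laver vs mitchell} (in the form of Lemma~\ref{laver vs mitchell2}) apply directly. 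The whole branch-preservation analysis is thus absorbed into already-proved lemmas, and nothing has to be preserved from a smaller model to a bigger one.

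Your strategy of proving $\TP(\k_m^{+4})$ in $W_0$ and then preserving it through the remaining forcing is, as stated, not sound, and here is the concrete problem. For the factors other than $F_m$ you correctly note they add no new subsets of $(\k_m)_{**}$, so they add neither new $(\k_m)_{**}$-trees nor new branches, and $\TP$ passes through unchanged. But $F_m$ \emph{does} add subsets of $(\k_m)_{**}$ and hence may add a brand-new $(\k_m)_{**}$-Aronszajn tree that does not exist in $W_0$; showing that $F_m$ ``adds no cofinal branches to existing $(\k_m)_{**}$-Aronszajn trees'' does not rule that out, so ``none of them can destroy $\TP(\k_m^{+4})$'' does not follow from what you have said. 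Your last paragraph secretly changes strategy and instead takes $T$ in the \emph{final} model, reduces its name to an $\mathbb{M}((\k_m)_*,(\k_m)_{**})\ast\dot{\Add}((\k_m)_{**},1)$-name over a suitable base, lifts an embedding with critical point $(\k_m)_{**}$, and shows via Lemma~\ref{lem: preservation product cc-2} and the projection of Lemma~\ref{more basic facts on mitchell forcing}(b) that the branch lives in the reduced model. That is indeed the structure of the proof of Lemma~\ref{baby iteration of mitchell}(c) and of Theorem~\ref{tree property at kappa}. However your conclusion ``the branch already lay in $W_0$; this contradicts $\TP(\k_m^{+4})$ in $W_0$'' is wrong on both counts: $T$ (and therefore its branch) live in $W_0[\Add((\k_m)_{**},1)]$, not in $W_0$, and since $T$ need not belong to $W_0$ the hypothesis $\TP(\k_m^{+4})$ in $W_0$ is irrelevant to it. The correct contradiction is simply that the branch lies in a submodel of $V^1[G_\k\ast g\ast h\ast K]$, so $T$ has a cofinal branch in the final model, contradicting the assumption that $T$ is Aronszajn there. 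Once you make that correction, step~(1) of your plan (proving $\TP$ in $W_0$) is superfluous, and the whole argument collapses to a hand-rolled version of what Lemma~\ref{laver vs mitchell2} already gives; it would be cleaner to commute $F_m$ into the intermediate model, as the paper does, and invoke the lemma.
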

\begin{proof}
As above,
\[
V^1[G_\k \ast g \ast h \ast K] = V[G_{\k_m}][ \langle F_i: i < m  \rangle][G(\k_m)][G_{(\k_m+1, \k]}][g \ast h][F_m][\langle F_i: m < i < \omega  \rangle].
\]
But $\mathbb{M}(\k_m^{+4},  \k_{m+1})$ is defined in the same way in the models $V^1[G_{\k_m}][ \langle F_i: i < m  \rangle][G(\k_m)]$
and $V^1[G_\k \ast g \ast h \ast K],$ so $V^1[G_\k \ast g \ast h \ast K]$ is equal to
\[
V^1[G_{\k_m}][ \langle F_i: i < m  \rangle][G(\k_m)][G_{(\k_m+1, \k_{m+1})} \times F_m][G_{(\k_{m+1}+1, \k]}][g \ast h][\langle F_i: m < i < \omega  \rangle].
\]
Since $V^1[G_\k \ast g \ast h \ast K]$ is obtained from $V^1[G_{\k_m}][ \langle F_i: i < m  \rangle][G(\k_m)][G_{(\k_m+1, \k_{m+1})} \times F_m]$ by a forcing which does not add new subsets to $(\k_m)_{**} (=((\k_m)^{+4})^{V^1[G \ast g \ast h \ast K]})$, it suffices to show $\TP((\k_m)_{**})$ holds in $V^1[G_{\k_m}][ \langle F_i: i < m  \rangle][G(\k_m)][G_{(\k_m+1, \k_{m+1})} \times F_m]$.
Now, the model $V^1[G_{\k_m}][ \langle F_i: i < m  \rangle][G(\k_m)][G_{(\k_m+1, \k_{m+1})} \times F_m]$ is a generic extension of $V^1[G_{\k_m}][ \langle F_i: i < m  \rangle]$  by the forcing notion
\[
\mathbb{P}(\k_m) \ast (\dot{\mathbb{P}}_{(\k_m+1, \k_{m+1})} \times \dot{\mathbb{M}}(\k_m^{+4},  \k_{m+1})),
\]
and by Lemmas \ref{iteration of mitchell} and \ref{laver vs mitchell},
\[
V^1[G_{\k_m}][ \langle F_i: i < m  \rangle][G(\k_m)][G_{(\k_m+1, \k_{m+1})} \times F_m] \models~``~\TP((\k_m)_{**})\text{~''}.
\]
The lemma follows.
\end{proof}

 Theorem \ref{thm:main theorem1} follows.

\section{Tree property at all regular even cardinals}
\label{section:all even}
In this section we prove Theorem \ref{thm:main theorem2}. In Subsection \ref{A new variant of Radin forcing}, we  present some of the basic properties
of the new version of Radin forcing we defined in  Section \ref{section:local evens}.
Then in Subsection \ref{sec: modified final model}, we define the forcing notion needed which is used for the proof of our main theorem.
Finally in Subsection \ref{the tree property holds at all even cardinals below}, we complete the proof of Theorem \ref{thm:main theorem2}.

\subsection{A new variant of Radin forcing}
\label{A new variant of Radin forcing}
Through this subsection, we assume that the following
conditions are satisfied:
\begin{itemize}
\item $\kappa$ is an $H(\k^{++})$-hypermeasurable cardinal, $2^\k=\k^{++}$ and $2^{\k^{++}}=\k^{+4}$.

\item  There is $j: V \to M$ with critical point $\k$
such that $H(\k^{++}) \subseteq M$.

\item  $j$ is generated by a $(\k, \k^{+5})$-extender.

\item If $U$ is the normal measure derived from $j$ and if $i: V \to N \simeq \Ult(V, U)$
is the ultrapower embedding, then there exists  $F \in V$
which is $\mathbb{M}(\kappa^{+4},  i(\k))_{N}$-generic over $N.$
\end{itemize}
Let $\Rforce_w$ be the modified version of Radin forcing that we defined in Section \ref{section:local evens}, and let us  review it basic properties in the general context. The next lemma can be proved as in Lemma
\ref{chain condition for radin forcing}
\begin{lemma}
\label{chain condition for modified radin forcing}
$\Rforce_{w}$ satisfies  the $\kappa_w^+$-chain condition.
\end{lemma}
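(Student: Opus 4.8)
The plan is to mimic the proof of Lemma~\ref{chain condition for radin forcing} almost verbatim, since the only structural change in the modified Radin forcing is that the bottom collapse coordinate $h$ now lives in $\mathbb{M}(\lambda^{+4},\kappa_w)$ rather than in $\Col(\lambda^{+5},<\kappa_w)$. First I would assume, towards a contradiction, that $A\subseteq\Rforce_w$ is an antichain of size $\kappa_w^+$. By thinning $A$ I may assume all conditions in $A$ have a common length $n$, so each $p\in A$ may be written as $p=d_p{}^{\frown}p_n$ with $d_p\in V_{\kappa_w}$ and $p_n=(w,\lambda^p,A^p,H^p,h^p)$. Since $2^{<\kappa_w}=\kappa_w$ (by $GCH$ in the relevant model, which holds since $\kappa$ is inaccessible and the forcing below it is small), there are only $\kappa_w$ many possible values for the pair $(d_p,\lambda^p)$, so a further shrinking lets me fix $d\in V_{\kappa_w}$ and $\lambda<\kappa_w$ with $d_p=d$ and $\lambda^p=\lambda$ for all $p\in A$.

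Next I would observe that for distinct $p,q\in A$, since $d_p=d_q=d$ and $\lambda^p=\lambda^q=\lambda$ and $A^p,A^q\in\mathcal F_w$ are compatible (their intersection is again in $\mathcal F_w$) and $H^p,H^q\in F^*_w$ are compatible (one can find a common lower bound in $F^*_w$ by the filter property, since both have the same domain up to a measure-one set), the \emph{only} possible source of incompatibility between $p$ and $q$ is that $h^p\perp h^q$ in $\mathbb{M}(\lambda^{+4},\kappa_w)$. Hence $\{h^p:p\in A\}$ would be an antichain of size $\kappa_w^+$ in $\mathbb{M}(\lambda^{+4},\kappa_w)$. But $\mathbb{M}(\lambda^{+4},\kappa_w)=\mathbb{M}(\lambda^{+4},\kappa_w,\kappa_w)$ is $\kappa_w$-Knaster, hence $\kappa_w$-c.c., by Lemma~\ref{basic facts on mitchell forcing}(b) applied with $\alpha=\lambda^{+4}$, $\beta=\kappa_w$ (note $\kappa_w$ is measurable in the ground model here, being a point on a measure sequence, so the hypothesis of that lemma is met). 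This contradicts $|\{h^p:p\in A\}|=\kappa_w^+$, completing the proof.

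The one point that needs a little care — and which I expect to be the main (minor) obstacle — is justifying that incompatibility in $\Rforce_w$ really does force incompatibility of the collapse coordinates, i.e.\ that after matching up $d$ and $\lambda$, two conditions with compatible top data $(A^p,H^p)$ and $(A^q,H^q)$ are compatible in $\Rforce_w$ provided $h^p\parallel h^q$. This is where one uses that the $\mathcal F_w$-sets intersect to an $\mathcal F_w$-set, that $F^*_w$ is closed under meets of functions with a common domain-up-to-measure-one, and that the order $\leq^*$ (and hence $\leq$) on $\Rforce_w$ only requires coordinatewise refinement of $A$, $H$, and $h$ together with $d_p=d_q$; I would state this as the analogue of the computation already done in Lemma~\ref{chain condition for radin forcing} and refer back to it. Everything else is a routine $\Delta$-system/counting argument identical to the earlier lemma, so the proof can be presented in just a few lines as ``the same as the proof of Lemma~\ref{chain condition for radin forcing}, replacing $\Col(\lambda^{+5},<\kappa_w)$ by $\mathbb{M}(\lambda^{+4},\kappa_w)$ and using Lemma~\ref{basic facts on mitchell forcing}(b) in place of the chain condition of the Levy collapse.''
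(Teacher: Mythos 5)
Your proposal is correct and matches the paper's intended argument: the paper itself proves this lemma simply by remarking that it "can be proved as in Lemma \ref{chain condition for radin forcing}", i.e.\ exactly the reduction you carry out, with the incompatibility pushed onto the bottom coordinate and the $\kappa_w$-c.c.\ (indeed $\kappa_w$-Knaster) property of $\mathbb{M}(\lambda^{+4},\kappa_w)$ from Lemma \ref{basic facts on mitchell forcing}(b) replacing the chain condition of the Levy collapse.
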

The following is an analogue of the factorization lemma \ref{thm:factorization lemma}.
\begin{lemma} (The factorization lemma)  \label{thm:modified factorization lemma}
Suppose that $p=\langle p_0,\dots,p_n \rangle\in \Rforce_{w}$ where
$p_i=\langle \kappa_i,\lambda_i,  A_i, f_i,F_i \rangle$ and $m<n.$ Set $p^{\leq m}= \langle p_0, \dots, p_m \rangle$
and $p^{>m} = \langle  p_{m+1}, \dots, p_n     \rangle$.

\begin{enumerate}
\item [(a)]  $p^{\leq m} \in \Rforce_{w\upharpoonright \kappa_m+1},$
$p^{>m} \in \Rforce_{w}$ and there exists
$$i: \Rforce_{w} / p \rightarrow \Rforce_{w\upharpoonright \kappa_m+1}/p^{\leq m} \times \Rforce_{w} / p^{>m}$$
which is an isomorphism with respect to both $\leq^*$ and $\leq.$

\item [(b)] If $m+1 <n,$ then there exists
$$i: \Rforce_{w} / p \rightarrow \Rforce_{w \upharpoonright \kappa_m+1}/p^{\leq m} \times \mathbb{M}(\kappa_m^{+4},  \kappa_{m+1}) \times \Rforce_{w} / p^{>m+1}$$
which is an isomorphism with respect to both $\leq^*$ and $\leq.$ \hfill$\Box$
\end{enumerate}
\end{lemma}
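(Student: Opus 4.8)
The plan is to take $i$ to be the canonical ``cut at the $m$-th block'' map and then check, essentially mechanically, that it is a bijection preserving both $\le$ and $\le^*$. Recall that any $q\in\Rforce_{w}/p$ is produced from $p$ by finitely many one-point extensions followed by a single $\le^*$-extension, and that each one-point extension $\Add(\cdot,u)$ alters exactly one block, splitting it into two; hence the blocks of $q$ partition canonically into those that ``descend from $p_0,\dots,p_m$'' and those that ``descend from $p_{m+1},\dots,p_n$''. Call these sub-sequences $q^{\le m}$ and $q^{>m}$ and set $i(q)=(q^{\le m},q^{>m})$. What makes this cut clean is the chaining requirement $\lambda^{p_{k+1}}=\kappa_{w^{p_k}}$ of Definition \ref{modified radin forcing definition}(3): when $\Add(p_k,u)$ splits a block it produces a block with top $w^{p_k}$ and lower endpoint $\kappa_{w^{p_k}}$ sitting above a block with top $u$ and $\kappa_u<\kappa_{w^{p_k}}$, so no block of $q$ ever straddles the ordinal $\kappa_m=\kappa_{w^{p_m}}$. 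It follows that $q^{\le m}$ is a genuine condition of $\Rforce_{w^{p_m}}$ below $p^{\le m}$ and $q^{>m}$ a genuine condition of $\Rforce_{w}$ below $p^{>m}$.

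For surjectivity one reverses this: given $(q_1,q_2)\in\Rforce_{w^{p_m}}/p^{\le m}\times\Rforce_{w}/p^{>m}$, form the concatenation $q_1{}^\frown q_2$. This lies in $\Rforce_{w}/p$: the topmost block of $q_1$ has top $w^{p_m}$ with $\kappa_{w^{p_m}}=\kappa_m$, while the bottom block of $q_2$ still has $\lambda$-coordinate $\lambda^{p_{m+1}}=\kappa_m$ (refining $p_{m+1}$ cannot change this, since $\le^*$ fixes the $\lambda$-coordinate and the lower half of an $\Add$-split retains it), so the chaining clause holds at the seam; every other clause in the definitions of $\MPB_{w}$ and $\Rforce_{w}$ is inherited blockwise, and $q_1{}^\frown q_2\le p$ because the sequences of extensions producing $q_1$ from $p^{\le m}$ and $q_2$ from $p^{>m}$ can be carried out inside the concatenation. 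The maps $q\mapsto(q^{\le m},q^{>m})$ and $(q_1,q_2)\mapsto q_1{}^\frown q_2$ are mutually inverse, so $i$ is a bijection.

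Order preservation, in both directions and for both $\le^*$ and $\le$, is again blockwise: $\le^*$ on $\Rforce_{w}$ is coordinatewise on blocks, and a general $\le$-extension decomposes into finitely many one-point extensions followed by a $\le^*$-step, each acting inside a single block; since no block migrates across the cut, $q\le q'$ in $\Rforce_{w}/p$ iff $q^{\le m}\le(q')^{\le m}$ and $q^{>m}\le(q')^{>m}$, and likewise for $\le^*$. This proves (a). Part (b) then follows by applying (a) a second time, cutting $p^{>m}$ at its bottom block, which gives $\Rforce_{w}/p^{>m}\cong\Rforce_{w^{p_{m+1}}}/\langle p_{m+1}\rangle\times\Rforce_{w}/p^{>m+1}$; since $m+1<n$ the block $p_{m+1}$ is not the top block, so its measure-sequence data $(A_{m+1},H_{m+1})$ contributes no points to be added in the interval $(\kappa_m,\kappa_{m+1})$, and $\Rforce_{w^{p_{m+1}}}$ below the singleton $\langle p_{m+1}\rangle$ reduces to its lower component, namely $\mathbb{M}(\kappa_m^{+4},\kappa_{m+1})$ below $h^{p_{m+1}}$, which is identified with $\mathbb{M}(\kappa_m^{+4},\kappa_{m+1})$ exactly as in Lemma \ref{thm:factorization lemma}(b). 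Composing the two isomorphisms yields (b).

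The only genuinely non-formal ingredient, and hence the main thing to watch, is the bookkeeping with the measure-sequence parameters $(A_k,H_k)$ on either side of the cut: one must verify that the restrictions $A^q_k\cap V_{\kappa_m}$ and $H^q_k\upharpoonright V_{\kappa_m}$ remain in the appropriate $\mathcal{F}$- and $F^*$-sets, that the domain constraint $\dom(H)=\{\kappa_v>\lambda\mid v\in A\}$ is preserved under restriction, and that the relevant $\Add$-operations stay well-defined on each side (invoking Lemma \ref{almost all can be added} where necessary), so that cutting commutes with $\Add$. This is carried out precisely in parallel with the routine verification behind the factorization Lemma \ref{thm:factorization lemma}, which is the reason it can be left to the reader.
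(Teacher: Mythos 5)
The paper supplies no argument for either factorization lemma (both Lemmas \ref{thm:factorization lemma} and \ref{thm:modified factorization lemma} are stated with $\Box$), so there is no proof to compare against directly; that said, your outline for part (a) is the standard one and is essentially sound: cut a condition $q\le p$ at $\kappa_m=\kappa_{w^{p_m}}$, use the chaining clause $\lambda_{k+1}=\kappa_{w_k}$ together with the form of $\Add$ to see that no block of $q$ straddles $\kappa_m$, check that concatenation inverts the cut, and verify that $\le^*$ and $\le$ are respected blockwise. (One small slip: the upper block produced by $\Add(p_k,u)$ has $\lambda$-coordinate $\kappa_u$, not $\kappa_{w^{p_k}}$; this does not affect your conclusion that the cut is clean.)

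Your derivation of part (b), however, contains a genuine gap. You write that ``since $m+1<n$ the block $p_{m+1}$ is not the top block, so its measure-sequence data $(A_{m+1},H_{m+1})$ contributes no points to be added in the interval $(\kappa_m,\kappa_{m+1})$,'' and conclude that $\Rforce_{w^{p_{m+1}}}/\langle p_{m+1}\rangle$ reduces to $\mathbb{M}(\kappa_m^{+4},\kappa_{m+1})$. This does not follow. The hypothesis $m+1<n$ only says $p_{m+1}$ is not the final block; Definition \ref{modified radin forcing definition} places no constraint on $\len(w^{p_{m+1}})$, and intermediate blocks with $\len(w^{p_{m+1}})>1$ arise naturally, since the lower half of an $\Add$-split carries a measure sequence $u$ of arbitrary length. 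When $\len(w^{p_{m+1}})>1$, $A_{m+1}$ is a nonempty measure-one set, $\Add$ can still be applied to the $(m+1)$-th block, and $\Rforce_{w^{p_{m+1}}}/\langle p_{m+1}\rangle$ is a nontrivial Radin-type forcing strictly larger than the Mitchell factor. Part (b) is correct only under the extra hypothesis $\len(w^{p_{m+1}})=1$ (equivalently $A_{m+1}=H_{m+1}=\emptyset$). This extra hypothesis does hold in all of the paper's uses of the lemma---for example, in the proof of Lemma \ref{thm: modified bounding sets}(c) the middle block corresponds to a successor point of the Radin club, whose associated measure sequence has length $1$ by the geometric characterization---but it is not a consequence of $m+1<n$, so you should either add it to your hypotheses or verify in each application that it holds.
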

The following can be proved as before:
\begin{lemma}  \label{thm:modified prikry property}
\begin{enumerate}
\item [(a)] $(\RR_{w}, \leq, \leq^*)$ satisfies the Prikry property.

\item [(b)] Assume $\len(w)=\k_w^+$ Then forcing with $\MRB_{w}$ preserves the inaccessibility of $\kappa_w$.
\end{enumerate}
\end{lemma}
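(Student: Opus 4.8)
Here is how I would attack Lemma~\ref{thm:modified prikry property}. The plan is to repeat, essentially line for line, the proofs of Lemma~\ref{thm:prikry property} and Lemma~\ref{thm:preserving inaccessibility}, replacing throughout each Levy collapse $\Col(\gamma^{+5},<\delta)$ by the Mitchell forcing $\mathbb{M}(\gamma^{+4},\delta)=\mathbb{M}(\gamma^{+4},\delta,\delta)$. Those arguments use only two features of the interleaved collapses: that $\Col(\gamma^{+5},<\delta)$ is $\delta$-c.c.\ for $\delta$ measurable, and that it is closed enough to meet a bounded family of dense sets and to carry $\leq^{*}$-recursions of bounded length. In the present setting the chain condition is supplied by Lemma~\ref{basic facts on mitchell forcing}(b) (so $\mathbb{M}(\gamma^{+4},\delta)$ is $\delta$-Knaster, a fortiori $\delta$-c.c., when $\delta$ is measurable), and the closure by Lemma~\ref{basic facts on mitchell forcing}(a), which says $\mathbb{M}(\gamma^{+4},\delta)$ is $\gamma^{+4}$-directed closed. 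The degree of closure has dropped by one, from $\gamma^{+5}$ to $\gamma^{+4}$; checking that this drop is harmless is the only delicate point.

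For (a) I would induct on $\kappa_w$, exactly as in Lemma~\ref{thm:prikry property}. For a length-one condition $p=(w,\lambda,A,H,h)$, now with $h\in\mathbb{M}(\lambda^{+4},\kappa_w)$, and for $v\in A$, one defines for each relevant stem $s$ the dense sets $D^{\text{top}}(0,s,v)\cup D^{\text{top}}(1,s,v)\subseteq\mathbb{M}(\kappa_v^{+4},\kappa_w)/H(\kappa_v)$ and $D^{\text{low}}(0,s,v)\cup D^{\text{low}}(1,s,v)\subseteq\mathbb{M}(\lambda^{+4},\kappa_v)/h$ verbatim as before, and forms $D^{\text{top}}_v$, $D^{\text{low}}_v$ by intersecting over all such $s$. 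The shape constraint on $s$ forces the last building block of $s$ to have critical point $\lambda$, so — as in Lemma~\ref{thm:prikry property}, after the usual normalization of the (absorbable) measure-one and function coordinates — there are fewer than $\lambda^{+4}$ relevant stems, and of course $\lambda^{+4}<\kappa_v^{+4}$; hence the $\lambda^{+4}$-directed-closure of $\mathbb{M}(\lambda^{+4},\kappa_v)$ makes $D^{\text{low}}_v$ dense and the $\kappa_v^{+4}$-directed-closure of $\mathbb{M}(\kappa_v^{+4},\kappa_w)$ makes $D^{\text{top}}_v$ dense. The rest of the length-one argument — the choice of $\tilde H$, $H^{*}$, the partition sets $A(s,\alpha)$, their diagonal intersection, and the analysis of a minimal-length putative counterexample via the reflected map $\phi$ — goes through unchanged, since none of it inspects the internal structure of the collapse coordinates, only $\bar\mu_w$ and the measures $w(\alpha)$. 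For $p$ of length $\geq 2$ one uses the factorization Lemma~\ref{thm:modified factorization lemma} to write $\RR_w/p\cong\RR_{w\upharpoonright \kappa_m+1}/p^{\leq m}\times(\text{tail})$, runs a $\leq^{*}$-recursion of length $\leq\kappa_u$ on the tail factor — whose top-block coordinate $\mathbb{M}(\kappa_u^{+4},\kappa_w)$ is $\kappa_u^{+4}$-directed closed, hence closed enough for recursions of length $\leq\kappa_u<\kappa_u^{+4}$ — and then applies the induction hypothesis to $\RR_{w\upharpoonright \kappa_m+1}$.

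For (b), assuming $\len(w)=\kappa_w^{+}$ (permitted, since the definition of measure sequences allows lengths up to $w(0)^{+}$), I would follow the proof of Lemma~\ref{thm:preserving inaccessibility}, i.e.\ Mitchell. Suppose toward a contradiction $p\Vdash$``$\dot f\colon\delta\to\kappa_w$ is cofinal'' with $\delta<\kappa_w$; choose $\mathbf{X}\prec H(\theta)$ with $|\mathbf{X}|=\kappa_w$, $V_{\kappa_w}\subseteq\mathbf{X}$, ${}^{<\kappa_w}\mathbf{X}\subseteq\mathbf{X}$ and $p,\kappa_w^{+},\dot f,w,\RR_w\in\mathbf{X}$, let $\pi\colon\mathbf{X}\to\mathbf{N}$ be the transitive collapse, and set $\bar\beta=\sup(\mathbf{X}\cap\kappa_w^{+})<\kappa_w^{+}$. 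The geometric characterization of $\RR_w$-genericity transfers from Lemma~\ref{geometric characterization} by the identical proof (which uses only the chain condition and the Prikry property, now available from part~(a) and Lemma~\ref{chain condition for modified radin forcing}), so any $\RR_{w\upharpoonright\gamma}$-generic over $V$ with $\bar\beta\leq\gamma<\kappa_w^{+}$ is $\pi(\RR_w)$-generic over $\mathbf{N}$; but for a limit $\gamma$ of that kind with $\cf(\gamma)<\kappa_w$ one has $\Vdash_{\RR_{w\upharpoonright\gamma}}$``$\cf(\kappa_w)=\cf(\gamma)$'', contradicting the presence of the singularizing map $\dot f$ over $\mathbf{N}$. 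Hence $\kappa_w$ remains inaccessible.

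The main obstacle is precisely the bookkeeping for this ``drop by one'': one must verify that every family of dense sets and every $\leq^{*}$-recursion occurring in the proof of Lemma~\ref{thm:prikry property} — the fewer-than-$\lambda^{+4}$ relevant stems, the recursions of length $\leq\kappa_u$, and the diagonal intersections of size $\leq\kappa_v$ over the $\kappa_w$-complete filter $\mathcal{F}_w$ — has size or length strictly below the pertinent closure degree ($\lambda^{+4}$, $\kappa_u^{+4}$, $\kappa_v^{+4}$, or $\kappa_w$), which in each case it does. A secondary point is that $\mathbb{M}(\gamma^{+4},\delta)$, unlike the Levy collapse, is neither weakly homogeneous nor closed past $\gamma^{+4}$; but the arguments above never appeal to homogeneity of the interleaved forcings and never need closure beyond $\gamma^{+4}$, so this causes no difficulty.
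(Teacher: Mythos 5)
Your overall strategy is exactly the paper's: the paper gives no argument for this lemma beyond ``can be proved as before,'' and your plan -- rerun the proofs of Lemma~\ref{thm:prikry property} and Lemma~\ref{thm:preserving inaccessibility} with $\Col(\gamma^{+5},<\delta)$ replaced by $\mathbb{M}(\gamma^{+4},\delta)$, using the Knaster/chain condition and the $\gamma^{+4}$-directed closure -- is the intended one, and you correctly isolate the closure drop from $+5$ to $+4$ as the only delicate point. However, at precisely that point your key numerical claim, ``there are fewer than $\lambda^{+4}$ relevant stems,'' is asserted rather than proved, and a verbatim transcription of the counting actually performed in Lemma~\ref{thm:prikry property} does \emph{not} deliver it. There the stems entering the $D^{\text{low}}$-intersection are bounded by counting all possible blocks sitting at level $\lambda$, including the measure sequence occupying that block; this gives a bound of roughly $2^{2^\lambda}$, which in the Section~\ref{section:double successor} preparation is $\leq\lambda^{+4}<\lambda^{+5}$ and so clears the $\lambda^{+5}$-distributivity of the interleaved collapse. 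In the Section~\ref{section:all even} preparation one has $2^\lambda=\lambda^{++}$ and $2^{\lambda^{++}}=\lambda^{+4}$, so that same bound is exactly $\lambda^{+4}$, while $\mathbb{M}(\lambda^{+4},\kappa_v)$ is only $\lambda^{+4}$-distributive (it adds subsets of $\lambda^{+4}$); intersecting $\lambda^{+4}$-many dense sets is then not justified, so the ``same count as before'' does not suffice.

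The gap is repairable, but it needs an explicit observation which neither your ``usual normalization'' remark nor the Section~\ref{section:double successor} write-up supplies: in any extension of a condition $p$, a block whose critical point is exactly the lower ordinal $\lambda$ can only be (a shrinking/splitting of) the block of $p$ already sitting at $\lambda$, because a point added inside a block with lower ordinal $\lambda'$ must have critical point strictly between $\lambda'$ and that block's critical point. Hence the measure-sequence coordinate of the $\lambda$-level block never varies over the relevant stems; only its $A$-, $H$- and $h$-coordinates and the finitely many blocks below $\lambda$ (all elements of $V_\lambda$) do, so the number of relevant stems is at most $2^\lambda=\lambda^{++}<\lambda^{+4}$. (Equivalently, in the length-one case the relevant stem family is essentially trivial and longer conditions are routed through the factorization, so the only families one ever integrates over have size $\leq\kappa_v$ or $\leq 2^\lambda$, and the $\leq^*$-recursions have length $\leq\kappa_u<\kappa_u^{+4}$.) With that count in hand your argument for (a) goes through, and your treatment of (b) via the transferred geometric characterization is the paper's argument; without it, the central cardinality claim of your proposal is unsupported in exactly the regime where the new cardinal arithmetic makes it tight.
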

From now on assume that $\len(w)=\k^+$.
Suppose $K \subseteq \Rforce_{w}$ is generic over $V$ and define the club $C$ and the sequence $\vec{u}= \langle u_\xi: \xi <\k  \rangle$
and $\vec{\k}= \langle \k_\xi: \xi < \k      \rangle$ as before. Let the sequence
$\vec{F}=\langle F_\xi: \xi<\k \rangle$ be such that each $F_\xi$ is $\mathbb{M}(\k_\xi^{+4},  \k_{\xi+1})$-generic over $V$, which is produced by $K.$
The next lemma can be proved as in Lemma \ref{thm: bounding sets}.
 \begin{lemma}\label{thm: modified bounding sets}
 \begin{enumerate}
\item [(a)] $V[K]=V[\vec{u}, \vec{F}].$

\item [(b)] For every limit ordinal $\xi<\kappa, \langle \vec{u}\upharpoonright \xi, \vec{F}\upharpoonright \xi  \rangle$  is $\RR_{u_\xi}$-generic over $V$,
and $\langle \vec{u} \upharpoonright [\xi, \kappa), \vec{F}\upharpoonright$
 $[\xi, \kappa) \rangle$ is $\RR_w$-generic over $V[\vec{u}\upharpoonright \xi, \vec{F}\upharpoonright \xi].$

\item [(c)] For every $\gamma <\kappa$ and every $A \subseteq \gamma$ with $A\in V[\vec{u}, \vec{F}],$ we have $A\in V[\vec{u}\upharpoonright \xi, \vec{F}\upharpoonright \xi],$
 where $\xi$ is the least ordinal such that $\gamma< \kappa_\xi.$
\end{enumerate}
\end{lemma}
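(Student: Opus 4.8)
The plan is to run the proof of Lemma~\ref{thm: bounding sets} essentially unchanged, replacing every occurrence of a collapse $\Col(\kappa_\xi^{+5},<\kappa_{\xi+1})$ by the Mitchell forcing $\mathbb{M}(\kappa_\xi^{+4},\kappa_{\xi+1})$ (and $\Col(\lambda^{+5},<\kappa_w)$ by $\mathbb{M}(\lambda^{+4},\kappa_w)$), and invoking the versions of the relevant facts already proved for the modified $\Rforce_w$: the $\kappa_w^+$-c.c.\ (Lemma~\ref{chain condition for modified radin forcing}), the factorization Lemma~\ref{thm:modified factorization lemma}, and the Prikry property (Lemma~\ref{thm:modified prikry property}(a)).

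For (a) I would set $K'$ to be the collection of all $p\in\Rforce_w$ such that every measure sequence $u\in V_\kappa$ appearing in $p$ is some $u_\xi$; for every $\xi<\kappa$ there is $q\le p$ in which $u_\xi$ appears; every $f\in V_\kappa$ appearing in $p$ lies in $F_\xi$ for some $\xi$; and for every $\xi<\kappa$ and every $f\in\mathcal{P}(F_\xi)\cap\mathbb{M}(\kappa_\xi^{+4},\kappa_{\xi+1})$ there is $q\le p$ in which $f$ appears. As before, $K'\in V[\vec u,\vec F]$, $K'$ is a filter with $K\subseteq K'$, and genericity of $K$ gives $K=K'$, so $V[K]=V[\vec u,\vec F]$. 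Part (b) follows at once from (a) and the factorization Lemma~\ref{thm:modified factorization lemma}.

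For (c): the least $\nu$ with $\gamma<\kappa_\nu$ is not a limit, so write $\nu=\xi+1$ (the case $\nu=0$ being similar), and pick $p\in K$ mentioning both $u_\xi=u^{p_m}$ and $u_{\xi+1}=u^{p_{m+1}}$. By Lemma~\ref{thm:modified factorization lemma}(b),
\[
\Rforce_w/p \simeq \Rforce_{u_\xi}/p^{\le m}\times\mathbb{M}(\kappa_\xi^{+4},\kappa_{\xi+1})\times\Rforce_w/p^{>m+1}.
\]
Fixing a nice $\Rforce_w$-name $\dot A$ for $A$ with $\Vdash\dot A\subseteq\gamma$, I would introduce an $\Rforce_w/p^{>m+1}$-name $\dot B$ for the subset of $\Rforce_{u_\xi}/p^{\le m}\times\mathbb{M}(\kappa_\xi^{+4},\kappa_{\xi+1})\times\gamma$ consisting of those triples $(r,f,\eta)$ with $(r,f)$ forcing $\eta\in\dot A$ over the product of the first two factors. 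Since GCH-type cardinal arithmetic at $\kappa_\xi,\kappa_{\xi+1}$ gives $|\Rforce_{u_\xi}/p^{\le m}\times\mathbb{M}(\kappa_\xi^{+4},\kappa_{\xi+1})\times\gamma|\le\kappa_{\xi+1}$, I would enumerate this set as $\langle y_\alpha:\alpha<\kappa_{\xi+1}\rangle$ and, using the Prikry property together with the $\kappa_{\xi+1}^+$-closure of $(\Rforce_w/p^{>m+1},\le^*)$, build a $\le^*$-decreasing chain below $p^{>m+1}$ deciding all the statements $y_\alpha\in\dot B$. Carrying this out below an arbitrary extension of $p$ shows that the set of conditions whose $p^{>m+1}$-part decides $\dot B$ completely is $\le$-dense below $p$, so some $q'\in K$ forces $\dot B=\bar B$ for a fixed $\bar B\in V$; then $A$ is recovered from $\bar B$ together with the generic objects for $\Rforce_{u_\xi}/p^{\le m}$ and $\mathbb{M}(\kappa_\xi^{+4},\kappa_{\xi+1})$, which are read off from $\vec u\upharpoonright\xi,\vec F\upharpoonright\xi$ and $F_\xi$, whence $A\in V[\vec u\upharpoonright\nu,\vec F\upharpoonright\nu]$.

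The only point that genuinely needs rechecking (rather than being a literal transcription of the proof of Lemma~\ref{thm: bounding sets}) is that substituting Mitchell forcings for collapses preserves the closure of the direct-extension ordering used in (c). But $\mathbb{M}(\alpha^{+4},\beta)$ is $\alpha^{+4}$-directed closed by Lemma~\ref{basic facts on mitchell forcing}(a), and the measure-theoretic coordinates $\mathcal{F}_w$ and $F^*_w$ behave exactly as before, so $(\Rforce_w/p^{>m+1},\le^*)$ stays (more than) $\kappa_{\xi+1}^+$-closed; hence I expect no real obstacle here, only routine bookkeeping.
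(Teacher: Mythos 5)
Your proposal is correct and matches the paper's treatment: the paper itself proves this lemma simply by noting it "can be proved as in Lemma \ref{thm: bounding sets}", i.e., by the literal transcription you carry out, with the collapses replaced by the Mitchell forcings and the modified chain condition, factorization and Prikry lemmas invoked in place of the originals. Your extra check that the direct-extension order above $\kappa_{\xi+1}$ retains $\kappa_{\xi+1}^+$-closure (via the $\kappa_{\xi+1}^{+4}$-directed closure of $\mathbb{M}(\kappa_{\xi+1}^{+4},\cdot)$ and the completeness of the measure filters) is exactly the only point needing verification, and your density argument in (c) is a slightly more careful rendering of the same step.
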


\subsection{The final model}
\label{sec: modified final model}
In this subsection we define the final model we are going to work with.
Thus assume that $GCH$ holds,
 $\eta > \lambda$ are  measurable cardinals above $\k$. We  assume that they are the least such cardinals. Suppose $\kappa$
is an  $H(\eta^+)$-hypermeasurable cardinal. Let $j: V \to M \supseteq H(\eta^+)$
witness this. We may assume that it is generated by a $(\k, \eta^+)$-extender.
Let $i: V \to N$ be the ultrapower embedding derived from $j$ and let $k: N \to M$
be such that $j=k \circ i.$

The next lemma can be proved as in Lemma \ref{a very basic iteration lemma}
\begin{lemma}
\label{third very basic iteration lemma}
 Then there exists a cofinality preserving generic extension $V^1$ of $V$ satisfying the following conditions:
\begin{enumerate}
\item [(a)] $V^1 \models$``$GCH$''.
\item [(b)] There is $j^1: V^1 \to M^1$ with critical point $\k$
such that $H(\eta^+) \subseteq M^1$ and $j^1 \upharpoonright V= j.$

\item [(c)] $j^1$ is generated by a $(\k, \eta^+)$-extender.

\item [(d)] If $U^1$ is the normal measure derived from $j^1$ and if $i^1: V^1 \to N^1 \simeq \Ult(V^1, U^1)$
is the ultrapower embedding, then there exists  $\bar g \in V^1$
which is $i^1(\Add(\k, \l)_{V^1})$-generic over $N^1.$
Further $i^1 \upharpoonright V=i.$
\end{enumerate}
\end{lemma}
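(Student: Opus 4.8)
The plan is to reduce Lemma \ref{third very basic iteration lemma} to the analogous preparation already available, namely Lemma \ref{a very basic iteration lemma}, since the statement here differs from the double-successor case only in which $H(\cdot)$-hypermeasurability hypothesis we start with ($H(\eta^+)$ here versus $H(\l^{++})$ there), and in the length of the extender. First I would invoke the standard Laver-style preparation iteration: working in $V$ under $GCH$, one performs a reverse Easton iteration $\MPB$ of length $\k$ that, at each inaccessible $\a \leq \k$ with enough reflection, forces with a suitably chosen forcing (essentially $\Add(\a,\a^+)$ or a Laver function evaluated at $\a$) so that the critical-point embedding $j$ acquires a master condition. This is exactly the construction of Friedman and Honzik referenced in the proof of Lemma \ref{a very basic iteration lemma}; one cites \cite{friedman-honzik2} Theorem 3.1 and Remark 3.6, checking that the argument there only uses that $\k$ is $H(\theta)$-hypermeasurable for $\theta$ large enough relative to the forcing $\Add(\k,\l)$ whose generic we must absorb into $N^1$, which holds here since $\eta^+ > \l$.

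Next I would verify the four clauses. Clause (a), that $V^1 \models GCH$, follows because the iteration $\MPB$ is $\k$-c.c., $\k^+$-cc at stage $\k$, and its factors are chosen to be sufficiently closed and small; GCH is preserved by such reverse Easton iterations under ground-model $GCH$. For clause (b), one lifts $j\colon V \to M$ to $j^1\colon V^1 \to M^1$ in the usual way: $\MPB$ factors as $\MPB_\k \ast \dot{\MQB}_\k$, $j(\MPB)$ factors as $\MPB_\k \ast \dot{\MQB}_\k \ast \dot{\MPB}_{\mathrm{tail}}$, one finds a generic for the tail over $M[G_\k \ast G(\k)]$ (using that it is sufficiently closed and that $M$ is closed enough), and then builds a master condition for $\dot{\MQB}_\k$ using that $j''G(\k)$ has a lower bound; since $H(\eta^+) \subseteq M$ and the forcing $\dot{\MQB}_\k$ has size $< \eta^+$, the relevant closure holds. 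Clause (c), that $j^1$ is generated by a $(\k,\eta^+)$-extender, is automatic from the lifting: the new embedding is generated by the same generators $\{j^1(h)(\a) : h \in V^1, \a < \eta^+\}$ because $j^1 \restricted V = j$ and $M^1 = M[\text{generic}]$, and $V^1 = V[G]$ with $G$ definable from those generators. For clause (d), one passes to the ultrapower $i^1\colon V^1 \to N^1$ by the normal measure $U^1$ derived from $j^1$, uses that $i^1 \restricted V = i$ (since $U^1 \cap V = U$), and then needs $\bar g \in V^1$ generic for $i^1(\Add(\k,\l)_{V^1})$ over $N^1$; this is obtained exactly as in \cite{friedman-honzik2} by a term-forcing / Woodin surgery argument, absorbing the ground-model generic $\bar g_0$ for $i(\Add(\k,\l)_V)$ and correcting it using the fact that $\Add(\k,\l)_{V^1}/\MPB_\k$ is isomorphic to $\Add(\k,\l)_V$ (cf. \cite{cummings} Fact 2, \S1.2.6).

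The main obstacle I expect is clause (d): producing the guiding generic $\bar g$ for $i^1(\Add(\k,\l)_{V^1})$ over $N^1$ inside $V^1$. This requires that $N^1$ be closed enough under sequences in $V^1$ for the surgery argument to go through, and one must be careful that the correction performed by Woodin's surgery does not destroy genericity — this is the delicate part, handled by checking that the set of coordinates one alters is small (size $\leq \k$, coded by a single function in the ultrapower) and that $N^1$ contains the needed antichains. Since the hypothesis $\k$ is $H(\eta^+)$-hypermeasurable is strictly stronger than what Lemma \ref{a very basic iteration lemma} and Lemma \ref{second very basic iteration lemma} require, and those lemmas were proved by citing \cite{friedman-honzik2}, the cleanest route is simply to observe that the same proof applies verbatim, with $\l^{++}$ (or $\eta$) replaced by $\eta^+$ throughout, and to state the lemma's proof as ``The proof is identical to that of Lemma \ref{a very basic iteration lemma}; see \cite{friedman-honzik2} Theorem 3.1 and Remark 3.6.''
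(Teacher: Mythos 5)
Your proposal matches the paper's own treatment: the paper proves this lemma simply by noting it "can be proved as in Lemma \ref{a very basic iteration lemma}," whose proof is the citation of \cite{friedman-honzik2} Theorem 3.1 and Remark 3.6, exactly the reduction you make. Your additional clause-by-clause sketch (lifting, GCH preservation, derived ultrapower, guiding generic via term forcing) fills in the standard details consistently with how the paper uses these facts elsewhere, so no correction is needed.
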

\begin{lemma}
\label{preparation model for all evens}
Work in $V^1$.
There exists a forcing iteration $\MPB_\k$ of length $\k$ such that if $G_\k \ast g \ast h$ is $\MPB_\k \ast \dot{\mathbb{M}}(\k,  \l) \ast \dot{\mathbb{M}}(\l,  \eta)$-generic over
$V^1$, then in $V^2=V^1[G_\k \ast g \ast h]$, the following holds:
\begin{enumerate}
\item [(a)] $V^2 \models$``$\l=\k^{++}+\eta=\k^{+4}+TP(\l)+TP(\eta)$''.

\item [(b)] There is $j^2: V^2 \to M^2$ with critical point $\k$
and $H(\k^{++}) \subseteq M^2$
such that $j^2 \upharpoonright V^1= j^1.$

\item [(c)] $j^2$ is generated by a $(\k, \eta^+)$-extender.

\item [(d)] If $U^2$ is the normal measure derived from $j^2$ and if $i^2: V^2 \to N^2 \simeq \Ult(V^2, U^2)$
is the ultrapower embedding, then there exists  $F \in V^2$
which is $\dot{\mathbb{M}}(\k^{+4},  i^2(\k))_{N^2}$-generic over $N^2.$
\end{enumerate}
\end{lemma}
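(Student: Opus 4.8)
The plan is to repeat the proof of Lemma~\ref{preparation model for aleph-omega model} (which itself follows the proof of Lemma~\ref{preparation model for double succseeor model}) essentially word for word; the only genuinely new point is keeping track of the degree of hypermeasurability. Working in $V^1$, I would first factor $j^1$ in two steps,
\[
N' = \text{the transitive collapse of } \{j^1(g)(\k) : g\colon\k\to V^1\},\qquad \bar N' = \text{the transitive collapse of } \{j^1(g)(\a) : g\colon\k\to V^1,\ \a<\l\},
\]
and let $i'\colon V^1\to N'$, $k'\colon N'\to M^1$, $\bar i'\colon N'\to\bar N'$, $\bar k'\colon\bar N'\to M^1$ be the induced maps, so that $k'\circ i'=j^1$ and $\bar k'\circ\bar i'=k'$. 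For the iteration I would take $\MPB_\k=\langle\langle\MPB_\a:\a\le\k\rangle,\langle\dot\MQB_\a:\a<\k\rangle\rangle$ to be the reverse Easton iteration which at a measurable limit of measurables $\a<\k$ forces with $\dot\MQB_\a=\dot{\mathbb{M}}(\a,\a_*)\ast\dot{\mathbb{M}}(\a_*,\a_{**})$ and is trivial otherwise, and then factor, exactly as in Lemma~\ref{preparation model for aleph-omega model},
\[
\MPB_\k\ast\dot{\mathbb{M}}(\k,\l)\ast\dot{\mathbb{M}}(\l,\eta)\;\cong\;\MPB_\k\ast\dot{\Add}(\k,\l)\ast\dot\MQB,
\]
where $\dot\MQB$ is forced to be $\k^+$-distributive.

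Next I would run the lifting argument of Lemma~\ref{preparation model for double succseeor model} without change: lift $k',\bar i',\bar k'$ over $G_\k$; split the $\Add(\k,\l)$-generic $g$ according to the intermediate model $N'$, which computes the Cohen part as $\Add(\k,\bar\l)$ for some $\bar\l$ with $\k^+<\bar\l<\k^{++}$; build a generic for the tail $i'(\MPB_\k)_{(\k+1,i'(\k))}$ using the $\k$-closure of $N'[i'(G_\k)]$; use the ground-model generic $\bar g\in V^1$ for $i'(\Add(\k,\l)_{V^1})$ together with Woodin's surgery to obtain a master-condition generic $h_a$ with $l''[g]\subseteq h_a$ (where $l=\bar i'\circ i'$); build $F$ as an $\mathbb{M}(\k^{+4},i'(\k))_{N'[i'(G_\k)]}$-generic over $N'[i'(G_\k)]$ using the closure of that model; transfer everything along $\bar i'$ and $\bar k'$; and finally lift over the $\k^+$-distributive $\dot\MQB$. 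Setting $V^2=V^1[G_\k\ast g\ast h]$, $M^2=M^1[j^1(G_\k\ast g\ast h)]$, $N^2=\bar N'[l(G_\k\ast g\ast h)]$, $j^2=\bar j$ and $i^2=l$, clause~(d) then follows as in Lemma~\ref{preparation model for aleph-omega model}: factoring $l$ through $l^\dagger\colon V^2\to N^\dagger\simeq\Ult(V^2,U^2)$ with $k^\dagger\colon N^\dagger\to N^2$, one has $P(\k)_{V^2}\subseteq N^\dagger$ and $N^\dagger\models 2^\k=\l$, hence $\crit(k^\dagger)>\l$; since $N^2$ is generated by a $(\k,\l)$-extender and $\l\subseteq\range(k^\dagger)$, we get $N^2=N^\dagger$, so $i^2$ is the ultrapower by $U^2$ and $F$ is the required $\mathbb{M}(\k^{+4},i^2(\k))_{N^2}$-generic.

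For clause~(a), I would apply Lemma~\ref{baby iteration of mitchell} to $\mathbb{M}(\k,\l)\ast\dot{\mathbb{M}}(\l,\eta)$ over $V^1[G_\k]$ (where $\l,\eta$ remain the first two measurable cardinals above $\k$, as $\MPB_\k$ has size $\k$), obtaining $2^\k=\l=\k^{++}$, $2^\l=\eta=\l^{++}=\k^{+4}$ and $\TP(\l),\TP(\eta)$ in $V^2$; the tail of $\MPB_\k$ is $\k$-c.c.\ of size $\k$ and does not disturb this. For clauses~(b) and~(c), since $j^1$ is generated by a $(\k,\eta^+)$-extender with $H(\eta^+)\subseteq M^1$, and the whole forcing $\MPB_\k\ast\dot{\mathbb{M}}(\k,\l)\ast\dot{\mathbb{M}}(\l,\eta)$ has size $<\eta^+$ and is captured by $M^1$, the lift $j^2$ is again generated by a $(\k,\eta^+)$-extender and $M^2\supseteq H(\eta^+)^{V^2}\supseteq H(\k^{++})^{V^2}$ (note $\k^{++}=\l<\eta^+$ in $V^2$); this is the one place where the hypothesis ``$\k$ is $H(\eta^+)$-hypermeasurable'' is used rather than merely ``$H(\eta)$-hypermeasurable'' as in Lemma~\ref{preparation model for aleph-omega model}. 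I expect the main obstacle to be purely bookkeeping: verifying that the surgery and master-condition steps survive the presence of the interleaved Mitchell forcing (rather than pure Cohen forcing) and that the lifted embedding retains the full $(\k,\eta^+)$-extender generation; but this is carried out in detail in the proof of Lemma~\ref{preparation model for double succseeor model}, so nothing genuinely new is required.
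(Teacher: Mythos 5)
Your proposal is correct and follows the paper's approach exactly: the paper's own proof simply states that the model $V^2$ constructed in Lemma~\ref{preparation model for aleph-omega model} works, with the extra $H(\eta^+)$-hypermeasurability yielding $H(\k^{++})\subseteq M^2$, and you have faithfully expanded that reference into the full construction from Lemmas~\ref{preparation model for double succseeor model} and \ref{preparation model for aleph-omega model}, correctly identifying where the strengthened hypothesis is used.
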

\begin{proof}
The model $V^2=V^1[G_\k \ast g \ast h]$ constructed in the proof of Lemma \ref{preparation model for aleph-omega model}
does the job. The additional assumption of $\k$ being $H(\eta^+)$-hypermeasurable
guarantees that
$H(\k^{++}) \subseteq M^2$.
\end{proof}

In particular, note that in the model $V^2$, the hypotheses
at the beginning of Subsection \ref{A new variant of Radin forcing}
are satisfied; so we can consider the forcing notion $\MRB_w,$
where  $w= u \upharpoonright \k^+$ and $u$ is constructed using the  pair $(j^2, F)$.
Let $K$ be $\MRB_{w}$-generic over $V^2.$
Build the sequences $\vec{\k}= \langle \k_\xi: \xi < \k \rangle, \vec{u} = \langle  u_\xi: \xi < \k \rangle$ and $\vec{F} = \langle F_\xi: \xi<\kappa \rangle$ from $K$, as before.

\subsection{In $V^1[G_\k \ast g \ast h \ast K]$,  the tree property holds at all regular even cardinals below $\kappa$}
\label{the tree property holds at all even cardinals below}
Here we complete the proof of Theorem \ref{thm:main theorem2}.
As before, given a cardinal $\a \leq \k,$ let $\a_*$ denote the least measurable cardinal above $\a$ and let $\a_{**}$ denote the second measurable cardinal above $\a$.
 Now note that
\[
Card^{V^1[G_\k \ast g \ast h \ast K]} \cap \kappa = \{\k_\xi, \k_\xi^+: \xi<\k                    \} \cup \{(\k_\xi)_*, (\k_\xi)_*^+: \xi < \k     \} \cup  \{(\k_\xi)_{**}, (\k_\xi)_{**}^+: \xi < \k     \},
\]
Also note that
 if $\alpha < \kappa$ is a singular cardinal in $V^1[G_\k \ast g \ast h \ast K]$, then $\alpha \in \lim(C)$, i.e., $\a=\k_\xi$ for some limit ordinal $\xi<\k$.
 The following lemma is immediate:
\begin{lemma}
 In $V^1[G_\k \ast g \ast h \ast K],$ the set of uncountable regular even cardinals below $\k$ is equal to
 \[
 \{ \k_\xi^{++}: \xi < \k     \} \cup \{\k_\xi^{+4}: \xi < \k          \} \cup \{  \k_{\xi+1}: \xi < \k \}.
 \]
 \end{lemma}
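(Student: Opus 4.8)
The plan is to read off everything from the cardinal structure computation stated just above the lemma, together with the basic cardinal arithmetic facts coming from the forcing $\MRB_w$ and the preparation iteration. First I would recall that in $V^1[G_\k \ast g \ast h \ast K]$ we have
\[
Card^{V^1[G_\k \ast g \ast h \ast K]} \cap \kappa = \{\k_\xi, \k_\xi^+: \xi<\k \} \cup \{(\k_\xi)_*, (\k_\xi)_*^+: \xi < \k \} \cup \{(\k_\xi)_{**}, (\k_\xi)_{**}^+: \xi < \k \},
\]
and that for each $\xi<\k$, in the generic extension one has $(\k_\xi)_* = \k_\xi^{++}$ and $(\k_\xi)_{**} = \k_\xi^{+4}$ (these come from Lemmas \ref{preparation model for all evens}(a), \ref{basic facts on mitchell forcing}, \ref{baby iteration of mitchell} applied locally at each block of the Radin club, exactly as recorded in the analogous Lemma \ref{properties of forcing R}(f) of Section \ref{section:local evens}; here the Radin length is $\k^+$, so all limit points of $C$ are singular and the block structure persists). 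So between two successive points $\k_\xi < \k_{\xi+1}$ of $C$ the cardinals are exactly $\k_\xi, \k_\xi^+, (\k_\xi)_* = \k_\xi^{++}, \k_\xi^{+3}, (\k_\xi)_{**}=\k_\xi^{+4}, \k_\xi^{+5}$, and then $\k_{\xi+1} = \k_\xi^{+6}$.

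Next I would identify which of these cardinals are regular and which are even. A cardinal $\mu < \k$ in $V^1[G_\k \ast g \ast h \ast K]$ is singular precisely when $\mu = \k_\xi$ for a limit ordinal $\xi$, i.e., $\mu \in \lim(C)$; all other infinite cardinals below $\k$ are regular. Now write each regular cardinal as $\aleph_\gamma$: the order type of $CARD^{V^1[G_\k \ast g \ast h \ast K]} \cap \k$ gives, below a limit point $\k_\xi$, the six cardinals $\k_\xi, \k_\xi^+, \k_\xi^{++}, \k_\xi^{+3}, \k_\xi^{+4}, \k_\xi^{+5}$ sitting at ordinal positions $\gamma, \gamma+1, \dots, \gamma+5$ for the appropriate $\gamma$, and the successor point $\k_{\xi+1}$ sits at the even position $\gamma$ (shifted). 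One checks that $\k_\xi$ itself lands at an even $\aleph$-index exactly when $\xi$ is a limit, $\k_\xi^{++}$ and $\k_\xi^{+4}$ are always at even indices, while $\k_\xi^+,\k_\xi^{+3},\k_\xi^{+5}$ land at odd indices; and the successor-of-singular-Radin-point cardinals $\k_{\xi+1}$ (for $\xi$ a limit) give the remaining even-indexed regular cardinals. Hence the set of uncountable regular even cardinals below $\k$ is
\[
\{ \k_\xi^{++}: \xi < \k \} \cup \{\k_\xi^{+4}: \xi < \k \} \cup \{ \k_{\xi+1}: \xi < \k \},
\]
as claimed, once one also notes $\k_0 = \aleph_0$ is excluded by restricting to uncountable cardinals.

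The only genuinely bookkeeping-heavy step is the parity count: verifying that, given the repeating block of six cardinals between consecutive Radin points plus the jump to the next point, the $\aleph$-subscript of each cardinal has the claimed parity, and that this parity is stable as $\xi$ ranges over all of $\k$ (so that the shift accumulated over limit stages does not flip anything). I expect this to be the main obstacle, though it is purely a matter of carefully tracking the enumeration: since between $\k_\xi$ and $\k_{\xi+1}$ there are exactly six new cardinals and $6$ is even, the parity pattern is preserved from one block to the next, and at limit stages $\xi$ the cardinal $\k_\xi$ is itself a limit of the earlier blocks, so it inherits an even index; a short induction on $\xi$ then closes the argument. Everything else — regularity, the identification of $(\k_\xi)_* = \k_\xi^{++}$ and $(\k_\xi)_{**} = \k_\xi^{+4}$, and singularity of the limit points — is immediate from the results already established, so the lemma follows.
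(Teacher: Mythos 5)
Your route is the same as the paper's: the paper gives no argument for this lemma, treating it as immediate bookkeeping from the displayed computation of the cardinals below $\kappa$ together with the identification of the singular cardinals below $\kappa$ with the limit points of $C$, and that is exactly the structure of your proposal. Two points need attention. First, the one substantive input is the direction you state only in passing: every limit point of $C$ below $\kappa$ is singular in the final model (otherwise such a point would be a regular limit cardinal, hence an uncountable regular even cardinal missing from the list). The remark quoted just before the lemma gives only ``singular $\Rightarrow$ limit point''; the converse is the fact recorded at the end of Section 4, coming from the observation that on a measure-one set the sequences $u_\xi$ occurring in the generic have length $<\kappa_\xi^+$, so by Mitchell's analysis each limit point is singularized already by the corresponding initial segment of the forcing, and singularity is upward absolute. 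Saying just ``the Radin length is $\kappa^+$'' compresses this too much (length $\kappa^+$ is also what keeps $\kappa$ itself inaccessible); cite the Section 4 fact explicitly.

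Second, your parity count, taken literally, is internally inconsistent and would even contradict the lemma: since each block between consecutive points of $C$ contributes exactly six cardinals, $\kappa_\xi$ sits at $\aleph$-index $6\cdot\xi$, which is even for \emph{every} $\xi$, not only for limit $\xi$; if $\kappa_\xi$ were odd-indexed for successor $\xi$, then $\kappa_\xi^{++}$ and $\kappa_\xi^{+4}$ would be odd-indexed too (against your own claim), and $\kappa_{\zeta+1}$ would fail to be even (against the lemma). Likewise the third family must be all $\kappa_{\xi+1}$ for $\xi<\kappa$, not only those with $\xi$ a limit. The correct tally, which your closing induction (blocks of six preserve parity; limit $\aleph$-indices are even by the paper's definition of even) does support, is: $\kappa_\xi,\kappa_\xi^{++},\kappa_\xi^{+4}$ are always even-indexed and $\kappa_\xi^{+},\kappa_\xi^{+3},\kappa_\xi^{+5}$ always odd-indexed; among the even ones discard $\kappa_0=\aleph_0$ (countable) and $\kappa_\xi$ for limit $\xi$ (singular), and note that $\kappa_\xi$ for successor $\xi$ is already of the form $\kappa_{\zeta+1}$. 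With these repairs the bookkeeping closes and agrees with the paper's (unstated) reasoning.
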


Before we continue, let us show that we can choose the cardinals $\k_\xi$ in a suitable way, which is guaranteed by the following lemma, which is an analogue
 of Lemma \ref{defining suitable x}.
\begin{lemma}
\label{suitable choice of elements}
The set $X \in \mathcal{F}_w,$ where $X$ consists of all those $u \in \mathcal{U}_\infty $ such that $\a=\k_u$ satisfies the following conditions:
\begin{enumerate}
\item $\mathbb{P}_\alpha$ is $\alpha$-c.c. and of size $\alpha.$
\item $\mathbb{P}_{\a}\Vdash~``~\mathbb{P}(\a)= \mathbb{M}(\a, \a_*) \ast \dot{\mathbb{M}}(\a_*, \a_{**})  \text{~''} $.
\item $\alpha$ remains measurable after forcing with $\mathbb{P}_\alpha$ and $\mathbb{P}_{\alpha+1}$.
\item Some  elementary embedding $j: V^1 \to M^1$ with $crit(j)=\alpha$ can be extended to
\[
j: V^1[G_\alpha] \to M^1[j(G_\alpha)]
\]
and then to
\[
j: V^1[G_\alpha \ast G(\alpha)] \to M^1[j(G_\alpha \ast G(\alpha))].
\]
\item $\mathbb{P}_{\alpha+1} \Vdash$ `` $\dot{\mathbb{P}}_{(\alpha+1, \kappa)}$ does not add any new subsets to $\alpha_{**}^{+4}$''.

\item $\forall \gamma<\a, \mathbb{P}_{(\gamma, \a]} \times \mathbb{M}(\gamma, \a)\Vdash~``~ \a=\gamma^{++} + TP(\a) \text{~''}.$
\end{enumerate}
\end{lemma}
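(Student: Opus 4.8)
The plan is to imitate the one-line proof of Lemma~\ref{defining suitable x}. Recall that $w=u\upharpoonright\k^+$ is constructed by the pair $(j^2,F)$ of Lemma~\ref{preparation model for all evens}, where $j^2\colon V^2\to M^2$ has critical point $\k$ and $H(\k^{++})\subseteq M^2$. Since $\mathcal{F}_w=\bar\mu_w\cap\bigcap\{w(\alpha)\mid 1<\alpha<\len(w)\}$, and membership of a subset of $\mathcal U_\infty$ in $\bar\mu_w$ and in each $w(\alpha)$ is decided --- exactly as in the proof of Lemma~\ref{almost all can be added} --- by asking whether $w\upharpoonright\alpha$ lies in its $j^2$-image, it will be enough to show that $w\upharpoonright\alpha\in j^2(X)$ for every $\alpha$ with $1\le\alpha<\len(w)=\k^+$. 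As $\crit(j^2)=\k$ and $\k_{w\upharpoonright\alpha}=w(0)=\k$, elementarity of $j^2$ reduces this to checking that $M^2$ believes that $\k$, with $\k_*=\lambda$ and $\k_{**}=\eta$, satisfies clauses (1)--(6); and since $M^2$ contains $H(\k^{++})$, extends $j^1$, and is close enough to $V^2$ to compute the relevant cardinals and the hypermeasurability of $\k$ correctly, this in turn reduces to verifying that clauses (1)--(6) hold of $\k$ in $V^2$ itself.

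For this last step I would simply point to the constructions already carried out. Clauses (1) and (2) are read off the definition of the reverse Easton iteration $\MPB_\k$ from the proof of Lemma~\ref{preparation model for aleph-omega model}: it has size $\k$ and is $\k$-c.c., and --- since $\k$ is a measurable limit of measurables --- its stage-$\k$ iterand in $j^2(\MPB)$ is $\mathbb{M}(\k,\k_*)\ast\dot{\mathbb{M}}(\k_*,\k_{**})$. Clause (3) follows because $\k$ is still measurable (indeed $H(\k^{++})$-hypermeasurable) in $V^2=V^1[G_\k\ast g\ast h]$, by Lemma~\ref{preparation model for all evens}(b). Clause (4) is witnessed by $j^1=j^2\upharpoonright V^1$ together with the lifts $j^1\colon V^1[G_\k]\to M^1[j^1(G_\k)]$ and $j^2\colon V^1[G_\k\ast g\ast h]\to M^1[j^1(G_\k\ast g\ast h)]$ produced in the proof of Lemma~\ref{preparation model for aleph-omega model}. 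Clause (5) is vacuous at $\k$ (the tail iteration $\MPB_{(\k+1,\k)}$ is trivial), so the real content is its reflection to smaller $\alpha$, which is supplied by the distributivity and support structure of the Easton iteration. Finally, clause (6) at $\k$ --- that for every $\gamma<\k$ the product $\MPB_{(\gamma,\k]}\times\mathbb{M}(\gamma,\k)$ forces ``$\k=\gamma^{++}+\TP(\k)$'' --- is the content of Lemma~\ref{laver vs mitchell} together with the remark following it that $\Lambda\in U$, while for intermediate $\alpha$ it is Lemma~\ref{laver vs mitchell2}.

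The one place where I expect to have to be careful is clause (6): it carries two quantifiers, and I must make sure the ``$\Lambda_\lambda\in U$''-type argument of Lemma~\ref{laver vs mitchell} goes through for the iteration $\MPB$ of Lemma~\ref{preparation model for all evens}, which interleaves \emph{pairs} of Mitchell forcings together with collapses reaching $\alpha^{+4}$ rather than a single Mitchell forcing. But this is precisely the situation of Lemma~\ref{laver vs mitchell2}, so it is a matter of bookkeeping with the identities $\alpha_{**}=(\alpha_*)_*$ and the cardinal arithmetic of the collapses, not of any new idea. All remaining clauses are routine absoluteness checks between $V^2$ and $M^2$, exactly as in Lemma~\ref{defining suitable x}.
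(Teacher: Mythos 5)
Your approach — reducing to $w \upharpoonright \alpha \in j^2(X)$ for $1 \le \alpha < \kappa^+$ and then verifying the six clauses one by one — is exactly the one the paper intends; the paper gives no explicit proof of Lemma~\ref{suitable choice of elements}, only the remark that it is an analogue of Lemma~\ref{defining suitable x}, whose own proof is precisely the one-line reduction you begin with (``it suffices to show $\forall\alpha<\kappa^+,\ w\upharpoonright\alpha\in j(X)$, which can be easily checked''). Your clause-by-clause verification fills in the detail the paper suppresses, and clauses (1)--(4) and (6) are handled correctly, including your identification of Lemmas~\ref{laver vs mitchell} and~\ref{laver vs mitchell2} as the real content of clause~(6).

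One piece of your reasoning is muddled and worth correcting: the treatment of clause~(5). You assert that clause~(5) is ``vacuous at $\kappa$'' because $\MPB_{(\kappa+1,\kappa)}$ is trivial, and that ``the real content is its reflection to smaller $\alpha$.'' This has the direction of reflection backwards. To show $w\upharpoonright\beta\in j^2(X)$ you must check that $M^2$ believes $\kappa$ satisfies the \emph{$j^2$-image} of clause~(5); that image refers to $j^2(\MPB_\kappa)$ on the interval $(\kappa+1, j^2(\kappa))$, not to $\MPB_\kappa$ on the empty interval $(\kappa+1,\kappa)$. The check is therefore not vacuous. What saves it is exactly the Easton-support observation you gesture at: the next nontrivial iterand of $j^2(\MPB_\kappa)$ beyond $\kappa$ lives at the least measurable limit of measurables above $\eta=\kappa_{**}$ in $M^2$, so the whole tail is highly closed — well beyond $\eta^{+4}=\kappa_{**}^{+4}$ — and adds no new subsets of $\kappa_{**}^{+4}$. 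Once you plug $j^2$ into the clause and work inside $M^2$, there is no further ``reflection to smaller $\alpha$'' to do; that reflection is the \emph{conclusion} of the lemma (namely $X\in\mathcal{F}_w$), not an input to it. With that correction your proof is complete and matches the paper's intended argument.
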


The next lemma can be proved as in Theorems \ref{tree property at kappa} and \ref{tree property at limits},
combined with ides of the proof of Lemma \ref{tp at double successor of k-m}.
\begin{lemma}
\label{tree property at limits below kappa}
$V^1[G_\k \ast g \ast h \ast K] \models$``For all limit ordinals $\xi<\k, \k_\xi^{++}=(\k_{\xi})_*$ and $\TP((\k_\xi)_*)$ holds''.
\end{lemma}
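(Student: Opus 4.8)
The plan is to mimic the proof of Theorem~\ref{tree property at kappa} together with the localization trick already used in Theorem~\ref{tree property at limits} and Lemma~\ref{tp at double successor of k-m}. Fix a limit ordinal $\xi<\k$ and write $\a=\k_\xi$; by construction $\a\in\lim(C)$ is a singular cardinal, $\a_*=(\k_\xi)_*$, and by Lemma~\ref{thm: modified bounding sets} we have
\[
V^1[G_\k \ast g \ast h \ast K]=V^1[G_{\a+1}][G_{(\a+1,\k]}][g\ast h][\vec u\restriction\xi,\vec F\restriction\xi][\vec u\restriction[\xi,\k),\vec F\restriction[\xi,\k)].
\]
Using that each $\k_\zeta\in X$ (Lemma~\ref{suitable choice of elements}), the tail $\MPB_{(\a+1,\k]}\ast\dot{\mathbb{M}}(\k,\l)\ast\dot{\mathbb{M}}(\l,\eta)$ and the tail Radin-type forcing $\RR_w/\langle(w,\l,A,H,h)\rangle$ together with the tail collapses coded by $\vec F\restriction[\xi,\k)$ add no new subsets of $\a_*=\a_{**}$; so it suffices to prove $\TP(\a_*)$ in the model
\[
V^1[G_{\a+1}][\vec u\restriction\xi,\vec F\restriction\xi]
= V^1[G_\a][G(\a)][\vec F\restriction\xi],
\]
obtained by forcing with $\MPB_\a\ast\dot{\mathbb{M}}(\a,\a_*)\ast\dot{\mathbb{M}}(\a_*,\a_{**})\ast\dot{\RR}_{u_\xi}$, where $\RR_{u_\xi}$ uses the guiding generics coming from $\vec F\restriction\xi$ (here I use that $\MPB_\a$ is $\a$-c.c.\ of size $\a$, hence adds no relevant subsets, and that, by the reflection built into $X$, $\MPB_\a$ can be absorbed into the term forcing, exactly as in Lemma~\ref{preparation model for double succseeor model}).

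First I would set up, working in $V^1[G_\a]$, the measurable embedding $k\colon V^1[G_\a]\to N^1[G_\a]$ witnessing measurability of $\a_*$ (via Levy--Solovay, since $|\MPB_\a|=\a<\a_*$), lift it over $\mathbb{M}(\a,\a_*)\ast\dot{\mathbb{M}}(\a_*,\a_{**})\ast\dot\RR_{u_\xi}$ to get a cofinal branch of the would-be $\a_*$-Aronszajn tree $T$ in the image model, exactly as in the proof of Theorem~\ref{tree property at kappa}. The key structural facts needed are: (i) $\mathbb{M}(\a,\a_*)\cong\Add(\a,\a_*^+)\ast\dot{\MQB}$ with $\dot{\MQB}$ forced $\a^+$-distributive, so $T$ is added by the Cohen part (Lemmas~\ref{basic facts on mitchell forcing}, \ref{more basic facts on mitchell forcing}); (ii) the analogue of Lemma~\ref{mitchell followed by radin} putting all conditions of $\mathbb{M}(\a,\a_*)\ast\dot\RR_{u_\xi}$ into a normal form with $\Add(\a,\a_*^+)$-names; (iii) the projection/quotient analysis $\MCB$, $\mathbb{T}$, $\MQB_\pi$, $\MCB_\pi$, $\mathbb{T}_\pi$, with $\mathbb{T}_\pi$ being $\a^+$-closed (Lemma~\ref{lem:projection properties}, Lemma~\ref{closure of t-pi}) and $\MCB_\pi\times\MCB_\pi$ being $\a^+$-c.c.\ (the analogue of Lemma~\ref{chain condition of c}, whose proof uses the Prikry property Lemma~\ref{thm:modified prikry property} and the two compatibility claims). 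Then, as in the end of Section~\ref{sec:tree property at double successors}, Lemmas~\ref{lem: preservation product cc}, \ref{lem: preservation product cc-2}, plus the closure and chain-condition facts just listed, show $\MCB_\pi\times\MQB_\pi$ cannot add a branch to $T$, a contradiction. Finally, one checks $\a^{++}=\a_*$ holds in this model, which follows from the cardinal structure computed after $\MPB_\a\ast\dot{\mathbb{M}}(\a,\a_*)\ast\dot{\mathbb{M}}(\a_*,\a_{**})$ and preservation of $\a_*$ (and of cardinals near $\a_*$) by $\RR_{u_\xi}$ via Lemma~\ref{chain condition for modified radin forcing} and the factorization Lemma~\ref{thm:modified factorization lemma}.

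The main obstacle I expect is verifying the $\a^+$-c.c.\ of $\MCB_\pi\times\MCB_\pi$ in the present setting: the proof of Lemma~\ref{chain condition of c} was carried out for the original Radin forcing $\RR_w$ with $\Col$-collapses at the top and bottom coordinates of each block, whereas here the blocks carry $\mathbb{M}(\lambda^{+4},\k_w)$-type Mitchell forcings instead of pure collapses. So the $\Delta$-system argument has to be re-run keeping track of the Cohen parts of these Mitchell forcings (which are $\a_*$-c.c., in fact $\a$-directed-closed on the lower side), and the two technical compatibility claims (Claims~\ref{claim1} and \ref{claim2}) need to be restated with ``$h^{d_k}\le h^{d^*_k}$'' interpreted in the Mitchell ordering rather than the collapse ordering; the Prikry property Lemma~\ref{thm:modified prikry property}(a) is exactly what makes Claim~\ref{claim2} go through. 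A secondary bookkeeping point is that here we must simultaneously handle \emph{all} the intermediate generics $\vec F\restriction\xi$ below the critical block and check they are absorbed by the closed term forcing; this is where the membership $\k_\zeta\in X$ for $\zeta<\xi$ and clause~(5) of Lemma~\ref{suitable choice of elements} (``adds no new subsets of $\a_{**}^{+4}$'') are used, so no branch can sneak in from the tail. Once these adaptations are in place, the argument is the same as in Theorems~\ref{tree property at kappa} and~\ref{tree property at limits}, so I would simply say ``the proof is a routine adaptation'' after indicating the two points above.
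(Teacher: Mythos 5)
Your proposal follows exactly the route the paper gestures at: localize as in Theorem~\ref{tree property at limits}, observe via Lemma~\ref{suitable choice of elements} and Lemma~\ref{thm: modified bounding sets} that the tail adds no new subsets of $(\k_\xi)_*$, then run the Theorem~\ref{tree property at kappa} quotient/branch-preservation machinery with the Prikry property and the $\Delta$-system chain-condition argument adapted to the Mitchell-type collapses in the blocks of $\RR_{u_\xi}$, and finally fold in the reshuffling trick of Lemma~\ref{tp at double successor of k-m}. This matches the intended proof, which the paper itself only sketches.

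One place you should be more careful, though. In Section~\ref{section:all even} the Easton stage at $\a=\k_\xi$ forces with the two-step iteration $\mathbb{M}(\a,\a_*)\ast\dot{\mathbb{M}}(\a_*,\a_{**})$, not with the single Mitchell poset $\mathbb{M}(\a,\a_*,\a_*^+)$ that appears in Section~\ref{section:double successor}. In your structural points (ii) and (iii) you work with $\mathbb{M}(\a,\a_*)\ast\dot\RR_{u_\xi}$, silently dropping the second Mitchell factor $\mathbb{M}(\a_*,\a_{**})$, even though you had correctly listed it in the factorization a few lines above. The $\MCB/\mathbb{T}$ decomposition has to account for this: since $\mathbb{M}(\a_*,\a_{**})$ is $\a_*$-directed closed it should be placed entirely on the term-forcing side $\mathbb{T}$ (or handled via the $\Add(\a_*,\cdot)$-Cohen-plus-$\a_*^+$-distributive decomposition of Lemma~\ref{more basic facts on mitchell forcing}(c) together with Silver's lemma, exactly as in the proof of Lemma~\ref{baby iteration of mitchell}). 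Either way $\mathbb{T}_\pi$ remains $\a^+$-closed and $\MCB_\pi$ remains $\a^+$-c.c., so the branch lemmas apply; but as written your sketch does not explain where $\mathbb{M}(\a_*,\a_{**})$ goes. Also some small slips: the displayed identity $V^1[G_{\a+1}][\vec u\restriction\xi,\vec F\restriction\xi]=V^1[G_\a][G(\a)][\vec F\restriction\xi]$ drops $\vec u\restriction\xi$ on the right; you write $\Add(\a,\a_*^+)$ where $\mathbb{M}(\a,\a_*)=\mathbb{M}(\a,\a_*,\a_*)$ gives $\Add(\a,\a_*)$; and the parenthetical ``$\a_*=\a_{**}$'' is a typo. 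None of these change the approach, but the treatment of the second Mitchell factor should be made explicit.
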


Before we continue, let us make a simple remark. Assume
 $\bar \xi$ is a limit ordinal.
Then we can write $V^1[G_\k \ast g \ast h \ast K]$ as
\[
V[G_\k \ast g \ast h][\vec{u} \upharpoonright \bar \xi, \vec{F} \upharpoonright \bar \xi][\vec{u} \upharpoonright [\bar \xi, \bar \xi + \omega), \vec{F} \upharpoonright [\bar \xi, \bar \xi + \omega)][\vec{u} \upharpoonright [\bar \xi+\omega, \k), \vec{F} \upharpoonright [\bar \xi+\omega, \k)].
\]
On the other hand:
\begin{enumerate}
\item  $V^1[G_\k \ast g \ast h \ast K]$ is a generic extension of $V^1[G_\k \ast g \ast h][\vec{u} \upharpoonright \bar \xi, \vec{F} \upharpoonright \bar \xi][\vec{u} \upharpoonright [\bar \xi, \bar \xi + \omega), \vec{F} \upharpoonright [\bar \xi, \bar \xi + \omega)]$ by a forcing notion which does not add any new subsets
    to $\k_{\bar \xi+\omega}$.
\item By standard arguments, $V^1[G_\k \ast g \ast h][\vec{u} \upharpoonright \bar \xi, \vec{F} \upharpoonright \bar \xi][\vec{u} \upharpoonright [\bar \xi, \bar \xi + \omega), \vec{F} \upharpoonright [\bar \xi, \bar \xi + \omega)]$ is generic extension of
    $V^1[G_\k \ast g \ast h ][\vec{u} \upharpoonright \bar \xi, \vec{F} \upharpoonright \bar \xi]$ by a forcing notion which is forcing equivalent to the forcing notion $\Rforce$
    of Section \ref{section:local evens} (for suitable choices of the normal measure and guiding generic filters).
\end{enumerate}
 So, given any limit ordinal $\bar \xi,$ we can use the arguments of Section \ref{section:local evens}
 to conclude that the model $V^1[G_\k \ast g \ast h][\vec{u} \upharpoonright \bar \xi, \vec{F} \upharpoonright \bar \xi][\vec{u} \upharpoonright [\bar \xi, \bar \xi + \omega), \vec{F} \upharpoonright [\bar \xi, \bar \xi + \omega)]$ satisfies:
 \[
  ~``~\forall n<\omega,~ \TP(\k_{\bar \xi+n}^{++})+\TP(\k_{\bar \xi+n}^{+4})+\TP(\k_{\bar \xi+\omega+2}) \text{~''.}
 \]
 Using the above remark, and by the same arguments as in Sections \ref{section:double successor} and \ref{section:local evens}
 one can prove Theorem \ref{thm:main theorem2}. Below we present more details for completeness.
 First we prove an analogue of Lemma \ref{tree property at limits below kappa} for successor ordinals.
\begin{lemma}
\label{tree property at successor k-i}
$V^1[G_\k \ast g \ast h \ast K] \models$``For all successor ordinals $\xi<\k, \k_\xi^{++}=(\k_\xi)_*$ and the tree property at $(\k_\xi)_*$  holds''.
\end{lemma}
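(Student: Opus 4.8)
The plan is to reduce the claim to the intermediate model isolated in the remark preceding this lemma, in the same spirit in which Lemma~\ref{tp at double successor of k-m} is reduced to Lemma~\ref{baby iteration of mitchell}. Fix a successor ordinal $\xi<\k$, and let $\bar\xi\le\xi$ be the largest ordinal $\le\xi$ which is $0$ or a limit ordinal; since $\xi$ is a successor we have $\bar\xi<\xi$, so $\xi=\bar\xi+n$ for some $n$ with $1\le n<\omega$. Set
\[
\mathcal M=V^1[G_\k\ast g\ast h][\vec{u}\upharpoonright\bar\xi,\vec{F}\upharpoonright\bar\xi][\vec{u}\upharpoonright[\bar\xi,\bar\xi+\omega),\vec{F}\upharpoonright[\bar\xi,\bar\xi+\omega)].
\]
By the remark, over $V^1[G_\k\ast g\ast h][\vec{u}\upharpoonright\bar\xi,\vec{F}\upharpoonright\bar\xi]$ the block $[\vec{u}\upharpoonright[\bar\xi,\bar\xi+\omega),\vec{F}\upharpoonright[\bar\xi,\bar\xi+\omega)]$ is forcing equivalent to the Radin forcing $\Rforce$ of Section~\ref{section:local evens} for suitable choices of the normal measure and guiding generics, so the analysis of that section applies; in particular $\mathcal M\models$``$\k_\xi^{++}=(\k_\xi)_*$ and $\TP((\k_\xi)_*)$'' (this is the $n$-th clause of the conclusion recorded in the remark, together with the cardinal computation of Lemma~\ref{properties of forcing R}).

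Next I would check that the passage from $\mathcal M$ to the full model $V^1[G_\k\ast g\ast h\ast K]$ adds no new subsets of $(\k_\xi)_*$. By the remark (its clause~(1)), $V^1[G_\k\ast g\ast h\ast K]$ is a generic extension of $\mathcal M$ by a forcing notion that adds no new subsets of $\k_{\bar\xi+\omega}$; and $(\k_\xi)_*=\k_\xi^{++}<\k_{\xi+1}\le\k_{\bar\xi+\omega}$, since $\k_\xi<(\k_\xi)_*<\k_{\xi+1}$ makes $\xi+1$ the least ordinal $\zeta$ with $(\k_\xi)_*<\k_\zeta$, and $\xi+1=\bar\xi+n+1\le\bar\xi+\omega$. (One may also derive this directly from Lemma~\ref{thm: modified bounding sets}(c).) Consequently $\mathcal M$ and $V^1[G_\k\ast g\ast h\ast K]$ have the same subsets of $(\k_\xi)_*$, hence — coding $(\k_\xi)_*$-trees and their branches as subsets of sets of size $(\k_\xi)_*$ — the same $(\k_\xi)_*$-trees and the same cofinal branches through them, and they agree on cardinals $\le\k_{\bar\xi+\omega}$.

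Putting these together, the statements ``$\k_\xi^{++}=(\k_\xi)_*$'' and ``$\TP((\k_\xi)_*)$'' transfer upward from $\mathcal M$ to $V^1[G_\k\ast g\ast h\ast K]$. As $\xi$ was an arbitrary successor ordinal below $\k$, this proves the lemma.

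I expect the only substantive ingredient to be the appeal to the remark, i.e. the fact — established there, and resting on the factorization Lemma~\ref{thm:modified factorization lemma} and on matching up the guiding generics — that the length-$\omega$ block of the Radin generic after a limit (or zero) stage $\bar\xi$ reproduces the forcing $\Rforce$ of Section~\ref{section:local evens}; granting this, the remaining argument is routine bookkeeping with Lemma~\ref{thm: modified bounding sets}. The degenerate case $\bar\xi=0$ (finite $\xi$) is treated identically, using $\k_0=\aleph_0$.
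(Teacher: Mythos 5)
Your argument is correct, but it follows a different route than the paper's own proof of this lemma. You reduce everything to the $\omega$-block intermediate model $V^1[G_\k\ast g\ast h][\vec{u}\upharpoonright\bar\xi,\vec{F}\upharpoonright\bar\xi][\vec{u}\upharpoonright[\bar\xi,\bar\xi+\omega),\vec{F}\upharpoonright[\bar\xi,\bar\xi+\omega)]$ of the remark preceding the lemma, quote the Section \ref{section:local evens} conclusion (the analogue of Lemma \ref{tp at double successor of k-m}) inside that model, and transfer upward using the no-new-subsets-of-$\k_{\bar\xi+\omega}$ clause; this is exactly the strategy the paper sketches in that remark, and your bookkeeping ($(\k_\xi)_*<\k_{\xi+1}\le\k_{\bar\xi+\omega}$, coding trees and branches as subsets) is sound. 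The paper's detailed proof instead factors at $\xi=\zeta+1$ itself: it rewrites $V^1[G_\k\ast g\ast h][\vec{u}\upharpoonright\xi,\vec{F}\upharpoonright\xi]$ as an extension of $V^1[G_{\k_\xi}][\vec{u}\upharpoonright\zeta,\vec{F}\upharpoonright\zeta][F_\zeta][G(\k_\xi)]$ by a forcing adding no new subsets of $(\k_\xi)_*$, and then applies Lemma \ref{baby iteration of mitchell} directly to $G(\k_\xi)$, which is generic for $\mathbb{M}(\k_\xi,(\k_\xi)_*)\ast\dot{\mathbb{M}}((\k_\xi)_*,(\k_\xi)_{**})$. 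The trade-off: your version is shorter but puts all the substantive weight on the remark's assertion (given only ``by standard arguments'') that the block after $\bar\xi$ is forcing equivalent to the $\Rforce$ of Section \ref{section:local evens} with matched guiding generics — and you additionally need its extension to the degenerate case $\bar\xi=0$, which the remark does not literally cover since it is stated for limit $\bar\xi$; the paper's route is more self-contained, needing only the factorization Lemma \ref{thm:modified factorization lemma}, the properties of the set $X$ from Lemma \ref{suitable choice of elements}, and Lemma \ref{baby iteration of mitchell}, and it handles all successor $\xi$ (including finite ones) uniformly without any block decomposition.
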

\begin{proof}
Suppose $\xi=\zeta+1$.
The model $V^1[G_\k \ast g \ast h \ast K]$ is an extension of
$V^1[G_\k \ast g \ast h][\vec{u} \upharpoonright \xi, \vec{F} \upharpoonright \xi]$
by a forcing notion which does not add new subsets to $\k_\xi^{++}=(\k_\xi)_*$; so it suffices to show that $\TP((\k_\xi)_*)$
holds in $V^1[G_\k \ast g \ast h ][\vec{u} \upharpoonright \xi, \vec{F} \upharpoonright \xi]$.
But this last model is equal to
\[
V^1[G_{\k_\xi}][G(\k_\xi)][G_{(\k_\xi+1, \k]}][g \ast h][\vec{u} \upharpoonright \zeta, \vec{F} \upharpoonright \zeta][F_\zeta].
\]
Since the forcing notion $\Rforce_{u_\zeta}$ is defined in the same way in the models $V^1[G_\k \ast g \ast h]$
and $V^1[G_{\k_\xi}],$ so $V^1[G_\k \ast g \ast h][\vec{u} \upharpoonright \xi, \vec{F} \upharpoonright \xi]$ equals
\[
V^1[G_{\k_\xi}][\vec{u} \upharpoonright \zeta, \vec{F} \upharpoonright \zeta][G(\k_\xi)][G_{(\k_\xi+1, \k]}][g \ast h][F_\zeta]=V[G_{\k_\xi}][\vec{u} \upharpoonright \zeta, \vec{F} \upharpoonright \zeta][F_\zeta][G(\k_\xi)][G_{(\k_\xi+1, \k]}][g \ast h].
\]
It follows that  $V^1[G_\k \ast g \ast h][\vec{u} \upharpoonright \xi, \vec{F} \upharpoonright \xi]$
is an extension of $V^1[G_{\k_\xi}][\vec{u} \upharpoonright \zeta, \vec{F} \upharpoonright \zeta][F_\zeta][G(\k_\xi)]$
by a forcing which does not add new subsets to $(\k_\xi)_*$, so we just need to show that
$\TP((\k_\xi)_*)$
holds in $V^1[G_{\k_\xi}][\vec{u} \upharpoonright \zeta, \vec{F} \upharpoonright \zeta][F_\zeta][G(\k_\xi)]$.

But $G(\k_\xi)$ is generic for  $\mathbb{M}(\k_\xi, (\k_\xi)_*) \ast \dot{\mathbb{M}}((\k_\xi)_*, (\k_{\xi})_{**})$
and so by Lemma \ref{baby iteration of mitchell},
\[
V^1[G_{\k_\xi}][\vec{u} \upharpoonright \zeta, \vec{F} \upharpoonright \zeta][F_\zeta][G(\k_\xi)] \models~``~\TP((\k_\xi)_*)\text{~''.}
\]
The lemma follows.
\end{proof}
Next we prove the following:
\begin{lemma}
\label{tree property at double successor k-i}
$V^1[G_\k \ast g \ast h \ast K] \models$``For all  ordinals $\xi<\k, ~\TP(\k_{\xi+1})$  holds''.
\end{lemma}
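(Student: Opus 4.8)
The plan is to mimic the proof of Lemma \ref{tp at k-m plus 1} from Section \ref{section:local evens}, adapting it to the present setting where the Radin club $C$ has length $\kappa$ rather than $\omega$. Fix an ordinal $\xi<\kappa$. We want to show $\TP(\kappa_{\xi+1})$ holds in $V^1[G_\k \ast g \ast h \ast K]$. Using Lemma \ref{thm: modified bounding sets}, write
\[
V^1[G_\k \ast g \ast h \ast K]=V^1[G_\k \ast g \ast h][\vec u\restriction \xi+1,\vec F\restriction \xi+1][\vec u\restriction[\xi+1,\kappa),\vec F\restriction[\xi+1,\kappa)],
\]
and observe, via the factorization Lemma \ref{thm:modified factorization lemma}, that the tail forcing splits off a final part which does not add new bounded subsets of $\kappa_{\xi+1}$: indeed the relevant Radin tail and the forcing $\mathbb{M}(\kappa_{\xi+1}^{+4},\kappa_{\xi+2})$ are all $\kappa_{\xi+1}^+$-closed or only add subsets above $\kappa_{\xi+1}$. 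So it suffices to prove $\TP(\kappa_{\xi+1})$ in the intermediate model $V^1[G_\k \ast g \ast h][\vec u\restriction \xi+1,\vec F\restriction \xi+1]$.

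Next I would peel off the generic $F_\xi$: the model $V^1[G_\k \ast g \ast h][\vec u\restriction \xi+1,\vec F\restriction \xi+1]$ is obtained from $V^1[G_\k \ast g \ast h][\vec u\restriction \xi,\vec F\restriction \xi]$ by forcing with $\Rforce_{u_\xi}$ above the stem, and by the factorization Lemma \ref{thm:modified factorization lemma}(b), after fixing the relevant finitely many earlier blocks this is, up to the earlier blocks and tail, essentially $\mathbb{M}(\kappa_\xi^{+4},\kappa_{\xi+1})$ together with a direct-extension part that is $\kappa_{\xi+1}$-closed. More precisely, working in $V^1[G_\k \ast g \ast h][\vec u\restriction \xi,\vec F\restriction \xi]$, the filter $F_\xi$ is $\mathbb{M}(\kappa_\xi^{+4},\kappa_{\xi+1})$-generic; by Lemma \ref{basic facts on mitchell forcing}(c), $\TP(\kappa_{\xi+1})$ holds after adding just $F_\xi$, since $\mathbb{M}(\kappa_\xi^{+4},\kappa_{\xi+1})$ forces $\kappa_{\xi+1}=(\kappa_\xi^{+4})^{++}$ together with the tree property at it. Then the remaining parts of $\Rforce_{u_\xi}$ above $F_\xi$ do not add new subsets of $\kappa_{\xi+1}$ (they are $\kappa_{\xi+1}$-closed by the Prikry-style direct-extension order and the collapse/Mitchell pieces live at higher cardinals), so $\TP(\kappa_{\xi+1})$ is preserved. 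Combining, $\TP(\kappa_{\xi+1})$ holds in $V^1[G_\k \ast g \ast h \ast K]$.

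The main obstacle is the bookkeeping: unlike the $\aleph_\omega$ case, here $\xi$ ranges over all ordinals below $\kappa$ and one must be careful that the relevant Radin factorization isolates $\mathbb{M}(\kappa_\xi^{+4},\kappa_{\xi+1})$ cleanly and that all the ``other'' forcings — the initial segment $\mathbb{P}_\k\ast\dot{\mathbb{M}}(\k,\l)\ast\dot{\mathbb{M}}(\l,\eta)$, the earlier Radin blocks $\vec F\restriction\xi$, the direct-extension part of the $\xi$-th block, and the whole tail $\vec u,\vec F\restriction[\xi+1,\kappa)$ — genuinely add no new subsets of $\kappa_{\xi+1}$. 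This is exactly the content of Lemma \ref{thm: modified bounding sets}(c) and the closure clauses of Lemmas \ref{thm:modified factorization lemma} and \ref{thm:modified prikry property}, so the argument is routine once those are invoked in the right order; no new idea beyond the $\aleph_{m+1}$ argument of Section \ref{section:local evens} is needed.
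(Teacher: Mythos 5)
Your argument is correct and is essentially the paper's own proof: decompose $V^1[G_\k \ast g \ast h \ast K]$ as $V^1[G_\k \ast g \ast h][\vec u\restriction\xi,\vec F\restriction\xi][F_\xi]$ followed by the tail, apply Lemma \ref{basic facts on mitchell forcing}(c) to the $\mathbb{M}(\k_\xi^{+4},\k_{\xi+1})$-generic $F_\xi$ to get $\TP(\k_{\xi+1})$ in the intermediate model, and note that the remaining forcing adds no new subsets of $\k_{\xi+1}$. The extra bookkeeping you describe (factorization and closure of the tail) is exactly what the paper leaves implicit, so no further comment is needed.
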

\begin{proof}
We have
\[
V^1[G_\k \ast g \ast h \ast K]= V^1[G_\k \ast g \ast h][\vec{u} \upharpoonright \xi, \vec{F} \upharpoonright \xi][F_\xi][\vec{u} \upharpoonright (\xi+1, \k), \vec{F} \upharpoonright (\xi+1, \k))]
\]
Now $F_\xi$ is a generic filter for $\mathbb{M}(\k_\xi^{+4}, \k_{\xi+1})$ and by Lemma \ref{basic facts on mitchell forcing}$(c)$,
we have
\begin{center}
$V^1[G_\k \ast g \ast h][\vec{u} \upharpoonright \xi, \vec{F} \upharpoonright \xi][F_\xi]\models$``$\TP(\k_{\xi+1})$''.
\end{center}
On the other hand, the models $V^1[G_\k \ast g \ast h \ast K]$ and
$V^1[G_\k \ast g \ast h][\vec{u} \upharpoonright \xi, \vec{F} \upharpoonright \xi][F_\xi]$ have the same subsets of $\k_{\xi+1}$,
and hence
$V^1[G_\k \ast g \ast h \ast K]\models$``$\TP(\k_{\xi+1})$''.
\end{proof}

\begin{theorem}
\label{tree property at double fourth successor of limits below kappa}
$V^1[G_\k \ast g \ast h \ast K] \models$``For all  ordinals $\xi<\k, \k_\xi^{+4}=(\k_\xi)_{**}$ and the tree property at $(\k_\xi)_{**}$ holds''.
\end{theorem}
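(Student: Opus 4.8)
```latex
\begin{proof}
The plan is to reduce the statement, for a fixed ordinal $\xi<\k$, to a tree property assertion in an intermediate model that is obtained by forcing over a model whose ``cut'' is exactly of the type treated in Section~\ref{section:local evens}; then we invoke the preservation lemmas to push the tree property back up to $V^1[G_\k \ast g \ast h \ast K]$. First I would fix $\xi<\k$ and set $\a=\k_\xi$, so that by the remark preceding Lemma~\ref{tree property at successor k-i} and by Lemma~\ref{properties of forcing R}$(f)$ we have $\a^{+4}=\a_{**}$ in the final model. Write
\[
V^1[G_\k \ast g \ast h \ast K] = V^1[G_{\k_\xi+1}][G_{(\k_\xi+1, \k]}][g \ast h][\vec{u} \upharpoonright \xi, \vec{F}\upharpoonright \xi][\vec{u}\upharpoonright[\xi,\k), \vec{F}\upharpoonright[\xi,\k)].
\]
Using Lemma~\ref{suitable choice of elements} we may assume $\a\in X$, so $\mathbb{P}(\a)=\mathbb{M}(\a,\a_*)\ast\dot{\mathbb{M}}(\a_*,\a_{**})$, the forcing $\mathbb{P}_{(\a+1,\k)}$ adds no new subsets of $\a_{**}^{+4}$, and the embedding $j$ lifts appropriately. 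By Lemma~\ref{thm: modified bounding sets}$(c)$ (together with the fact that $\Rforce_{u_\xi}$ is defined the same way in $V^1[G_\k \ast g \ast h]$ and in $V^1[G_{\k_\xi+1}]$, since the tail forcing adds no small subsets), everything relevant to $\TP(\a_{**})$ lives in
\[
V^1[G_{\k_\xi}][\vec{u}\upharpoonright\xi, \vec{F}\upharpoonright\xi][G(\k_\xi)],
\]
and $V^1[G_\k \ast g \ast h \ast K]$ is a generic extension of this model by a forcing that adds no new subsets of $\a_{**}$.

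Next I would handle the limit and successor cases of $\xi$ slightly differently, as in Lemmas~\ref{tree property at limits below kappa} and~\ref{tree property at successor k-i}. If $\xi=\zeta+1$ is a successor, then $\vec{F}\upharpoonright\xi$ ends with $F_\zeta$, which is $\mathbb{M}(\k_\zeta^{+4},\k_\xi)$-generic; by reordering the mutually generic factors we see that $V^1[G_{\k_\xi}][\vec{u}\upharpoonright\zeta, \vec{F}\upharpoonright\zeta][F_\zeta][G(\k_\xi)]$ is obtained from $V^1[G_{\k_\xi}][\vec{u}\upharpoonright\zeta, \vec{F}\upharpoonright\zeta]$ by forcing with
\[
\mathbb{M}(\k_\xi^{+4},\k_{\xi+1}) \ast \left(\dot{\mathbb{M}}(\k_\xi, (\k_\xi)_*)\ast\dot{\mathbb{M}}((\k_\xi)_*,(\k_\xi)_{**})\right),
\]
which, by absorbing the first factor into the iteration as a term-forcing product exactly as in Lemma~\ref{laver vs mitchell} (with the tail iteration $\mathbb{P}_{(\k_\zeta,\k_\xi]}$ in place of $\mathbb{P}$ and $\mathbb{M}(\k_\zeta^{+4},\k_\xi)$ in place of $\mathbb{M}$), forces $\TP((\k_\xi)_{**})$ by Lemmas~\ref{iteration of mitchell}, \ref{laver vs mitchell} and \ref{laver vs mitchell2}. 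If $\xi$ is a limit ordinal, then $\vec{u}\upharpoonright\xi$ is a genuine Radin generic and I would instead run the argument of Theorem~\ref{tree property at kappa} internally: the relevant forcing over $V^1[G_{\k_\xi}]$ is $\mathbb{P}(\k_\xi)\ast\dot{\mathbb{R}}_{u_\xi}$, whose ``bottom'' is $\mathbb{M}(\k_\xi,(\k_\xi)_*)\ast\dot{\mathbb{M}}((\k_\xi)_*,(\k_\xi)_{**})$, and the argument from Lemma~\ref{tree property at double successor of k-m} shows $\TP((\k_\xi)_{**})$ holds there.

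The main obstacle, as in all of these proofs, is the branch-preservation step: having lifted the relevant embedding $j$ so that a putative $(\k_\xi)_{**}$-Aronszajn tree $T$ acquires a branch in the target model, one must argue that the quotient forcing cannot add such a branch over $V^1[G_{\k_\xi}\ast\ldots]$. This is handled exactly as in the proof of Lemma~\ref{baby iteration of mitchell}: the quotient splits as a $\k_\xi$-closed piece (killed by Silver's lemma, since $2^{\k_\xi^{+3}}\geq(\k_\xi)_{**}$) composed with a product of the form $\Add(\k_\xi^{+4}, j(\k_\xi))\times\Col(\k_\xi^{+5}, j(\k_\xi))$-type forcing, to which Lemma~\ref{lem: preservation product cc-2} applies (with $\rho=\k_\xi^{+3}$, $\mu=\k_\xi^{+4}$), so no branch is added. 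Putting the pieces together — $\TP((\k_\xi)_{**})$ holds in the intermediate model, and the remaining forcing adds no subsets of $(\k_\xi)_{**}$ — yields $V^1[G_\k \ast g \ast h \ast K]\models \TP(\k_\xi^{+4})$, as required. Together with Lemmas~\ref{tree property at successor k-i}, \ref{tree property at double successor k-i} and~\ref{tree property at limits below kappa}, and the classification of regular even cardinals below $\k$, this completes the proof of Theorem~\ref{thm:main theorem2}.
\end{proof}
```
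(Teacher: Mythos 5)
Your overall strategy (pass to an intermediate model, then quote the Mitchell-iteration lemmas and the ``Laver versus Mitchell'' preservation lemmas) is the right one, but your reduction step is wrong in a way that breaks the proof. You claim that $V^1[G_\k \ast g \ast h \ast K]$ is a generic extension of $V^1[G_{\k_\xi}][\vec{u}\upharpoonright\xi,\vec{F}\upharpoonright\xi][G(\k_\xi)]$ by a forcing adding no new subsets of $(\k_\xi)_{**}$. That is false: the Radin generic itself interleaves the filter $F_\xi$, which is generic for $\mathbb{M}(\k_\xi^{+4},\k_{\xi+1})=\mathbb{M}((\k_\xi)_{**},\k_{\xi+1})$, and its Cohen part $\Add((\k_\xi)_{**},\k_{\xi+1})$ adds many new subsets of $(\k_\xi)_{**}$. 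Lemma \ref{thm: modified bounding sets}(c) only localizes subsets of $(\k_\xi)_{**}$ to the extension by $\vec{u}\upharpoonright(\xi+1),\vec{F}\upharpoonright(\xi+1)$, which includes $F_\xi$. So a putative $(\k_\xi)_{**}$-Aronszajn tree of the final model may well be added by $F_\xi$, and verifying the tree property in your smaller intermediate model says nothing about it. The correct intermediate model is $V^1[G_{\k_\xi}][\vec{u}\upharpoonright\xi,\vec{F}\upharpoonright\xi][G(\k_\xi)][G_{(\k_\xi+1,\k_{\xi+1}]}][F_\xi]$; that is, one must analyze, over $V^1[G_{\k_\xi}][\vec{u}\upharpoonright\xi,\vec{F}\upharpoonright\xi]$, the forcing $\mathbb{M}(\k_\xi,(\k_\xi)_*)\ast\dot{\mathbb{M}}((\k_\xi)_*,(\k_\xi)_{**})\ast(\dot{\mathbb{P}}_{(\k_\xi+1,\k_{\xi+1}]}\times\dot{\mathbb{M}}((\k_\xi)_{**},\k_{\xi+1}))$, and it is exactly for the last product that Lemmas \ref{iteration of mitchell} and \ref{laver vs mitchell} (compare Lemma \ref{laver vs mitchell2} and the proof of Lemma \ref{tp at fourth successor of k-m}) are invoked. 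This is what the paper does, uniformly in $\xi$, with no case split between limit and successor.

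Your case analysis then compounds the problem. In the successor case you misidentify the forcing: $F_\zeta$ is generic for $\mathbb{M}(\k_\zeta^{+4},\k_\xi)$, not $\mathbb{M}(\k_\xi^{+4},\k_{\xi+1})$, and the term-forcing absorption you describe, with the tail $\mathbb{P}_{(\k_\zeta,\k_\xi]}$ below $\k_\xi$, is the configuration for the tree property at $\k_\xi$ itself, not at $(\k_\xi)_{**}$; the tail that matters here is $\mathbb{P}_{(\k_\xi+1,\k_{\xi+1}]}$ together with $\mathbb{M}((\k_\xi)_{**},\k_{\xi+1})$, sitting on top of the two local Mitchell steps. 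In the limit case, the arguments of Theorem \ref{tree property at kappa} and Lemma \ref{tree property at double successor of k-m} give the tree property at the double successor $\k_\xi^{++}=(\k_\xi)_*$ (that is Lemma \ref{tree property at limits below kappa}); they do not address $\k_\xi^{+4}$. Finally, your branch-preservation sketch uses $\Col(\k_\xi^{+5},\cdot)$-type quotients, which belong to the Section \ref{section:double successor} version of the Radin forcing; in the present forcing the interleaved components are Mitchell forcings, and the relevant quotient and branch analysis is the one of Lemmas \ref{baby iteration of mitchell} and \ref{laver vs mitchell}, carried out with critical point $(\k_\xi)_{**}$.
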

\begin{proof}
By similar analysis as above, it suffices to show that $\TP((\k_\xi)_{**})$ holds in the model
\[
V^1[G_{\k_\xi}][\vec{u} \upharpoonright \xi, \vec{F} \upharpoonright \xi][G(\k_\xi)][G_{(\k_\xi+1, \k_{\xi+1}]}][F_\xi],
\]
which is an extension of $V^1[G_{\k_\xi}][\vec{u} \upharpoonright \xi, \vec{F} \upharpoonright \xi]$
by the forcing notion
$$\mathbb{M}(\k_\xi, (\k_\xi)_*) \ast \dot{\mathbb{M}}((\k_\xi)_*, (\k_{\xi})_{**}) \ast (\dot{\mathbb{L}}_{(\k_\xi+1, \k_{\xi+1}]} \times \dot{\mathbb{M}}((\k_\xi)_{**}, \k_{\xi+1})).$$
So, by lemmas by Lemmas \ref{iteration of mitchell} and \ref{laver vs mitchell},
\[
V^1[G_{\k_\xi}][\vec{u} \upharpoonright \xi, \vec{F} \upharpoonright \xi][G(\k_\xi)][G_{(\k_\xi+1, \k_{\xi+1}]}][F_\xi] \models ~``~\TP((\k_\xi)_{**})\text{~''.}
\]
The lemma follows.
\end{proof}
 Theorem \ref{thm:main theorem2} follows.

School of Mathematics, Institute for Research in Fundamental Sciences (IPM), P.O. Box:
19395-5746, Tehran-Iran.

E-mail address: golshani.m@gmail.com

\end{document}